\tikzset{
    labl/.style={anchor=south, rotate=90, inner sep=.5mm, scale=1.2, yshift=-2mm}
}
\definecolor{darkblue}{rgb}{0,0,0.6}
\newcommand{\hooklongrightarrow}{\lhook\joinrel\longrightarrow}
\newcommand{\longtwoheadrightarrow}  {\relbar\joinrel\twoheadrightarrow}
\newcommand{\Z}{\mathbb{Z}}
\newcommand{\RP}{\mathbb{RP}^2}
\newcommand{\bands}{\mathcal{B}}
\newcommand{\cF}{\mathcal{F}}
\DeclareMathOperator{\km}{km}
\DeclareMathOperator{\pt}{pt}
\DeclareMathOperator{\p}{pm}
\newcommand{\imra}{\looparrowright}
\newcommand{\ra}{\longrightarrow}
\newcommand{\hra}{\hookrightarrow}
\newcommand{\sra}{\twoheadrightarrow}
\newcommand{\TT}{\mathcal{T}} 
\newcommand{\RR}{\mathcal{R}}
\newcommand{\EE}{\mathcal{E}}
\newcommand{\R}{\mathbb{R}}
\newcommand{\N}{\mathbb{N}}
\newcommand{\ol}{\overline}
\newcommand{\sm}{\setminus}
\newcommand{\wt}{\widetilde}
\newcommand{\CP}{\mathbb{CP}}
\newcommand{\tw}{t}
\newcommand{\wh}{\widehat}
\newcommand{\W}{\mathcal{W}}
\DeclareMathOperator{\Ima}{Im}
\newtheorem*{rep@theorem}{\rep@title}
\newcommand{\newreptheorem}[2]{%
\newenvironment{rep#1}[1]{%
 \def\rep@title{#2 \ref{##1}}%
 \begin{rep@theorem}}%
 {\end{rep@theorem}}}
\DeclareMathOperator{\Imm}{Imm}
\def\drawtw#1#2{\tikz[baseline=-.07ex,cap=round,scale=#1,line width=#2]
  {\draw (0,1ex) ..controls+(-.5ex,0) and +(-.5ex,0)..
    (0ex,0ex) ..controls+(.5ex,0) and +(-.5ex,0)..
    (.85ex,1ex) ..controls+(.5ex,0) and +(.5ex,0).. (.85ex,0);}}
\newsavebox{\ttwbox}\newsavebox{\stwbox}\newsavebox{\sstwbox}
\sbox{\ttwbox}{\drawtw{.9}{.6pt}}
\sbox{\stwbox}{\drawtw{.54}{.5pt}}
\sbox{\sstwbox}{\drawtw{.45}{.45pt}}
\def\twist{{\mathchoice{\usebox{\ttwbox}}{\usebox{\ttwbox}}{\usebox{\stwbox}}{\usebox{\sstwbox}}}}
\DeclareMathOperator{\PD}{PD}
\DeclareMathOperator{\Arf}{Arf}
\DeclareMathOperator{\Id}{Id}
\DeclareMathOperator{\ks}{ks}
\DeclareMathOperator{\Int}{Int}
\DeclareMathOperator{\Alt}{alt}
\newcommand{\Sum}{\displaystyle \sum}
\newcommand*{\newfaktor}[2]{
  \raisebox{0.3\height}{\ensuremath{#1}}
  \mkern-5mu\diagup\mkern-4mu
  \raisebox{-0.7\height}{\ensuremath{#2}}
}
\newcommand{\smfrac}[2]{\newfaktor{#1}{#2}} 
\def\smath#1{\text{\scalebox{.8}{$#1$}}}
\def\sfrac#1#2{\smath{\frac{#1}{#2}}}
\theoremstyle{plain}
\newtheorem{theorem}{Theorem}[section]
	\newtheorem{proposition}[theorem]{Proposition}
	\newtheorem{lemma}[theorem]{Lemma}
	\newtheorem{corollary}[theorem]{Corollary}
\newtheorem{claim}[theorem]{Claim}
\newtheorem{case}{Case}
\theoremstyle{definition}
	\newtheorem{definition}[theorem]{Definition}
	\newtheorem{example}[theorem]{Example}
	\newtheorem{construction}[theorem]{Construction}
	\newtheorem{convention}[theorem]{Convention}
	\newtheorem{remark}[theorem]{Remark}
\newtheoremstyle{theorem-giventitle}
        {}{}              
        {\itshape}                      
        {}                              
        {\bfseries}                     
        {.}                             
        { }                             
        {\thmnote{\bfseries#3}}
\theoremstyle{theorem-giventitle}
\newtheorem{theorem-named}{}
\numberwithin{figure}{section}
\numberwithin{equation}{section}
\begin{document}

\title{Embedding surfaces in $4$-manifolds}

\author[D.~Kasprowski]{Daniel Kasprowski}
\address{School of Mathematical Sciences, University of Southampton, United Kingdom}
\email{d.kasprowski@soton.ac.uk}

\author[M.~Powell]{Mark Powell}
\address{School of  Mathematics and Statistics, University of Glasgow, United Kingdom}
\email{mark.powell@glasgow.ac.uk}

\author[A.~Ray]{Arunima Ray}
\address{Max Planck Institute for Mathematics, Vivatsgasse 7, 53111 Bonn, Germany}
\email{aruray@mpim-bonn.mpg.de}

\author[P.~Teichner]{Peter Teichner}
\address{Max Planck Institute for Mathematics, Vivatsgasse 7, 53111 Bonn, Germany}
\email{teichner@mac.com}

\def\subjclassname{\textup{2020} Mathematics Subject Classification}
\expandafter\let\csname subjclassname@1991\endcsname=\subjclassname
\subjclass{
57K40, 
57N35. 
}
\keywords{Embedding surfaces in 4-manifolds, Kervaire--Milnor invariant}

\begin{abstract}
We prove a surface embedding theorem for $4$-manifolds with good fundamental group in the presence of dual spheres, with no restriction on the normal bundles.
The new obstruction is a Kervaire--Milnor invariant for surfaces and we give a combinatorial formula for its computation. For this we introduce the notion of band characteristic surfaces.
\end{abstract}
\maketitle

\section{Introduction}

We study whether a given map of a surface to a topological $4$-manifold is homotopic to an embedding. Here and throughout the article, embeddings and immersions in the topological category are by definition \emph{locally flat}, meaning they are locally modelled on linear inclusions $\R^2 \hra\R^4$ or $\R^2_+ \hra \R^4$.

Even for maps of $2$-spheres, this question has only been completely addressed in a handful of simple manifolds, such as $S^4$, $\CP^2$~\cite{tristram}*{p.\ 264}, and $S^2 \times S^2$~\citelist{\cite{tristram}*{Theorem~4.5}\cite{Kervaire-Milnor:1961-1}*{Corollary~1}\cite{F}*{Corollary~1.1}}. Lee--Wilczynski~\cites{lee-wilczynski,lee-wilczynski-AJM} and Hambleton--Kreck~\cite{hambleton-kreck:1993} described the minimal genus of an embedded surface in a fixed homology class, in any given simply connected, closed $4$-manifold, assuming that the  fundamental group of the complement is abelian. This was recently extended by~\cite{FMNOPR} to knot traces.
In the relative setting even the simplest case is open: which knots in $S^3$ bound a (locally flat) embedded disc in $D^4$?

The main available tool for proving positive results is Freedman's embedding theorem which shows that maps of discs and spheres to a 4-manifold with \emph{good} fundamental group, with vanishing intersection and self-intersection numbers, and with framed algebraically dual spheres, are regularly homotopic to embeddings (\cref{thm:DET}~\citelist{\cite{F}\cite{FQ}*{Corollary 5.1B}}, see also~\citelist{\cite{PRT20}\cite{Freedman-notes}}). Surgery and the $s$-cobordism theorem for topological 4-manifolds with good fundamental group are consequences of this theorem~\citelist{\cite{Quinn-annulus}\cite{FQ}}.
Our aim, realised in \cref{thm:SET,thm:main} below, is to extend Freedman's theorem to all compact surfaces with algebraically dual spheres, in any connected 4-manifold with good fundamental group. In \cref{subsection:homotopy-vs-reg-homotopy} we explain how to apply this to the question from the opening paragraph of whether a given homotopy class contains an embedding.
In \cref{subsection:knots} we give some applications to knot theory. In particular, we show that every knot bounds an embedded surface of genus one in $M\sm \mathring{D}^4$ for every simply connected closed 4-manifold $M$ not homeomorphic to $S^4$. Recall that for $M=S^4$, this does not hold because the slice genera of knots can be arbitrary large.

Throughout the paper, we will work in the following setting unless otherwise specified.

\begin{convention}\label{convention}
We assume that $M$ is a connected, topological $4$-manifold and that $\Sigma$ is a nonempty compact surface with connected components $\{\Sigma_i\}_{i=1}^m$. The notation $F = \{f_i\}_{i=1}^m \colon (\Sigma,\partial \Sigma)\looparrowright (M, \partial M)$ represents a generic immersion (\cref{def:gen_immersion}) with components $f_i\colon (\Sigma_i,\partial \Sigma_i)\looparrowright (M, \partial M)$.
\end{convention}

By assumption, the map $F$ restricts to an embedding on $\partial \Sigma$ and $F^{-1}(\partial M) = \partial \Sigma$, where $\partial \Sigma$ and $\partial M$ are permitted to be empty. There is no requirement for $\Sigma$ or  $M$ to be orientable, and $M$ could be non-compact.
Weakening the hypotheses of Freedman's theorem to allow for the algebraically dual spheres to be unframed introduces an additional obstruction, the \emph{Kervaire--Milnor invariant} $\km(F)\in\Z/2$ (\cref{def:km=0}), which vanishes in the presence of framed algebraically dual spheres.

\begin{theorem}[Surface embedding theorem]\label{thm:SET}
Let $F=\{f_i\}_{i=1}^m \colon (\Sigma,\partial \Sigma)\looparrowright (M, \partial M)$ be as in \cref{convention}. Suppose that $\pi_1(M)$ is good and that $F$ has algebraically dual spheres $G=\{[g_i]\}_{i=1}^m \subseteq \pi_2(M)$. Then the following statements are equivalent.

\begin{enumerate}[(i)]
\item \label{item-i-SET}  The intersection numbers $\lambda(f_i,f_j)$ for all $i < j$, the self-intersection numbers $\mu(f_i)$ for all $i$, and the Kervaire--Milnor invariant $\km(F)\in\Z/2$, all vanish.
\item \label{item-ii-SET} There is an embedding $\ol{F}=\{\ol{f}_i\}_{i=1}^m \colon (\Sigma, \partial\Sigma)  \hookrightarrow (M,\partial M)$, regularly homotopic to $F$ relative to $\partial \Sigma$, with geometrically dual spheres $\ol{G}=\{\ol{g}_i\colon S^2\looparrowright M\}_{i=1}^m$ such that $[\ol{g}_i]= [g_i]\in \pi_2(M)$ for all $i$.
\end{enumerate}
\end{theorem}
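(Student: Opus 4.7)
The direction (\ref{item-ii-SET}) $\Rightarrow$ (\ref{item-i-SET}) should be essentially formal: intersection and self-intersection numbers are invariants of regular homotopy and vanish on embedded surfaces, while $\km(F)$ is by design an obstruction to embedding, so it must vanish whenever an embedding as in (\ref{item-ii-SET}) exists. I expect this to follow directly from the definition of $\km$ and the standard properties of $\lambda$ and $\mu$, with essentially no geometry involved.

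For (\ref{item-i-SET}) $\Rightarrow$ (\ref{item-ii-SET}), the plan is to reduce to Freedman's disc embedding theorem \cref{thm:DET}. Since $\lambda(f_i,f_j)=0$ and $\mu(f_i)=0$, I would first pair up all double points of $F$ by Whitney arcs and choose a system $\W=\{W_k\}$ of generic Whitney discs. These discs will in general meet $F$ and each other, and their normal bundles need not agree with the Whitney framing required to carry out the geometric Whitney move.

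To kill intersections of the $W_k$ with $F$, one tubes each Whitney disc into parallel copies of the given algebraically dual spheres $g_i$. Since the $g_i$ are only algebraic duals, this introduces further intersections among the $W_k$ themselves, but these can be controlled by Norman-type tricks, and an algebraic dual sphere for each $W_k$ can also be produced from a further tubed copy of an appropriate $g_i$. At this stage the intersection data is acceptable, and the only remaining obstruction is framing: because the $g_i$ are not assumed framed, the Whitney discs need not inherit the correct normal framing, nor can one expect the new algebraic duals of the $W_k$ to be framed for free. The hypothesis $\km(F)=0$ is precisely what allows one to modify $\W$---via boundary twists, interior twists and cusp moves, paired off according to the combinatorial recipe defining $\km$---so that the resulting Whitney discs are correctly framed and equipped with \emph{framed} algebraic duals.

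With the hypotheses of \cref{thm:DET} in place, one obtains disjoint embedded Whitney discs $\{\ol W_k\}$ whose interiors are disjoint from $F$ and which are regularly homotopic to $\{W_k\}$ rel boundary. Performing the Whitney moves along the $\ol W_k$ produces the required embedding $\ol F$, regularly homotopic to $F$ rel $\partial \Sigma$. To obtain geometrically dual spheres $\ol g_i$ with $[\ol g_i]=[g_i]\in\pi_2(M)$, I would start from the original $g_i$, repair algebraic duality after the Whitney moves by further tubings along $\ol F$, and then apply the sphere case of \cref{thm:DET} in the complement of $\ol F$ to upgrade these algebraic duals to geometric ones without altering their $\pi_2$-classes. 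The hardest step, and the conceptual heart of the argument, will be the framing adjustment: identifying the combinatorial definition of $\km(F)$ with the precise $\Z/2$-valued framing obstruction that arises once the dual spheres are only algebraic and unframed, and verifying that the intervening tubing and Norman moves do not change this invariant.
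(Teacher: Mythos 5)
Your direction $(ii)\Rightarrow(i)$ is fine: it is exactly the paper's argument, via regular homotopy invariance of $\lambda$, $\mu$, and $\km$.

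For $(i)\Rightarrow(ii)$ there is a genuine conceptual gap. You are treating $\km(F)$ as a combinatorial framing count (boundary twists, cusps, ``paired off according to the combinatorial recipe defining $\km$''), i.e.\ the Freedman--Quinn/Stong $t$-invariant, and you plan to use $\km(F)=0$ at the end of the argument to repair framings. But in this paper $\km$ is \emph{defined geometrically} (\cref{def:km}): $\km(F)=0$ means that after finitely many finger moves there exists a convenient collection of framed Whitney discs whose interiors are already disjoint from the surface. The identification of this with a combinatorial $t$-count is the content of \cref{thm:main}, a much harder theorem proved later, and using it here would invert the paper's logic. The correct use of $\km(F)=0$ is at the \emph{beginning}: after first upgrading the $g_i$ from algebraic to geometric duals via the geometric Casson lemma (\cref{lem:geometric-casson-lemma}) --- a step you omit, but which is needed both to control $\pi_1(M\setminus\nu F)$ and to make any tubing arguments work --- one performs the finger moves supplied by $\km(F)=0$ to obtain framed Whitney discs $\W$ with $\Int\W\cap F=\emptyset$. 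So there is no need to tube $\W$ into $g_i$ to clear $F$ from the interiors, and your proposed step of doing so with merely \emph{algebraic} duals would in fact introduce new $\W\cap F$ intersections rather than remove them.

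Consequently the ``hardest step'' you identify --- matching the combinatorial $\km$ to a $\Z/2$ framing obstruction --- does not occur in the proof of this theorem. What remains after the step above is to apply the disc embedding theorem to $\W$ in $M\setminus\nu F''$, and the framed algebraically dual spheres for $\W$ needed there come from Clifford tori at the double points of $F''$, symmetrically contracted using meridional discs tubed into the (now geometric) duals $G'$. The framing of these contracted tori is automatic: each cap is used twice with opposite orientations, so any framing discrepancy from a twisted $g_i$ cancels, independently of $\km$. Finally, the geometrically dual spheres $\ol{G}$ are obtained by tubing $G'$ off the embedded Whitney discs into the DET-provided geometric duals of those Whitney discs; they need only be generically immersed, so a second application of the disc embedding theorem (as you suggest) is both unnecessary and would require its own hypotheses you have not verified.
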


Equivariant intersection and self-intersection numbers of immersed discs and spheres have a long history (see e.g.~\cite{Wall-surgery-book}). In the theorem above, we consider generalised versions for arbitrary compact surfaces, lying in quotients of the group ring $\Z[\pi_1(M)]$, which we denote by $\Gamma_{f_i,f_j} \ni \lambda(f_i,f_j)$ for the intersection numbers and $\Gamma_{f_i} \ni \mu(f_i)$ for the self-intersection numbers.
We describe these quotients in detail in \cref{subsection:intersection-numbers,subsection:self-intersection-numbers}.
As in the simply connected case, these invariants require based maps (\cref{def:based}) but their vanishing as in \cref{thm:SET}~(i) does not depend on the choice of basing. Vanishing of all the $\lambda(f_i,f_j)$ for $i<j$ and all the $\mu(f_i)$ is equivalent to the vanishing of the self-intersection number $\mu(F)$, which is defined as follows.

\begin{definition}\label{def:mu(F)}
Let $F=\{f_i\}_{i=1}^m \colon (\Sigma,\partial\Sigma)\looparrowright (M,\partial M)$ be as in \cref{convention}. Assume in addition that $M$ and $\Sigma$ are based and that $F$ is a based map. The \emph{self-intersection number} of this possibly disconnected immersed surface is given by counting all double points of $F$, as follows:
\[\mu (F):=\sum_{i < j}\lambda(f_i,f_j)+\sum_i\mu(f_i)\in \bigoplus_{i< j}
\Gamma_{f_i,f_j}\oplus \bigoplus_i\Gamma_{f_i}.\]
\end{definition}

The self-intersection number $\mu (F)$ is a regular homotopy invariant that vanishes if and only if there is a collection of Whitney discs that pair all double points of $F$ (\cref{cor:vanishing-pairing}), just like for connected surfaces. The Whitney discs may be chosen to form a \emph{convenient} collection, meaning that all Whitney discs have disjointly embedded boundaries, are framed, and have interiors transverse to~$F$ (\cref{def:convenient-Whitney}).

\begin{definition}\label{def:km}\label{def:km=0}
Let $F\colon (\Sigma,\partial \Sigma)\looparrowright (M, \partial M)$ be as in \cref{convention}.
By definition, $\km(F)\in\Z/2$ vanishes if and only if, after finitely many finger moves taking $F$ to some $F'$, there is a convenient collection of Whitney discs pairing all the double points of $F'$ and whose interiors are disjoint from~$F'$.
\end{definition}

We think of $\mu (F)$ as the primary embedding obstruction, and $\km(F)$ as the secondary embedding obstruction.
Note that $\km(F)=0$ implies $\mu (F)=0$ but $\km(F)$ is always defined even if $\mu (F) \neq 0$. In \cref{sec:computing-km} we will give a combinatorial description of $\km(F)$ in the case that $\mu(F)=0$.

The Kervaire--Milnor invariant is named in homage to~\cite{Kervaire-Milnor:1961-1}, in which Kervaire and Milnor defined an embedding obstruction, and used it to give the first proof that the Whitney trick fails in dimension~4. \cref{section:KM-invariant} gives details on the connection of our Kervaire--Milnor invariant to the original obstruction and other secondary embedding obstructions in the literature.

A group is said to be \emph{good} if it satisfies the \emph{$\pi_1$-null disc property}~\cite{Freedman-Teichner:1995-1} (see also~\cite{Freedman-book-goodgroups}); we shall not repeat the definition. In practice, it suffices to know that virtually solvable groups and groups of subexponential growth are good, and that the class of good groups is closed under taking subgroups, quotients, extensions, and colimits~\cites{Freedman-Teichner:1995-1, Krushkal-Quinn:2000-1}.

If $\Sigma$ is a union of discs or spheres, \cref{thm:SET} follows from \cite{FQ}*{Theorem 10.5\,(1)}.  The latter theorem contained an error found by Stong~\cite{Stong} (cf.\ \cref{theorem:Stong}),
but this is not relevant to \cref{thm:SET} because of the way we defined the Kervaire--Milnor invariant.  It is, however, very relevant to how to compute the Kervaire--Milnor invariant, and Stong's correction will be one of the ingredients in our results (see~\cref{sec:computing-km}).

For an arbitrary $\Sigma$, one could try to prove \cref{thm:SET} by using general position to embed the $0$- and $1$-handles of $\Sigma$ (relative to $\partial\Sigma$) and removing a small open neighbourhood thereof from $M$. This gives a new connected $4$-manifold $M_0$ with the same fundamental group as $M$, and only the $2$-handles $\{h_i\colon  (D^2,S^1)\imra (M_0,\partial M_0)\}$, one for each component $\Sigma_i$ of $\Sigma$, remain to be embedded. One then hopes to apply \cite{FQ}*{Theorem 10.5\,(1)} (i.e.\ \cref{thm:SET} for $\Sigma$ a union of discs) to these maps of $2$-handles to produce the desired embedded surface. The original algebraically dual spheres $\{g_i\}$ for the $\{f_i\}$ perform the same r\^{o}le for the $\{h_i\}$ in $M_0$. The intersection and self-intersection numbers $\lambda$ and $\mu$ remain unchanged, hence they also vanish for $\{h_i\}$. However, the Kervaire--Milnor invariant may behave differently. That is, it may become nonzero for the embedding problem for the discs $\{h_i\}$, whereas it was trivial for the original $F$. We show that this phenomenon can occur in \cref{ex:km-handle}. This difference stems from the fact that in applying \cite{FQ}*{Theorem 10.5\,(1)} we fix an embedding of the $1$-skeleton of $\Sigma$ and try to extend it across the $2$-handles. As usual in obstruction theory, it might be advantageous to go back and change the solution of the problem on the $1$-skeleton.

\subsection{Computing the Kervaire--Milnor invariant}\label{sec:computing-km}

The strength of \cref{thm:SET} versus the above strategy using~\cite{FQ}*{Theorem 10.5\,(1)} lies in our computation of the Kervaire--Milnor invariant for arbitrary compact surfaces.
In the case of discs and spheres, Stong showed that the Kervaire--Milnor invariant vanishes in more situations than claimed by Freedman--Quinn, due to the ambiguity arising from sheet choices when pairing up double points by Whitney discs, when the associated fundamental group element has order two. As we recall in \cref{theorem:Stong}, Stong~\cite{Stong} introduced the notion of an \emph{$r$-characteristic} surface, short for \emph{$\RP$-characteristic} surface (\cref{def:r-char}), to give a criterion, in terms of  copies of $\RP$ immersed in the ambient manifold $M$, to decide whether the sheet changing move is viable. Combined with the work of Freedman and Quinn, this enabled the computation of the Kervaire--Milnor invariant, and therefore answered the embedding problem for every finite union of discs or spheres with algebraically dual spheres, in an ambient $4$-manifold with good fundamental group (see \cref{rem:freedman-quinn-stong-win} for more details).

In order to compute the Kervaire--Milnor invariant $\km(F)$ for general surfaces, we extend the notion of $r$-characteristic surfaces to a notion of \emph{$b$-characteristic} surfaces, short for \emph{band characteristic} (\cref{def:b-char}), defined using bands (annuli and M\"{o}bius bands) immersed in~$M$. The next theorem is a generalisation of Stong's computation of $\km(F)$ to arbitrary compact surfaces.

\begin{definition}\label{def:t}
Let $F=\{f_i\}_{i=1}^m \colon (\Sigma,\partial \Sigma)\looparrowright (M, \partial M)$ be as in \cref{convention} with $\mu (F)=0$. Choose  a convenient collection $\W=\{W_\ell\}$ of Whitney discs that pair all double points of $F$ and define
 \[
 \tw(F,\W) :=\Sum_{\ell,i}\ \lvert \Int{W_\ell}\pitchfork f_i\rvert \mod 2.
 \]
In other words, $\tw(F,\W) $ is the mod 2 count of transverse intersections between $F$ and the interiors of the Whitney discs in $\W$.
\end{definition}

We will often apply this definition to the restriction $F^\twist$ of $F=\{f_1,\dots,f_m\}$ to the sub-surface $\Sigma^\twist \subseteq \Sigma$, which includes a component $\Sigma_i$ of $\Sigma$ precisely if its image does not admit a framed immersion $g_i\colon S^2\looparrowright M$ with $\lambda(f_j,g_i)=\delta_{ij}$ for all $j=1,\dots,m$ (\cref{def:Ft}). The main result of the article is as follows.

\begin{theorem}\label{thm:main}
Let $F\colon (\Sigma,\partial \Sigma)\looparrowright (M, \partial M)$ be as in \cref{convention}. Suppose that $\mu (F)=0$ and that $F$ has algebraically dual spheres. If $F^\twist$ is not $b$-characteristic then $\km(F)=0$. If $F^\twist$ is $b$-characteristic then the secondary embedding obstruction satisfies \[\km(F) = \tw(F^{\twist},\W^{\twist})\in \Z/2\]
for every convenient collection of Whitney discs $\W^\twist$ pairing all the double points of $F^{\twist}$.
\end{theorem}

The main novelty in this theorem lies in finding the right condition on~$F$ that makes the combinatorial formula $\tw(F^{\twist},\W^{\twist})$ independent of the choice of Whitney discs, namely that $F^\twist$ is $b$-characteristic. Note that if $\pi_1(M)$ is good, then for $\km(F)=0$ in the previous theorem, \cref{thm:SET} gives an embedding regularly homotopic to $F$.
In practice, it can often be easy to determine if a given surface is $b$-characteristic, as demonstrated by the following corollaries, derived in \cref{sec:applications} as consequences of the more general  \cref{prop:application}.

\begin{corollary}\label{cor:simply-connected-pos-genus}
	If $M$ is a simply connected $4$-manifold and $\Sigma$ is a connected, oriented surface with positive genus, then any generic immersion $F\colon (\Sigma,\partial\Sigma)\imra (M,\partial M)$ with vanishing self-intersection number is not $b$-characteristic.
Thus if~$F$ has an algebraically dual sphere then $\km(F)=0$, and since $\pi_1(M)$ is good the map~$F$ is regularly homotopic, relative to~$\partial \Sigma$, to an embedding.
\end{corollary}

This corollary in particular implies that for every simply connected $4$-manifold $M$ with boundary a disjoint union of homology spheres every primitive class in $H_2(M;\Z)$ can be represented by an embedded torus. This recovers~\cite{lee-wilczynski-AJM}*{Theorem~1.1} in the case of divisibility $d=1$. We also have the following extension to the case of arbitrary $4$-manifolds.

\begin{corollary}\label{cor:stabilisation}
Let $F\colon (\Sigma,\partial \Sigma)\looparrowright (M, \partial M)$ be as in \cref{convention},  with $\mu(F)=0$ and $\Sigma$ connected.
If $F'$ is obtained from~$F$ by an ambient connected sum with an embedding $S^1 \times S^1 \hookrightarrow S^4$, then $F'$ is not $b$-characteristic.
Thus if $F$ has an algebraically dual sphere then $\km(F')=0$, and if $\pi_1(M)$ is good then~$F'$ is regularly homotopic, relative to~$\partial \Sigma$, to an embedding.
\end{corollary}

See \cref{prop:simply-connected-nonorientable,prop:stabilisation-crosscap} for the nonorientable analogues of these two results. In particular, the latter concerns the case where we replace the embedded torus in \cref{cor:stabilisation} by an embedded $\RP$.

 \subsection{Band characteristic maps}\label{sec:intro-bchar}

We briefly explain how the notion of a map being $b$-characteristic arises in the context of embedding general surfaces. Given $F\colon \Sigma\looparrowright M$ as in \cref{convention}, assume that its double points are paired by a convenient collection of Whitney discs~$\mathcal{W}$. Then the interiors of the discs in $\mathcal{W}$ could be tubed into spheres in $M$, potentially changing the count~$t$ from \cref{thm:main}. The condition that $F$ is \emph{$s$-characteristic}, short for \emph{spherically characteristic} (\cref{def:s-char}), precisely ensures that the count is preserved under this move.

\begin{figure}[htb]
	\centering
\begin{tikzpicture}
        \node[anchor=south west,inner sep=0] at (0,0){	\includegraphics{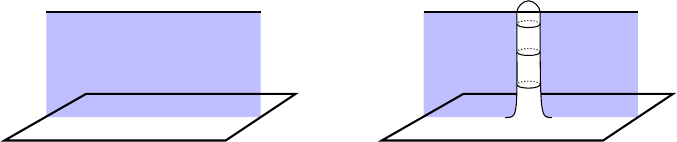}};
		\node at (-0.2,0) {$F$};
		\node at (6.1,0) {$F'$};
		\node at (2.5, 2.4) {$F$};
		\node at (8,2.4) {$F'$};
		\node at (2.5, 1.5) {$B$};
		\node at (8,1.5) {$W_B$};
	\end{tikzpicture}
\caption{Two portions of the immersion $F$ and part of a band $B$  are shown on the left. A finger move produces $F'$ with two new double points, paired by $W_B$.}
	\label{fig:band-gives-whitney-disc}
\end{figure}

Similarly, consider a band, i.e.\ an annulus or M\"{o}bius band, immersed in $M$ with boundary lying on $F(\Sigma)$ minus the double points, as in \cref{fig:band-gives-whitney-disc}.
Then as shown in the figure we may perform a finger move on $F$ along a fibre of the band, creating $F'$ with two new intersections, paired by a new Whitney disc $W_B$ arising from the band $B$ (see~\cref{fig:band-gives-whitney-disc}). We call this move the \emph{band fibre finger move} and give further details in \cref{construction:finger-move}. Adding $W_B$ to $\mathcal{W}$ might in principle change the count $t$, but the requirement that $F^\twist$ is $b$-characteristic maps ensures it does not. In the case that $\Sigma$ has only simply connected components, the boundary of the band is null-homotopic in $\Sigma$, and therefore the band can be closed off by discs to produce either a sphere (from an annulus) or an $\RP$ (from a M\"{o}bius band). Thus in this case it suffices to consider $r$-characteristic maps.

However, for general $\Sigma$ there may exist a band in $M$ with a boundary curve that is nontrivial in $\pi_1(\Sigma)$. This necessitates the new notion of $b$-characteristic maps, which by definition requires that a function $\Theta \colon \mathcal{B}(F) \to \Z/2$ vanishes (\cref{def:thetaA,def:thetaA-nonorientable}), where $\mathcal{B}(F)$ consists of the homology classes in $H_2(M,\Sigma;\Z/2)$ that can be represented by certain immersed bands in $M$ with boundary on $\Sigma$ (\cref{def:band}). These additional conditions on the bands have to do with the first Stiefel--Whitney classes of $M$ and $\Sigma$; when both are oriented, $\bands(F)$ consists precisely of the classes in $H_2(M,\Sigma;\Z/2)$ that are represented by maps of annuli and M\"obius bands.
 Roughly speaking, the vanishing of $\Theta$ means that every band with boundary on $\Sigma$ intersects $\Sigma$ evenly many times in its interior. Intersections among the boundary components of the bands and a relative Euler number also play a r\^{o}le: see \cref{sec:intro-km-section,sec:km-sec3} for details. If $\Theta \equiv 0$ then for every band $B$, adding $W_B$ does not change the $t$-count, and in fact $t$ is well defined if and only if $F$  is $b$-characteristic (\cref{lem:fiber-move}).    See \cref{ex:km-handle} for a map which is $r$-characteristic but not $b$-characteristic.

The first step for deciding whether $F$ is $b$-characteristic is to determine the subset $\mathcal{B}(F)$.
In general this could be difficult, but in practice it is often soluble.
 If this can be done, then, as shown in \cref{fig:flowchart}, by computing $\lambda_{\Sigma}|_{\partial \mathcal{B}(F)}$ and $\Theta \colon \mathcal{B}(F) \to \Z/2$, we can decide whether $F$ is $b$-characteristic. Both of these are functions on a finite group, so in principle these computations are manageable.

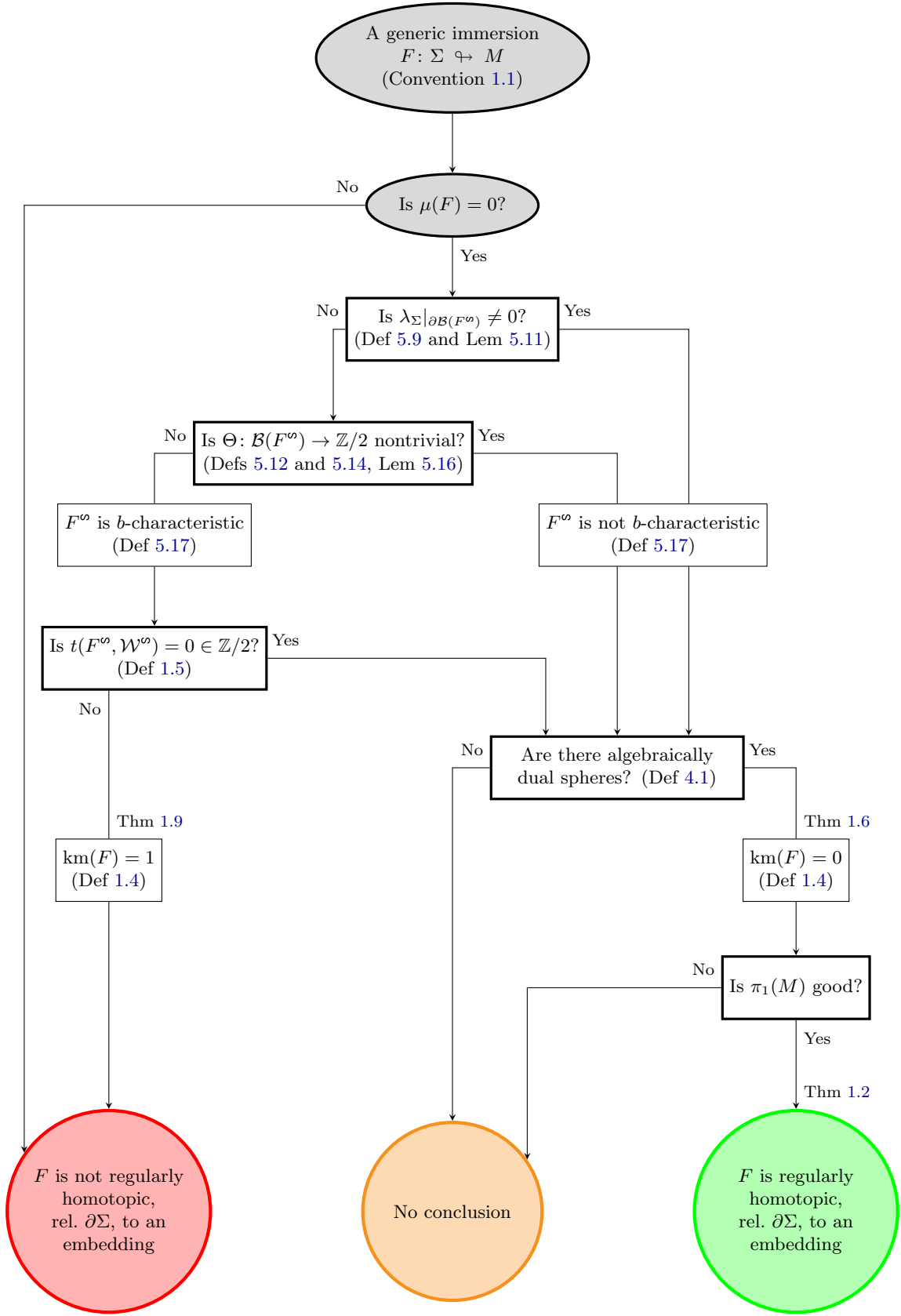
\begin{figure}[p!]
	\centering
	\begin{tikzpicture}[node distance=10mm and 5mm]
		\tikzstyle{every node}=[font=\small, align=center, minimum height=6mm, draw, text centered, thick]
		\tikzstyle{box} = [rectangle]
		\tikzstyle{startbox} = [ellipse, very thick, text centered, minimum height=3em, fill=gray!30, text width=30mm]
		\tikzstyle{prim} = [ellipse, very thick, text centered, minimum height =3em, fill=gray!30,]
		\tikzstyle{decision} = [rectangle, draw, very thick, text centered, minimum height=3em]
		\tikzstyle{duals} = [rectangle, draw, very thick, text centered, text width =4cm, minimum height=3em]
		\tikzstyle{tourist} = [rectangle, draw, thin, text centered, minimum height=3em]
		\tikzstyle{arrow-text}=[draw=none, font=\footnotesize]
		\tikzstyle{end} = [circle, ultra thick, text centered, text width=2.75cm]
		\tikzset{>=stealth}
		
		\node at (0,19.5)(start)  [startbox] {A generic immersion $F\colon \Sigma\looparrowright M$ (\cref{convention})} ;
		\node (prim) [prim, below=1cm of start] {Is $\mu (F)=0$?};
		\node (intform) [decision,below=of prim] {Is $\lambda_\Sigma\vert_{\partial \bands(F^\twist)}\neq 0$?\\
			(Def~\ref{def:band} and Lem~\ref{lem:dependence_on_A})} ;
		\node (theta) [decision,below=of intform, xshift=-2cm] {Is $\Theta\colon \bands(F^\twist)\to \Z/2$ nontrivial?\\ (Defs~\ref{def:thetaA} and \ref{def:thetaA-nonorientable}, Lem~\ref{lem:well-defined})};
		\node (bchar) [tourist, below=of theta, xshift=-3cm, yshift=7mm]{$F^\twist$ is $b$-characteristic\\ (Def~\ref{def:b-char})};
		\node (t) [decision,below=of bchar] {Is $t(F^\twist, \mathcal{W}^\twist) = 0\in \Z/2$?\\(Def~\ref{def:t})};
		
		\node at (0,0) (mystery) [end, draw=BurntOrange, fill=BurntOrange!30] {No conclusion} ;
		\node (canembed) [end,right=2.5cm of mystery, draw=green, fill=green!30] {$F$ is regularly homotopic, rel.\ $\partial \Sigma$, to an embedding} ;
		\node (cannotembed) [end,left=2.5cm of mystery, draw=red, fill=red!30] {$F$ is not regularly homotopic, rel.\ $\partial \Sigma$, to an embedding} ;
		\node (km1) [tourist,above=3.5cm of cannotembed] {$\km(F)=1$\\ (Def~\ref{def:km})} ;
		\node (km0) [tourist,above=3.5cm of canembed] {$\km(F)=0$\\ (Def~\ref{def:km})} ;
		
		\node at ($(mystery.east-|km0)-(3cm,-7.5cm)$) (duals) [duals] {Are there algebraically dual spheres? (Def~\ref{defn:alg-dual-spheres})} ;
		
		\node at (bchar-|duals) (notbchar) [tourist, xshift=6mm]{$F^\twist$ is not $b$-characteristic\\ (Def~\ref{def:b-char})};
		
		\node (good) [decision, above=1.5cm of canembed]{Is $\pi_1(M)$ good?};
		
		\draw[->] (start) -- (prim);
		
		\draw[->] (prim) -- node[arrow-text, pos=0, yshift=-3mm, anchor=west]{Yes} (intform);
		\draw[->] (prim) -| node[arrow-text, anchor=south, pos=0, xshift=-3mm]{No}(cannotembed.145);
		
		\draw[->] (intform.west) -| node[arrow-text, anchor=south, pos=0, xshift=-3mm]{No}(theta);
		\draw[-] (intform.east) -| node[arrow-text, anchor=south, pos=0, xshift=3mm]{Yes}([xshift=1.2cm] notbchar.north-|duals.north);
		
		\draw[-] (theta.west) -| node[arrow-text, anchor=south, pos=0, xshift=-3mm]{No} (bchar);
		\draw[-] (theta.east) -| node[arrow-text, anchor=south, pos=0, xshift=3mm]{Yes}(notbchar.north-|duals.north);
		
		\draw[->] (bchar) -- (t);
		
		\draw[-] (t.south-|km1) -| node[arrow-text, anchor=east, pos=0, yshift=-3mm]{No} node[arrow-text, anchor=west, pos=1, yshift=3mm]{Thm~\ref{thm:embedding-obstruction}}(km1);
		\draw[->] (t.east) -| node[arrow-text, anchor=south, pos=0, xshift=3mm]{Yes}([xshift=-1.2cm] duals.north);
		
		\draw[->] (km1) -- (cannotembed);
		
		\draw[->] (notbchar.south-|duals.north) -- (duals);
		\draw[->] ([xshift=1.2cm] notbchar.south-|duals.north) -- ([xshift=1.2cm] duals.north);
		
		\draw[-] (duals) -| node[arrow-text, anchor=south, pos=0, xshift=3mm]{Yes}node[arrow-text, anchor=west, pos=1, yshift=3mm,]{Thm~\ref{thm:main}}(km0);
		\draw[->] (duals.west) -| node[arrow-text, anchor=south, pos=0, xshift=-3mm,]{No}(mystery.90);
		
		\draw[->] (km0)-- (good);
		
		\draw[->] (good) -- node[arrow-text, anchor=west, pos=0, yshift=-3mm]{Yes} node[arrow-text, anchor=west, pos=1, yshift=3mm]{Thm~\ref{thm:SET}} (canembed);
		\draw[->] (good.west)-|node[arrow-text, anchor=south, pos=0, xshift=-3mm]{No} (mystery.35);
	\end{tikzpicture}
	\caption{A  flowchart deciding whether a generic immersion $F$ is regularly homotopic, relative to the boundary, to an embedding.}\label{fig:flowchart}
\end{figure}

\subsection{An embedding obstruction without dual spheres}

Irrespective of whether $F$ has algebraically dual spheres, we obtain a secondary embedding obstruction in the $b$-characteristic case.

\begin{theorem}\label{thm:embedding-obstruction}
Let $F\colon (\Sigma,\partial \Sigma)\looparrowright (M, \partial M)$ be as in \cref{convention} with $\mu (F)=0$. Let $\W$ be a convenient collection of Whitney discs for the double points of $F$. Then
$F$ is $b$-characteristic if and only if for every $F'$ regularly homotopic to $F$ and convenient collection $\W'$ for the double points of $F'$, we have $\tw(F,\W)=\tw(F',\W')$.

For $b$-characteristic $F$, we denote the resulting regular homotopy invariant by $\tw(F) \in \Z/2$. Then if $\km(F)=0$, for instance if $F$ is an embedding, then $\tw(F) =0$.
\end{theorem}
Note that, in particular, if $F$ is $b$-charactertistic and a map $H$ is regularly homotopic to $F$, then $H$ is $b$-characteristic (\cref{lem:bchar-reg-htpy}). If $F$ is not $b$-characteristic, it is still possible that some restriction $F'$ of $F$ to a union of connected components $\Sigma'\subseteq\Sigma$ is $b$-characteristic. Then we obtain an obstruction for embedding $F'$ and, as a consequence, for embedding $F$. A frequent example of this phenomenon is $F' = F^\twist$ from \cref{thm:main}.
Note that by \cref{lem:F=F-twist}, if $F$ is $b$-characteristic then $F=F^\twist$.

As part of our analysis of the obstruction $t$, in \cref{sec:applications} we shall prove the following additivity properties.

\begin{proposition}
	\label{prop:connectedsum-intro}
Let $M_1$ and $M_2$ be oriented $4$-manifolds. Let $F_1\colon (\Sigma_1, \partial \Sigma_1)\looparrowright (M_1, \partial M_1)$ and $F_2\colon (\Sigma_2, \partial \Sigma_2)\looparrowright (M_2, \partial M_2)$ be generic immersions of connected, compact, oriented surfaces, each with vanishing self-intersection number. If $F_i$ is $b$-characteristic for each $i$ then both the disjoint union
\[F_1\sqcup F_2\colon \Sigma_1 \sqcup \Sigma_2\looparrowright M_1\# M_2\]
and any interior connected sum
\[F_1\# F_2\colon \Sigma_1 \# \Sigma_2 \looparrowright M_1\# M_2\]
 are $b$-characteristic, and satisfy
 \[\tw(F_1 \sqcup F_2)=\tw(F_1\#F_2)=\tw(F_1)+\tw(F_2).\]
\end{proposition}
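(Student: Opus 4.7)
The strategy is a general-position argument based at the separating $3$-sphere $S$ of the connected sum $M_1 \# M_2 = (M_1 \sm \mathring{D}^4) \cup_S (M_2 \sm \mathring{D}^4)$. First I would isotope $F_i$ inside $M_i$ so that all double points, together with a chosen convenient Whitney collection $\W_i$ realising $\tw(F_i) = \tw(F_i, \W_i)$, lie in $M_i \sm \mathring{D}^4$. In the interior connected-sum case, pick the connecting tube of $\Sigma_1 \# \Sigma_2$ to meet $S$ transversely in a single embedded circle and to be disjoint from $\W_1 \cup \W_2$.

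\textbf{Additivity of the count.} Since the connect-sum construction introduces no new double points and preserves framings, $\W_1 \cup \W_2$ is a convenient Whitney collection pairing all intersections of both $F_1 \sqcup F_2$ and $F_1 \# F_2$ in $M_1 \# M_2$. The transverse interior intersections with the surface are likewise unchanged, so
\[
\tw(F_1 \sqcup F_2,\, \W_1 \cup \W_2) \;=\; \tw(F_1 \# F_2,\, \W_1 \cup \W_2) \;=\; \tw(F_1) + \tw(F_2).
\]
Conditional on the $b$-characteristic property proved below, \cref{thm:main} upgrades these equalities to the asserted statements about $\tw(F_1 \sqcup F_2)$ and $\tw(F_1 \# F_2)$.

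\textbf{The $b$-characteristic property.} The core task is to show $\Theta \equiv 0$ on both $\bands(F_1 \sqcup F_2)$ and $\bands(F_1 \# F_2)$. Let $B$ be an immersed band in $M_1 \# M_2$ with boundary on the target surface, placed in general position with $S$; in the $\#$ case one first isotopes $\partial B$ off the connecting tube, which is possible because the tube is an embedded cylinder in $\Sigma_1 \# \Sigma_2$. A Mayer--Vietoris argument for the pair $(M_1 \# M_2, \Sigma)$ with cover $\{M_1 \sm \mathring{D}^4,\, M_2 \sm \mathring{D}^4\}$, using that $H_2(S) = H_1(S) = 0$, yields a decomposition
\[
[B] \;=\; [B_1] + [B_2] \;\in\; H_2(M_1 \# M_2,\, \Sigma;\, \Z/2),
\]
realised geometrically by innermost-disc surgery on the closed $1$-manifold $B \cap S \subset S$, where $B_i \subset M_i \sm \mathring{D}^4$ is a band with boundary on $\Sigma_i$. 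One must further check that each $B_i$ represents a class in $\bands(F_i)$, which amounts to matching the boundary framings across $S$.

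\textbf{Main obstacle.} The delicate step is verifying that every ingredient of $\Theta$ from \cref{def:thetaA,def:thetaA-nonorientable}---the mod $2$ intersection count of $B$ with $F^{\twist}$, the boundary term from $\lambda_\Sigma|_{\partial \bands}$, and the relative Euler number---is additive under the decomposition $[B] = [B_1] + [B_2]$. The interior intersection count and the boundary term localise to one side of $S$ by the disjointness arrangement above; the Euler-number contribution requires an extra argument, comparing the framings of $B_1$ and $B_2$ across the surgery discs in $S$ and checking that the correction terms cancel mod $2$. Granting this additivity, the hypothesis that $F_1$ and $F_2$ are $b$-characteristic gives $\Theta(B) = \Theta(B_1) + \Theta(B_2) = 0 + 0 = 0$, which simultaneously establishes the $b$-characteristic property for $F_1 \sqcup F_2$ and $F_1 \# F_2$ and, combined with the count computation, finishes the proof.
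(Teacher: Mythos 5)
Your overall plan — localise the Whitney discs to either side of the connect-sum sphere $S$, observe the additivity of the $t$-count, and then prove $b$-characteristicity by cutting a band $B$ along $S$ — matches the paper's strategy. But the proposal as written has several genuine gaps that the paper's proof closes.

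First, in the connect-sum case you claim one may ``first isotope $\partial B$ off the connecting tube, which is possible because the tube is an embedded cylinder.'' This is not justified: the boundary circles of a band may geometrically intersect the core circle of the tube in a way that cannot be removed by isotopy (removing intersection points requires bigon moves, and the relevant bigons need not exist). The paper instead does \emph{not} isotope $\partial B$ off the tube; it allows $B\cap S$ to contain arcs with endpoints on $(F_1\# F_2)(\Sigma_1\#\Sigma_2)\cap S$, closes those arcs to circles using arcs in the surface inside $S$, and only then surgers along the resulting discs.

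Second, after surgery along discs in $S$ the pieces $B_i$ need not be bands: by the Euler-characteristic count they could be spheres, $\RP^2$'s, or discs as well as annuli and M\"obius bands. You implicitly assume each $B_i$ is already a band. The paper fixes this by tubing any sphere/disc/$\RP^2$ component into a small disc on $\Sigma_i$, turning it into a legitimate element of $\bands(F_i)$.

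Third, and most substantively, you correctly identify the ``main obstacle'' — additivity of $\Theta$ under the decomposition, especially the Euler-number term — but you leave it unresolved, sketching a hand-to-hand framing comparison across the surgery discs. The paper never compares framings by hand: it invokes \cref{lem:well-defined} (which shows $\Theta_A$ depends only on the class in $H_2(M,\Sigma;\Z/2)$, so the particular representative after surgery is irrelevant) and \cref{lem:quadratic} (quadraticity of $\Theta_A$, which becomes linearity once $\lambda_\Sigma$ vanishes on the boundaries). Together these give $\Theta(B)=\Theta(B_1)+\Theta(B_2)$ directly from $[B]=[B_1\cup B_2]$, and each summand vanishes because $F_i$ is $b$-characteristic and $\Theta(B_i)$ does not depend on whether the ambient manifold is $M_i$ or $M_1\# M_2$. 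You should also record the preliminary step — that $\lambda_{\Sigma_1\sqcup\Sigma_2}$ (resp.\ $\lambda_{\Sigma_1\#\Sigma_2}$) restricted to $\partial\bands$ is trivial, which follows from the corresponding fact for each $F_i$ — since this is needed before $\Theta$ is even well-defined.
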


\cref{thm:embedding-obstruction} and \cref{prop:connectedsum-intro} imply the following corollary.

\begin{corollary}\label{cor:arbitrary-genus-intro}
	For any $g$, there exists a smooth, closed $4$-manifold $M_g$, a closed, connected, oriented surface $\Sigma_g$ of genus $g$, and a smooth, $b$-characteristic, generic immersion $F \colon \Sigma_g \imra M_g$ with $\tw(F)\neq 0$, and therefore $\km(F)\neq 0$.
\end{corollary}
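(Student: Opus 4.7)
The plan is to prove the corollary by a base case in genus $0$ together with a stabilisation argument, exploiting the additivity statement of Proposition~\ref{prop:connectedsum-intro}.

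\emph{Base case ($g=0$).} I would first produce a smooth closed oriented $4$-manifold $M_0$ and a smooth generic immersion $F_0 \colon S^2 \imra M_0$ with $\mu(F_0)=0$, admitting an algebraically dual sphere but no \emph{framed} algebraically dual sphere (so $F_0 = F_0^{\twist}$), that is $r$-characteristic (hence $b$-characteristic, since the two notions coincide in the simply-connected domain case) and satisfies $\tw(F_0) = 1$. This is a Kervaire--Milnor type example: take $M_0$ so that a preferred characteristic homology class is represented by an immersed sphere with vanishing self-intersection number whose $\km$-obstruction is detectably nonzero, for instance via the Rochlin-type congruence $\sigma(M_0) \equiv [F_0]^2 \pmod{8}$ failing to lift to a congruence modulo $16$. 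The parity $\tw(F_0) = 1$ then follows from the combinatorial computation underlying \cref{theorem:Stong}.

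\emph{Raising the genus.} For each $g \geq 1$, I would construct an auxiliary smooth closed oriented $4$-manifold $N_g$ together with a smoothly embedded, oriented, $b$-characteristic, closed surface $E_g$ of genus $g$ with $E_g = E_g^{\twist}$, admitting an algebraically dual sphere. Since $E_g$ is embedded, no Whitney discs are needed and $\tw(E_g) = 0$ automatically. By \cref{prop:simply-connected-pos-genus} the ambient $N_g$ must have nontrivial fundamental group; a natural candidate is a nontrivial $S^2$-bundle over $\Sigma_g$ (so that the zero section $E_g$ has only an unframed algebraic dual) or a suitable further stabilisation thereof. Verifying $b$-characteristicity then reduces to checking that $\Theta \colon \bands(E_g) \to \Z/2$ vanishes, which can be done by analysing bands in the product-like geometry and exploiting the nontriviality of $\pi_1(\Sigma_g)$ to constrain which relative homology classes are representable.

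Setting $M_g := M_0 \# N_g$ and $F := F_0 \# E_g \colon \Sigma_g \imra M_g$, \cref{prop:connectedsum-intro} yields that $F$ is $b$-characteristic with
\[
\tw(F) = \tw(F_0) + \tw(E_g) = 1 + 0 = 1 \neq 0.
\]
The algebraically dual spheres for $F_0$ and $E_g$ combine to an algebraically dual sphere for $F$, neither of which is framed, so $F = F^{\twist}$. Then \cref{thm:main}, or equivalently \cref{thm:embedding-obstruction}, identifies $\km(F) = \tw(F) = 1 \neq 0$, completing the construction. The principal obstacle will be the construction and verification of the auxiliary pair $(N_g, E_g)$ in positive genus: the $b$-characteristic condition forces $\pi_1(N_g) \neq 1$, while the absence of a framed algebraic dual constrains the normal bundle of $E_g$, and balancing these two requirements while confirming $\Theta \equiv 0$ on $\bands(E_g)$ is the technical heart of the argument.
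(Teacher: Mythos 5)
Your overall architecture mirrors the paper's: start from a genus-zero Kervaire--Milnor sphere with $\tw = 1$, then raise the genus via connected sum and invoke the additivity in \cref{prop:connectedsum-intro}. The base case matches the paper's choice of $S\colon S^2\to\CP^2$ representing $3[\CP^1]$. However, your genus-raising step has a genuine gap: the proposed auxiliary pair $(N_g, E_g)$ does not exist as described.

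Concretely, you suggest taking $N_g$ to be a (nontrivial) $S^2$-bundle over $\Sigma_g$ with $E_g$ its zero section, in order to have an algebraic dual which is ``only unframed''. But any fibre sphere of an $S^2$-bundle has trivial normal bundle (its normal bundle is the pullback of $T_x\Sigma_g$, a bundle over a point, hence trivial), and it is geometrically dual to the zero section. So $E_g$ always has a \emph{framed} dual sphere. This immediately forces $E_g \cdot (\text{fibre}) \equiv 1 \not\equiv 0 \equiv (\text{fibre})\cdot(\text{fibre}) \pmod 2$, i.e.\ $E_g$ is not $s$-characteristic, hence not $b$-characteristic (\cref{lem:r-b}), and $E_g^{\twist} = \emptyset$. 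Every requirement you impose on $E_g$ fails simultaneously. More generally, you have over-constrained the problem by demanding that $E_g$ admit an algebraically dual sphere in the first place: \cref{prop:connectedsum-intro} needs no dual-sphere hypothesis, and the condition $E_g = E_g^{\twist}$ is automatic from $b$-characteristicity alone (via the chain $b$-char $\Rightarrow$ $s$-char $\Rightarrow$ no framed dual $\Rightarrow$ $E_g^\twist = E_g$), regardless of whether any dual exists.

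The paper resolves exactly this tension by choosing an auxiliary manifold with \emph{no} $\pi_2$ at all: $T := K\times\Id \colon S^1\times S^1 \hookrightarrow S^3\times S^1$ for a knot $K$. Since $\pi_2(S^3\times S^1)=0$, there is no dual sphere and $T = T^{\twist}$ trivially; one then checks $b$-characteristicity directly as in \cref{ex:nontrivialtorus}, using $H_2(S^3\times S^1, S^1\times S^1;\Z/2)\cong\Z/2$ generated by a Seifert surface. The genus is raised by iterating $S \#^g T$ inside $\CP^2 \#^g (S^3\times S^1)$, rather than by a single connected sum with a genus-$g$ surface. If you wish to preserve your one-shot structure, you would need to replace the $S^2$-bundle idea by an ambient manifold where $E_g$ has no framed dual; you could for instance take $N_g = \#^g(S^3\times S^1)$ and $E_g$ an appropriate internal connected sum of the tori $K_i \times \Id$, but at that point you have essentially reproduced the paper's iteration.
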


By contrast, we show in \cref{ex:pi1Z} that every map of a closed surface to $S^1\times S^3$ is homotopic to an embedding. One could ask whether there exists a $4$-manifold $M$ and immersions $\Sigma_g\looparrowright M$ with nontrivial Kervaire--Milnor invariant, for every $g$. However, as a partial negative answer we will show in~\cref{prop:so-big} that a $b$-characteristic generic immersion $F\colon \Sigma\looparrowright M$ from a closed surface $\Sigma$ to a compact $4$-manifold $M$ with abelian fundamental group with $n$ generators must satisfy $\chi(\Sigma)\geq -2n$.

\subsection{Homotopy versus regular homotopy}\label{subsection:homotopy-vs-reg-homotopy}
\cref{thm:SET,thm:embedding-obstruction,thm:main} together give a framework for deciding whether or not an immersed surface is regularly homotopic to an embedding, as illustrated by the flowchart in \cref{fig:flowchart}. However, in the first sentence of the article, we began by considering whether a given continuous map is homotopic to an embedding. We explain now how to extend the framework of \cref{fig:flowchart} to decide this, for maps of surfaces that admit algebraically dual spheres.

For a map $f$ from a connected surface to a $4$-manifold, we will show in \cref{theorem:generic-immersions-bijection} that there are either infinitely many or precisely two regular homotopy classes in the homotopy class of $f$, according to whether $f^*(w_1(M))$ is trivial or nontrivial respectively.
Our strategy is to make a judicious choice of regular homotopy class to which we apply our previous theory.

Begin with a continuous map $F\colon \Sigma\to M$ that restricts to an embedding on $\partial\Sigma$ and satisfies $F^{-1}(\partial M) = \partial \Sigma$, where $\Sigma, M$ are as in \cref{convention}. Denote the components of $F$ by $f_i\colon (\Sigma_i,\partial\Sigma_i)\to (M, \partial M)$, and suppose that $F$ has algebraically dual spheres.  Note that homotopies preserve the intersection numbers $\lambda(f_i,f_j)$, but might not preserve the self-intersection number $\mu(f_i)$, since adding a local cusp in $f_i$ changes $\mu(f_i)_1$, the coefficient of $1 \in \pi_1(M)$,  by $\pm 1$. Depending on the behaviour of the orientation characters of $M$ and $\Sigma$, the coefficient $\mu(f_i)_1$ lies in either $\Z$ or $\Z/2$, and is preserved under regular homotopy (see \cref{lem:mu1,prop:regular-homotopy-inv-mu}).

Now, in order to decide whether $F$ is homotopic to an embedding, we will either find a generic immersion in the homotopy class of $F$ which is regularly homotopic to an embedding, or we will show that this is impossible.
First, by performing a homotopy we may assume without loss of generality that $F$ is a generic immersion such that $\mu(f_i)_1=0$ for every component $f_i$ of~$F$.
If $\mu(F)\neq 0$, then $F$ is not homotopic to an embedding.
On the other hand if $\mu(F)=0$, we have the two following cases.
Below, $(f_i)_{\bullet}$ is the map induced on fundamental groups by $f_i$ using some choice of path connecting $f_i(\Sigma_i)$ to the basepoint of~$M$.

\begin{case}
$w_1(\Sigma)|_{\ker (f_i)_{\bullet}}$  is trivial for every $f_i\in F^\twist$.
\end{case}

\noindent By \cref{theorem:generic-immersions-bijection} the regular homotopy class of $F^\twist$ is uniquely determined by the condition that $\mu(f_i)_1=0$ for each $i$ with $f_i \in F^{\twist}$.
Run the analysis in \cref{fig:flowchart} on $F$ to determine whether it is regularly homotopic to an embedding. Note that the outcome of this analysis only depends on the regular homotopy class of $F^\twist$, rather than all of $F$. In particular, if $\pi_1(M)$ is good then $F$ is homotopic to an embedding if and only if $F$ is regularly homotopic to an embedding.
\begin{case}
There exists $f_i\in F^\twist$ with $w_1(\Sigma)|_{\ker (f_i)_{\bullet}}$ nontrivial.
\end{case}

\noindent In this case we use the following theorem, which we prove in \cref{sec:homotopy-reg-homotopy}.

\begin{theorem}
	\label{theorem:changing-t-intro}
Let $F=\{f_i\}_{i=1}^m \colon (\Sigma,\partial\Sigma)\looparrowright (M,\partial M)$ be as in \cref{convention} with $\mu (F)=0$.  Suppose that there is at least one $f_i \in F^{\twist}$ with  $w_1(\Sigma)|_{\ker (f_i)_\bullet}$ nontrivial.  Then there exists a generic immersion $F'$ homotopic to $F$ with $\mu(F')=0$, and a convenient collection of Whitney discs $\W'$ such that $t((F')^{\twist},(\W')^{\twist})=0$.
Thus if $F'$ has algebraically dual spheres then $\km(F')=0$, and if moreover $\pi_1(M)$ is good then $F'$ is regularly homotopic, relative to~$\partial \Sigma$, to an embedding.
\end{theorem}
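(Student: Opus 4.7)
The plan is to produce the required $F'$ and $W'$ explicitly by applying a band-fibre finger move along a M\"obius band built from the orientation-reversing, null-homotopic loop supplied by the hypothesis.

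First I would fix any convenient collection $W$ of Whitney discs pairing all the intersections and self-intersections of $F$, which exists since $\lambda(f_i,f_j)=0$ and $\mu(f_i)=0$, and set $n := \tw(F^\twist,W^\twist)\in\Z/2$. If $n=0$, then $(F,W)$ already works and we are done. Otherwise, the hypothesis supplies some $f_i\in F^\twist$ and a class in $\pi_1(\Sigma_i)$ with $w_1(\Sigma)\neq 0$ mapping trivially to $\pi_1(M)$; represent it by an embedded loop $\gamma\subset\Sigma_i$ which, after general position, is disjoint from the double points of $F$ and from the boundaries of the Whitney discs in $W$. Because $\gamma$ is orientation-reversing in $\Sigma$, a tubular neighbourhood of $\gamma$ in $\Sigma_i$ is a M\"obius band; pushing the restriction of $f_i$ to this neighbourhood off of $F$ in a normal direction yields a band $B$ in $M$ with $\partial B$ on $F^\twist$. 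The null-homotopy of $f_i(\gamma)$ in $M$ certifies that the class of $B$ lies in $\bands(F^\twist) \subseteq H_2(M,\Sigma^\twist;\Z/2)$, and I claim that $\Theta(B)=1\in\Z/2$.

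Given such a $B$, the band-fibre finger move of \cref{construction:finger-move} produces a generic immersion $F'$ regularly homotopic (and therefore homotopic) to $F$, together with a new Whitney disc $W_B$ pairing the two new double points created by the move. Setting $W' := W\cup\{W_B\}$ with the other Whitney discs unchanged, the effect on the counts is
\[
\tw((F')^\twist,(W')^\twist) \;=\; \tw(F^\twist,W^\twist) + \Theta(B) \;=\; 1+1 \;=\; 0 \in \Z/2,
\]
and the two new intersections cancel in pairs so that $\mu(f'_j)=0$ for all $j$. The final sentence of the theorem then follows by combining this conclusion with \cref{thm:main} and \cref{thm:SET}: once $F'$ has algebraically dual spheres, the vanishing of $\tw((F')^\twist,(W')^\twist)$ forces $\km(F')=0$, and the surface embedding theorem supplies the regular homotopy to an embedding whenever $\pi_1(M)$ is good.

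The main obstacle I anticipate lies in verifying that the specific band $B$ built from $\gamma$ satisfies $\Theta(B)=1$. By \cref{def:thetaA,def:thetaA-nonorientable}, $\Theta$ combines a count of transverse intersections of $B$ with $F^\twist$ with a contribution from the boundary intersection form on $\partial\bands(F^\twist)$ and a relative Euler-number/framing term; the key point is that the M\"obius twisting of the tubular neighbourhood of $\gamma$ in $\Sigma$, detected by $w_1(\Sigma)(\gamma)\neq 0$, forces exactly one of these contributions to be odd after the normal pushoff. A secondary technicality is keeping the resulting $W_B$ convenient in the sense of \cref{def:convenient-Whitney} --- in particular, ensuring disjointly embedded boundaries and the correct framing --- but these properties are standard consequences of the finger-move construction together with general position.
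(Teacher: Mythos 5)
Your proposal has a genuine and fatal gap. The band-fibre finger move of \cref{construction:finger-move} is a \emph{regular} homotopy, and by \cref{thm:embedding-obstruction}\,(ii) the count $t$ is a regular homotopy invariant whenever $F^\twist$ is $b$-characteristic; equivalently, \cref{lem:fiber-move} says the move changes $t$ by exactly $\Theta_A(B)$, and $\Theta$ vanishes identically on $\bands(F^\twist)$ when $F^\twist$ is $b$-characteristic.  Your M\"obius band $B$ does lie in $\bands(F^\twist)$: its core maps to $f_i(\gamma)$, which is null-homotopic, so $\langle w_1(M),C\rangle=0$, while $\partial B$ is the boundary of a tubular neighbourhood of $\gamma$ and is homologous to $2[\gamma]$, so $\langle w_1(\Sigma),\partial B\rangle=0$, verifying condition~\eqref{item:defn-band-w1-condn}. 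Therefore $\Theta(B)=0$ in exactly the case that matters, and the claimed identity $\Theta(B)=1$ cannot hold.  (Conversely, if $F^\twist$ is not $b$-characteristic, $\km(F)=0$ already follows from \cref{thm:main} without any construction, so the only substantive case is the $b$-characteristic one, where your argument breaks down.)

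The paper's proof is a genuinely different move: it uses \emph{cusp homotopies}, which are not regular homotopies, precisely because a regular homotopy cannot change $t$ here. In \cref{const:change-t} one introduces four new double points of the same sign by local cusps inside a M\"obius band neighbourhood $N\subseteq\Sigma_i$ (not a band immersed in $M$), then chooses Whitney arcs for the new double points so that one arc runs around the core of $N$; this is possible because $f_i\vert_N$ is $\pi_1$-trivial and forces the opposite-sign condition via $w_1(\Sigma)(\gamma)\ne0$. The resulting new Whitney boundaries intersect an odd number of times (since $\lambda_N(\alpha,\alpha)=1$ for the core $\alpha$), and pushing those off contributes the required change of $t$ by one. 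The role of the hypothesis $w_1(\Sigma)\vert_{\ker(f_i)_\bullet}\ne0$ is thus not to supply a useful band in $M$, but to guarantee---through \cref{theorem:generic-immersions-bijection}---that one can leave the regular homotopy class while staying in the homotopy class with $\mu(f'_j)=0$, and to furnish the M\"obius band in the \emph{domain} $\Sigma$ that makes the new Whitney arcs work.
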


Using this theorem, we can immediately conclude that our $F$ as in Case 2 is homotopic to an embedding, as long as $\pi_1(M)$ is good. Notably, it is not relevant in this case whether $F^\twist$ is $b$-characteristic. This completes the analysis of whether a given continuous map of a surface into a $4$-manifold is homotopic to an embedding.

We now sketch the proof of \cref{theorem:changing-t-intro}. By the vanishing of $\mu (F)$, there is a convenient collection of Whitney discs $\W$ for $F$ and therefore for $F^\twist$. In case $t(F^\twist, \W^\twist)=0$ the proof is completed by setting $F'=F$. In case $t(F^\twist, \W^\twist)=1$, we use \cref{const:change-t} to find another generic immersion $F'$ homotopic to $F$.  Briefly, \cref{const:change-t} involves creating four new double points in the component $f_i$ with nontrivial $w_1(\Sigma)|_{\ker (f_i)_\bullet}$ using local cusps, and then cancelling them using a suitable choice of Whitney arcs and discs. For further details on the proof, see~\cref{sec:homotopy-reg-homotopy}.

\cref{theorem:changing-t-intro} also has the following immediate corollaries. These are the nonorientable analogues of  \cref{cor:simply-connected-pos-genus,cor:stabilisation}. They provide homotopies to embeddings rather than regular homotopies, and again it is not relevant whether $F^\twist$ is $b$-characteristic.

\begin{corollary}\label{prop:simply-connected-nonorientable}
	If $M$ is a simply connected $4$-manifold and $\Sigma$ is a connected, nonorientable surface, then a generic immersion $F\colon (\Sigma,\partial\Sigma)\imra (M,\partial M)$ with vanishing self-intersection number and an algebraically dual sphere is homotopic, relative to~$\partial \Sigma$, to an embedding.
\end{corollary}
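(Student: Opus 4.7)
The plan is to split on whether $F$ admits a framed algebraically dual sphere. In the first case $F^\twist$ is empty, so by definition the Kervaire--Milnor invariant $\km(F)\in\Z/2$ vanishes. Since $\Sigma$ is connected the intersection number hypothesis in \cref{thm:SET} is vacuous, while $\mu(F)=0$ holds by assumption. As $\pi_1(M)$ is trivial and hence good, \cref{thm:SET} produces an embedding regularly homotopic, and thus homotopic, to $F$ relative to $\partial\Sigma$.

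In the remaining case, $F^\twist = F$, and the plan is to invoke \cref{theorem:changing-t-intro}. The hypotheses $\lambda=0$ (vacuous) and $\mu(F)=0$ hold as before. Since $M$ is simply connected, the induced map $F_\bullet\colon \pi_1(\Sigma)\to \pi_1(M)$ is trivial, so $\ker F_\bullet = \pi_1(\Sigma)$; since $\Sigma$ is nonorientable, $w_1(\Sigma)|_{\ker F_\bullet}=w_1(\Sigma)$ is nontrivial, as required. Thus \cref{theorem:changing-t-intro} produces a generic immersion $F'$ homotopic to $F$ with $\mu(F')=0$ and Whitney discs $W'$ satisfying $\tw((F')^{\twist},(W')^{\twist})=0$.

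To complete the argument one observes that any algebraically dual sphere $g$ for $F$ remains algebraically dual to $F'$: indeed, algebraic duality depends only on the homology class of $F$ in $H_2(M)$ (equivalently, on its class in $\pi_2(M)$ paired equivariantly with $g$), which is preserved under homotopy. Consequently the second sentence of \cref{theorem:changing-t-intro} applies, yielding $\km(F')=0$ together with a regular homotopy from $F'$ to an embedding relative to $\partial\Sigma$, using that $\pi_1(M)$ is good. Concatenating this with the homotopy from $F$ to $F'$ gives the desired homotopy to an embedding. Notably, this argument sidesteps any determination of whether $F^\twist$ is $b$-characteristic, as emphasised in the discussion immediately preceding the corollary. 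There is no substantive obstacle: the only potentially delicate point is the persistence of the algebraically dual sphere under the homotopy, which is immediate from homotopy invariance of the equivariant intersection pairing.
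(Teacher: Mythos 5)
Your proof is correct and follows the same route the paper intends: split according to whether $F^\twist$ is empty or not, and in the nonempty case invoke \cref{theorem:changing-t-intro}. The only imprecision is the phrase ``by definition'' for the vanishing of $\km(F)$ when $F^\twist$ is empty: Definition~\ref{def:km} does not mention $F^\twist$, so this is not a direct consequence of the definition. Rather, it follows from \cref{lem:tau} (with $\tw(F^\twist,\W^\twist)=0$ holding vacuously for an empty sum) or equivalently from \cref{thm:main} (an empty $F^\twist$ is $b$-characteristic vacuously, with $t=0$), either of which requires the hypothesis of algebraically dual spheres that you do have. Your observation that the algebraically dual sphere persists under the homotopy $F\simeq F'$ — because the relevant hypothesis in the second sentence of \cref{theorem:changing-t-intro} is on $F'$, not $F$ — is the right point to flag, and it is indeed covered by \cref{prop:regular-homotopy-inv-lambda} together with the fact that $\Gamma_{f,g}$ depends on $f$ only through the homotopy-invariant data $f_\bullet$.
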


\begin{corollary}\label{prop:stabilisation-crosscap}
Let $F\colon (\Sigma,\partial \Sigma)\looparrowright (M, \partial M)$ be as in \cref{convention}, with $\Sigma$ connected and $\pi_1(M)$ good. Suppose that $F$ has vanishing self-intersection number and an algebraically dual sphere.  If $F'$ is obtained from $F$ by an ambient connected sum with any embedding $\RP \hookrightarrow S^4$, then $F'$ is  homotopic, relative to~$\partial \Sigma$, to an embedding.
\end{corollary}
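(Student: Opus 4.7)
The plan is to deduce the corollary directly from \cref{theorem:changing-t-intro}, using that the crosscap of the connect-summed $\RP$ contributes an orientation-reversing loop in $\Sigma' := \Sigma \# \RP$ that is null-homotopic in $M$. First I would check that the ambient connected sum preserves the relevant numerical data. Performed along an arc inside a small $4$-ball chosen to be disjoint (apart from a thin tube) from both $F(\Sigma)$ and a fixed representative $g\colon S^2 \to M$ of the algebraically dual sphere, the construction introduces no new double points and no new intersections with $g$. Hence $\mu(F')=\mu(F)=0$, and $g$ remains an algebraically dual sphere for $F'$.

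The key geometric observation is that the core circle $\gamma$ of the new crosscap satisfies $w_1(\Sigma')(\gamma) \neq 0$ but lies inside the $\RP \subset S^4$ summand, hence inside an embedded $4$-ball in $M$. Therefore $(f')_\bullet(\gamma) = 1 \in \pi_1(M)$, so $\gamma \in \ker(f')_\bullet$ witnesses that $w_1(\Sigma')|_{\ker(f')_\bullet}$ is nontrivial.

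To finish I would split into two cases. If $f'$ admits a framed algebraically dual sphere, then $(F')^{\twist} = \emptyset$, so $\km(F')=0$ by \cref{thm:main} (the $\tw$-count is a trivial empty sum), and since $\pi_1(M)$ is good, \cref{thm:SET} produces an embedding regularly homotopic to $F'$ rel $\partial \Sigma$. Otherwise $(F')^{\twist} = \{f'\}$, and the loop $\gamma$ verifies the orientation-reversal hypothesis of \cref{theorem:changing-t-intro}; that theorem (whose conclusion already uses goodness of $\pi_1(M)$) then yields a generic immersion homotopic to $F'$ rel $\partial \Sigma$ that is regularly homotopic to an embedding. In both cases $F'$ is homotopic rel $\partial \Sigma = \partial \Sigma'$ to an embedding, as required.

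The substantive work is all packaged inside \cref{theorem:changing-t-intro}, so I do not anticipate a genuinely hard step here. The only mild obstacle is the bookkeeping needed to set up the connected sum so that intersection data and dual sphere are preserved, and to identify the crosscap loop as the required null-homotopic, orientation-reversing element—both of which are routine.
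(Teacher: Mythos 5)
Your proposal is correct and follows the paper's own route: the paper derives this statement as an immediate corollary of \cref{theorem:changing-t-intro}, exactly via the observation that the crosscap core is an orientation-reversing loop in $\Sigma\#\RP$ lying in a ball of $M$, hence in $\ker(f')_\bullet$. Your extra case split (handling $(F')^\twist=\emptyset$ when a framed dual sphere exists, where $\km(F')=0$ trivially and \cref{thm:SET} applies directly) is a correct and appropriate piece of bookkeeping that the paper leaves implicit.
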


As in our analysis for the embedding problem up to regular homotopy, our techniques are primarily applicable in the presence of algebraically dual spheres and good fundamental group of the ambient space. It is however sometimes possible to conclude that a map without algebraically dual spheres is homotopic to an embedding. For example we show in \cref{ex:pi1Z} that every map of a closed surface to $S^1\times S^3$ is homotopic to an embedding.

\subsection{Applications to knot theory}\label{subsection:knots}

\cref{thm:SET} can be applied to the problem of finding embedded surfaces in general $4$-manifolds bounded by knots in their boundary. Given a closed $4$-manifold $M$, let $M^\circ$ denote the punctured manifold $M\sm \mathring{D}^4$. The \emph{$M$-genus} of a knot $K\subseteq S^3= \partial M^\circ$, denoted by $g_M(K)$,  is the minimal genus of an embedded orientable surface bounding $K$ in $M^\circ$. If $M$ is smooth, we also consider the \emph{smooth $M$-genus}, denoted by $g_M^\mathrm{Diff}(K)$, the minimal genus of a smoothly embedded orientable surface with boundary $K$. The quantities $g_{S^4}$ and $g^\mathrm{Diff}_{S^4}$ coincide with the topological and smooth slice genus of knots in $D^4$ respectively. Note that $g_{\ol{M}}(K) = g_M(\ol{K})$, so (2) and (3) below imply a corresponding results for $\ol{\CP}^2$ and $\ol{*\CP}^2$ respectively.

\begin{corollary}\label{cor:M-slicing}
For every knot $K\subseteq S^3$,
\begin{enumerate}
	\item $g_M(K)=0$ for every simply connected $4$-manifold $M$ not homeomorphic to one of $S^4$, $\CP^2$, or $*\CP^2$;
	\item $g_{\CP^2}(K)\leq 1$ and $g_{\CP^2}( \#^3 T(2,3))=1$; and
    \item $g_{*\CP^2}(K)\leq 1$ and $g_{*\CP^2}( \#^2 T(2,3))=1$.
\end{enumerate}
\end{corollary}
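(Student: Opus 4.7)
\medskip
\noindent\textit{Proof plan.}
Both parts are applications of \cref{thm:SET} and its positive-genus consequence \cref{prop:simply-connected-pos-genus} to generic immersions of surfaces bounded by $K$ in $M^\circ$, after arranging vanishing of $\km$ through careful use of the intersection form of $M$.

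For part (1) the plan is to produce an immersed disc $D\colon(D^2,S^1)\imra(M^\circ,\partial M^\circ)$ bounding $K$ with $\mu(D)=0$, together with a \emph{framed} algebraically dual sphere; any such configuration has $D^\twist=\varnothing$, so $\km(D)=0$ automatically, and since $\pi_1(M)=1$ is good, \cref{thm:SET} delivers the slice disc. The algebraic step is to find primitive classes $\alpha,\beta\in H_2(M;\Z)$ with $Q_M(\alpha,\beta)=1$ and $Q_M(\beta,\beta)$ even. The hypothesis that $M$ is homeomorphic to neither $S^4$ nor $\CP^2$ rules out exactly the unimodular forms of rank at most one, giving $b_2(M)\ge 2$. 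If $Q_M$ is even, any primitive $\beta$ with a primitive dual works. If $Q_M$ is odd, the mod-$2$ characteristic element $\bar c\in H_2(M;\Z/2)$ is nonzero, so its orthogonal complement has positive $\Z/2$-dimension; take $\beta$ to be a primitive lift of a nonzero element of this complement, and let $\alpha$ be a primitive dual provided by unimodularity. One then realises $[D]=\alpha$ by tubing an initial disc for $K$ with immersed spheres representing classes in $\pi_2(M^\circ)=H_2(M;\Z)$, corrects $\mu(D)$ to $0$ by local cusp homotopies, and represents $\beta$ by a generic immersed sphere $g$ with $e(g)=0$; the latter is possible via cusp moves, which preserve $2\mu(g)+e(g)=Q_M(\beta,\beta)$, because this quantity is even.

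For (2)(a), the upper bound $g_{\CP^2}(K)\le 1$ follows by the same method with a genus-$1$ surface: construct $F\colon(\Sigma,\partial\Sigma)\imra((\CP^2)^\circ,S^3)$ on a once-punctured torus $\Sigma$ with $F|_{\partial\Sigma}=K$, $[F]$ the generator of $H_2(\CP^2)$, and $\mu(F)=0$, for instance by taking an immersed disc of the right homology class, introducing a pair of cancelling double points by a finger move, and tubing them into a handle. The standard $\CP^1\subset(\CP^2)^\circ$ is then an algebraic (but not framed) dual sphere, and since $\Sigma$ has positive genus, \cref{prop:simply-connected-pos-genus} gives $\km(F)=0$ and \cref{thm:SET} produces the embedded once-punctured torus. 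For the lower bound $g_{\CP^2}(\#^3 T(2,3))\ge 1$, one invokes a classical topological signature obstruction: if $\#^3 T(2,3)$ bounded a locally flat disc in $(\CP^2)^\circ$ of class $n[\CP^1]$, a Tristram--Levine type inequality would force $\sigma(\#^3 T(2,3))=-6$ to agree with a quadratic expression in $n$ of absolute value at most one, which is impossible.

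I expect the main difficulty to lie in the algebraic step for (1), namely producing a primitive class of even self-intersection admitting a primitive dual; this is where the exclusion of $\CP^2$ becomes essential, since the rank-one forms $(\pm 1)$ support no such pair and hence no immersed disc in their punctured total spaces can carry a framed algebraic dual. A secondary but routine point is the explicit construction of the genus-$1$ immersion in (2)(a) with the prescribed homology class and vanishing self-intersection.
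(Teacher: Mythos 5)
Your arguments for part (1) and for the upper bound $g_{\CP^2}(K)\le 1$ are correct, and for part (1) you take a route that genuinely differs from the paper's. You arrange a \emph{framed} algebraic dual: choose primitive $\alpha,\beta$ with $Q_M(\alpha,\beta)=1$ and $Q_M(\beta,\beta)$ even, realise $[D]=\alpha$, and use cusps to kill the normal Euler number of a sphere in class $\beta$ via $\lambda(g,g)_1=2\mu(g)_1+e(\nu g)$. Then $D^{\twist}=\varnothing$ and $\km(D)=0$ by \cref{lem:tau} (or \cref{thm:main}), without ever invoking the characteristic calculus. The paper instead chooses the dual class $\alpha$ to have even square, concludes that $\Delta$ is not $s$-characteristic (hence not $b$-characteristic, by \cref{lem:r-b}), and applies \cref{lem:s-char-km-zero} or \cref{thm:main}. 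Both are valid; yours bypasses the characteristic machinery at the cost of a slightly longer algebraic step, and your odd/even split of $Q_M$ via the mod-$2$ characteristic element works, though the paper's observation that one of $\lambda(x,x)$, $\lambda(y,y)$, $\lambda(x+y,x+y)$ is even is shorter.

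The lower bound $g_{\CP^2}(\#^3 T(2,3))\ge 1$ is where your argument has a genuine gap. The Gilmer--Viro signature inequalities that the paper quotes apply only when the homology class $d\in\Z\cong H_2(\CP^2;\Z)$ of the putative surface is even or is divisible by an odd prime; for $d=\pm 1$ there is no branched or cyclic cover to run a $G$-signature argument, and no Tristram--Levine-type inequality constrains a disc in that class. Your sentence asserting that ``a Tristram--Levine type inequality would force $\sigma(\#^3 T(2,3))=-6$ to agree with a quadratic expression in $n$'' is simply false for $n=\pm 1$. The classes $d=\pm 1$ are precisely the $s$-characteristic (equivalently $b$-characteristic) ones, and ruling them out requires the secondary obstruction: the paper tubes $\Delta'\subseteq S^3\times[0,1]$ into $\CP^1$, identifies $t(\Delta')=\Arf(K)=1$ via Matsumoto and Freedman--Kirby, deduces $t(\Delta)=1$, and concludes from \cref{thm:embedding-obstruction} that $\Delta$ is not homotopic to an embedding. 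Getting $d=\pm 1$ right is the whole point of this part of the corollary --- it is the case the classical signature obstructions cannot see --- so your proof as written establishes $g^d_{\CP^2}(K)\ge 1$ only for $|d|\ge 2$ and leaves the essential case open.
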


See \cref{sec:applications} for the proof.
The smooth $\CP^2$-genus has been studied by~\cites{yasuhara91,yasuhara92,aitnouh,pichelmeyer,MMRS},
and differs dramatically from the topological result in \cref{cor:M-slicing}\,(2); in particular, it can be arbitrarily high~\cite{MMRS}.

\cref{cor:M-slicing}\,(1) is straightforward to prove if $M$ topologically splits as a connected sum with $S^2\times S^2$ or $S^2\wt\times S^2$, because  $g_{S^2\times S^2}(K)=g_{S^2\wt \times S^2}(K)=0$ for all $K$ by the Norman trick~\cite{normantrick}*{Corollary~3, Remark}. For the $K3$ surface, this implies that $g_{K3}(K)=0$ for all knots $K$. On the other hand, it is an open question whether there exists a $K$ with $g^\mathrm{Diff}_{K3}(K) \neq 0$~\cite{manolescu-marengon-piccirillo}*{Question~6.1}.

Given a knot $K\subseteq S^3$ and an integer $n\in \Z$, we build the \emph{$n$-trace} $X_n(K)$ by attaching a $2$-handle $D^4$ along $K$ with framing $n$. The minimal genus of an embedded surface representing a generator of $H_2(X_n(K);\Z)$ is called the \emph{$n$-shake genus} of $K$, denoted by $g^\mathrm{sh}_n(K)$. Similarly, the \emph{smooth $n$-shake genus} of $K$ is denoted by $g^\mathrm{sh, Diff}_n(K)$. We recover the following result of~\cite{FMNOPR}.

\begin{corollary}[\cite{FMNOPR}*{Proposition~8.7}]\label{cor:shake-genus}
For any knot $K\subseteq S^3$, $g^\mathrm{sh}_{\pm 1}(K)=\Arf(K)\in\{0,1\}$.
\end{corollary}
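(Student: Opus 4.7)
The plan is to build a generically immersed sphere $\Sigma_0\colon S^2\looparrowright X_{\pm 1}(K)$ representing the generator of $H_2(X_{\pm 1}(K);\Z)\cong\Z$ with $\tw(\Sigma_0)=\Arf(K)$, and then to extract both inequalities from the paper's main theorems. Starting from a generically immersed disc $\Delta'\colon D^2\looparrowright D^4$ with $\partial\Delta'=K$ and $\mu(\Delta')=0$, the identity $\tw(\Delta')=\Arf(K)\in\Z/2$ recalled in the proof of \cref{cor:M-slicing}\,(2) transfers, after gluing $\Delta'$ to the core $C$ of the $\pm 1$-framed 2-handle along $K\subseteq S^3=\partial D^4$, to a generic immersion $\Sigma_0$ with $\mu(\Sigma_0)=0$ and $\tw(\Sigma_0)=\Arf(K)$. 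Since $[\Sigma_0]\cdot[\Sigma_0]=\pm 1\neq 0$, the class $[\Sigma_0]$ admits no \emph{framed} algebraically dual sphere, so $\Sigma_0=\Sigma_0^{\twist}$, whereas $\pm[\Sigma_0]$ itself is an (unframed) algebraic dual. As $X_{\pm 1}(K)$ is simply connected with $H_2=\Z$ and intersection form $(\pm 1)$, every $n\in\Z$ satisfies $\lambda(\Sigma_0,n)\equiv n\equiv n^2\equiv\lambda(n,n)\pmod 2$, so $\Sigma_0$ is $s$-characteristic; and since $\Sigma_0$ is a sphere, every band in $X_{\pm 1}(K)$ with boundary on $\Sigma_0$ caps off to a sphere, so being $b$-characteristic reduces to being $s$-characteristic, which holds.

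For the upper bound $g^{\mathrm{sh}}_{\pm 1}(K)\leq\Arf(K)$: if $\Arf(K)=0$ then $\km(\Sigma_0)=\tw(\Sigma_0)=0$ by \cref{thm:main}, and since $\pi_1(X_{\pm 1}(K))=1$ is good, \cref{thm:SET} upgrades $\Sigma_0$ to an embedded sphere, giving $g^{\mathrm{sh}}_{\pm 1}(K)=0$. If instead $\Arf(K)=1$, I form $\Sigma_1:=\Sigma_0\# T^2$ by ambient connected sum with a standard $T^2\hookrightarrow S^4$; by \cref{prop:stabilisation}, $\Sigma_1$ is not $b$-characteristic, so \cref{thm:main} forces $\km(\Sigma_1)=0$, and since $[\Sigma_1]=[\Sigma_0]$ still has an algebraic dual, \cref{thm:SET} produces an embedded genus-one representative, giving $g^{\mathrm{sh}}_{\pm 1}(K)\leq 1=\Arf(K)$.

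For the lower bound $g^{\mathrm{sh}}_{\pm 1}(K)\geq\Arf(K)$, assume $\Arf(K)=1$ and, towards a contradiction, that an embedded sphere $\Sigma\subseteq X_{\pm 1}(K)$ represents the generator. Then $\mu(\Sigma)=0=\mu(\Sigma_0)$ and both spheres represent the same class in $\pi_2$; invoking \cref{theorem:generic-immersions-bijection} together with the fact that in an oriented 4-manifold the Wall self-intersection number classifies regular homotopy classes within a fixed homotopy class of generically immersed spheres, I conclude that $\Sigma$ and $\Sigma_0$ are regularly homotopic. Then \cref{thm:embedding-obstruction}(ii) yields $\tw(\Sigma)=\tw(\Sigma_0)=1$, but the embedded $\Sigma$ has no self-intersections and hence an empty convenient collection of Whitney discs, so $\tw(\Sigma)=0$, a contradiction.

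The hard step is the regular homotopy identification in the last paragraph; everything else is a direct application of the invariance and obstruction properties of $\tw$ from \cref{thm:embedding-obstruction} and of the embedding results \cref{thm:main,thm:SET,prop:stabilisation}. I expect this classification to follow from the explicit form of \cref{theorem:generic-immersions-bijection} once Wall's self-intersection number is pinned down, and if needed it can be reinforced by constructing a regular homotopy between $\Sigma$ and $\Sigma_0$ directly, using that both have vanishing self-intersection in a simply connected ambient space.
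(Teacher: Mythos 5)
Your proof is correct and takes essentially the same route as the paper's: represent the generator by $\Sigma_0 = \Delta' \cup C$ with $t(\Sigma_0) = \Arf(K)$, verify that $\Sigma_0$ is $b$-characteristic, and apply \cref{thm:SET,thm:main,thm:embedding-obstruction,prop:stabilisation}; your explicit lower-bound argument via the regular-homotopy classification in \cref{theorem:generic-immersions-bijection} is exactly what the paper packages implicitly in the assertion that $F$ is homotopic to an embedding if and only if $\Arf(K)=0$. One minor slip in the justification: ``$[\Sigma_0]\cdot[\Sigma_0]=\pm 1\neq 0$'' does not by itself rule out a framed algebraic dual (in $\CP^2\#\CP^2$ the class $(1,0)$ has square $1$ yet admits $(1,1)$ as a framed dual); the correct reason here is that $H_2(X_{\pm 1}(K);\Z)\cong\Z$ with odd form forces any algebraic dual to be $\pm[\Sigma_0]$, which has odd self-intersection and hence no framed representative --- equivalently, this is the $s$-characteristic computation you perform in the very next clause.
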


By contrast, the smooth $\pm1$-shake genus of a knot can be arbitrarily high. For example, for $q\geq 5$ we have that $g^\mathrm{sh, Diff}_{\pm 1}(T(2,q))\geq \tfrac{q+1}{2}$, by the slice-Bennequin inequality~\citelist{\cite{lisca-matic}\cite{cochran-ray}*{Corollary~5.2}}.

\subsection*{Outline of the paper}
In \cref{sec:basic} we describe the primary embedding obstructions arising from the theory of equivariant intersection numbers for surfaces in $4$-manifolds.  In \cref{section:KM-invariant} we define the Kervaire--Milnor invariant carefully.  We prove \cref{thm:SET} in \cref{sec:thm1.1}. In \cref{sec:intro-km-section} we explain our combinatorial method for computing the Kervaire--Milnor invariant, and define $b$-characteristic surfaces, postponing almost all proofs to \cref{sec:km}. In \cref{sec:homotopy-reg-homotopy} we give the proof of \cref{theorem:changing-t-intro}.  In \cref{sec:proof-of-main-theorem}, we prove \cref{thm:main,thm:embedding-obstruction}.
Finally, in \cref{sec:applications}, we prove \cref{cor:simply-connected-pos-genus,cor:stabilisation,prop:connectedsum-intro,cor:arbitrary-genus-intro,cor:M-slicing,cor:shake-genus}, and we give further applications and examples.

\subsection*{Acknowledgements}
We are grateful to Rob Schneiderman and the anonymous referee for several insightful comments on a previous version, and to Allison N.~Miller and Andrew Nicas for helpful conversations.
Much of this research was conducted at the Max Planck Institute for Mathematics.
DK was supported by the Deutsche Forschungsgemeinschaft (DFG, German Research Foundation) under Germany's Excellence Strategy - GZ 2047/1, Projekt-ID 390685813.
MP was partially supported by EPSRC New Investigator grant EP/T028335/2 and EPSRC New Horizons grant EP/V04821X/2.

\section{Generic immersions and intersection numbers}
\label{sec:basic}

In \cref{sec:generic-immersions} we carefully define and study generic immersions of surfaces in 4-manifolds in the topological category. We show they admit well-behaved normal bundles, and introduce generic homotopies and ambient isotopies between them.

In \cref{subsection:intersection-numbers,subsection:self-intersection-numbers} we study equivariant intersection and self-intersection numbers of generically immersed surfaces. In the case of immersions of spheres and discs, these have a long history, in particular in surgery theory (see e.g.~\cite{Wall-surgery-book}).
For the first time in the literature, as far as we are aware, we give a careful account of intersection and self-intersection numbers in full generality, for compact surfaces and for any possible combination of orientation characters.  The specific groups in which these numbers live depend on the input surfaces, and it is somewhat subtle to describe them.
A preliminary version for orientable surfaces was considered in e.g.~\cite{COT1}*{Section~7}, and the self-intersection number for annuli was considered by Schneiderman in~\cite{schneiderman}.

In \cref{subsection:Whitney-discs} we discuss Whitney discs, which arise if the intersection and self-intersection numbers vanish. We define the important notion of a convenient collection of Whitney discs.  In \cref{subsection:hom-vs-reg-hom}, \cref{theorem:generic-immersions-bijection} explains the difference between homotopy and regular homotopy of generic immersions of surfaces, in terms of the Euler number of the normal bundle or the self-intersection number.

\subsection{Topological generic immersions}\label{sec:generic-immersions}

We start with the definition of an immersion of manifolds in the topological setting.  For $m \geq 0$ let $\R^m_+ := \{(x_1,\dots,x_m) \in \R^m \mid x_1 \geq 0\}$.
For $k \leq n$ we consider the standard inclusions:
\begin{align*}
  \iota \colon \R^k_{\phantom{+}} &= \R^k_{\phantom{+}} \times \{0\} \hooklongrightarrow \R^k_{\phantom{+}} \times \R^{n-k} = \R^n, \\
  \iota_\mathsmaller{+} \colon \R^k_+ &= \R^k_+ \times \{0\} \hooklongrightarrow \R^k_{\phantom{+}} \times \R^{n-k} = \R^n, \text{ and }\\
  \iota_\mathsmaller{++} \colon \R^k_+ &= \R^k_+ \times \{0\} \hooklongrightarrow \R^k_+ \times \R^{n-k} = \R^n_+.
\end{align*}

\begin{definition}\label{defn:immersion}
A continuous map $F \colon \Sigma^k \to M^n$ between topological manifolds of dimensions $k\leq n$ is an \emph{immersion} if locally it is a flat embedding, that is if for each point $p\in \Sigma$ there is a chart $\varphi$ around $p$ and a chart $\Psi$ around $F(p)$ fitting into one of the following commutative diagrams. The first diagram is for $p \in \Int\Sigma$ and $F(p) \in \Int M$, the second diagram is for $p \in \partial \Sigma$ and $F(p) \in \Int M$, and the third is for $p \in \partial \Sigma$ and $F(p) \in \partial M$. In particular $F$ is required to map interior points of $\Sigma$ to interior points of~$M$.
\begin{equation}\label{diagram:above}
\begin{gathered}
\xymatrix{
\R^k \ar[r]^{\iota} \ar[d]^{\varphi} & \ar[d]^{\Psi}  \R^n   \\
\Sigma \ar[r]^F & M }
\hspace{2em}
\xymatrix{
\R^k_+ \ar[r]^{\iota_+} \ar[d]^{\varphi} & \ar[d]^{\Psi}  \R^n   \\
\Sigma \ar[r]^F & M }
\hspace{2em}
\xymatrix{
\R^k_+ \ar[r]^{\iota_{++}} \ar[d]^{\varphi} & \ar[d]^{\Psi}  \R^n_+   \\
\Sigma \ar[r]^F & M }
\end{gathered}
\end{equation}
Some authors prefer to call this notion a \emph{locally flat immersion}.
\end{definition}

\begin{definition}\label{def:normal-bundle}
A (linear) \emph{normal bundle} for an immersion $F\colon \Sigma^k \to M^n$ is an $(n-k)$-dimensional real vector bundle $\pi \colon \nu_F \to \Sigma$, together with an immersion $\wt{F} \colon \nu_F \to M$ that restricts to $F$ on the zero section $s_0$, i.e.\ $\wt F\circ s_0 = F$, and such that each point $p\in\Sigma$ has a neighbourhood $U$ such that $\wt F|_{\pi^{-1}(U)}$ is an embedding.
\end{definition}

We now restrict to the relevant dimensions for this paper, $k=2$ and $n=4$, and take $M$ to be a connected topological 4-manifold as in \cref{convention}.
The \emph{singular set} of an immersion $F \colon \Sigma \to M$ is the set \[\mathcal{S}(F) := \{m \in M \mid |F^{-1}(m)| \geq 2\}.\]
Recall that a continuous map is said to be \emph{proper} if the inverse image of every compact set in the codomain is compact.

\begin{definition}\label{def:gen_immersion}
Let $\Sigma$ be a surface, possibly noncompact.  A continuous, proper map $F \colon \Sigma \to M$ is said to be a (topological) \emph{generic immersion}, denoted $F\colon \Sigma\looparrowright M$, if it is an immersion and the singular set is a closed, discrete subset of $M$ consisting only of transverse double points, each of whose preimages lies in the interior of $\Sigma$. In particular whenever $m \in \mathcal{S}(F)$, there are exactly two points $p_1, p_2 \in \Sigma$ with $F(p_i)=m$, and there are disjoint charts $\varphi_i$ around $p_i$, for $i=1,2$, where $\varphi_1$ is as in the left-most diagram of \eqref{diagram:above}, and $\varphi_2$ is the same, with respect to the same chart $\Psi$ around $m$, but with $\iota$ replaced by
\[
\iota' \colon \R^2 = \{0\} \times \R^2 \hooklongrightarrow \R^2 \times \R^2 = \R^4.
\]
\end{definition}

\begin{theorem}\label{thm:plumbed-normal-bundle}
A generic immersion $F \colon \Sigma \imra M$, for possibly noncompact $\Sigma$, has a normal bundle as in \cref{def:normal-bundle} with the additional property that $\wt F$ is an embedding outside a neighbourhood of $F^{-1}(\mathcal{S}(F))$, and near the double points $\wt{F}$ plumbs two coordinate regions $\pi^{-1}(\varphi_i(\R^2))\cong \varphi_i(\R^2)\times \R^2$, $i=1,2$, together i.e.\  $\wt F\circ ( \varphi_1(x),y) = \wt F\circ (\varphi_2(y),x)$.
\end{theorem}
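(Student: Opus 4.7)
The plan is to construct the normal bundle by combining standard local models near each double point with the Freedman--Quinn topological normal bundle theorem on the complement of a neighbourhood of the double points, and then gluing the two pieces via uniqueness of normal bundles.

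Near each double point $m \in \mathcal{S}(F)$, the chart data $\varphi_1, \varphi_2, \Psi$ supplied by \cref{def:gen_immersion} identifies a neighbourhood of $\{p_1,p_2\} = F^{-1}(m)$ in $\Sigma$ with the standard pair of transverse $2$-planes $\R^2 \times \{0\} \cup \{0\} \times \R^2 \subseteq \R^4$, via $\Psi$. The standard linear $\R^2$-normal bundles of these two sheets in $\R^4$ then define, through $\Psi$, a local normal bundle $\nu_{\mathrm{loc}}$ over a compact neighbourhood $N_m \subseteq \Sigma$ of $\{p_1,p_2\}$ whose total immersion plumbs the coordinate regions $\pi^{-1}(\varphi_i(\R^2)) \cong \varphi_i(\R^2) \times \R^2$ in precisely the manner required by the conclusion of the theorem.

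Next, choose an open neighbourhood $N$ of $F^{-1}(\mathcal{S}(F))$ whose closure is contained in the union of the interiors of the $N_m$'s, taken to be a disjoint union of small open discs (or half-discs if a preimage of a double point happened to lie on $\partial\Sigma$, though by \cref{def:gen_immersion} double points have only interior preimages). The restriction $F|_{\Sigma \sm \ol{N}}$ is a proper, injective, locally flat immersion with no double points, hence a (closed) locally flat embedding. By the Freedman--Quinn topological normal bundle existence theorem for locally flat $2$-submanifolds of $4$-manifolds (\cite{FQ}*{Section~9.3}), there is a normal bundle $\nu'$ for this embedding whose total immersion $\wt{F}'$ is a genuine embedding.

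The main obstacle is the gluing of $\nu_{\mathrm{loc}}$ and $\nu'$ over the collar overlap $\bigcup_m (N_m \sm \ol{N}) \subseteq \Sigma$. Both bundles restrict to $\R^2$-bundles over a disjoint union of annular collars, and we must arrange that $\wt{F}'$ agrees with $\wt{F}_{\mathrm{loc}}$ there. The essential input is the uniqueness of topological normal bundles up to ambient isotopy rel.\ the zero section, also due to Freedman--Quinn, applied to the embedded circles $F(\partial N_m) \subseteq F(\Sigma)$. This uniqueness supplies an ambient isotopy supported in a small neighbourhood of the overlap collar carrying $\wt{F}'$ to agree with $\wt{F}_{\mathrm{loc}}$ on a smaller collar, after which the two bundles and their total immersions glue together to form the desired $\nu_F$ on $\Sigma$. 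The case of non-compact $\Sigma$ is handled by performing the construction double point by double point using paracompactness, and the boundary case of $\Sigma$ is handled identically away from the singular set using the half-space charts of \cref{defn:immersion}.
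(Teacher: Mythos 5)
Your strategy differs from the paper's: you build the bundle \emph{independently} on a neighbourhood of the double points and on its complement, then try to glue the two via uniqueness of normal bundles, whereas the paper avoids gluing altogether by invoking the \emph{relative} existence theorem \cite{FQ}*{Theorem~9.3A}, which extends a normal bundle already defined near a closed subset of the submanifold (here, near the double points and near $\partial\Sigma$) to all of $\Sigma$.  Your route is conceivable in principle, but as written the gluing step has a genuine gap.  The crux is that after applying some ambient isotopy realising the uniqueness, you need the two total immersions $\wt{F}_{\mathrm{loc}}$ and $\wt{F}'$ to agree on a collar of $\partial N_m$ \emph{without} disturbing either the plumbed model near the double points or the bundle far away.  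This requires a version of uniqueness that is \emph{relative} to a closed subset and supported in a prescribed region — a statement you neither state nor cite.  Moreover, you apply uniqueness to the ``embedded circles $F(\partial N_m)\subseteq F(\Sigma)$''; these are $1$-submanifolds of $M$, and uniqueness of their $3$-dimensional normal bundles does not control the $2$-plane sub-bundle carrying the normal bundle of the annular overlap in $F(\Sigma)$.  You would instead need to compare the two normal bundles of a properly embedded \emph{annulus} in $M$, and even then only an ambient isotopy with control on its support would finish the argument.  Using \cite{FQ}*{Theorem~9.3A} directly, as the paper does, makes all of this unnecessary.

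A second, smaller gap: the boundary $\partial\Sigma$ is dismissed in one clause, but it requires genuine work.  The paper splits $\partial\Sigma$ into $\partial_1\Sigma$ (components mapped to $\partial M$) and $\partial_2\Sigma$ (components mapped to $\Int M$), handling the former with a Connelly-style collar argument and the latter with \cite{FQ}*{Theorem~9.3} together with an orthogonal decomposition of the fibre using the inward-pointing normal.  Your proposal also quietly treats $F|_{\Sigma\sm\ol{N}}$ as a ``proper'' embedding to which \cite{FQ}*{Theorem~9.3} applies, but the boundary circles $\partial N$ land in $\Int M$, not $\partial M$, so one has to explain why (and in what form) the theorem applies to such an embedding-with-boundary.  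You should either treat these cases as the paper does, or explain how your gluing set-up handles them.
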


\begin{proof}
Let $\partial_1\Sigma\subseteq \partial\Sigma$ denote the union of the components of $\partial \Sigma$ mapped to $\partial M$. Then since $F|_{\partial_1 \Sigma}$ is an embedding of a 1-manifold in a 3-manifold, it has a normal bundle. We extend this to a collar neighbourhood of $F(\partial_1 \Sigma)$ contained in a collar neighbourhood of $\partial M$.  To do this, first note that since (i) $\partial M$ is closed in $M$, (ii) $\mathcal{S}(F)$ is closed and contained in $\Int(M)$, and (iii) manifolds are normal, it follows that there is an open neighbourhood of $\partial M$ disjoint from $\mathcal{S}(F)$. Then argue as in Connelly's proof~\cite{Connelly} that boundaries of manifolds have collars, to obtain a homeomorphism of pairs
\[G \colon \big(M,F(\Sigma)\big) \xrightarrow{\phantom{5}\cong\phantom{5}} \big(M \cup (\partial M \times [0,1]),F(\Sigma) \cup (F(\partial_1 \Sigma) \times [0,1])\big).\]
The normal bundle over the boundary extends to a collar in the codomain, hence its pull-back extends to a collar in the domain.

Next, let $\partial_2\Sigma\subseteq \partial\Sigma$ denote the union of the components of $\partial \Sigma$ mapped to $\Int{M}$. We see that $F(\partial_2 \Sigma)$ has a normal bundle by~\cite{FQ}*{Theorem~9.3}, as a submanifold of $M$. Let $\wt{F}$ be the embedding of the total space, as in \cref{def:normal-bundle}.
By using the inward pointing normal for $\partial_2\Sigma$ in $\Sigma$, we obtain an orthogonal decomposition of each fibre as $\nu_{\partial_2 \Sigma \hookrightarrow \Sigma} \oplus V$, where $V$ is a 2-dimensional subspace. Then translates of $V$ in the direction of the inward pointing normal give rise to a normal bundle on the intersection of a collar of $\partial_2 \Sigma$ with the image of the normal bundle of $\partial_2 \Sigma$ under~$\wt{F}$.

Now we want to extend the normal bundle that we have just constructed on a neighbourhood of $\partial \Sigma$ to the rest of $\Sigma$.
First we will produce a normal bundle in a neighbourhood of both preimages of each double point, and then finally we will extend the normal bundle to the rest of the interior of $\Sigma$.

Let $m \in \mathcal{S}(F)$ be a double point of $F$, so that there exist $p_1,p_2\in \Sigma$ with $F(p_i)=m$, $i=1,2$. By the definition of a generic immersion, there is a chart $\Psi$ for $M$ at $m$, and charts $\varphi_i$ around $p_i$, such that $F\circ \varphi_1(x)=\Psi(x,0)$ and $F\circ \varphi_2(y)=\Psi(0,y)$. We assume that $F(\Sigma) \cap \Psi(\R^4) = F(\varphi_1(\R^2)) \cup F(\varphi_2(\R^2))$, and moreover that the images of the charts for different elements of $\mathcal{S}(F)$ do not overlap one another, and also are disjoint from the images of the normal bundles already constructed close to $\partial \Sigma$.
Then we take a trivial $\R^2$-bundle over each $\varphi_i(\R^2)$, and we define the map $\wt{F}$ on $\varphi_1(\R^2)\times \R^2$ and $\varphi_2(\R^2)\times \R^2$ by setting $\wt{F}(\varphi_1(x),y)=\Psi(x,y)$ and $\wt{F}(\varphi_2(x),y)=\Psi(y,x)$. Then $\wt F\circ ( \varphi_2(y),x) = \Psi(x,y) = \wt F\circ (\varphi_1(x),y)$ as needed.

Let $U^m_1$ and $U^m_2$ be open neighbourhoods in $\Sigma$ of $p_1$ and $p_2$, contained within the images of $\varphi_1$ and $\varphi_2$ above, respectively. Define $\Sigma':= \Sigma \sm \bigcup_{m\in \mathcal{S}(F)} (U^m_1\cup U^m_2)$. Then the restriction of $F$ gives an embedding of $\Sigma'$ in $M$. We already have a normal bundle defined on a neighbourhood of $\partial\Sigma'$. Apply~\cite{FQ}*{Theorem~9.3A} to extend the given normal bundle on $\ol{U^m_1\cup U^m_2}$ and $\partial \Sigma$ to all of $\Sigma'$ and therefore we have a normal bundle on all of $\Sigma$.
\end{proof}

\begin{remark}
  Freedman--Quinn~\cite{FQ}*{Theorem~9.3} produce an \emph{extendable} normal bundle for every submanifold of a 4-manifold. The extendability condition is technical with an important consequence: extendable normal bundles are unique up to isotopy.  One can always find an extendable normal bundle embedded in the total space of any given normal bundle.
\end{remark}

The proof of \cref{thm:plumbed-normal-bundle} also applies in the following more general setting where $\partial_2\Sigma$ is not embedded in $M$, but $F\vert_{\partial_2\Sigma}$ factors as a composition of generic immersions $\partial_2\Sigma\imra S\imra M$, for some surface $S$.   We will use this case in the definition of $b$-characteristic maps in \cref{sec:intro-km-section}, so we introduce nomenclature.

\begin{definition}\label{defn:generic-immersion-of-pairs}
  Let $g \colon S \imra M$ be a generic immersion of a surface in a 4-manifold $M$. Let $(B,Z)$ be a pair consisting of a surface $B$ and a collection $Z \subseteq \partial B$  of connected components of its boundary. A map $H \colon B\to M$
  is called a \emph{generic immersion of pairs} if $H(Z)\subseteq g(S)$ and
   \begin{enumerate}[(i)]
     \item $H|_{B \sm Z}$ is a generic immersion that is transverse to $g$ and has image disjoint from $H(Z)$;
     \item $H(B)$ is disjoint from the double points of $g$, which implies there is a unique map $h \colon Z\to S$ with $g\circ h=H|_Z$;
      \item the map $h$ is a generic immersion; and
     \item there is a collar $N$ of $Z$ in $B$ with $H(N \sm Z) \subseteq M \sm g(S)$.
   \end{enumerate} \end{definition}

We denote such maps by $H\colon (B,Z)\looparrowright (M,S)$, and sometimes identify $h$ with $H|_Z$.

\begin{corollary}\label{remark:bdy-not-embedded-normal-bundle}\label{rem:not-so-generic}
Let $g \colon S \imra M$ be a generic immersion of a surface in a 4-manifold $M$ and let $H\colon (B,Z)\looparrowright (M,S)$ be a generic immersion of pairs. Then $H$ admits a normal bundle, i.e.\  a normal bundle for $B$ in $M$ such that the restriction to $Z$ contains a normal bundle for $Z$ in $S$.
\end{corollary}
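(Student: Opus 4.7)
The plan is to adapt the proof of \cref{thm:plumbed-normal-bundle}, with the treatment of the boundary being the main new ingredient: rather than assuming the boundary is embedded in $M$, we exploit the fact that $H|_C$ factors as $g\circ h$ through the intermediate surface~$S$. This reduces the problem to constructing a $2$-dimensional normal bundle along $C$ that visibly contains a $1$-dimensional normal bundle for $h$ in $S$, and then extending across the interior of $B$ as in the original proof.

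First I would produce the normal bundle along $C$. Since $h\colon C \to S$ is a generic immersion of a $1$-manifold into a surface, it admits a $1$-dimensional normal bundle $\nu_h \to C$ with an immersion $\widetilde{h}\colon \nu_h \to S$ restricting to $h$ on the zero section; this is a standard (and strictly lower-dimensional) instance of the argument in \cref{thm:plumbed-normal-bundle}. By condition~(ii) of \cref{defn:generic-immersion-of-pairs}, $H(C) \subseteq g(S)$ avoids the double points of $g$, so after shrinking, $g$ restricts to an embedding on an open neighbourhood $U$ of $h(C)$ in $S$, and \cite{FQ}*{Theorem~9.3} provides a $2$-dimensional normal bundle $\nu_U$ for $g(U)$ in $M$. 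The Whitney sum $\nu_h \oplus h^*\nu_U$ is a $2$-dimensional real vector bundle over $C$; its total space maps into $M$ by first moving in the $\nu_h$-direction inside $S$ and then in the $\nu_U$-direction normal to $S$, and the summand $\nu_h$ is by construction a subbundle realising a normal bundle of $h\colon C\to S$.

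Next I would propagate this bundle into the interior of $B$. By condition~(iv) there is a collar $N \cong C \times [0,1)$ of $C$ in $B$ with $H(N\sm C) \subseteq M \sm g(S)$; on $N$, parallel transport in the collar direction extends the bundle chosen over $C$ to a normal bundle for $H|_N$ in~$M$, after possibly shrinking. On the remainder $B \sm C$, the map $H$ is a generic immersion into $M$ (transverse to $g$ by condition~(i)) with a normal bundle already specified on the inner boundary of $N$. At this stage I would invoke the argument of \cref{thm:plumbed-normal-bundle} verbatim: use \cite{FQ}*{Theorem~9.3A} to extend the prescribed normal bundle across the complement of a neighbourhood of the double points of $H|_{B\sm C}$, and use the standard plumbing model near each such transverse double point. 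Patching the result along the inner boundary of $N$ yields the desired normal bundle.

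The only technical obstacle I anticipate is verifying compatibility of the two local models at the interior boundary of the collar, since the normal bundle produced on $N$ is tied to the splitting $\nu_h \oplus h^*\nu_U$ of the bundle along $S$, while the one produced by \cite{FQ}*{Theorem~9.3A} on $B\sm N$ has no a priori relationship to $S$. This is exactly the uniqueness issue addressed by the extendability clause of \cite{FQ}*{Theorem~9.3}: any two extendable normal bundles on the shared piece are isotopic, and after composing with such an isotopy we can glue the two bundles smoothly to obtain the required normal bundle of $B$ in $M$ whose restriction to $C$ contains the normal bundle of $h\colon C\to S$ as a subbundle.
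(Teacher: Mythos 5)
Your strategy is the same as the paper's: build a normal bundle for $C$ in $M$ as $\nu_{C\hookrightarrow S}\oplus h^*\nu_S^M$ and then propagate into the interior of $B$ using the argument of \cref{thm:plumbed-normal-bundle}. (The paper quotes \cref{thm:plumbed-normal-bundle} rather than \cite{FQ}*{Theorem~9.3} directly for the normal bundle of $S$, but since $h(C)$ misses the double points of $g$ by condition~(ii), these amount to the same thing near $C$.)

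There is, however, a dimension slip that obscures the key step. The bundle $\nu_h\oplus h^*\nu_U$ has rank $1+2=3$, not $2$; it is the normal bundle of the $1$-manifold $C$ in the $4$-manifold $M$, not the normal bundle of $B$ in $M$ restricted to $C$, and its total space maps onto a $4$-dimensional neighbourhood of $H(C)$, exactly as your description (``first along $\nu_h$ in $S$, then along $\nu_U$ out of $S$'') indicates. To obtain the required rank-$2$ bundle one must still split each fibre of this rank-$3$ bundle as $\nu_{C\hookrightarrow B}\oplus V$ using the inward-pointing normal of $C$ in $B$, and translate the rank-$2$ complement $V$ along the collar --- exactly the move made for $\partial_2\Sigma$ in the proof of \cref{thm:plumbed-normal-bundle}. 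The point you should make explicit is that condition~(iv) is what makes this splitting compatible with the statement: since $H(N\sm C)\subseteq M\sm g(S)$, the inward-pointing normal of $C$ in $B$ points out of $S$, i.e.\ lies in the $h^*\nu_U$ summand, and hence the complement $V$ can be chosen to contain $\nu_h$. That is precisely where the ``restriction to $C$ contains a normal bundle for $C$ in $S$'' clause is earned; you currently invoke condition~(iv) only for the subsequent collar extension, not for this splitting. Once corrected, the remainder of your argument --- extending across the interior via \cite{FQ}*{Theorem~9.3A} and the plumbing model, with compatibility handled by extendability --- is fine and matches the paper.
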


\begin{proof}
Note that $Z$ has a normal bundle in $S$, and then the sum of this with the normal bundle of $S$ in $M$ guaranteed by \cref{thm:plumbed-normal-bundle} gives rise to a normal bundle for $Z$ in $M$. The rest of the proof proceeds as before.
\end{proof}

Observe that smooth generic immersions are topological immersions. Next we show that when both notions make sense they coincide, which justifies the terminology.

\begin{theorem}\label{thm:smooth-normal-bundle}
Consider a smooth compact surface $\Sigma$ and a $($topological$)$ generic immersion $F \colon
\Sigma \imra M$.
If $M$ is non-compact then let $M':= M$, and if $M$ is compact then choose $p\in M \sm F(\Sigma)$ and set $M' := M \sm \{p\}$.
Then $F$ is a smooth generic immersion in some smooth structure on $M'$.
\end{theorem}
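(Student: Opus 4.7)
The plan is to construct the smooth structure on $M'$ in two stages: first, use the plumbed normal bundle to produce a smoothing on an open neighborhood $U$ of $F(\Sigma)$ in which $F$ is manifestly smooth; second, extend this smoothing to all of $M'$ using the Freedman--Quinn smoothing theory for non-compact topological $4$-manifolds.

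For the first stage, I would apply \cref{thm:plumbed-normal-bundle} to obtain a normal bundle $\pi\colon \nu_F \to \Sigma$ with plumbed embedding $\wt F\colon \nu_F \to M$ onto an open neighborhood $U$ of $F(\Sigma)$, which after shrinking the fibres we may assume is disjoint from $p$ in case $M$ is compact. Since $\nu_F$ is a real $2$-plane vector bundle over the smooth surface $\Sigma$, it carries a smooth vector bundle structure compatible with the smooth structure on the base; this uses only that real vector bundles over smooth manifolds admit smooth reductions, which in this low-dimensional case is equivalent to the classical fact $\mathrm{TOP}(2) \simeq \mathrm{O}(2)$. With this structure, the total space $\nu_F$ becomes a smooth $4$-manifold in which the zero section $\Sigma \hookrightarrow \nu_F$ is smoothly embedded, and the plumbing identifications $\wt F(\varphi_1(x),y) = \wt F(\varphi_2(y),x)$ at double points are linear, hence smooth. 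Pushing forward along the homeomorphism $\wt F$ gives a smooth structure on the open set $U \subseteq M'$ in which $F$ is locally modelled on the standard inclusion $\R^2 \hra \R^4$, i.e.\ a smooth generic immersion.

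For the second stage, I would extend the smoothing on $U$ to all of $M'$. Since $M'$ is a connected non-compact topological $4$-manifold, one has $H^4(M'; \Z/2) = 0$, so the Kirby--Siebenmann obstruction $\ks(M')$ vanishes and $M'$ admits some smooth structure. To promote this to a smoothing that restricts to the given one on $U$, I would invoke the relative Freedman--Quinn smoothing theorem~\cite{FQ}*{\S 8.6}: the obstruction to extending a smoothing from an open subset lies in a relative $H^4$ group that vanishes when no component of the complement is compact, and in the case that $M$ is compact this is arranged by choosing $p$ to meet the relevant component of $M \sm U$.

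The main technical obstacle is this second stage: the extension relies on the full strength of the Freedman--Quinn smoothing theorem and, in particular, on the careful handling of the relative obstruction for non-compact $4$-manifolds. The first stage is, by contrast, a direct corollary of \cref{thm:plumbed-normal-bundle} together with the classical smoothability of low-dimensional vector bundles.
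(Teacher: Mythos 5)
Your first stage is essentially the same as the paper's opening move and is correct, except that the invocation of $\mathrm{TOP}(2)\simeq \mathrm{O}(2)$ is a red herring: $\nu_F$ is already a genuine real vector bundle over the smooth surface $\Sigma$ (per \cref{def:normal-bundle}), so it carries a canonical smooth structure and there is no topological-to-smooth reduction to perform. What you do need to check, and omit, is that the plumbing identifications respect the smooth structure; the paper handles this by passing to the closed disc bundle $D(\nu_F)$ and describing $N(F) := \wt F(D(\nu_F))$ explicitly as the result of smooth plumbing operations with corners smoothed, rather than working with an open tubular neighbourhood $U$.

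Your second stage has two genuine gaps. First, the extension mechanism is imprecise: you invoke ``the relative Freedman--Quinn smoothing theorem'' and an obstruction in a relative $H^4$, but the result the paper actually uses is cleaner and more elementary to state, namely that a connected noncompact $4$-manifold with a prescribed smooth structure on its boundary admits a compatible smooth structure (\cite{FQ}*{Theorem~8.2}, \cite{Quinn-annulus}*{Corollary~2.2.3}). The paper arranges to be in exactly that situation by using the compact regular neighbourhood $N(F)$: its boundary $\partial N(F)$ is a smooth $3$-manifold, and the complement $M' \setminus (\Int N(F) \cup (N(F)\cap\partial M'))$ is a noncompact $4$-manifold with that boundary. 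Extending a smoothing across an open subset $U$, as you propose, is a less standard formulation and you would need to justify the vanishing of the obstruction you cite. Second, and more seriously, you do not address $\partial\Sigma$ and $\partial M$ at all. In \cref{convention} we have $F^{-1}(\partial M)=\partial\Sigma$, so $F|_{\partial\Sigma}$ is an embedding of a $1$-manifold into $\partial M$, and the smooth structure you produce on $M'$ must restrict to $\partial M'$ in such a way that this embedding is smooth. The paper deals with this at the outset, fixing a smooth structure on $\partial M$ compatible with $F|_{\partial\Sigma}$ via the normal bundle of $F(\partial\Sigma)\subseteq\partial M$, before constructing $N(F)$; without this preliminary step the claim that $F$ becomes a smooth generic immersion is not established near the boundary.
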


We know that $M'$ has a smooth structure by~\citelist{\cite{FQ}*{Theorem~8.2}\cite{Quinn-annulus}*{Corollary~2.2.3}}.

\begin{proof}
Fix a smooth structure on $\partial M$ such that the generic immersion $F$ restricted to those connected components of $\partial \Sigma$ that map to $\partial M$ is a smooth embedding. To find such a smooth structure, first use the standard smooth structure on the normal bundle of $F|_{\partial \Sigma}$, and then extend this to a smooth structure on all of $\partial M$. Since any two smooth structures on a 3-manifold are isotopic this could also be arranged by an isotopy of $\partial \Sigma$, but our aim is to use the given map without isotoping it.

By~\cref{thm:plumbed-normal-bundle} there is a normal bundle $(\nu_F,\wt{F})$ for $F$. Let $D(\nu_F) \to \Sigma$ be the (closed) disc bundle.  This yields a regular neighbourhood $N(F) := \wt{F}(D(\nu_F))$ of $F(\Sigma)$, a codimension zero submanifold of~$M'$.
The regular neighbourhood $N(F)$ can be identified with a smooth manifold obtained from $D(\nu_F)$ after the requisite plumbing operations and smoothing corners.  Use such an identification to fix a smooth structure on $N(F) \subseteq M$. With respect to this smooth structure the map $\Sigma\to N(F)$ is a smooth generic immersion.

In addition, the boundary of $N(F)$ inherits a smooth structure.  The complement in $M'$ of $\Int N(F) \cup (N(F) \cap \partial M')$ is a connected, noncompact $4$-manifold with a prescribed smooth structure on its boundary. Then the interior has a compatible smooth structure by~\citelist{\cite{FQ}*{Theorem~8.2}\cite{Quinn-annulus}*{Corollary~2.2.3}}, giving rise to a smooth structure on all of $M'$. Since the smooth structure on $N(F)$ is unaltered, $F$ become a smooth generic immersion, as desired.
\end{proof}

Recall that an \emph{isotopy} of homeomorphisms of a manifold $M$ is a map $H\colon M\times[0,1]\to M$ such that the track $M \times [0,1] \to M \times [0,1]$ given by $(m,t)\mapsto (H(m,t),t)$ is a homeomorphism.

\begin{definition}\label{def:ambient-isotopy-immersions}
 An \emph{ambient isotopy} between generic immersions $F, G\colon\Sigma\imra M$ consists of two isotopies $H_\Sigma\colon \Sigma\times[0,1]\to\Sigma$ and $H_M\colon M\times[0,1]\to M$ such that
	\begin{enumerate}
		\item $H_\Sigma(-,0)$ and $H_M(-,0)$ are both the identity; and
		\item $G(x)=H_M(F(H_\Sigma(x,1)),1)$ for all $x\in\Sigma$.
	\end{enumerate}
\end{definition}

This is motivated by the smooth result which states that two generic immersions are ambiently isotopic (in the sense of \cref{def:ambient-isotopy-immersions} but with homeomorphism replaced by diffeomorphism in the definition of an isotopy) if and only if they are connected by a path in the space of generic immersions~\cite{GoGu}*{Chapter~III,~Theorem~3.11}.
Note that for embeddings one does not need the isotopy~$H_{\Sigma}$.

Mirroring the smooth notion, a \emph{generic homotopy} between generically immersed surfaces in a 4-manifold is by definition a sequence of ambient isotopies, finger moves, Whitney moves, and cusp homotopies. The moves in question are defined in local coordinates exactly as in the smooth setting. A \emph{regular homotopy} between generically immersed surfaces in a $4$-manifold is by definition a sequence of ambient isotopies, finger moves, and Whitney moves. The following proposition explains that maps of surfaces in a $4$-manifold can be assumed to be generic immersions, and homotopies between generic immersions may be assumed to be generic as well.

\begin{proposition}[\cite{PRT20}*{Proposition~3.1}]\label{prop:hom-gen-immersion}
Let $\Sigma$ be a compact surface and let $M$ be a topological 4-manifold.
\begin{enumerate}
 \item\label{item:hom-to-gen-imm-1}
 Every map $(\Sigma,\partial\Sigma) \to (M,\partial M)$ is homotopic $($relative to the embedded boundary$)$ to a generic immersion.
\item\label{item:hom-to-gen-imm-2} Every homotopy $(\Sigma,\partial\Sigma) \times [0,1] \to (M,\partial M)$ that restricts to a  generic immersion on $\Sigma \times \{0,1\}$, is homotopic $($relative to the boundary$)$  to a generic homotopy.
\end{enumerate}
\end{proposition}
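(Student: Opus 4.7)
The plan is to handle both parts by first fixing the map near the boundary by a collar-type homotopy, and then applying the topological transversality results of \cite{FQ}*{Chapter 9} and \cite{Quinn-annulus} in the interior.

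For part \eqref{item:hom-to-gen-imm-1}, I would first homotope $F$ rel $\partial \Sigma$ so that on a collar $\partial \Sigma \times [0,\e)$ it is a standard collar embedding. This uses only that $F|_{\partial \Sigma}$ is an embedding into $\partial M$, that $\partial M$ has a collar in $M$, that $\partial \Sigma$ has a collar in $\Sigma$, and the homotopy extension property. Shrinking the collar if necessary, I can then restrict attention to the codimension-zero subsurface $\Sigma' := \Sigma \setminus (\partial \Sigma \times [0, \e/2))$, where $F'=F|_{\Sigma'}$ already embeds a bicollar of $\partial \Sigma'$ into $\Int M$. The main input is then~\cite{FQ}*{Chapter 9}, which provides topological transversality and existence of normal bundles for $2$-submanifolds of $4$-manifolds. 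Using this, one first approximates $F'$ rel a neighbourhood of $\partial \Sigma'$ by a topological immersion (equipped with a normal bundle as in \cref{def:normal-bundle}), and then perturbs a small neighbourhood of each point of the singular set, using the normal bundle coordinates, to make any two sheets meet transversely in isolated double points. The result is a generic immersion in the sense of \cref{def:gen_immersion}, homotopic to~$F$ relative to~$\partial \Sigma$.

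For part \eqref{item:hom-to-gen-imm-2}, the strategy is the same one dimension up. View the homotopy as a map $H \colon \Sigma \times [0,1] \to M$ that is already a generic immersion on $\Sigma \times \{0,1\}$ and an embedding on $\partial \Sigma \times [0,1]$. After adjusting on a collar of $\Sigma \times \{0,1\}$ so that $H$ is a product of the given generic immersion with a small interval near each end, apply a one-parameter, relative form of topological transversality to $H$ on the remainder. This arranges that the singular set of $H$ is a properly embedded $1$-manifold in $\Sigma \times (0,1)$ together with its endpoints on $\Sigma \times \{0,1\}$, with only finitely many isolated critical points as a map to $[0,1]$. Locally near each such critical point, the generic codimension-one transitions between generic immersions of surfaces in a $4$-manifold are precisely births and deaths of double-point pairs (finger and Whitney moves), together with cusp homotopies; this classification of local models in the smooth category is carried out in \cite{GoGu}*{Chapter~III}, and the local models translate verbatim to the topological setting inside the neighbourhood provided by topological transversality. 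Concatenating the elementary pieces yields a generic homotopy, homotopic to $H$ relative to the boundary.

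The main obstacle, and the only substantial difference from the smooth case, is the correct invocation of topological transversality in dimension $4$. In the smooth category, jet transversality immediately produces generic immersions and generic homotopies; in the topological category, the analogous statements rest on the deep work of Quinn, and extracting the relative one-parameter version needed for \eqref{item:hom-to-gen-imm-2} requires some care. Once transversality places the singular set into the standard local forms of \cref{def:gen_immersion} (and their codimension-one counterparts), the remaining Morse-theoretic analysis is routine and identical to its smooth analogue.
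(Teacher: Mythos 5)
Your proposal attempts a genuinely topological argument: apply the Freedman--Quinn / Quinn topological transversality machinery directly in $M$ to produce generic immersions (part~1) and then a relative one-parameter version to produce generic homotopies (part~2). The paper takes a different and decisively simpler route: after a cellular approximation one may assume the map (resp.\ homotopy) misses a point $p \in \Int M$, one removes $p$, and by Quinn's smoothing theorem (\cite{FQ}*{Theorem~8.2}, \cite{Quinn-annulus}*{Corollary~2.2.3}) the noncompact manifold $M \sm \{p\}$ carries a smooth structure. The problem is then entirely smooth, and one quotes the smooth jet-transversality results directly: Hirsch's immersion theory \cite{Hirsch-Diff-Top}*{Theorems~2.2.6 and 2.2.12} for part~1, and \cite{GoGu}*{Chapter~III, Corollary~3.3} for part~2.

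The gap in your argument is exactly the step you flag at the end as the ``main obstacle'' and do not resolve. Topological transversality in the sense of \cite{FQ}*{Chapter~9} concerns making a submanifold (or a proper map to a submanifold) transverse to another submanifold; it does not provide a jet-transversality theorem for maps of surfaces into $4$-manifolds, which is what you actually need to conclude that the singular set consists of isolated transverse double points, and (for part~2) that the only codimension-one singularities are the standard ones. Your claim that ``the local models translate verbatim to the topological setting inside the neighbourhood provided by topological transversality'' is precisely what would need to be proved, and there is no topological analogue of the Thom--Boardman / multijet-transversality machinery that underlies the smooth classification in \cite{GoGu}. Similarly, your part~1 step of ``approximating $F'$ by a topological immersion'' and then perturbing near the singular set has no supporting citation; producing a topological immersion from a continuous map is itself an immersion-theory statement that FQ Chapter~9 does not supply. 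The device of smoothing the punctured manifold is not an optional convenience here but the mechanism that makes all of these smooth-category tools available; your outline would need to recreate that body of theory topologically, which is far harder than the problem at hand and, to my knowledge, not in the literature.
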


Briefly, the proposition is proven as follows. Homotope the maps away from a point of $M$ using cellular approximation, remove that point, choose a smooth structure on the complement of the point, and then apply the smooth theory of generic immersions, combining~\cite{Hirsch-Diff-Top}*{Theorems~2.2.6~and~2.2.12} with~\cite{GoGu}*{Chapter~III, Corollary~3.3}.

\subsection{Intersection numbers}\label{subsection:intersection-numbers}

We define intersection numbers between compact, connected surfaces in 4-manifolds.
In order to accommodate the fundamental group in equivariant intersection numbers, we need to use basings.

\begin{definition}\label{def:based}
We call a manifold $X$ \emph{based} if it is equipped with basepoints $p_i \in X_i$ for each connected component $X_i \subseteq X$, together with a local orientation at each $p_i$. A generic immersion $F\colon X\to Y$ between based manifolds with $Y$ connected is said to be \emph{based} if it is equipped with \emph{whiskers}, i.e.~ paths in $Y$ from the basepoint of $Y$ to $F(p_i)$, for each basepoint $p_i$ of $X$.
\end{definition}

For the remainder of this section, let $M$ be a connected, based $4$-manifold and let $\Sigma$ and $\Sigma'$ be based, compact, connected surfaces, unless specified otherwise.

Let $f\colon \Sigma\to M$ and $g\colon \Sigma' \to M$ be based maps that are \emph{transverse}, i.e.\ around each intersection point $f(s) = g(s')$, $s \in \Sigma$, $s' \in \Sigma'$, there are  coordinates that make $f$ and $g$ (in a neighbourhood of $s$ in $\Sigma$ and a neighbourhood of $s'$ in $\Sigma'$) resemble the standard inclusions $\R^2 \times \{0\}$ and $\{0\} \times \R^2$ into $\R^4$ respectively, as in~\eqref{diagram:above}.
We assume that these intersections are the only singularities between $f$ and $g$ and that $f(\partial \Sigma)$ and $g(\partial \Sigma')$ are disjoint.

Let $v_f$ and $v_g$ be whiskers for $f$ and $g$. The intersection number $\lambda(f,g)$ is the sum of signed fundamental group elements
\[
\lambda(f,g):=\sum_{p\in f \pitchfork g} \varepsilon(p) \cdot \eta(p)
\]
as follows.  A priori this is the formal sum of a list of elements of the set $\{\pm 1\} \times \pi_1(M)$.  It will ultimately give rise to an element of a quotient of $\Z[\pi_1(M)]$, given in \cref{defn:Gamma-f-g}, after we factor out the effect of finger and Whitney moves and the effect of the choice of the paths $\gamma_f^p$ and $\gamma_g^p$ in the first bullet point below.

Fix $p\in f\pitchfork g$. Next we define $\varepsilon(p) \in \{\pm 1\}$ and $\eta(p) \in \pi_1(M)$.  We use $\ast$ to denote concatenation of paths.
\begin{itemize}
\item Let $\gamma_f^p$ be a path in $\Sigma$ from the basepoint to $f^{-1}(p)$ and let $\gamma_g^p$ be a path in $\Sigma'$ from the basepoint to $g^{-1}(p)$.
\item The sign $\varepsilon(p)\in\{\pm 1\}$ is determined as follows. Transport the local orientation of $\Sigma$ at the basepoint to $f^{-1}(p)$ along $\gamma_f^p$, and the local orientation of $\Sigma'$ at the basepoint to $g^{-1}(p)$ along $\gamma_g^p$. This induces a local orientation at $p$, by ordering $f$ before $g$. Another local orientation is obtained by transporting the local orientation at the basepoint of $M$ to $p$ along the concatenated path $v_g \ast (g \circ \gamma_g^p)$. We define $\varepsilon(p)= + 1$ when the two local orientations match at $p$, and $-1$ otherwise.
\item The element $\eta(p)\in \pi_1(M)$ is by definition the concatenation $v_f \ast (f \circ \gamma_f^p) \ast (g \circ \gamma_g^p)^{-1} \ast  v_g^{-1}$.
\end{itemize}

For a generic immersion $f\colon \Sigma\looparrowright M$, we define $\lambda(f,f):= \lambda(f,f^+)$, where $f^+$ is a push-off of $f$ along a section of its normal bundle transverse to the zero section. In case the embedding $f\vert_{\partial\Sigma}$ is equipped with a specified framing for its normal bundle, then $f^+$ is defined to be a push-off of $f$ along a section restricting to the first vector of that framing on $\partial \Sigma$.

If $f_t$ is a homotopy of $f$ that is transverse to $g$ for all $t$ then $\lambda(f_t,g)$ is independent of $t$ as a set of signed fundamental group elements, assuming the above choices of $\gamma_{f_t}^p$ are made carefully. However, if $f_t$ describes a finger move of $f$ into $g$, there is a single time $t_0$ at which $f_{t_0}$ and $g$ are not transverse, because there is a tangency. After the tangency, two new intersection points $p$ and $q$ arise. These have the same group element $\eta(p) = \eta(q)$ and opposite signs $\varepsilon(p) = - \varepsilon(q)$, with appropriate choices of $\gamma_{f_t}^p$ and $\gamma_{f_t}^q$. Similarly, a Whitney move reduces the intersections between $f$ and $g$ by such a pair. To get a regular homotopy invariant notion, it is thus important to specify the home of $\lambda(f,g)$ carefully.

For $\Sigma$ and $\Sigma'$ simply connected, the sum $\lambda(f,g)$ is usually considered as an element of $\Z[\pi_1(M)]$ and is independent of the choice of $\{\gamma_f^p\}_p$ and $\{\gamma_g^p\}_p$. In the abelian group $\Z[\pi_1(M)]$ the relations $-a + a=0$ for each $a\in \pi_1(M)$ are built in, and if one identifies the sign $\varepsilon(p)$ with the inverse in this abelian group then finger moves and Whitney moves do not change $\lambda(f,g)$ as an element in the group ring.

For non-simply connected $\Sigma$, $\Sigma'$, the homotopy class of $\gamma_f^p$ and  $\gamma_g^p$ may be changed by wrapping around nontrivial elements in $\pi_1(\Sigma)$ or $\pi_1(\Sigma')$. This wrapping may also change the induced local orientations at the intersection points of $f$ and $g$. We describe this in more detail next. Let $w^M\colon \pi_1(M)\to \{\pm 1\}$, $w^\Sigma\colon \pi_1(\Sigma)\to \{\pm 1\}, w^{\Sigma'}\colon \pi_1(\Sigma')\to \{\pm 1\}$ denote the orientation characters.

\begin{definition}\label{defn:Gamma-f-g}
Let $\Gamma_{f,g}$ be the abelian group generated by the elements of $\pi_1(M)$ and with relators
\begin{equation}\label{eq:lambda-relations}
\gamma -  w^{\Sigma}(\alpha)w^{\Sigma'}(\beta) w^M(g_\bullet(\beta))\cdot  f_\bullet(\alpha) \ast \gamma \ast g_\bullet(\beta),
\end{equation}
for all $\alpha \in \pi_1(\Sigma), \beta \in \pi_1(\Sigma'), \gamma \in \pi_1(M)$.
Here $f_\bullet(\alpha):=v_f\ast (f\circ \alpha) \ast v_f^{-1}$ and $g_\bullet(\beta):=v_g\ast (g\circ \beta)\ast v_g^{-1}$ are elements of $\pi_1(M)$.
\end{definition}

For  transverse $f \colon \Sigma\to M$ and $g \colon \Sigma'\to M$, the intersection number $\lambda(f,g)\in \Gamma_{f,g}$ is well defined.
The relations precisely account for wrapping around elements of $\pi_1(\Sigma)$ or $\pi_1(\Sigma')$ as described above. We will show in \cref{prop:regular-homotopy-inv-lambda} that this target also makes $\lambda(f,g)$ a homotopy invariant.

\begin{remark}\label{remark:examples-Gamma-fg}
In the case that $M$, $\Sigma$, and $\Sigma'$ are all oriented,
\[\Gamma_{f,g} \cong \Z[f_\bullet(\pi_1(\Sigma))\backslash\pi_1(M)/g_\bullet(\pi_1(\Sigma'))],\]
 the free abelian group generated by the double coset quotient of $\pi_1(M)$ by left and right multiplication by the images of loops in $\Sigma$ and $\Sigma'$ respectively.

In general, due to the signs introduced by the orientation characters, there may be torsion in $\Gamma_{f,g}$.
For example, consider $f\colon \RP\looparrowright M$ with $f_\bullet(\mathbb{RP}^1)=1$, without any assumption on $M$. Then for every  $g\colon \Sigma'\to M$, the group \[\Gamma_{f,g} \cong (\Z/2)[\pi_1(M)/g_\bullet(\pi_1(\Sigma'))],\]
is $2$-torsion, due to the relations $\gamma =w^{\RP}(\mathbb{RP}^1)\cdot f_\bullet(\mathbb{RP}^1)\ast \gamma=-\gamma$ for every $\gamma \in \pi_1(M)$, arising from setting $\alpha:= \mathbb{RP}^1$ and $\beta:=1$ in~\eqref{eq:lambda-relations}.
\end{remark}

To understand $\Gamma_{f,g}$ better, we introduce some notation. Write $\pm \pi_1(M) := \{\pm 1\} \times \pi_1(M)$.  There is a natural inclusion $\pm \pi_1(M) \to \Z[\pi_1(M)]$ given by $(\pm 1,\gamma) \mapsto \pm \gamma$.
Write $[a]\in \Gamma_{f,g}$ for the equivalence class of $a\in\Z[\pi_1(M)]$, and let $\sim$ denote the equivalence relation on $\pm \pi_1(M)$ induced by the composition $\pm \pi_1(M) \hookrightarrow \Z[\pi_1(M)]\sra \Gamma_{f,g}$, i.e.\ for $a,b \in \pm \pi_1(M)$,  $a \sim b$ if and only if the images of $a$ and $b$ in $\Gamma_{f,g}$ coincide.
The following lemma is immediate from the definitions.

\begin{lemma}\label{lem:double coset}
Let $\gamma_1,\gamma_2\in \pi_1(M)$. One of the relations $[\gamma_1]= \pm [\gamma_2] \in \Gamma_{f,g}$ holds if and only if $\gamma_1$ and $\gamma_2$ represent the same element in the double coset
$ f_\bullet(\pi_1(\Sigma))\backslash \pi_1(M) / g_\bullet(\pi_1(\Sigma'))$.
\end{lemma}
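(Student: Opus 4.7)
The plan is to prove both directions of the biconditional; the reverse implication is essentially an immediate unpacking of the defining relators of $\Gamma_{f,g}$, while the forward implication requires constructing an auxiliary homomorphism from $\Gamma_{f,g}$ that records the double coset class.

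For the reverse direction, suppose $\gamma_1$ and $\gamma_2$ represent the same element of the double coset quotient $f_\bullet\pi_1(\Sigma)\backslash \pi_1(M) / g_\bullet\pi_1(\Sigma')$. Then by definition there exist $\alpha \in \pi_1(\Sigma)$ and $\beta \in \pi_1(\Sigma')$ with $\gamma_2 = f_\bullet(\alpha) \ast \gamma_1 \ast g_\bullet(\beta)$ in $\pi_1(M)$. Substituting these choices of $\alpha,\beta$ and setting $\gamma := \gamma_1$ in the defining relator~\eqref{eq:lambda-relations} immediately yields
\[
[\gamma_1] = w^\Sigma(\alpha) w^{\Sigma'}(\beta) w^M(g_\bullet(\beta)) \cdot [\gamma_2] = \pm [\gamma_2] \in \Gamma_{f,g},
\]
as desired.

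For the forward direction, let $X := f_\bullet\pi_1(\Sigma)\backslash \pi_1(M) / g_\bullet\pi_1(\Sigma')$ denote the double coset set, let $q \colon \pi_1(M) \to X$ be the canonical projection, and extend $q$ linearly to a homomorphism of abelian groups $\tilde q \colon \Z[\pi_1(M)] \to (\Z/2)[X]$. The key observation is that every defining relator of $\Gamma_{f,g}$ is killed by $\tilde q$: indeed, for any relator of the form $\gamma - \epsilon\, f_\bullet(\alpha) \ast \gamma \ast g_\bullet(\beta)$ with $\epsilon \in \{\pm 1\}$, the two monomials on either side of the minus sign have the same image $q(\gamma) \in X$, so their difference maps to $q(\gamma) - \epsilon \cdot q(\gamma)$, which equals either $0$ or $2q(\gamma)$, and both are zero in $(\Z/2)[X]$. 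Hence $\tilde q$ descends to a well-defined homomorphism $\Gamma_{f,g} \to (\Z/2)[X]$.

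Now assume $[\gamma_1] = \pm[\gamma_2]$ in $\Gamma_{f,g}$. Applying the descended $\tilde q$, we obtain $q(\gamma_1) = \pm q(\gamma_2)$ in $(\Z/2)[X]$, but signs are trivial mod $2$, so $q(\gamma_1) = q(\gamma_2)$ as elements of $(\Z/2)[X]$. Since the inclusion $X \hookrightarrow (\Z/2)[X]$ as basis elements is injective, we conclude that $q(\gamma_1) = q(\gamma_2) \in X$, i.e.\ that $\gamma_1$ and $\gamma_2$ represent the same double coset. There is no serious obstacle here — the only subtlety is the need to pass to $\Z/2$-coefficients in the auxiliary group so that both signs $\epsilon = \pm 1$ appearing in the relators are simultaneously annihilated.
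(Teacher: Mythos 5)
Your proof is correct, and it is essentially the argument the paper has in mind (the paper merely asserts the lemma is ``immediate from the definitions''): the reverse direction is exactly the unpacking of the relators that the paper intends, and your forward direction rigorously packages the observation that each relator identifies only elements of a common double coset. The detecting homomorphism $\Gamma_{f,g} \to (\Z/2)[X]$ is a clean way to make the forward implication precise, and the pass to $\Z/2$ coefficients is exactly the right move to absorb both signs $\epsilon = \pm 1$ in the relators.
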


Let $\p \colon \smfrac{\pm \pi_1(M)}{\sim} \sra  f_\bullet\pi_1(\Sigma)\backslash \pi_1(M) / g_\bullet\pi_1(\Sigma')$ be the map sending $\pm \gamma$ to the class of $\gamma$. We write $|\gamma| := \p(\gamma)$.  Here one should think that $\p$ stands for dividing out ``plus-minus''.  Note that $\p$ has fibres of order 1 or 2 and we can decompose the double coset as a disjoint union $B_1\sqcup B_2$ according to this distinction, where $\p$ gives a bijection $\p^{-1}(B_1) \leftrightarrow B_1$ while $\p^{-1}(B_2) \to B_2$ is $2$-$1$.

\begin{remark}
We give examples in the cases from \cref{remark:examples-Gamma-fg}.
  In the case that $M$, $\Sigma$, and $\Sigma'$ are all oriented, then $B_1  = \emptyset$ and
  $B_2 = f_\bullet(\pi_1(\Sigma))\backslash\pi_1(M)/g_\bullet(\pi_1(\Sigma'))$.
  If we have $f\colon \RP\looparrowright M$ with $f_\bullet(\mathbb{RP}^1)=1$, and $g\colon \Sigma'\to M$ is arbitrary, then
  $B_1 = \pi_1(M)/g_\bullet(\pi_1(\Sigma'))$ and $B_2 = \emptyset$.
\end{remark}

 Choose a section $s$ of $\p$.
For each $s(b) \in \p^{-1}(B_2)$ we denote the other element of $\p^{-1}(b)$ by $-s(b)$.  Their images in $\Gamma_{f,g}$ are indeed inverse to one another, which motivates the notation.

\begin{lemma}\label{lem:Gammafg}
Fix a section $s$ for $\p$ as above. The abelian group $\Gamma_{f,g}$ is a direct sum
$\Gamma_{f,g} = FA \oplus V$ of a free abelian group $FA$ on the set $s(B_2)\subseteq \smfrac{\pm\pi_1(M)}{\sim}\subseteq \Gamma_{f,g}$ and a $\Z/2$-vector space $V$ with basis $s(B_1)=\p^{-1}(B_1)\subseteq \smfrac{\pm\pi_1(M)}{\sim}$.

Reading off the coefficients in this decomposition gives homomorphisms $c_{s(b)} \colon \Gamma_{f,g}\to\Z$ for each $b\in B_2,$ and $c_b\colon \Gamma_{f,g}\to\Z/2$ for each $b\in B_1$, yielding a decomposition of $\Gamma_{f,g}$ as a direct sum of copies of $\Z$ and $\Z/2$.
 \end{lemma}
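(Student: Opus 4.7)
The plan is to construct an inverse pair of maps realising the decomposition $\Gamma_{f,g} \cong FA \oplus V$. First I would observe, by applying \cref{lem:double coset} to the pair $\gamma_1 = \gamma_2 = \gamma$, that whenever $|\gamma| = b \in B_1$ the single equivalence class $[\gamma] \in \p^{-1}(b)$ must satisfy $[\gamma] = -[\gamma]$ in $\Gamma_{f,g}$, so that $2[\gamma] = 0$; for $b \in B_2$ the two classes $s(b)$ and $-s(b)$ have, by definition of $B_2$, distinct images in $\Gamma_{f,g}$, and in particular $[s(b)] \neq 0$ and $[s(b)]$ is not $2$-torsion. Using this I would define a homomorphism $\Psi \colon FA \oplus V \to \Gamma_{f,g}$ sending each basis element $s(b) \in FA$ to $[s(b)] \in \Gamma_{f,g}$ and each basis element $b \in V$ to the image in $\Gamma_{f,g}$ of the unique class in $\p^{-1}(b)$; since the only defining relations of $FA \oplus V$ are the $2$-torsion of the $V$-generators, just verified in $\Gamma_{f,g}$, the map $\Psi$ is well-defined.

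Next I would produce a candidate inverse $\Phi \colon \Gamma_{f,g} \to FA \oplus V$ by specifying it on generators $\gamma \in \pi_1(M)$. If $|\gamma| = b \in B_1$, set $\Phi(\gamma) := b \in V$; if $|\gamma| = b \in B_2$, then because $[s(b)] \neq -[s(b)]$ there is a well-defined sign $\delta(\gamma) \in \{\pm 1\}$ with $[\gamma] = \delta(\gamma)[s(b)]$ in $\Gamma_{f,g}$, and I set $\Phi(\gamma) := \delta(\gamma)\,s(b) \in FA$. To check that $\Phi$ descends to $\Gamma_{f,g}$, the key computation is on each defining relator $\gamma - \eta \cdot f_\bullet(\alpha)\gamma g_\bullet(\beta)$, where $\eta := w^\Sigma(\alpha) w^{\Sigma'}(\beta) w^M(g_\bullet(\beta))$: writing $\gamma' := f_\bullet(\alpha)\gamma g_\bullet(\beta)$ one has $|\gamma'| = |\gamma| = b$, and in the $B_2$ case the identity $[\gamma] = \eta[\gamma']$ forces $\delta(\gamma) = \eta\,\delta(\gamma')$ (using that $[s(b)]$ is not $2$-torsion), whence $\Phi(\gamma) - \eta\Phi(\gamma') = 0$; in the $B_1$ case $\Phi(\gamma) - \eta \Phi(\gamma') = (1 - \eta) b$ is either $0$ or $2b = 0$ in $V$.

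Finally, checking on generators that $\Psi \circ \Phi$ and $\Phi \circ \Psi$ are the respective identities is immediate from the constructions, yielding the decomposition $\Gamma_{f,g} \cong FA \oplus V$; the homomorphisms $c_{s(b)}$ and $c_b$ are then the coordinate projections on the two summands. I anticipate no substantive obstacle: the one delicate ingredient is the sign bookkeeping in the $B_2$ case, which reduces entirely to tracking how the character $\eta$ appears in the defining relators of $\Gamma_{f,g}$, and this is handled precisely by the constraint $\delta(\gamma) = \eta\,\delta(\gamma')$ built into the definition of $\Phi$.
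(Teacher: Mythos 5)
Your approach of constructing explicit mutually inverse homomorphisms $\Psi\colon FA\oplus V\to\Gamma_{f,g}$ and $\Phi\colon\Gamma_{f,g}\to FA\oplus V$ differs from the paper's proof, which instead classifies the relators~\eqref{eq:lambda-relations} into three types according to their effect on the free basis $\pi_1(M)$ of $\Z[\pi_1(M)]$ and reads off the quotient directly. Your later steps (in particular the sign bookkeeping $\delta(\gamma)=\eta\,\delta(\gamma')$, which uses only that $[s(b)]$ is not $2$-torsion, and the verification that $\Phi$ and $\Psi$ are inverse on generators) are fine. The gap is entirely in your opening claim.

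You assert that $b\in B_1$ forces $[\gamma]=-[\gamma]$, and you cite \cref{lem:double coset} for the pair $\gamma_1=\gamma_2=\gamma$. That application is vacuous: it produces only the tautology that one of $[\gamma]=\pm[\gamma]$ holds, which is always satisfied via the first alternative. The substantive implication --- that a singleton fibre $\p^{-1}(b)$ forces $2$-torsion --- is not a formal consequence of the definitions: $\p^{-1}(b)\subseteq\pi_1(M)/\!\!\sim$ consists of images of actual elements of $\pi_1(M)$, and $-[\gamma]$ need not be such an image. For instance, if $\pi_1(\Sigma)=\pi_1(\Sigma')=\pi_1(M)=1$ and $M$ is orientable, every relator in~\eqref{eq:lambda-relations} is trivial, so $\Gamma_{f,g}\cong\Z$; here $\p$ has a unique fibre $\{[1]\}$ of size one, yet $[1]$ has infinite order. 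Without the $2$-torsion, $\Psi$ does not descend to $V$ and your argument stalls at its first step. You need either to supply an argument that $b\in B_1$ produces a type-(3) relator $\gamma = f_\bullet(\alpha)\,\gamma\,g_\bullet(\beta)$ with sign $-1$, or to pin down $B_1$ as consisting of those double cosets for which $[\gamma]$ has order two in $\Gamma_{f,g}$, which is the reading the paper relies on when it treats the two characterisations as equivalent in \cref{def:lambda-coeff-at-gamma}. Once $B_1$ is fixed this way, the rest of your construction goes through.
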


In particular, the homomorphisms $c_{s(b)}$ and $c_b$ determine the isomorphisms displayed in \cref{remark:examples-Gamma-fg}.

\begin{proof}
Starting with the free abelian group with basis $\pi_1(M)$, a relator in~\eqref{eq:lambda-relations} does one of the following three things.
\begin{enumerate}
\item It identifies two distinct basis elements $\gamma_1$ and $\gamma_2$ if and only if $\gamma_2 = f_\bullet(\alpha) \ast \gamma_1 \ast g_\bullet(\beta)\in \pi_1(M)$ and $w^{\Sigma}(\alpha)w^{\Sigma'}(\beta) w^M(g_\bullet(\beta))=1$ for some $\alpha\in \pi_1(\Sigma)$ and $\beta\in\pi_1(\Sigma')$.
\item It identifies a basis element $\gamma_1$ with the inverse $-\gamma_2$ of another basis element $\gamma_2\neq \gamma_1$ if and only if $\gamma_2 = f_\bullet(\alpha)\ast \gamma_1 \ast g_\bullet(\beta)$ and $w^{\Sigma}(\alpha)w^{\Sigma'}(\beta) w^M(g_\bullet(\beta))=-1$ for some $\alpha\in \pi_1(\Sigma)$ and $\beta\in\pi_1(\Sigma')$.
\item It identifies a basis element $\gamma$ with its inverse $-\gamma$ if and only if
\[\gamma = f_\bullet(\alpha) \ast \gamma \ast g_\bullet(\beta)\]
and
\[w^{\Sigma}(\alpha)w^{\Sigma'}(\beta) w^M(g_\bullet(\beta))=-1
\]
for some $\alpha\in \pi_1(\Sigma)$ and $\beta\in\pi_1(\Sigma')$.
\end{enumerate}
The first two types of relators reduce the basis to the double coset \[f_\bullet\pi_1(\Sigma)\backslash \pi_1(M) / g_\bullet\pi_1(\Sigma').\] The third type adds the relations $2[\gamma]=0$ to the fundamental group elements $\gamma$ in question, which then generate $V$, because these are exactly the $\gamma$ such that $-[\gamma]=[\gamma]$, i.e.\ where $\#\p^{-1}(|\gamma|)=1$.
Those $\gamma$ where $\#\p^{-1}(|\gamma|)=2$ remain of infinite order and generate $FA$.
Note that the second type of relator forces us to choose the section $s$ in order to write down a consistent basis for $FA$.
\end{proof}

The subgroup $V$ and its basis clearly do not depend on our choice of section, but
 the basis of $FA$ depends on this choice. If we change the section $s$ at a point $b\in B_2$ to $s'$ so that $s'(b)= - s(b)$, the associated basis element changes to its inverse. It follows that the subgroup $FA$ of $\Gamma_{f,g}$ does not depend on the choice of $s$.
It also follows that the coefficient maps $c_{s(b)}$ only depend on $s$ up to sign and satisfy $c_{-s(b)} = - c_{s(b)}$.

For a given $a\in \smfrac{\pm\pi_1(M)}{\sim}$ we can choose a section $s$ as above with $s(\p(a)):=a$ and hence we get a coefficient map $c_a$ that is independent of the other values of $s$.
For example, we can take $a:=[\gamma] \in \smfrac{\pm\pi_1(M)}{\sim}$ with $\gamma\in\pi_1(M)$ to get $c_\gamma$.

\begin{definition}\label{def:lambda-coeff-at-gamma}
For $\gamma\in \pi_1(M)$, we write $\lambda(f,g)_\gamma:= c_{\gamma}(\lambda(f,g))$. This quantity lies in $\Z$ (respectively, $\Z/2$) when $|\gamma|$ lies in $B_2$ (respectively $B_1$), or equivalently when $[\gamma]$ has infinite order (respectively, order~2) in $\Gamma_{f,g}$. The values does not depend on the choice  of $s$  and satisfies  $c_{\gamma_1} = - c_{\gamma_2}$ whenever $[\gamma_1] = -[\gamma_2]$.
\end{definition}

The following can be proven using~\cref{prop:hom-gen-immersion} (see e.g.~\citelist{\cite{FQ}*{Section~1.7}\cite{DET-book-detintro}} for the case of discs and spheres).

\begin{proposition}\label{prop:regular-homotopy-inv-lambda}
Let $f\colon \Sigma\to M$ and $g\colon \Sigma' \to M$ be based maps that are transverse to one another. The intersection number $\lambda(f,g)$ is preserved by homotopies that are ambient isotopies near $\partial\Sigma\sqcup\partial \Sigma'$.
\end{proposition}

\begin{remark}
The geometric definition of $\lambda$ given above has a well known algebraic version in the case that $f$ and $g$ correspond to classes in $H_2(M,\partial M;\Z[\pi_1(M)])$. This extends to the case of positive genus, as we now sketch. We restrict ourselves to the case that $M$, $\Sigma$, and $\Sigma'$ are closed and oriented for convenience.

Choose a basepoint in the universal cover of $M$, lifting the basepoint of $M$. The maps $f$ and $g$ lift uniquely (with respect to this choice of basepoint) to covers $\widehat{M}$ and $\widehat{M}'$, corresponding to the subgroups $f_\bullet(\pi_1(\Sigma))$ and $g_\bullet(\pi_1(\Sigma'))$ respectively. These lifts represent classes
\[
[f]\in H_2(\widehat{M};\Z)\cong H_2\big(M;\Z[\pi_1(M)/f_\bullet\pi_1(\Sigma)]\big)
\]
and
\[[g]\in H_2(\widehat{M}';\Z)\cong H_2\big(M;\Z[\pi_1(M)/g_\bullet\pi_1(\Sigma')]\big).
\]

Then we have \[\PD^{-1}([f])\smile \PD^{-1}([g]) \in H^4\big(M;\Z[\pi_1(M)/f_\bullet\pi_1(\Sigma)] \otimes_\Z \Z[\pi_1(M)/g_\bullet\pi_1(\Sigma')]\big).\]
By Poincar\'{e} duality, this yields an element in \[H_0\big(M;\Z[\pi_1(M)/f_\bullet\pi_1(\Sigma)] \otimes_\Z \Z[\pi_1(M)/g_\bullet\pi_1(\Sigma')]\big),\] which is isomorphic as an abelian group to \[\Z[f_\bullet\pi_1(\Sigma)\backslash\pi_1(M)] \otimes_{\Z[\pi_1(M)]} \Z[\pi_1(M)/g_\bullet\pi_1(\Sigma')].\]
Here $\Z[f_\bullet(\pi_1(\Sigma))\backslash\pi_1(M)]$ denotes $\Z[\pi_1(M)/f_\bullet\pi_1(\Sigma)]$ considered as a right $\Z[\pi_1(M)]$-module.

Finally we have the isomorphism
\begin{align*}
\Z[f_\bullet\pi_1(\Sigma)\backslash\pi_1(M)] \otimes_{\Z[\pi_1(M)]} \Z[\pi_1(M)/g_\bullet\pi_1(\Sigma')]	&\ra \Z[f_\bullet\pi_1(\Sigma)\backslash \pi_1(M) / g_\bullet\pi_1(\Sigma')]\\
[a]\otimes [b]	&\longmapsto [ab] \quad \text { for } a,b\in \pi_1(M).
\end{align*}
We shall not prove that this formulation agrees with the geometric definition.
\end{remark}

\subsection{Self-intersection numbers}\label{subsection:self-intersection-numbers}

Next we turn to the \emph{self-intersection number} for a based generic immersion $f\colon \Sigma\looparrowright M$ of a connected surface $\Sigma$, with whisker $v_f$.
The definition of $\mu$, given below, is similar to that of $\lambda$ in the previous subsection, except that there is no longer a clear choice of which sheet to consider first at a given double point. Consequently, the values of $\mu$ lie in a further quotient of the group $\Gamma_{f,f}$ from \cref{defn:Gamma-f-g}.

We write $f \pitchfork f \subseteq M$ for the set of double points of $f$. We record the self-intersections of $f$ by the sum of signed group elements \[\mu(f):= \sum_{p\in f \pitchfork f} \varepsilon(p)\cdot \eta(p)\] as follows.
\begin{itemize}
\item For $p=f(x_1)=f(x_2)$ for $x_1\neq x_2\in \Sigma$, let $\gamma_1^p$ and $\gamma_2^p$ be paths in $\Sigma$ from the basepoint to $x_1$ and $x_2$ respectively.
\item The sign $\varepsilon(p)\in\{\pm 1\}$ is defined as follows. Transport the local orientation of $\Sigma$ at the basepoint to $x_1$ along $\gamma_1^p$, and along $\gamma_2^p$ to $x_2$. This induces a local orientation at $p$. Another local orientation is obtained by transporting the local orientation at the basepoint of $M$ to $p$ along the concatenated path $v_f \ast (f \circ \gamma_2^p)$. We define $\varepsilon(p)= 1$ when the two local orientations match at $p$, and $-1$ otherwise.
\item The element $\eta(p)\in \pi_1(M)$ is given by the concatenation $v_f \ast (f \circ \gamma_1^p) \ast (f \circ \gamma_2^p)^{-1} \ast  v_f^{-1}$.
\end{itemize}
There is a similar discussion about homotopy invariance of $\mu(f)$ as for $\lambda(f,g)$ earlier: homotopies $f_t$ that are generic immersions for all $t$ preserve the formal sum of signed elements but finger moves and Whitney moves (of $f$ with itself) create pairs $-\eta(p)+\eta(p)$ so it is convenient to use abelian groups. This takes care of regular homotopies of $f$ but there is an additional subtlety for cusp homotopies $f_t$ where there is exactly one time $t_0$ for which $f_{t_0}$ is not an immersion. These issues will be discussed carefully below.

For simply connected $\Sigma$, the self-intersection invariant $\mu(f)$ is well defined in the quotient (as an abelian group) of $\Z[\pi_1(M)]$ obtained by introducing the relators
\[
\gamma - w^M(\gamma)\cdot\gamma^{-1}\]
 for all $\gamma\in\pi_1(M)$. For general $\Sigma$, the quantity $\mu(f)$ is well defined in the abelian group
\begin{equation}\label{eqn:wt-Gamma-ff}
  \Gamma_{f} := \Gamma_{f,f} / \langle \gamma- w^M(\gamma)\cdot\gamma^{-1}\rangle,
\end{equation}
i.e.\ in this quotient of $\Gamma_{f,f}$ from \cref{defn:Gamma-f-g}. Here, as above $w^M \colon \pi_1(M) \to \{\pm 1\}$ is the orientation character.

We now change our notation slightly from the discussion of $\lambda(f,g)$ in order to work in this further quotient.
Let $\sim$ denote the equivalence relation on $\pm\pi_1(M)$ induced by the composition
\[\pm\pi_1(M) \hooklongrightarrow \Z[\pi_1(M)]\longtwoheadrightarrow \Gamma_{f}\]
sending $a\mapsto [a]$ and let $|\pi_1(M)|$ be the quotient of $\pm\pi_1(M)$ obtained by identifying $\gamma_1$ and $\gamma_2$ whenever
$[\gamma_1]= \pm [\gamma_2] \in \Gamma_f$.
Then we obtain the following analogues of \cref{lem:double coset,lem:Gammafg}.

\begin{lemma}\label{lem:double coset f}
Let $\gamma_1,\gamma_2\in\pi_1(M)$. One of the relations $[\gamma_1]= \pm [\gamma_2]$ holds if and only if $\gamma_1$ and $\gamma_2$ represent the same element in the quotient of the double coset by inversion. In other words, the identity map induces a bijection
\[
|\pi_1(M)| \longleftrightarrow \smfrac{\left(f_\bullet\pi_1(\Sigma)\backslash \pi_1(M) / f_\bullet\pi_1(\Sigma)\right)}{\approx}
\]
where $\approx$ is the equivalence relation identifying $\gamma$ and $\gamma^{-1}$ for all $\gamma \in \pi_1(M)$.
\end{lemma}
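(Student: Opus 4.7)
The strategy is to deduce the statement from Lemma~\ref{lem:double coset}, applied with $g=f$ and $\Sigma'=\Sigma$, together with the extra defining relation of $\Gamma_{f}$ from~\eqref{eqn:wt-Gamma-ff}. Concretely, $\Gamma_f = \Gamma_{f,f}/\langle \gamma - w^M(\gamma)\cdot\gamma^{-1}\rangle$, so in $\Gamma_f$ we have $[\gamma]=w^M(\gamma)[\gamma^{-1}]$ for every $\gamma\in\pi_1(M)$. In particular $[\gamma^{-1}]=\pm[\gamma]$ in $\Gamma_f$ for every $\gamma$, so at the level of equivalence classes in $\pi_1(M)$, the new relation simply identifies each element with its inverse (up to sign).

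For the ``if'' direction, suppose $\gamma_1$ and $\gamma_2$ lie in the same class of $\big(f_\bullet\pi_1(\Sigma)\backslash \pi_1(M) / f_\bullet\pi_1(\Sigma)\big)/\langle \gamma\sim \gamma^{-1}\rangle$. Then either $\gamma_1$ and $\gamma_2$ represent the same double coset, or $\gamma_1$ and $\gamma_2^{-1}$ do. By Lemma~\ref{lem:double coset} applied to $g=f$, the first possibility gives $[\gamma_1]=\pm[\gamma_2]$ in $\Gamma_{f,f}$, and hence in $\Gamma_f$. The second gives $[\gamma_1]=\pm[\gamma_2^{-1}]$ in $\Gamma_{f,f}$, and therefore in $\Gamma_f$, where we may further use $[\gamma_2^{-1}]=\pm[\gamma_2]$ to conclude $[\gamma_1]=\pm[\gamma_2]$.

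For the ``only if'' direction, suppose $[\gamma_1]=\pm[\gamma_2]$ in $\Gamma_f$. By definition, this equality is witnessed by a finite sequence of applications of the relators in~\eqref{eq:lambda-relations} (with $g=f$, $\Sigma'=\Sigma$) together with the extra relators $\gamma - w^M(\gamma)\cdot\gamma^{-1}$. The first type of relator, by Lemma~\ref{lem:double coset}, preserves the double coset class of the underlying element of $\pi_1(M)$; the second type replaces an element by its inverse, which acts on double cosets by $f_\bullet(\alpha)\gamma f_\bullet(\beta) \mapsto f_\bullet(\beta^{-1})\gamma^{-1} f_\bullet(\alpha^{-1})$, i.e.\ it sends a double coset to the double coset of the inverse. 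Hence the composite equivalence relation on $\pi_1(M)$ generated by both types of relators is precisely double coset equivalence followed by the inversion involution, which gives the double coset quotient by inversion.

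The main bookkeeping point, and the only nontrivial step, is to check that no \emph{new} identifications on $\pi_1(M)/\!\!\sim$ arise from interleaving the two types of relators: this follows because inversion normalises the double coset action (inverting $f_\bullet(\alpha)\gamma f_\bullet(\beta)$ yields $f_\bullet(\beta^{-1})\gamma^{-1} f_\bullet(\alpha^{-1})$), so the group generated by ``double coset equivalence'' and ``inversion'' in the symmetric group on $\pi_1(M)$ is the semidirect product of the two, and its orbits are exactly the classes of the double coset quotient modulo inversion. Combining the two directions yields the claimed bijection.
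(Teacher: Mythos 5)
The paper states this lemma without proof --- it sits between Lemma~\ref{lem:double coset}, which is flagged as ``immediate from the definitions,'' and Lemma~\ref{lem:Gammaf}, whose proof is declared ``exactly analogous'' to that of Lemma~\ref{lem:Gammafg} --- so there is no argument in the paper to compare against. Your proof supplies the natural one and is correct. You rightly read the displayed bijection with $g=f$ and $\Sigma'=\Sigma$ (the printed $g_\bullet\pi_1(\Sigma')$ is a slip for $f_\bullet\pi_1(\Sigma)$), and deduce the claim from Lemma~\ref{lem:double coset} together with the single extra relator $\gamma - w^M(\gamma)\cdot\gamma^{-1}$ that defines $\Gamma_f$ as a quotient of $\Gamma_{f,f}$. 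The ``if'' direction is clean, and your observation that inversion normalises the double-coset action, so that any composition of these moves collapses to a double-coset identification followed by at most one inversion, is exactly the right point.

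One place the phrasing is looser than the logic strictly requires: the ``only if'' direction asserts that $[\gamma_1]=\pm[\gamma_2]$ in $\Gamma_f$ is ``witnessed by a finite sequence of applications of the relators.'' In a quotient of a free abelian group, equality of classes means $\gamma_1 \mp \gamma_2$ lies in the subgroup $R$ generated by the relators, i.e.\ is an integer linear combination of them; it is not automatic that this gives a step-by-step chain replacing one basis element by another. It does here, because each relator in question relates two basis elements of $\Z[\pi_1(M)]$ lying in the same orbit under the group generated by left and right multiplication by $f_\bullet\pi_1(\Sigma)$ and by inversion, so $R$ decomposes as a direct sum over these orbits; and within a single orbit-summand every relator has even augmentation, so a single basis element (augmentation $1$) never lies in $R$. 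Hence $\gamma_1\mp\gamma_2\in R$ forces $\gamma_1$ and $\gamma_2$ into the same orbit, which is exactly the ``chain'' you invoke. This is the degree of rigour the paper itself uses in the proof of Lemma~\ref{lem:Gammafg}, so I regard your account as adequate, but note that this, rather than the interleaving of relator types you single out, is where the nontrivial content lives. The aside that the generated group is ``the semidirect product of the two'' is not quite right (it is a quotient of such a product), but this is harmless since normalisation is all you use.
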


We write $|\gamma| := \p(\gamma)$ for the quotient map $\p \colon \smfrac{\pm\pi_1(M)}{\sim}  \sra  |\pi_1(M)|$. Again $\p$ has fibres of order 1 or 2 and we decompose $|\pi_1(M)|$ as a disjoint union $B_1\sqcup B_2$ according to this distinction as before. Choose a section $s\colon  |\pi_1(M)| \to \smfrac{\pm\pi_1(M)}{\sim}$ of $\p$.
As before, for $b \in B_2$ we denote the elements of the fibre by $\p^{-1}(b) = \{s(b),-s(b)\}$.

\begin{lemma}\label{lem:Gammaf}
The abelian group $\Gamma_{f}$ is a direct sum
$\Gamma_{f} = FA \oplus V$ of a free abelian group $FA$ on the set  $s(B_2)\subseteq \smfrac{\pm\pi_1(M)}{\sim} \subseteq \Gamma_{f}$ and a $\Z/2$ vector space $V$ with basis $s(B_1)=\p^{-1}(B_1)\subseteq \smfrac{\pm\pi_1(M)}{\sim}$.

Reading off the coefficients in this decomposition gives homomorphisms $c_{s(b)}\colon \Gamma_{f}\to\Z$ for $b\in B_2,$ and $c_b\colon \Gamma_{f}\to\Z/2$ for $b\in B_1,$ leading to a decomposition of $\Gamma_{f}$ as a direct sum of copies of $\Z$ and $\Z/2$.
 \end{lemma}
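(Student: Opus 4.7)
I would follow essentially verbatim the argument for Lemma~\ref{lem:Gammafg}, adapted to account for the additional inversion relators $\gamma - w^M(\gamma)\cdot\gamma^{-1}$ that distinguish $\Gamma_f$ from $\Gamma_{f,f}$. Starting from the free abelian group on $\pi_1(M)$, I would consider simultaneously the relators from Definition~\ref{defn:Gamma-f-g} (specialised to $\Sigma'=\Sigma$ and $g=f$) together with the inversion relators, and classify each according to its effect on the generating set: (1) identifying two distinct basis elements $\gamma_1,\gamma_2$, (2) identifying a basis element $\gamma_1$ with the negative $-\gamma_2$ of another, or (3) forcing $2[\gamma]=0$. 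The first two types are as in the proof of Lemma~\ref{lem:Gammafg}; the new inversion relators contribute extra instances of (1) when $w^M(\gamma)=+1$ and $\gamma\neq\gamma^{-1}$, of (2) when $w^M(\gamma)=-1$ and $\gamma\neq\gamma^{-1}$, and of (3) when $\gamma$ equals $w^M(\gamma)\gamma^{-1}$ after the double-coset relations.

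Next, I would invoke Lemma~\ref{lem:double coset f} to identify the set of equivalence classes of basis elements under all type-(1) and type-(2) relators with the quotient $|\pi_1(M)|$ of the double coset by inversion. This reduces the generators to (one representative of each element of) $|\pi_1(M)|$, partitioned as $B_1\sqcup B_2$ according to whether $\p^{-1}$ has one or two elements. For $b\in B_2$, I would choose the section $s$ to pick one of the two preimages, and observe that no type-(3) relator can act on $s(b)$: any such relator would identify $s(b)$ with $-s(b)$, collapsing $\p^{-1}(b)$ to a singleton and contradicting $b\in B_2$. Hence these elements generate a free abelian summand $FA$. For $b\in B_1$, by definition $\p^{-1}(b)$ is a single class $[\gamma]$ satisfying $[\gamma]=-[\gamma]$, which is exactly the relation $2[\gamma]=0$; these contribute a $\Z/2$-summand $V$ with basis $\p^{-1}(B_1)$. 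Reading off coefficients in the decomposition $\Gamma_f=FA\oplus V$ then produces the claimed homomorphisms $c_{s(b)}\colon\Gamma_f\to\Z$ for $b\in B_2$ and $c_b\colon\Gamma_f\to\Z/2$ for $b\in B_1$.

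\textbf{Main obstacle.} The conceptual content is routine once Lemma~\ref{lem:Gammafg} is in hand; the only point requiring care is the bookkeeping check that the partition $|\pi_1(M)|=B_1\sqcup B_2$ accurately captures precisely which elements become $2$-torsion in $\Gamma_f$. Specifically, one must verify that $\#\p^{-1}(b)=1$ is equivalent to the existence of some combination of double-coset and inversion relators forcing a type-(3) relation on a lift of $b$, while $\#\p^{-1}(b)=2$ means no such combination exists and the two preimages remain independent of infinite order (modulo their sum). This is a direct unpacking of the definition of $\sim$ and of $\p$, but it is the step where the orientation characters $w^\Sigma$ and $w^M$ interact most subtly, and it must be done carefully to ensure that the section $s$ yields a consistent $\Z$-basis for $FA$.
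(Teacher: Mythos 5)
Your proposal is correct and takes essentially the same approach as the paper, which in fact proves Lemma~\ref{lem:Gammaf} simply by noting that the proof is "exactly analogous" to that of Lemma~\ref{lem:Gammafg}; your write-up usefully unpacks that analogy by classifying how the additional inversion relators $\gamma - w^M(\gamma)\gamma^{-1}$ fall into the same three types already appearing there. One small clarification: the "bookkeeping check" you flag as the main obstacle is essentially definitional, since $B_1$ consists by construction of those classes with $\#\p^{-1}(b)=1$, which is equivalent to $[\gamma]=-[\gamma]$ (the genuinely nontrivial point is that the $B_2$ classes remain of infinite order, which the orbit-wise sign-consistency argument supplies).
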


\begin{proof}
The proof is analogous to that of \cref{lem:Gammafg}.
\end{proof}

\begin{remark}
  If $M$ and $\Sigma$ are oriented, then
  \[\Gamma_f \cong \Z[f_\bullet\pi_1(\Sigma)\backslash \pi_1(M) / f_\bullet\pi_1(\Sigma)]/\langle\gamma -  \gamma^{-1}\rangle = \Z\bigl[ \smfrac{\left(f_\bullet\pi_1(\Sigma)\backslash \pi_1(M) / f_\bullet\pi_1(\Sigma)\right)}{\approx}\bigr] \]
is free abelian.  In this case $B_1 = \emptyset$ and $B_2 = |\pi_1(M)| = \smfrac{\left(f_\bullet\pi_1(\Sigma)\backslash \pi_1(M) / f_\bullet\pi_1(\Sigma)\right)}{\approx}$ by \cref{lem:double coset f}.

Consider instead $f\colon \RP\looparrowright M$ with $f_\bullet(\mathbb{RP}^1)=1$, without any assumption on $M$.
Then \[\Gamma_{f} \cong (\Z/2)[\pi_1(M)]/\langle\gamma - w^M(\gamma) \cdot \gamma^{-1}\rangle  = (\Z/2)\bigl[\smfrac{\pi_1(M)}{\approx}\bigr].\]
As in \cref{remark:examples-Gamma-fg} this is 2-torsion due to the relations $\gamma =w^{\RP}(\mathbb{RP}^1)\cdot f_\bullet(\mathbb{RP}^1)\ast \gamma=-\gamma$. In this case $B_1 =|\pi_1(M)| = \smfrac{\pi_1(M)}{\approx}$ and $B_2 = \emptyset$.
\end{remark}

As before the subgroups $FA$ and $V$ of $\Gamma_{f}$ do not depend on the choice of $s$, only the basis of $FA$ does. As a consequence, the coefficient maps $c_{s(b)}$ only depend on $s$ up to sign and satisfy $c_{-s(b)} = - c_{s(b)}$. Given $a\in \smfrac{\pm\pi_1(M)}{\sim}$ we may again take $s(\p(a)):=a$ to get $c_a$ and in particular $c_\gamma$ for $\gamma\in\pi_1(M)$ independent of the choice of $s$ at other points. This gives the following definition.

\begin{definition}\label{def:coeff-at-gamma}
For $\gamma\in \pi_1(M)$, we write $\mu(f)_\gamma:= c_{\gamma}(\mu(f))$. This quantity lies in $\Z$ (respectively, $\Z/2$) when $|\gamma|$ lies in $B_2$ (respectively $B_1$), or equivalently when $[\gamma]$ has infinite order (respectively, order~2) in $\Gamma_f$. The values does not depend on the choice  of $s$  and satisfies  $c_{\gamma_1} = - c_{\gamma_2}$ whenever $[\gamma_1] = -[\gamma_2]$.
\end{definition}

We focus on $\mu(f)_1$, which plays an important r\^{o}le in the distinction between the homotopy class and regular homotopy class of $f$, as we will discuss in the next subsection. In the usual case, where $\Sigma$ is simply connected, $\mu(f)_1\in \Z$. However, in general $\mu(f)_1$ may lie in either $\Z$ or $\Z/2$. The following lemma gives the precise conditions determining the home of $\mu(f)_1$.

\begin{lemma}\label{lem:mu1}
 Let $f\colon \Sigma\looparrowright M$ be a based, generic immersion, with whisker $v$. Recall that the map $f_\bullet\colon \pi_1(\Sigma)\to \pi_1(M)$ is given by $\alpha\mapsto v\ast (f\circ \alpha) \ast v^{-1}$.
	
	If $w^\Sigma$ is trivial on $\ker(f_\bullet)$ and $w^M$ is trivial on $\Ima(f_\bullet)$, then $[1]\in \Gamma_f$ has infinite order and thus $\mu(f)_1\in\Z$. Otherwise, $[1]$ has order 2 and $\mu(f)_1\in \Z/2$.
\end{lemma}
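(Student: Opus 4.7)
The plan is to reformulate using \cref{def:coeff-at-gamma}: the coefficient $\mu(f)_1$ lies in $\Z$ precisely when $[1]$ has infinite order in $\Gamma_f$, and in $\Z/2$ precisely when $[1]$ has order two. Since each defining relator of $\Gamma_f$ only identifies one basis element of $\Z[\pi_1(M)]$ with a signed multiple of another, no relator can kill $[1]$; hence $[1]\neq 0$ and the only question is whether $[1]=-[1]$. So the lemma reduces to the equivalence
\[
[1]=-[1] \text{ in }\Gamma_f \quad \Longleftrightarrow\quad w^\Sigma|_{\ker f_\bullet}\not\equiv 1 \text{ or } w^M|_{\Ima f_\bullet}\not\equiv 1.
\]

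For the ``$\Longleftarrow$'' direction I would exhibit an explicit relation witnessing $[1]=-[1]$. If some $\delta\in\ker(f_\bullet)$ satisfies $w^\Sigma(\delta)=-1$, apply the relator~\eqref{eq:lambda-relations} (which descends to $\Gamma_f$ by~\eqref{eqn:wt-Gamma-ff}) with $\alpha:=\delta$, $\beta:=1$, $\gamma:=1$ to get $[1]=w^\Sigma(\delta)\,[f_\bullet(\delta)]=-[1]$. If instead some $\beta\in\pi_1(\Sigma)$ satisfies $w^M(f_\bullet\beta)=-1$, apply the same relator with $\alpha:=\beta^{-1}$, that $\beta$, and $\gamma:=1$; the $w^\Sigma$ factors combine to $w^\Sigma(\beta)^2=1$, leaving $[1]=w^M(f_\bullet\beta)\,[f_\bullet(\beta^{-1}\beta)]=-[1]$.

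For the ``$\Longrightarrow$'' direction (equivalently the contrapositive), I would assume both characters are trivial as stated, and construct a homomorphism $\phi\colon \Gamma_f\to\Z$ with $\phi([1])=1$. Set $H:=\Ima(f_\bullet)\subseteq\pi_1(M)$. The hypothesis $w^\Sigma|_{\ker f_\bullet}\equiv 1$ ensures that $w^\Sigma$ descends to a well-defined homomorphism $\widehat w\colon H\to\{\pm 1\}$ with $\widehat w(f_\bullet\alpha)=w^\Sigma(\alpha)$. Define $\phi\colon\Z[\pi_1(M)]\to\Z$ on the basis $\pi_1(M)$ by
\[
\phi(\gamma):=\begin{cases}\widehat w(\gamma) & \gamma\in H,\\ 0 & \gamma\notin H,\end{cases}
\]
and extend $\Z$-linearly. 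Verifying that $\phi$ descends to $\Gamma_f$ requires checking the two families of relators: in~\eqref{eq:lambda-relations}, the elements $\gamma$ and $f_\bullet(\alpha)\gamma f_\bullet(\beta)$ are simultaneously inside or outside $H$; in the ``outside'' case both contribute $0$, while in the ``inside'' case the computation reduces to $\widehat w(\gamma)=w^\Sigma(\alpha)w^\Sigma(\beta)w^M(f_\bullet\beta)\,\widehat w(f_\bullet\alpha)\widehat w(\gamma)\widehat w(f_\bullet\beta)=w^M(f_\bullet\beta)\,\widehat w(\gamma)$, which holds exactly because $w^M|_H\equiv 1$. The relators $\gamma-w^M(\gamma)\gamma^{-1}$ are handled analogously, again using $w^M|_H\equiv 1$ (and $\widehat w(\gamma^{-1})=\widehat w(\gamma)$ since $\widehat w$ is $\pm 1$-valued). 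Then $\phi([1])=\widehat w(1)=1\neq -1=\phi(-[1])$, so $[1]$ has infinite order.

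The main obstacle is the well-definedness of $\phi$ in the third paragraph; it is precisely here that \emph{both} hypotheses of the lemma are used in an essential, separate way — $w^\Sigma|_{\ker f_\bullet}\equiv 1$ to make $\widehat w$ single-valued on $H$, and $w^M|_{\Ima f_\bullet}\equiv 1$ to kill the leftover sign in each of the two relator families. Dropping either hypothesis produces an inconsistency, consistent with the explicit torsion relation exhibited in the second paragraph. The lemma then follows immediately from \cref{def:coeff-at-gamma}.
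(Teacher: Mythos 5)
Your proof is correct, and the forward direction takes a genuinely different (and arguably cleaner) route than the paper's. The $\Longleftarrow$ direction — exhibiting explicit relators that force $[1]=-[1]$ — is essentially identical to the paper's argument. For the $\Longrightarrow$ direction, the paper proceeds by case analysis: it asserts that $[1]$ can acquire order two only through one of two enumerated relator patterns (their cases (i) and (ii)), and shows each is incompatible with the hypotheses. This leaves a small implicit burden, namely that the enumerated cases really are exhaustive — in principle one might worry about $[1]=-[1]$ arising from a \emph{chain} of relators rather than a single one. You sidestep that issue entirely by producing a homomorphism $\phi\colon\Gamma_f\to\Z$ with $\phi([1])=1$, which immediately gives infinite order. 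The construction — an augmentation-type map supported on $\Ima(f_\bullet)$ and twisted by the descended character $\widehat{w}$ — is natural, and it is pleasing that the two hypotheses of the lemma surface as exactly what is needed to make $\widehat{w}$ single-valued and to kill the leftover $w^M$ sign in the two relator families. Two small remarks: your second sentence ($[1]\neq 0$, and ``the only question is $[1]=-[1]$'') is tacitly appealing to the structure of $\Gamma_f$ as in Lemma~\ref{lem:Gammaf}, which says every $[\gamma]$ has order $1$, $2$, or $\infty$; you could cite it. But in fact you do not need that observation at all, since $\phi([1])=1\neq 0$ already forces infinite order directly, without passing through $[1]\neq -[1]$.
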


\begin{proof}
	By definition, for $1\in\pi_1(M)$, we know that $[1]\in \Gamma_{f,f}$ has order 2 precisely if (i) there exists $\alpha, \beta\in \pi_1(\Sigma)$ such that $f_\bullet(\alpha)\ast f_\bullet(\beta)=f_\bullet(\alpha\ast \beta)=1$ and $ w^\Sigma(\alpha)w^\Sigma(\beta)w^M(f_\bullet(\beta))=w^\Sigma(\alpha\ast \beta)w^M(f_\bullet(\beta))=-1$, or (ii) there exists $\delta\sim 1$ where $\delta$ has order two in $\pi_1(M)$ and $w^M(\delta)=-1$.

	Suppose that $w^\Sigma$ is trivial on $\ker(f_\bullet)$ and $w^M$ is trivial on $\Ima(f_\bullet)$. Then the first case (i) cannot happen, since if $f_\bullet(\alpha\ast \beta)=1$, then $\alpha\ast \beta \in \ker(f_\bullet)$ so $w^\Sigma(\alpha\ast \beta)w^M(f_\bullet(\beta))=1\cdot 1=1$. Similarly, (ii) cannot happen: suppose $\delta$ is order two in $\pi_1(M)$ and $\delta\sim 1$. Then by definition, $\delta= f_\bullet(\alpha)\ast 1\ast f_\bullet(\beta)=f_\bullet(\alpha\ast \beta)$ in $\pi_1(M)$, for some $\alpha,\beta\in \pi_1(M)$. In particular, $\delta\in \Ima(f_\bullet)$, and so again $w^M(\delta)=1$ by hypothesis, contradicting (ii). Therefore, $[1]$ has infinite order as claimed.
	
	Now suppose there is some $\alpha\in\pi_1(\Sigma)$ with $w^M(f_\bullet(\alpha))=-1$. Then we have $f_\bullet(\alpha^{-1}) \ast 1\ast f_\bullet(\alpha)=1$ and $w^{\Sigma}(\alpha^{-1})w^{\Sigma}(\alpha)w^M(f_\bullet(\alpha))=-1$, so $[1]\sim -[1]$ and $[1]$ has order two.
	
	Finally suppose that there is some $\alpha\in \ker(f_\bullet)$ with $w^\Sigma(\alpha)=-1$. Then we have $f_\bullet(\alpha) \ast 1 \ast f_\bullet(1_\Sigma) = 1$ and $w^{\Sigma}(\alpha)w^{\Sigma}(1_\Sigma) w^M(f_\bullet(1_\Sigma))=-1$, where $1_\Sigma$ denotes the trivial element in $\pi_1(\Sigma)$. Then again we have $[1]\sim -[1]$ and $[1]$ has order two.
\end{proof}

As with \cref{prop:regular-homotopy-inv-lambda} the proof of the following proposition is virtually identical to the case of discs and spheres using~\cref{prop:hom-gen-immersion} (see e.g.~\cite{DET-book-detintro}), and we leave it for the interested reader.

\begin{proposition}\label{prop:regular-homotopy-inv-mu}
Let $f\colon \Sigma\looparrowright M$ be a based generic immersion. The self-intersection number $\mu(f)$ is preserved under regular homotopies that are ambient isotopies near $\partial\Sigma$.
\end{proposition}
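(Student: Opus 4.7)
The plan is to reduce to generic homotopies using \cref{prop:hom-gen-immersion}(2), and then to verify invariance of $\mu(f) \in \Gamma_f$ under each of the local moves (ambient isotopy, finger move, Whitney move) that constitute a regular homotopy. The main subtlety, which forces the quotient defining $\Gamma_f$, is that the elementary data $(\varepsilon(p), \eta(p))$ at a double point depends on a choice of paths $\gamma_1^p, \gamma_2^p$ in $\Sigma$ and on the ordering of the two preimages $x_1, x_2$ of $p$; I need to check that the quotient in \eqref{eqn:wt-Gamma-ff} precisely absorbs these ambiguities.

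First I would show well-definedness: replacing $\gamma_i^p$ by $\alpha \ast \gamma_i^p$ for some $\alpha \in \pi_1(\Sigma)$ replaces $\eta(p)$ by either $f_\bullet(\alpha) \ast \eta(p)$ or $\eta(p) \ast f_\bullet(\alpha)^{-1}$ (depending on which preimage is modified), and multiplies $\varepsilon(p)$ by $w^\Sigma(\alpha)$ whenever the transport around $\alpha$ reverses the local orientation. This is exactly the relation \eqref{eq:lambda-relations} defining $\Gamma_{f,f}$ (specialised to $g = f$). Swapping the ordering $x_1 \leftrightarrow x_2$ sends $\eta(p) \mapsto \eta(p)^{-1}$ and multiplies $\varepsilon(p)$ by $w^M(\eta(p))$, since parallel-transporting the local orientation of $M$ around $\eta(p)$ picks up $w^M(\eta(p))$; this is exactly the additional relator $\gamma - w^M(\gamma) \cdot \gamma^{-1}$ in the quotient $\Gamma_f$. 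Hence the formal sum $\mu(f) := \sum_p \varepsilon(p) \cdot \eta(p)$ is well defined in $\Gamma_f$.

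Next I would check each of the three local moves. An ambient isotopy can be covered by a one-parameter family of whiskers and paths $\gamma_i^p(t)$ with the double points varying continuously, and transport is constant along the isotopy, so each $(\varepsilon(p), \eta(p))$ is preserved. For a finger move, two new double points $p, q$ are created and, by choosing the paths $\gamma_i^p, \gamma_i^q$ to differ only by the small arc of the finger, one computes $\eta(p) = \eta(q)$ and $\varepsilon(p) = -\varepsilon(q)$, so their joint contribution is $\eta(p) - \eta(p) = 0 \in \Gamma_f$. A Whitney move is the inverse of a finger move on a pair of double points paired by a framed, embedded Whitney disc, and the same local computation (carried out in the smooth model after applying \cref{thm:smooth-normal-bundle} to a regular neighbourhood of the Whitney disc and the two sheets) shows that the pair cancelled contributes zero. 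This gives regular homotopy invariance in the interior.

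The main obstacle, which is mostly bookkeeping, is making the choices of paths $\gamma_i^{p(t)}$ vary continuously through a generic homotopy and checking the sign conventions consistently in the nonorientable setting; once the relations in $\Gamma_f$ are in place this is essentially forced. Finally, for the boundary statement, a regular homotopy that pushes an intersection point $p$ across $\partial \Sigma$ eliminates exactly one double point without creating a canceling partner; its contribution $\varepsilon(p) \cdot \eta(p)$ is simply deleted from $\mu(f)$, and this contribution is in general nonzero in $\Gamma_f$, so $\mu(f)$ is not preserved. This matches the behaviour already noted for $\lambda$ in \cref{prop:regular-homotopy-inv-lambda}.
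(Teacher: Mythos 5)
Your proof is correct and follows the standard route that the paper itself defers to (it states the proof is ``virtually identical to the case of discs and spheres'' and leaves it to the reader). Two small points worth noting: since the paper \emph{defines} a regular homotopy of generic immersions to be a sequence of ambient isotopies, finger moves, and Whitney moves, the appeal to \cref{prop:hom-gen-immersion}(2) is unnecessary here (that proposition is for genericising arbitrary homotopies, not needed for regular ones); and in your well-definedness step the sign change when modifying $\gamma_2^p$ by $\beta$ is $w^\Sigma(\beta)\,w^M(f_\bullet(\beta))$ rather than just $w^\Sigma(\beta)$, because the convention in the paper transports the ambient local orientation along $v_f \ast (f\circ\gamma_2^p)$, so the extra $w^M$ factor enters on that side — exactly matching the asymmetric form of relation~\eqref{eq:lambda-relations}.
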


In this and the previous subsection, we have considered intersection and self-intersection numbers of connected surfaces. By combining these invariants, we can define the conglomerate notion of self-intersection number for disconnected surfaces $F = \{f_i\}_{i=1}^m \colon \Sigma \to M$, as considered in \cref{def:mu(F)}:
\[\mu (F) := \sum_{i < j} \lambda(f_i,f_j)+\sum_i\mu(f_i)\in \bigoplus_{i < j} \Gamma_{f_i,f_j}\oplus \bigoplus_i\Gamma_{f_i}.\]
\cref{prop:regular-homotopy-inv-lambda,prop:regular-homotopy-inv-mu} imply that $\mu(F)$ preserved under regular homotopies of $F$ that are ambient isotopies near $\partial\Sigma$.

\subsection{Whitney discs}\label{subsection:Whitney-discs}

A Whitney move cancels a pair of double points of a generic immersion $F\colon \Sigma\imra M$ as in \cref{convention}, provided all the assumptions on the guiding Whitney disc are satisfied. In our setting where $\Sigma$ and $M$ need be neither simply connected nor orientable, this requires some care. We start with the notion of arcs $A$ and $A'$ pairing double points $p$ and $q$, and the corresponding notion of $(p,q, A, A')$ having \emph{opposite sign}.

\begin{definition}\label{defn:paired-arcs-opposite-sign}
Let $f\colon \Sigma\to M$ and $g\colon \Sigma' \to M$ be based maps that either intersect transversely, or $f=g$ and $f$ is a generic immersion.
We say that two points $p,q\in f\pitchfork g\subseteq M$ are \emph{paired by arcs} if we equip them with the extra data of an arc $A\colon [0,1]\to\Sigma$ from $f^{-1}(p)$ to $f^{-1}(q)$ and an arc $A'$ in $\Sigma'$ from $g^{-1}(q)$ to $g^{-1}(p)$. In the case that $f=g$ we require that each point in $f^{-1}(p)$ and in $f^{-1}(q)$ is the endpoint of precisely one of the arcs $A$ and $A'$, i.e.\ $A$ and $A'$ lie in distinct sheets at both $p$ and $q$.
\end{definition}

With the extra data of the arcs $A$ and $A'$, we can make sense of whether two intersection points that are paired by arcs have opposite sign.

\begin{definition}\label{defn:opposite-signs}
Let $f\colon \Sigma\to M$ and $g\colon \Sigma' \to M$ be based maps that either intersect transversely, or $f=g$ and $f$ is a generic immersion.
Two intersection points $p,q\in f\pitchfork g\subseteq M$ paired by arcs $A$ in $\Sigma$ and $A'$ in $\Sigma'$ have \emph{opposite sign} if the following holds.
    Fix local orientations of $\Sigma$ at $f^{-1}(p)$ and of $\Sigma'$ at $g^{-1}(p)$. This choice induces a local orientation of $M$ at $p$. Transport the local orientation of $\Sigma$ from $f^{-1}(p)$ to $f^{-1}(q)$ along $A$, and the local orientation of $\Sigma'$ from $g^{-1}(p)$ to $g^{-1}(q)$ along $A'$. This gives a local orientation of $M$ at the point $q$. Compare this with the local orientation on $M$ at $q$ induced by transporting the local orientation from $p$ to $q$ along the arc $f\circ A$. If these orientations disagree then the points $p,q$ are said to have opposite sign (with respect to $A,A'$) and otherwise they are said to have the same sign.
The dependence on the choice of arcs $A$ and $A'$ is sometimes neglected.
\end{definition}

Note that double points having the same sign could be ``paired'' by an embedded disc, but this does not mean that a Whitney move using this disc is possible, because the required section of the normal bundle of the disc is not available; in this case any rank one sub-bundle of the normal bundle of the disc, restricted to the boundary, that is tangent to one sheet of $\Sigma$ and normal to the other sheet, turns out to be a M\"{o}bius bundle. So one does not study such discs and assumes that a Whitney disc always pairs two double points of opposite sign.

In the setting of based transverse maps $f\neq g$, with $\Sigma$ and $\Sigma'$ connected, recall from \cref{subsection:intersection-numbers} that $\lambda(f,g)$ is a sum of terms $\varepsilon(p)\cdot\eta(p)$, one for each double point $p\in f\pitchfork g$, with $\eta(p)\in \pi_1(M)$ and $\varepsilon(p)\in\{\pm 1\}$. This sum is well defined in the abelian group $\Gamma_{f,g}$ and each signed group element $a\in \pm\pi_1(M)$ represents a unique element $[a]\in\Gamma_{f,g}$. The same proof as in the case of simply connected surfaces~\cite{DET-book-detintro}*{Proposition~11.10} yields the following result.

\begin{lemma}\label{lem:pairing}
Let $\Sigma$ and $\Sigma'$ be compact connected surfaces and let $f\colon \Sigma\to M$ and $g \colon \Sigma'\to M$ be based maps with transverse double points $p,q\in f\pitchfork g\subseteq M$. Then $[\varepsilon(p)\cdot \eta(p) + \varepsilon(q)\cdot \eta(q)]=0 \in \Gamma_{f,g}$ if and only if $p$ and $q$ can be paired by arcs $A\subseteq\Sigma$ and $A'\subseteq\Sigma'$ such that
\begin{enumerate}[(i)]
\item\label{lem:pairing-item-i} the closed loop $f \circ A\cup_{p,q} g \circ A'$ is null-homotopic in $M$, and
\item\label{lem:pairing-item-ii} the points $p$ and $q$ have opposite sign with respect to the arcs $A$ and $A'$.
\end{enumerate}
\end{lemma}

If \eqref{lem:pairing-item-i} and \eqref{lem:pairing-item-ii} are satisfied for $p$ and $q$, we say that $W \colon D^2\to M$ is a (map of a) \emph{Whitney disc pairing $p$ and $q$} if its boundary is the closed loop in \eqref{lem:pairing-item-i}, the union of its two \emph{Whitney arcs} $f \circ A$ and $g \circ A'$.
We leave it to the reader to formulate the analogous notion for a pair of transverse self-intersection points of $f \colon \Sigma\to M$.
This gives rise to the following corollary to \cref{lem:pairing}.

\begin{corollary}\label{cor:vanishing}
Let $\Sigma$ and $\Sigma'$ be compact connected surfaces and let $f\colon \Sigma\to M$ and $g \colon \Sigma'\to M$ be transverse based maps. Then $\lambda(f,g)=0$ if and only if all intersection points between $f$ and $g$ can be paired by maps of Whitney discs.

Moreover, a based generic immersion $f \colon \Sigma\imra M$ satisfies $\mu(f)=0$ if and only if all self-intersection points of $f$ can be paired by maps of Whitney discs.
\end{corollary}

Note that by the geometric Casson lemma (\cref{lem:geometric-casson-lemma}) the vanishing  of $\lambda(f,g)$ is equivalent to the existence of a regular homotopy of $f$ and $g$ that makes their images disjoint, at the cost of introducing self-intersections in $f$ and $g$. There is no analogue of this argument if $\mu(f)=0$ by the failure of the Whitney trick in dimension 4, as for example exhibited by the secondary embedding obstruction $\km(f)$ (see \cref{section:KM-invariant}).

By the geometric characterisation in \cref{cor:vanishing}, it is meaningful to refer to $f$ and $g$ having trivial intersection number, and to $f$ having trivial self-intersection number, without using a basing.

The analogue of the characterisation in the second part of \cref{cor:vanishing}  holds for generic immersions $F=\{f_i\}_{i=1}^m \colon \Sigma\imra M$ from \cref{convention}, i.e.\ for compact but possibly disconnected domains, if we use \cref{def:mu(F)} from the introduction for the self-intersection number $\mu(F)$.

\begin{corollary}\label{cor:vanishing-pairing}
Let $F\colon (\Sigma,\partial\Sigma)\looparrowright (M,\partial M)$ be as in \cref{convention}. 
Then $\mu(F)=0$  if and only if the double points of $F$ can be paired by maps of Whitney discs.
\end{corollary}

\begin{proof}
This is a direct consequence of \cref{prop:regular-homotopy-inv-lambda,prop:regular-homotopy-inv-mu,cor:vanishing}  because every double point of $F$ is either a self-intersection point of a component $f_i$ or an intersection point between distinct components $f_i$ and $f_j$ (where we can assume that $i<j$). Note that both cases represent self-intersection points of~$F$.
\end{proof}

Collections of Whitney discs as above may be assumed to be \emph{convenient} in the following sense (see e.g.~\citelist{\cite{FQ}*{Section~1.4}\cite{DET-book-detintro}}).

\begin{definition}\label{def:convenient-Whitney}
Let $F\colon (\Sigma,\partial \Sigma)\looparrowright (M, \partial M)$ be as in \cref{convention}. A \emph{convenient} collection of Whitney discs for $F$ is a collection of framed, generically immersed Whitney discs pairing all the double points of $F$, with interiors transverse to $F$,  and with disjointly embedded boundaries.  A collection of arcs in $F(\Sigma)$ is called a collection of \emph{Whitney arcs} if the union of the arcs is the boundary of a convenient collection of Whitney discs.
\end{definition}

By pushing double points of a convenient collection across the boundaries of  Whitney discs~\cite{DET-book-detintro}*{Figure~11.4}, we may further assume that all Whitney discs are pairwise disjoint and embedded. However, the (resulting and pre-existing) intersections between the original surface $F$ and the Whitney discs can in general not be removed, as detected by the secondary invariant $\km(F)$.

\subsection{Homotopy versus regular homotopy of generic immersions}\label{subsection:hom-vs-reg-hom}

Let $f\colon \Sigma\looparrowright M$ be a generic immersion.
Local orientations of $M$ and $\Sigma$ determine a local orientation of $\nu f$. Hence, given a framing of $f\vert_{\partial \Sigma}$, one can define a relative Euler class of the normal bundle $\nu f$ in $H^2(\Sigma,\partial \Sigma;\Z^{w_1(\nu f)})$.  If  $f^*(w_1(M))=w_1(\nu f) + w_1(T\Sigma)= 0$ then the local orientation of $\Sigma$ determines a Poincar\'{e} duality isomorphism from this twisted cohomology group to $\Z$, and we denote the resulting integer by $e(\nu f)$. Note that $e(\nu f)$ does not depend on the local orientation of $\Sigma$ but only on the local orientation of $M$.
If $f^*(w_1(M)) \neq 0$ then there is still a mod 2 normal Euler number, which we also denote by $e(\nu f) \in \Z/2$.

A useful interpretation of $e(\nu f)$ is as follows. A vector in $\R^2$ together with the framing  of $f\vert_{\partial \Sigma}$ determines a nonvanishing section of $\nu f$ on $f(\partial \Sigma)$. Extend this to a section of $\nu f$ over all of $f(\Sigma)$, transverse to the zero section. Then $e(\nu f)$ counts, with sign, the number of zeros of the section, in $\Z$ or $\Z/2$ as appropriate.

Next we give an extension of \cite{PRT20}*{Theorem 1.2} from the simply connected to the general setting, restricting ourselves to the case of connected $\Sigma$ for convenience.
We note that \cite{PRT20}*{Theorem 1.2} was based on \cite{FQ}*{Lemma~1.2~and~Proposition~1.6}, but that the latter proposition was not proven in \cite{FQ}.

 By the following theorem, in some cases, for example if $M$ is orientable, then $e(\nu f) \in \Z$ is an additional invariant of regular homotopy classes of immersions. It changes by $\pm 2$ during a cusp homotopy (see e.g.~\cite{CST-classical}*{Figure~19}) and hence can be infinitely many regular homotopy classes of immersions, that are all  homotopic as continuous maps.

In \cref{theorem:generic-immersions-bijection}, in the case that $\Sigma$ has nontrivial boundary, we fix a framing on the embedding $f\vert_{\partial \Sigma}$, in order to define the relative Euler number $e(\nu \wt f)$, for $\wt f$ any generic immersion homotopic to $f$.

\begin{theorem}\label{theorem:generic-immersions-bijection}
	Let $\Sigma$ be a compact, connected surface and let $M$ be a 4-manifold. Then the inclusion of the subspace of generic immersions $\Imm(\Sigma,M)$ in the space of all continuous maps induces a map
	\[
	\frac{\Imm(\Sigma,M)} { \{\text{regular homotopy}\}} \xrightarrow{\phantom{5}i\phantom{5}} [\Sigma,M]_{\partial}, \]
	where $[\Sigma,M]_{\partial}$ denotes the set of homotopy classes of continuous maps that restrict on $\partial\Sigma$ to embeddings disjoint from the image of the interior of $\Sigma$.
	\begin{enumerate}
		\item\label{thm:gib-i} $i$ is surjective.
		\item\label{thm:gib-ii} The fibres of $i$ are related by cusp homotopies. More precisely, suppose that $f$ and $g$ are homotopic generic immersions. Then we can add cusps to $f$ and $g$, to obtain $f'$ and $g'$ respectively, such that $f'$ and $g'$ are regularly homotopic.
		\item\label{thm:gib-iii} For every $f\in [\Sigma,M]_{\partial}$, there is a bijection
			\[i^{-1}(f)\cong \begin{cases}
			2\Z&\text{if }f^*(w_1(M))=0 \text{ and } w_2(\nu\wt f)=0;\\[1ex]
			2\Z+1&\text{if }f^*(w_1(M))=0 \text{ and } w_2(\nu\wt f)=1;\\[1ex]
			\Z/2&\text{otherwise;}
		\end{cases}\]
		where $\nu\wt f$ is a normal bundle for $\wt f$, a generic immersion in $i^{-1}(f)$.
		In the cases where $f^*(w_1(M))=0$, the bijection is given by \[\wt f\mapsto e(\nu \wt{f}).\]
		 Otherwise the bijection is given by
		\[\wt f\longmapsto \mu(\wt f)_1\in \Z/2.\]
		\item\label{thm:gib-iv} If $f^*(w_1(M))=0$ and $w_1(\Sigma)|_{\ker(f_\bullet)}=0$, for $\wt f$ a generic immersion in $i^{-1}(f)$, the quantities $\mu(\wt{f})_1$ and $e(\nu \wt{f})$ are related by the formula
		\[
		\lambda(\wt f,\wt f)_1=2\mu(\wt f)_1 +e(\nu\wt{f}) \in \Z
		\]
and so $\mu(\wt f)_1\in\Z$ also detects the regular homotopy class of $\wt f \in i^{-1}(f)$.
	\end{enumerate}
\end{theorem}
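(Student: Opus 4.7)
My plan is to extend \cite{PRT20}*{Theorem~1.2} from the simply connected setting to the general setting, treating the four parts of the theorem in order.

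Part~(1) is immediate from \cref{prop:hom-gen-immersion}(1). For part~(2), I would apply \cref{prop:hom-gen-immersion}(2) to promote any homotopy between generic immersions $f$ and $g$ to a generic homotopy, which is a concatenation of ambient isotopies, finger moves, Whitney moves (all regular homotopies), and cusp homotopies. By sliding cusp events past disjointly-supported regular-homotopy moves, and by cancelling any cusp immediately undone later in the sequence, I would rearrange so that all remaining cusps that add double points occur at the beginning and all cusps that remove double points occur at the end. This exhibits the generic homotopy as $f \to f' \to g' \to g$ with cusp additions from $f$ to $f'$, a regular homotopy from $f'$ to $g'$, and cusp removals from $g'$ to $g$; the latter stage, read backwards, shows $g'$ is obtained from $g$ by cusp additions, yielding the desired $f'$ and $g'$.

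For part~(3), I would first verify that the proposed invariants are well defined on regular homotopy classes. When $f^*(w_1(M))=0$, the relative Euler number $e(\nu\wt f)$ is unchanged by ambient isotopy and by finger and Whitney moves (each supported in a coordinate patch on which $\nu\wt f$ is trivialised). When $f^*(w_1(M))\neq 0$, invariance of $\mu(\wt f)_1\in\Z/2$ is \cref{prop:regular-homotopy-inv-mu}. A single cusp changes $e(\nu\wt f)$ by $\pm 2$ (so preserves $w_2(\nu\wt f)$) and changes $\mu(\wt f)_1$ by $\pm 1$. Surjectivity of each invariant then follows by iteratively adding cusps of the appropriate sign. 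For injectivity, suppose two generic immersions in $i^{-1}(f)$ carry the same invariant value; by part~(2), after adding cusps to each they become regularly homotopic. In the case $f^*(w_1(M))=0$, equality of the integer invariant forces the added cusps to have zero net signed count, so they pair into sign-opposite cusp pairs which cancel via small framed Whitney moves. In the case $f^*(w_1(M))\neq 0$, the hypothesis yields a loop $\alpha\in\pi_1(\Sigma)$ with $f_\bullet(\alpha)$ orientation-reversing in $M$, along which a cusp can be dragged by ambient isotopy to flip its sign, so equality of $\mu_1$ modulo~$2$ ensures the added cusps can be paired and cancelled after possibly flipping individual cusp signs.

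For part~(4), I would compute $\lambda(\wt f,\wt f)=\lambda(\wt f,\wt f^+)$ using a push-off $\wt f^+$ along a generic section of $\nu\wt f$ extending the prescribed boundary framing. The intersections between $\wt f$ and $\wt f^+$ split into two families: (i) at each double point of $\wt f$ the plumbed normal bundle of \cref{thm:plumbed-normal-bundle} produces two nearby intersections of $\wt f$ with $\wt f^+$; and (ii) the zeros of the generic section contribute $e(\nu\wt f)$ intersections, each with trivial group element. Under the stated hypotheses, \cref{lem:mu1} and its analogue for $\Gamma_{f,f}$ ensure that $[1]$ has infinite order in both $\Gamma_{f,f}$ and $\Gamma_f$, so the coefficients $\lambda(\wt f,\wt f)_1$ and $\mu(\wt f)_1$ lie in~$\Z$. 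Family~(ii) contributes $e(\nu\wt f)$ to $\lambda(\wt f,\wt f)_1$, while a double point $p$ of $\wt f$ with $\eta(p)=1$ contributes $2\varepsilon(p)$ through family~(i), matching its contribution to $2\mu(\wt f)_1$. The corollary that $\mu(\wt f)_1\in\Z$ detects the regular homotopy class of $\wt f\in i^{-1}(f)$ then follows from part~(3), since $\lambda(\wt f,\wt f)_1$ is a homotopy invariant by \cref{prop:regular-homotopy-inv-lambda}, so $e(\nu\wt f)$ is determined by $\mu(\wt f)_1$ within a fixed homotopy class. I expect the main obstacle to be the careful bookkeeping at double points $p$ with $\eta(p)\in f_\bullet\pi_1(\Sigma)$ nontrivial in $\pi_1(M)$: the two push-off intersections at such $p$ contribute $\eta(p)$ and $\eta(p)^{-1}$ to $\lambda$, which may each project to $\pm[1]\in\Gamma_{f,f}$ via the relations \eqref{eq:lambda-relations}; one must verify these contributions combine to match $2\varepsilon(p)$ in $\Gamma_f$ under the inversion relations defining $\Gamma_f$ from $\Gamma_{f,f}$.
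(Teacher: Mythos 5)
Your overall strategy coincides with the paper's: part (1) from \cref{prop:hom-gen-immersion}, part (2) by normalising a generic homotopy so that creating cusps come first and removing cusps last, part (3) by realising all values of the invariant with cusps and then cancelling cusps in a homotopy, and part (4) by the push-off computation (which the paper simply imports from the disc/sphere case) followed by the observation that a cusp changes $\mu(\wt f)_1$ by $\pm 1$ and $e(\nu \wt f)$ by $\mp 2$.

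There is, however, a genuine gap in your injectivity argument for part (3) in the case $f^*(w_1(M))=0$. After part (2) you have $\wt f_1'$ obtained from $\wt f_1$ by creating cusps, $\wt f_2'$ obtained from $\wt f_2$ by creating cusps, and a regular homotopy $\wt f_1'\simeq \wt f_2'$. Equality of the Euler numbers does \emph{not} force the cusps to pair into sign-opposite pairs each cancellable by a local Whitney move: the generic situation after cancelling opposite-sign pairs within each group is a creating cusp near the start of the homotopy that must be cancelled against a removing cusp of the \emph{same} sign near the end (reading the homotopy from $\wt f_1$ to $\wt f_2$). These two cusps live on different surfaces separated by an arbitrary regular homotopy, so no ``small framed Whitney move'' applies to them. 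The paper isolates exactly this as its case (b) and resolves it with a trading trick: insert, at the start of the homotopy, a trivial finger move together with two removing cusps, cancel one of the new removing cusps against the offending creating cusp (using the elementary cancellation, which only applies to a \emph{removing} cusp followed by a \emph{creating} cusp of the same sign along a level-preserving arc), thereby replacing the creating cusp by a removing cusp of opposite sign; after the symmetric operation at the other end the elementary cancellation finishes the job. Equivalently, one can phrase this as cancellation in the monoid of regular homotopy classes under ``add a cusp'', using that a $(+)$ and a $(-)$ cusp added to the same surface cancel — but some such argument must be supplied, and your proposal does not contain it. A similar, milder issue affects your case (a)-type cancellation of two creating cusps of opposite sign performed at distant points and times of the homotopy: the paper again routes this through the insertion trick rather than a direct Whitney move, since the elementary level-preserving cancellation is only stated for a removing cusp preceding a creating cusp of the same sign. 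The rest of your proposal (including the $f^*(w_1(M))\neq 0$ case, where the sign of a cusp is flipped by rerouting the comparison arc over an orientation-reversing loop, and part (4)) matches the paper.
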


While we prefer the upcoming direct argument analysing singularities, \cref{theorem:generic-immersions-bijection} could in principle also be proven via Smale--Hirsch immersion theory, which has a version in the topological category.
The main novelty of the theorem is that we give precise conditions in terms of the Stiefel--Whitney classes to control how large the fibres of $i$ are, and which invariants detect them.

\begin{proof}[Proof of \cref{theorem:generic-immersions-bijection}]
By \cref{prop:hom-gen-immersion}\,\eqref{item:hom-to-gen-imm-1}, the map is surjective. That is, every homotopy class contains a generic immersion. This proves \eqref{thm:gib-i}.

For~\eqref{thm:gib-ii}, note that if $f$ and $g$ are homotopic generic immersions, then by \cref{prop:hom-gen-immersion}\,\eqref{item:hom-to-gen-imm-2} there exists a generic homotopy $H$ between them, which by definition is a sequence of ambient isotopies, finger moves, Whitney moves, and cusp homotopies. We can modify $H$ such that there are real numbers $t_1 < t_2 \in [0,1]$ such that the singularities of $H$ in $[0,t_1]$ only consist of cusp homotopies that create double points, the singularities in $[t_1,t_2]$ only consist of finger moves and Whitney moves, and those in $[t_2,1]$ only consist of cusp homotopies that remove double points. The statement then follows by taking $f':=H_{t_1}$ and $g':=H_{t_2}$.

To achieve this modification, note that we can bring all the creating cusp singularities forward, so that they occur earlier, and we can delay all the removing cusps.  To arrange for a creating cusp to be rearranged earlier than a finger or Whitney move,  choose an arc in the image of $H$ starting from $H_t(\Sigma)$, for some $t \in (0,t_1)$, and ending at the cusp, which intersects each level in a point and is disjoint from all Whitney arcs and double points.  The homotopy can then be altered in a neighbourhood of this arc so that the cusp singularity occurs at time $t$.
Delaying a removing cusp is the same procedure but with the direction of time reversed. This completes the proof of~\eqref{thm:gib-ii}.
		
The proof of~\eqref{thm:gib-iii} splits naturally into two cases.

\setcounter{case}{0}
\begin{case}
$f^*(w_1(M))=0$.
\end{case}

As noted in \cref{sec:basic}, the sign of an intersection point is not always well defined. Nevertheless, in the case that $f^*(w_1(M))=0$ the sign of a cusp homotopy is well defined. The key point is that a cusp not only specifies a double point $p$ but also an arc between the preimages of $p$. In the case that $f^*(w_1(M))=0$, using this path, the sign of the double point $p$ is well defined, independent of the choice of path transporting the local orientation at the basepoint to the double point. Thus in this setting we define the \emph{sign} of a cusp to be the sign of the double point it creates or removes. We will use the terminology of \emph{creating cusps} for cusps that create a double point and \emph{removing cusps} for those that remove a double point.

Since $f^*(w_1(M))=0$, $e(\nu \wt f)$ is defined in $\Z$ for any generic immersion $\wt f$ homotopic to $f$. Recall that $w_2(\nu\wt{f})\equiv e(\nu\wt{f})\mod{2}$. Since regularly homotopic generic immersions have equal Euler numbers, the map in the theorem statement is well defined on equivalence classes in the domain of $i$.  Note that a cusp homotopy changes $e(\nu \wt f)$ by $2$ or $-2$, depending on the sign of the cusp and whether it is a creating or a removing cusp.
So every element of $2\Z$ or $2\Z+1$, depending on $w_2(\nu\wt{f})$, can be realised as the Euler number of a generic immersion in $i^{-1}(f)$.

To complete the proof when $f^*(w_1(M))=0$, it remains to show injectivity. We will show that given a generic homotopy between generic immersions with equal Euler numbers, we can modify the homotopy to cancel cusps, until we are left with a regular homotopy.
		
	First, note that when we have a removing cusp, and later in the homotopy we have a creating cusp with the same sign, then we can cancel these two cusps along a level-preserving path in the homotopy as indicated in \cref{fig:cuspcanceling}.
	
	\begin{figure}[htb]
    \centering
    \begin{tikzpicture}	
            \node[anchor=south west,inner sep=0] at (0,-3.35){\includegraphics{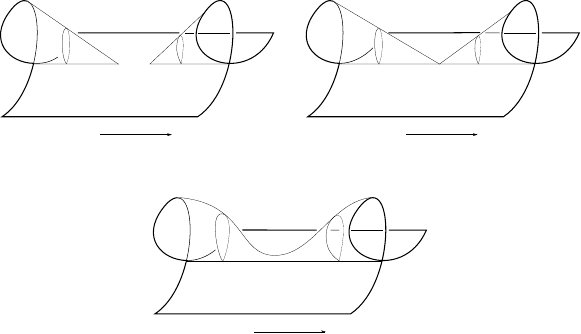}};
			\node at (1.6,0.05) {$t$};
			\node at (6.75,0.05) {$t$};
			\node at (4.175,-3.3) {$t$};
			\node at (2.3,-0.5) {$(a)$};
			\node at (7.5,-0.5) {$(b)$};
			\node at (4.9,-4) {$(c)$};
        \end{tikzpicture}
    \caption{A schematic picture showing how a removing cusp singularity and creating cusp singularity with the same sign can be cancelled. In each of (a), (b), and (c), a homotopy is traced out in the direction of $t$. At every time $t$, except the times of the cusp singularities, we depict an arc of a generic immersion homotopic to $f$. (a) Two cusp singularities are shown: a removing cusp occurring first, followed by a creating cusp of the same sign. (b)  Modify the homotopy, delaying the removing cusp until it coincides with the creating cusp. This involves choosing an arc in $\Sigma$ joining the two cusp points. (c) A further local modification removes the two cusps.}
\label{fig:cuspcanceling}
\end{figure}

However this is not sufficient. We also have to show that we can also cancel cusps in the following two situations.
\begin{enumerate}[(a)]
			\item\label{item:cusp-cancelling-a} Two creating cusps of opposite sign, or two removing cusps of opposite sign.
			\item\label{item:cusp-cancelling-b} A creating cusp paired with a later removing cusp, both of the same sign.
		\end{enumerate}

		Suppose that we have a generic homotopy $H$ between generic immersions with equal Euler numbers consisting of two creating cusp homotopies of opposite sign, as in \eqref{item:cusp-cancelling-a}. Create a self-homotopy $H_0$ of the starting immersion, i.e.\ the immersion at $t=0$, consisting of a trivial finger move together with two removing cusps for the double points created by the finger move, as shown in~\cref{fig:fingerthencusp}. Then concatenate $H_0$ with the original homotopy $H$. The new homotopy can be modified as in \cref{fig:cuspcanceling} to cancel the removing cusps in $H_0$ and the creating cusps in $H$, leaving only the finger move behind. An analogous argument shows how to cancel two removing cusps of opposite sign, this time concatenating at the end of $H$.
		
\begin{figure}[htb]
    \centering
    \begin{tikzpicture}	
            \node[anchor=south west,inner sep=0] at (0,0){\includegraphics{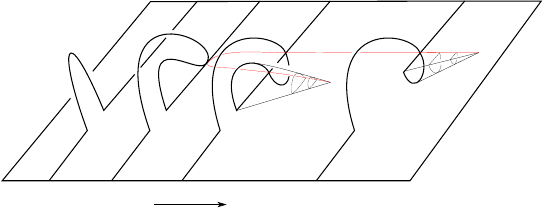}};
			\node at (2.5,0.05) {$t$};
        \end{tikzpicture}		
\caption{A schematic picture showing a self-homotopy consisting of a trivial self-finger move followed by two removing cusps. The homotopy is traced out in the direction of $t$. At every time $t$, except at the times of the cusp and finger move singularities, we depict an arc of a generic immersion homotopic to $f$. In red we show the arc of self-intersections of $f$ -- note that it starts at one cusp singularity and ends at the other. }
\label{fig:fingerthencusp}
\end{figure}		
		
		Similarly, for the situation in \eqref{item:cusp-cancelling-b}, suppose that we have a generic homotopy $H$ between generic immersions with equal Euler numbers consisting of a creating cusp and a later removing cusp of the same sign. Again we construct the self-homotopy $H_0$ and concatenate with $H$. In the result, we use the procedure from \cref{fig:cuspcanceling} to cancel the creating cusp in $H$ with one of the removing cusps in $H_0$. This entire operation has so far replaced a cusp with a cusp of opposite sign and direction. As before we can repeat the operation at the end of the homotopy to replace the removing cusp with a creating one, also with the opposite sign.  Thus when we have a creating cusp with a later removing cusp of the same sign, we can replace both by cusps of opposite sign and direction. Since now the removing cusp happens before the creating cusp, the two can be cancelled and we are done with case~\eqref{item:cusp-cancelling-b}. This completes the proof of \eqref{thm:gib-iii} in the case that $f^*(w_1(M))=0$.

\begin{case}
$f^*(w_1(M))\neq 0$.
\end{case}

Note that a cusp homotopy changes $\mu(\wt f)_1\in\Z/2$ by one. So both values of $\Z/2$ can be realised within the homotopy class. To show injectivity in this case,  we have to show that we can cancel cusps in a homotopy in arbitrary pairs. First use the trading argument above to get all the removing cusps before the creating cusps in the homotopy. Then for any pair of cusps, one removing and one creating,  choose some level-preserving path in the homotopy between  the first and the second cusp, and restrict to a small disc containing the path. If they have the same sign with respect to this disc, cancel the two cusps as before.

If they have opposite signs, change the choice of the arc to arrange that the union of the new arc and the old arc maps nontrivially under $w_1(M)$. Such an arc exists since $f^*(w_1(M))$ is nontrivial and $\Sigma$ is connected. With this new choice, the signs of the cusps in the disc become the same and we can again cancel the cusps. This completes the proof of both halves of \eqref{thm:gib-iii}.

Finally for~\eqref{thm:gib-iv} note that if $f^*(w_1(M))=0$ and $w_1(\Sigma)|_{\ker(f_\bullet)}=0$, then recall that by \cref{lem:mu1} that $\mu(\wt{f})_1$ is well defined in $\Z$. By the discussion above the statement of the theorem, $e(\nu \wt f)$ is also well defined in $\Z$. In this case the formula
\[
\lambda(\wt f,\wt f)_1=2\mu(\wt f)_1 +e(\nu\wt{f}) \in \Z
\]
holds by the proof of the corresponding fact for discs and spheres (see e.g.\ \cite{DET-book-detintro}*{Proposition~11.8}). Any cusp homotopy leaves $\lambda(\wt f, \wt f)_1$ unchanged, while it changes $\mu(\wt f)_1$ by $\pm 1$. By the formula, it changes $e(\nu \wt f)$ by $\mp 2$.
Thus if $\wt f$ and $\wt f'$ are generic immersions homotopic to $f$, then $e(\nu \wt f)=e(\nu \wt f')$ if and only if $\mu(\wt f)_1=\mu(\wt f')_1$. Hence \eqref{thm:gib-iv} follows from \eqref{thm:gib-iii}.
\end{proof}

\section{Secondary embedding obstructions} \label{section:KM-invariant}

The Whitney trick implies that every map $F\colon S^n\to M^{2n}$ is homotopic to an embedding, whenever $M$ is a simply connected $2n$-dimensional manifold and $n > 2$. In order to prove the failure of the Whitney trick in dimension 4, Kervaire--Milnor devised an obstruction in \cite{Kervaire-Milnor:1961-1} that gave counterexamples to the above statement for $n=2$. They showed that the homotopy class of $3\cdot\CP^1$ is not represented by an embedded sphere in $\CP^2$. In a smooth, oriented, closed 4-manifold $M$, consider the formula
\begin{equation}\label{eq:KM}
\theta(c):= \frac{c\cdot c-\sigma(M)}{ 8} \mod 2,
\end{equation}
where the $\Z/2$-reduction of $c\in H_2(M;\Z)$ is Poincar\'{e} dual to $w_2(M)$ and $\sigma(M)$ is the signature of the intersection form $(x,y)\mapsto x\cdot y$ on $H_2(M;\Z)$. In this setting, if $c$ is represented by an embedded sphere, then $\theta(c)=0$. Recall that for a unimodular form $\ell$ and a \emph{characteristic} element $c$, i.e.\ one satisfying $\ell(c,x) \equiv \ell(x,x) \mod 2$, the difference $\ell(c,c) - \sigma(\ell)$ is always divisible by 8. The condition on $c$ being dual to $w_2(M)$ is stronger than being characteristic for the intersection form since the mod~2 intersection condition holds for all $x\in H_2(M;\Z/2)$, not just for integral homology classes. For example, if $M$ is the Enriques surface (double covered by the $K3$ surface) then $\theta(0)\neq 0$, so $0$ cannot be dual to $w_2(M)$, even though the intersection form on $H_2(M;\Z)$ is even.

For the proof that $\theta$ is an embedding obstruction, Kervaire--Milnor add $(1-[F]\cdot [F])$ copies of $(\CP^2,\CP^1)$ to a proposed characteristic pair $(M,F\colon S^2\hra M)$, with $F$ assumed to be an embedding, to obtain an embedded sphere with self-intersection number $1$. Then they blow down that characteristic sphere to arrive at a spin manifold $M'$ with $\sigma(M') = \sigma(M) +(1- [F] \cdot [F]) - 1 = \sigma(M) - [F] \cdot [F]$. Rochlin's theorem~\cite{Rochlin}, that the signature of a smooth, closed, spin 4-manifold is divisible by 16, is equivalent to the original condition $\theta([F])=0$ in $M$.

The Kervaire--Milnor result also has consequences for spin manifolds, where it says that any (characteristic) homology class $c = 2 b$ that is represented by an embedded sphere must satisfy $b \cdot b \equiv 0\, \mod 4$. For example, $2\Delta\in \pi_2(S^2 \times S^2)$ is not represented by an embedding for $\Delta$ the diagonal 2-sphere.

For about a decade, it remained an open problem to find a combinatorial formula for $\theta(c)$ in terms of geometric representatives for $c$.

\subsection{Combinatorial formulae: Rochlin's Arf invariant}\label{sec:Rochlin}

Rochlin picked an embedded representative $F\colon \Sigma \hra M$ for $c\in H_2(M;\Z)$ as above and assumed that $H_1(M;\Z/2)$ vanishes  \cite{Rochlin:71}. Any simple closed curve $r$ in (the image of) $F$ then bounds a compact surface $R$ in $M$. The reader should think of $R$ as an ``unoriented cap'' and check that it has a relative Euler number, just like a Whitney disc or an ordinary cap.   Rochlin then asserted that setting $q_F(r) :=  \lvert \Int{R}\pitchfork F\rvert$, for $R$ with vanishing relative Euler number, defines a quadratic enhancement
\[
q_F\colon  H_1(\Sigma;\Z/2) \to \Z/2,
\]
that refines the mod~2 intersection form on $\Sigma$. Independence from the choice of $R$ follows from $F$ being dual to $w_2(M)$, in this setting using intersections of $F$ with all classes of the form $[R\cup R']\in H_2(M;\Z/2)$. Rochlin stated that the Arf invariant $\Arf(q_F)$ is equal to $\theta(c)=\theta([F])$.
A nice consequence of this equality is that $\Arf(q_F)=\theta(c)$ vanishes whenever $c$ can be represented by an embedded sphere, because $q_F$ is then defined on the zero vector space.

\subsection{Combinatorial formulae: Freedman--Kirby's characteristic bordism}\label{sec:FK}

Using the same definitions, Freedman and Kirby proved Rochlin's claims from above in \cite{freedman-kirby}, on their way to a geometric proof of Rochlin's original theorem. They worked with an arbitrary smooth, closed, oriented 4-manifold $M$, but before computing $q_F$ they performed surgery on circles in $M$ to arrange that $H_1(M;\Z/2)=0$; alternatively, they could have made $M$ simply connected and used discs for $R$, i.e.\ ordinary caps. They showed that $\Arf(q_F)$ is invariant under ``characteristic bordism'', implying independence from the choice of surgeries, as well as establishing the equality $\Arf(q_F)=\theta([F])$ by checking it on the generators of $\Omega^{\operatorname{char}}_4$. A different proof of $\Arf(q_F)=\theta([F])$ was given in \cite{matsumoto86}.

On a historical note, Freedman--Kirby wrote that they learnt these results from Casson and that they only heard of Rochlin's results after finishing their paper.
The Rochlin method was extended to non-orientable characteristic surfaces in closed 4-manifolds by Guillou--Marin~\cite{Guillou-Marin} and Kirby--Taylor~\cite{Kirby-Taylor}.

\subsection{Combinatorial formulae: Matsumoto's \texorpdfstring{$t$-invariant}{t-invariant}}\label{sec:Matsumoto}

 In \cite{matsumoto78}, published in the same proceedings as~\cite{freedman-kirby}, Matsumoto started with a spherical class $c\in \pi_2(M)$ and represented it by a generic immersion $F\colon S^2 \imra M$ with $2g$ algebraically cancelling double points. He assumed that $H_1(M;\Z)=0$, using this condition to find ``Whitney surfaces'', i.e.\ oriented surfaces $R_1,\dots, R_g$ bounded by pairs of Whitney arcs in $F$. Again there is a relative Euler number and we may assume that every $R_i$ has vanishing relative Euler number. Matsumoto proved that if $[F]\in H_2(M;\Z)$ is characteristic then
\begin{equation}\label{eq:Matsumoto}
\Arf(q_F) = \sum_{i=1}^g  \lvert \Int{R_i}\pitchfork F\rvert =: t(F)\in\Z/2
\end{equation}
by adding $g$ tubes based at pairs of double points of $F$ to turn it into an embedding of a surface $\Sigma$ of genus $g$, where $q_F$ is the quadratic enhancement defined above. The new surface has pairs of framed caps $(D_i,R_i)$ where $D_i$ is a meridional disc of the $i$-th tube and hence has one interior intersection with $F$,  so $q_F(\partial D_i)=1$. Since the boundaries of these caps form a hyperbolic basis of $H_1(\Sigma;\Z/2)$, the result follows from the usual formula
\[
\Arf(q_F)=\sum_{i=1}^g q_F(\partial D_i)\cdot q_F(\partial R_i) = \sum_{i=1}^g q_F(\partial R_i)=\sum_{i=1}^g  \lvert \Int{R_i}\pitchfork F\rvert.
\]

\subsection{Summary of the secondary embedding obstructions from the 1970s}\label{sec:KM-summary}

Given $[F]\in\pi_2(M)$ such that its Hurewicz image in $H_2(M;\Z/2)$ is Poincar\'e dual to $w_2(M)$, the above results show that
\[
\theta([F]) = \Arf(q_F) = t(F)\in\Z/2
\]
is an obstruction to representing $[F]$ by an embedding $F\colon S^2\hra M$. Note that $\theta$ only depends on the homology class  $h([F])\in H_2(M;\Z)$ by definition, whereas that is not clear for the other two invariants. 

An attractive aspect of Matsumoto's $t(F)$ is that it can be computed combinatorially from a generic immersion $F\colon S^2 \imra M$. One argues directly that $t(F)$ is an obstruction to representing $[F]\in\pi_2(M)$ by an embedded sphere and independence of the choice of $R_i$ comes from $[F]$ being characteristic.

Matsumoto's formula was extended in a number of ways. For example, in recent work of three of the current authors and Land on the stable diffeomorphism classification of spin 4-manifolds~\cites{KLPT,KPT-tau,KPT-long}, a version of Matsumoto's $t$-invariant was used to compute the relevant Arf invariant. We describe further extensions presently.

It follows from topological transversality \cite{FQ}*{Section~9.5} that in a smooth, closed, oriented 4-manifold $M$, the quantity $\theta(c)$ is also an obstruction to representing an element $c$ as before by a topological i.e.\ locally flat embedding $F\colon S^2\hra M$. If $M$ is not smooth, one adds the Kirby--Siebenmann invariant and then the formula
\[
\theta_{\operatorname{TOP}}(c):= \theta(c) + \ks(M)
\]
defines such an obstruction; see \cite{CST-universal}*{Introduction} for details. For example, it follows that the generator of $\pi_2(*\CP^2)$ is not represented by an embedding. Historically speaking, these applications were not known at the time of publication of~\cites{Rochlin:71,freedman-kirby,matsumoto78}.

In the following, we will return to considering topological manifolds and obstructions to topological embeddings.

\subsection{Secondary obstructions to embedding genus zero surfaces with dual spheres} \label{sec:unions}

If $\Sigma$ is a union of discs or spheres and $F\colon (\Sigma,\partial \Sigma)\looparrowright (M, \partial M)$ has algebraically dual spheres, then Freedman--Quinn~\cite{FQ} gave a version of Matsumoto's $t$-invariant in \cite{FQ}*{Definition~10.8A}, calling it the Kervaire--Milnor invariant. Rather than restricting $H_1(M;\Z)$ as in the discussions above, they assumed  that $\mu (F)=0$, i.e.\ that all double points of $F$ can be paired by Whitney discs.
They used  the same formula as in \eqref{eq:Matsumoto}, but counted intersections with $F^\twist$, restricting to the Whitney discs in a convenient collection $\W^\twist$ that pair double points of $F^\twist$. They claimed that this mod~2 count, $\tw(F^\twist,\W^\twist)$ from \cref{def:t}, is a secondary obstruction to representing $F$ by an embedding. However, this is only true if $F^\twist$ is $r$-characteristic (\cref{def:r-char}), as Stong's correction \cite{Stong} showed. Stong noticed that the choice of sheets for double points whose group elements have order 2 is related to immersed $\RP$s in $M$. If $F$ is dual to $w_2(M)$ then $F$ is also $r$-characteristic, but not vice versa, so this obstruction is more generally defined than $\theta([F])$.

The embedding theorem for unions of discs and spheres~\cite{FQ}*{Theorem~10.5}, as corrected by Stong, says, in our notation, that for good fundamental group $\pi_1(M)$, such an $F$ is homotopic to a topological embedding if and only if there exists a convenient collection of Whitney discs $\W^\twist$ for the double points of $F^\twist$ such that $t(F^\twist,\W^\twist)=0$. We give more details about the Freedman--Quinn--Stong embedding result in \cref{sec:intro-km-section}.

 \subsection{Secondary obstructions to embedding unions of spheres} \label{sec:tau}

Matsumoto's invariant $t(F)$ from \eqref{eq:Matsumoto} was extended to a secondary embedding obstruction in~\cite{Schneiderman-Teichner} for $F=\{f_i\}_{i=1}^m$, not assuming dual spheres, where each $f_i\colon  S^2\imra M$ is a generic immersion and assuming $\mu (F)=0$ and that $M$ is oriented.
By  counting interior intersections of $F$ with a convenient collection $\W$ of Whitney discs pairing the double points of $F$, and remembering group elements, signs, and components of $F$, Schneiderman and the fourth author defined an intersection count $\tau(F,\W)\in \TT(\pi_1(M),m)$.
Here $\TT(\pi_1(M),m)$ is the abelian group given by the direct sum of $m+\binom{m}{2}+\binom{m}{3}$ copies of $\Z[\pi_1(M) \times \pi_1(M)]$. To obtain a secondary embedding obstruction,~\cite{Schneiderman-Teichner}*{Section~8} gave a list of relations such that the subgroup $\RR(M,F)\leq\TT(\pi_1(M),m)$ generated by these relations has the property that
\[
\tau(f_1,\dots,f_m)=\tau(F):=[\tau(F,\W)]\in \TT(\pi_1(M),m)/\RR(M,F)
\]
 does not depend on the choice of convenient collection $\W$. In our current language, the main result of that paper is that  $\tau(F)=0$ if and only if $\km(F)=0$ as in \cref{def:km=0}. In the absence of dual spheres, $\tau(F)=0$ does not imply that $F$ is homotopic to an embedding. For example, there are obstructions from higher order Whitney towers.

If $F$ is $r$-characteristic then the augmentation map  $\EE\colon \TT(\pi_1(M),m)\to \Z/2$, summing all possible coefficients, takes $\RR(M,F)$ to zero and $\tau(F)$ to Matsumoto's $t(f_1\#\cdots\#f_m)$, for an arbitrary choice of interior connected sum of the $\{f_i\}_{i=1}^m$.
Moreover, if $F$ has algebraic dual spheres then $\EE$ induces an isomorphism of $\TT(\pi_1(M),m)/\RR(M,F)$ with either $\Z/2$ or $0$, depending on whether $F^\twist$ is $r$-characteristic or not. This gives the relationship to \cref{sec:unions}.

\subsection{Secondary embedding obstructions for arbitrary compact surfaces}\label{sec:km-sec3}

It is likely possible to extend the invariant $\tau$ from \cref{sec:tau} to arbitrary immersed compact surfaces, not just spheres. However determining the analogue of $\RR(M,F)$ would be a formidable task. In this paper we take the first step, namely by defining the right notion of $b$-characteristic surfaces for which Matsumoto's invariant extends from spheres to a secondary embedding obstruction for arbitrary compact surfaces. We also generalise the work of Freedman-Quinn--Stong to all compact surfaces in the presence of algebraically dual spheres.

Recall from \cref{def:km=0} that for $F\colon (\Sigma,\partial \Sigma)\looparrowright (M, \partial M)$ as in \cref{convention}, by definition the \emph{Kervaire--Milnor invariant} $\km(F)\in\Z/2$ vanishes if and only if after finitely many finger moves on the interior of~$F$, taking $F$ to some $F'$, there is a convenient collection of
Whitney discs, with interiors disjoint from $F'$, pairing all the double points of $F'$.

The finger moves in this definition are relevant because finger moves can add relations to the  fundamental group $\pi_1(M\smallsetminus F)$, making it easier to find (Whitney) discs in the complement of $F$.

We could have allowed arbitrary regular homotopies, from $F$ to $F'$, in the definition of $\km$. However, this is not needed as the following result shows. Note that a non-regular homotopy can change $\km(F)$, see \cref{cor:RP-euler-no-restrictions}.

\begin{proposition}\label{lemma:km-reg-homotopy-invariance}
Let $\Sigma$ and $M$ be as in \cref{convention}. If $F_1, F_2\colon \Sigma\imra M$ are regularly homotopic generic immersions then $\km(F_1) = \km(F_2) \in \Z/2$.
\end{proposition}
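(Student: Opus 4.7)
The plan is to reduce to elementary moves. A regular homotopy between generic immersions is by definition a finite sequence of ambient isotopies, finger moves, and Whitney moves, so it suffices to prove invariance of $\km$ under each of these three operations separately. First, I will verify that $\km(F_2)$ is defined. This requires that $\lambda(f_i',f_j')=0$ for $i \neq j$ and $\mu(f_i')=0$ for each component $f_i'$ of $F_2$, which holds by~\cref{prop:regular-homotopy-inv-lambda,prop:regular-homotopy-inv-mu} since $\lambda$ and $\mu$ are invariant under regular homotopy in the interior.

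Invariance under ambient isotopy is immediate: an ambient isotopy carries any convenient collection of Whitney discs with interiors disjoint from~$F'$ to a collection with the same properties. The main content is to show invariance under the remaining two moves. I would organise this around the observation that finger moves and Whitney moves are inverse operations, so it suffices to prove: if $F_2$ is obtained from $F_1$ by a single finger move~$\phi$, then $\km(F_1)=\km(F_2)$. The direction $\km(F_1)=0 \Rightarrow \km(F_2)=0$ is trivial, because any collection of finger moves $\sigma$ on $F_1$ producing $F_1'$ with clean Whitney discs can be realised as the collection $\{\phi\} \cup \sigma$ of finger moves on~$F_2$, giving the same resulting $F_1'$ and the same clean Whitney discs.

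For the reverse direction $\km(F_2)=0 \Rightarrow \km(F_1)=0$, suppose $\sigma$ is a collection of finger moves on $F_2$ producing $F_2'$ with a convenient collection $\mathcal{W}'$ of Whitney discs having interiors disjoint from~$F_2'$. After a small ambient isotopy, the support of $\phi$ can be made disjoint from the support of~$\sigma$, so that $\sigma$ may equally be carried out on $F_1$, yielding an immersion $\sigma(F_1)$. Now apply the finger move $\phi$ to $\sigma(F_1)$; because finger moves commute up to ambient isotopy when their supports are disjoint, the result is ambiently isotopic to $\sigma(F_2)=F_2'$. The new pair of double points created by $\phi$ is paired by the local Whitney disc $W_\phi$ intrinsic to the finger move, which is framed, embedded, and has interior disjoint from the immersion. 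Therefore $\{\phi\}\cup\sigma$ is a collection of finger moves on $F_1$ producing (up to ambient isotopy) $F_2'$ with clean Whitney disc collection $\mathcal{W}' \cup \{W_\phi\}$, so $\km(F_1)=0$.

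The main obstacle I anticipate is the bookkeeping about supports: I need to ensure that when I apply $\sigma$ to $F_1$ rather than $F_2$, or insert an additional finger move into an existing clean configuration, the Whitney arcs and disc interiors in $\mathcal{W}'$ are not disturbed and the framing is preserved. This is handled by a standard general position argument inside a small neighbourhood of the support of each finger move, together with the fact that the local Whitney disc $W_\phi$ is contained in an arbitrarily small ball, which can be chosen disjoint from $\mathcal{W}'$.
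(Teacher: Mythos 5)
Your overall strategy---reduce to a single elementary move, then analyse a finger move directly, using that Whitney moves are the inverse operation---is a legitimate alternative to the paper's argument. (The paper instead reduces by symmetry to one implication and produces a ``peak'' surface $F_3$ obtainable from both $F_1'$ and $F_2$ by finger moves alone; this absorbs all the commuting into a single preliminary rearrangement of the regular homotopy.) However, your two directions are swapped, and this propagates into a genuine error.

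Write $F_2 = \phi(F_1)$. The direction that is actually trivial is $\km(F_2)=0 \Rightarrow \km(F_1)=0$: if $\sigma$ is a collection of finger moves on $F_2$ giving $F_2'$ with clean Whitney discs $\mathcal{W}'$, then the composite collection ``first $\phi$, then $\sigma$'' on $F_1$ also produces $F_2'$ with clean Whitney discs $\mathcal{W}'$, with no commuting and no extra disc needed. Your stated argument for the opposite implication $\km(F_1)=0 \Rightarrow \km(F_2)=0$---that $\{\phi\}\cup\sigma$ is a collection of finger moves \emph{on $F_2$} yielding $F_1'$---does not parse: $\phi$ is a move from $F_1$ to $F_2$, so applying it to $F_2$ either makes no sense or creates yet more double points, and one certainly cannot recover $F_1'$ (which has strictly fewer double points than $\sigma(F_2)$) by performing finger moves on $F_2$. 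This is the direction that genuinely requires the commuting-of-supports argument and the local Whitney disc $W_\phi$: push $\sigma$'s support off $\phi$'s, note $\sigma(F_2) = \sigma(\phi(F_1)) \cong \phi(\sigma(F_1)) = \phi(F_1')$, and observe that $\phi(F_1')$ has the double points of $F_1'$ (paired cleanly by $\mathcal{W}$) together with the two new double points from $\phi$ (paired cleanly by $W_\phi$), so $\mathcal{W}\cup\{W_\phi\}$ witnesses $\km(F_2)=0$. Your third paragraph is essentially this argument with $F_1$ and $F_2$ interchanged---but as written, it is attached to the wrong implication, and the final sentence adds $W_\phi$ to $\mathcal{W}'$ even though $\mathcal{W}'$ is already a convenient collection pairing \emph{all} double points of $F_2'$, including the two created by $\phi$; appending $W_\phi$ would double-pair those two points, which is not allowed in a convenient collection. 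Swap the two paragraphs, interchange the roles of $F_1$ and $F_2$ in the commuting argument, and drop the spurious $W_\phi$ from the (now correctly trivial) direction, and the proof is sound.
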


\begin{proof}
To show that $\km(F_1) = \km(F_2)$, by symmetry it suffices to show that $\km(F_1)=0$ implies $\km(F_2)=0$.
Suppose that $\km(F_1)=0$, and let $F_1'$ be obtained from $F_1$ by finger moves such that the intersections of $F_1'$ can be paired up by Whitney discs $\{W_i\}$ as in \cref{def:km=0}.
    Since $F_1'$ and $F_2$ are regularly homotopic, there is a generic immersion $F_3$ such that $F_3$ can be obtained from both $F_1'$ and $F_2$ by finger moves and ambient isotopies.   Since $F_3$ is obtained from $F_1'$ by finger moves and ambient isotopies and the finger moves can be assumed to be disjoint from $\{W_i\}$, all the double points of $F_3$ can also be paired up by Whitney discs with interiors disjoint from $F_3$, as in \cref{def:km=0}.  Since $F_3$ is obtained from $F_2$ by finger moves, taking $F_2' := F_3$ it follows that $\km(F_2)=0$.
\end{proof}

\cref{def:km=0} is optimised for the proof of \cref{thm:SET}, as we will see shortly, but is difficult to use in practice. In particular, while one may fortuitously detect specific finger moves and Whitney discs to show $\km(F)=0$, without a combinatorial description it appears, for a given $F$, to be hard to prove that the required finger moves from $F$ to some $F'$, together with Whitney discs for $F'$, do not exist.
We provide precisely such a combinatorial reformulation in \cref{thm:embedding-obstruction}, generalizing Matsumoto's invariant to our formula for $\tw(F)$ for $b$-characteristic $F$. In the proof of \cref{thm:main} we will show that in the presence of dual spheres this agrees with \cref{def:km=0}.

\section{The proof of the surface embedding theorem}\label{sec:thm1.1}

The surface embedding theorem (\cref{thm:SET}) can be deduced using the proof of \cite{FQ}*{Theorem~10.5\,(1)}, combined with an observation in~\cite{PRT20}*{Theorem~A, Lemma~6.5} for the condition on the homotopy class of $\ol{G}$, using our definition of the Kervaire--Milnor invariant (\cref{def:km=0}).  Since the surface embedding theorem does not follow directly from the statement of \cite{FQ}*{Theorem~10.5\,(1)}, as previously discussed, and also since the latter requires a correction by Stong~\cite{Stong}, it can be hard for the uninitiated to piece together a correct proof. Therefore we provide one in this section.

Further, the statement of \cite{FQ}*{Theorem~10.5\,(1)} is itself quite complicated, and our version focused on the surface embedding problem may be useful for those looking to apply the technology of Freedman--Quinn without delving into the details.

\subsection{Ingredients}

The statement of the surface embedding theorem uses the notions of algebraically and geometrically dual spheres. We recall the definitions.

\begin{definition}\label{defn:alg-dual-spheres}
Let $F\colon (\Sigma,\partial \Sigma)\looparrowright (M, \partial M)$ be as in \cref{convention}, with components $\{f_i\}_{i=1}^m$.
\begin{enumerate}
    \item  A collection $G=\{g_i\colon S^2\looparrowright M\}_{i=1}^m$ of generic immersions is said to be \emph{algebraically dual} to $F$ if $F \sqcup G$ is a generic immersion and $\lambda(f_i,g_j)=[\delta_{ij}] \in \Gamma_{f_i,g_j}$ for all $i,j$, for some choice of basings for $F$ and $G$.
	\item 	A  collection $\ol{G}=\{\ol{g}_i\colon S^2 \imra M\}_{i=1}^m$ of generic immersions is \emph{geometrically dual} to $F$ if $F \sqcup G$ is a generic immersion and the geometric count of intersections satisfies $|f_i \pitchfork \ol{g}_j|=\delta_{ij}$ for all $i,j$.
  \end{enumerate}
\end{definition}

We will need the following lemma, the idea behind which is due to Casson~\citelist{\cite{Casson} \cite{F}*{Section~3}}.  The formulation we give here is from \cite{PRT20}*{Lemma~5.1}.

\begin{lemma}[Geometric Casson lemma]\label{lem:geometric-casson-lemma}
Let $F$ and $G$ be transversely intersecting generic immersions of compact surfaces in a connected 4-manifold $M$.
Assume that the intersection points $\{p,q\}\subseteq F\pitchfork G$ are paired by a Whitney disc $W$. Then there is a regular homotopy from $F \cup G$ to $\ol F \cup \ol{G}$ such that  $\ol F \pitchfork \ol{G} = (F\pitchfork G) \sm \{p,q\}$. That is, the two paired intersections have been removed.
The regular homotopy may create many new self-intersections of $F$ and $G$; however, these are algebraically cancelling.
\end{lemma}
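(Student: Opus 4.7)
My plan is to localise the construction to a regular neighbourhood $N(W) \subseteq M$ of $W$ and perform the Whitney move along $W$, carefully identifying the new intersections arising from each way in which $W$ may fail to be a ``pristine'' Whitney disc (embedded, framed, interior disjoint from $F \cup G$). The output of this process will be $\ol F \cup \ol G$, agreeing with $F \cup G$ outside $N(W)$, with $\{p,q\}$ removed from $F \pitchfork G$ and only new intersections lying inside $N(W)$. In the pristine case, the standard Whitney move (as described in \cite{FQ}*{Section~1.5}) supplies an ambient isotopy supported in $N(W)$ replacing $F$ by $\ol F$ with $\ol F \pitchfork G = (F \pitchfork G) \sm \{p,q\}$ and no new intersections of any kind; this is already a regular homotopy and proves the lemma in the pristine case.

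In the general case $W$ may have non-trivial normal framing, interior transverse intersections with $F \cup G$, and transverse self-intersections, each contributing new intersections to $\ol F \cup \ol G$. A boundary twist on $W$ changes its framing by $\pm 1$ and introduces a single interior intersection of $W$ with $F$ or $G$, thereby reducing the framing defect to the interior intersection defect. For the interior intersection defect, an intersection of $\Int W$ with $F$ (resp.\ $G$) causes the push of $F$ (resp.\ $G$) across $W$ to cross the unchanged part of $F$ (resp.\ $G$) twice in a neighbourhood of that point; the local model is precisely a finger move of $F$ (resp.\ $G$) into itself, producing two new transverse self-intersections of $\ol F$ (resp.\ $\ol G$) of opposite sign, paired by a short Whitney arc lying in $N(W)$. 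A self-intersection of $W$ can be resolved by a preliminary regular homotopy of $W$ that pushes its two local branches apart; this may create new interior intersections of $W$ with $F \cup G$ but no new intersections between $F$ and $G$, so the problem reduces to the interior intersection case above.

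The new self-intersections of $\ol F$ and $\ol G$ thus come in algebraically cancelling pairs by construction: each pair is produced by a local finger-move model, whose two new double points have opposite signs along the short Whitney arc supplied in $N(W)$. Throughout the entire process the maps remain generic immersions with only transverse double points, so the homotopy is genuinely regular rather than merely generic. The main obstacle will be to justify the local model near a self-intersection of $W$, where $W$ itself is not locally flat; I plan to handle this by the reduction outlined above, which first removes such defects via a regular homotopy of $W$ alone, leaving $W$ with only framing and interior intersection defects, and then invokes the finger-move local model to complete the Whitney move.
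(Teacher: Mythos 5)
Your decomposition of the possible defects of $W$ (framing, interior intersections, self-intersections) is the right starting point, and the paper does not itself prove this lemma but cites \cite{PRT20}*{Lemma~5.1}; nevertheless your proposal has a genuine gap in the central step, precisely where Casson's insight lives.

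The problem is your treatment of interior intersections of $\Int W$ with $G$. A Whitney move along $W$ pushes \emph{one} sheet across $W$; say it pushes $F$. Near each point of $\Int W \cap F$, the pushed-across copies of $W$ (which now belong to $\ol F$) cross the unchanged part of $F$, so you correctly obtain a cancelling pair of $\ol F$--$\ol F$ self-intersections. But near a point of $\Int W \cap G$, those same pushed-across copies of $W$ cross the unchanged $G$, producing a \emph{new pair of $\ol F$--$G$ intersections}, not $G$--$G$ self-intersections. There is no ``push of $G$ across $W$'' in this move; you cannot push $F$ across $W$ for the $F$-intersections and $G$ across $W$ for the $G$-intersections within a single Whitney move. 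The new $\ol F$--$G$ points violate the conclusion $\ol F \pitchfork \ol{G} = (F\pitchfork G) \sm \{p,q\}$. The missing ingredient is the Casson finger move: \emph{before} the Whitney move, for each $y \in \Int W \cap G$ choose an arc in $W$ from $y$ to the $G$-arc of $\partial W$ and do a finger move pushing $G$ along this arc past $\partial W$, hence through $G$ itself. This removes $y$ from $\Int W$ at the cost of two new $G$--$G$ self-intersections in a cancelling pair, and crucially creates no $F$--$G$ intersections because the arc ends on the $G$-sheet. (One may optionally do the analogous finger moves of $F$ over the $F$-arc, or leave $\Int W \cap F$ to be absorbed by the Whitney move as you describe.) Only after $\Int W$ is disjoint from $G$ can the Whitney move be performed without harming $F \pitchfork G$.

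A secondary issue: a transverse self-intersection of an immersed disc in a $4$-manifold cannot be removed by a regular homotopy that ``pushes the two local branches apart''---that is exactly the failure of general position that forces the Whitney trick in the first place. What one \emph{can} do is push the double point of $W$ off $\partial W$ (a finger move of $W$ into $F$ or $G$), converting it into two interior intersections of $\Int W$ with $F$ or $G$ as you intend; or one can simply perform the Whitney move along the immersed $W$ and observe that each self-intersection of $W$ produces algebraically cancelling self-intersections of the pushed-across sheet. Either route is fine, but the mechanism in your text does not exist.
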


The proof of the surface embedding theorem also relies on Freedman's disc embedding theorem, whose statement we recall.

\begin{theorem}[Disc embedding theorem \cites{F,FQ,PRT20}; see also \cite{Freedman-notes}]\label{thm:DET}
Let $M$ be a connected topological $4$-manifold with good fundamental group, and let
\[
F= \{f_i\}_{i=1}^m \colon (D^2\sqcup\cdots \sqcup D^2, S^1\sqcup \cdots \sqcup S^1)  \looparrowright (M,\partial M)
\]
be a generic immersion of finitely many discs.
Assume that $F$ has framed algebraically dual spheres $G=\{[g_i]\}_{i=1}^m \subseteq \pi_2(M)$ such that $\lambda(g_i,g_j)=0=\mu(g_i)$ for all $i\neq j$. Then there is a flat embedding $\ol{F} = \{\ol{f}_i\}_{i=1}^m \colon (\sqcup D^2, \sqcup S^1)  \hookrightarrow (M,\partial M)$, which is equipped with geometrically dual spheres $\ol{G}=\{\ol{g}_i\}_{i=1}^m$, such that $\ol{F}$ and $F$ have the same framed boundary and $[\ol{g}_i]= [g_i]\in \pi_2(M)$ for all $i$.
\end{theorem}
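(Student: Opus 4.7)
The plan is to reduce the theorem to the core analytic input of Freedman's program, namely the existence of embedded Casson handles (equivalently, the fact that a skyscraper built over a standard kinky handle is homeomorphic to a standard open $2$-handle). First I would arrange, via algebraic manipulations of the given data, a geometric configuration to which Freedman's infinite-tower construction applies; then I would invoke the topological reimbedding results to extract the flat embedded discs and their geometrically dual spheres.

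Concretely, the first step is preparation of the dual spheres. Since $\lambda(g_i,g_j)=0$ and $\mu(g_i)=0$, \cref{lem:vanishing-pairing} provides a convenient collection of Whitney discs pairing all intersections and self-intersections among the $g_i$. Using these, together with the hypothesis that the $g_i$ are framed, I would perform finger moves, boundary twists, and tubing into the $g_i$ themselves to replace $G$ by a new collection (still denoted $G$) whose components are pairwise disjoint, framed, and generically immersed spheres, still satisfying $\lambda(f_i,g_j)=\delta_{ij}$ as a single group element. Next, using the Casson lemma (\cref{lem:geometric-casson-lemma}) applied to the algebraic duality, I would arrange geometric transverse intersections $|f_i\pitchfork g_j|=\delta_{ij}$ plus possibly extra cancelling pairs that can themselves be paired by Whitney discs. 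Tubing the interiors of all Whitney discs for intersections of $F$ with itself into the framed dual spheres then produces a convenient collection $\mathcal{W}$ of framed Whitney discs whose interiors are disjoint from $F$, with $\mathcal{W}$ itself generically immersed.

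The heart of the argument, and the step I expect to be by far the hardest, is now the disc embedding input. The pair $(F,\mathcal{W})$, together with the framed dual spheres, is precisely the kind of geometric data that feeds into Freedman's skyscraper construction. The plan is to build from $\mathcal{W}$ an infinite tower of second-, third-, and higher-order Whitney discs, at each stage pushing the remaining intersections deeper into the tower. The convergence of this process requires precisely the $\pi_1$-null disc property of the good group $\pi_1(M)$, so that at each stage one can control the fundamental group of the complement of the partial tower; this is what forces the hypothesis that $\pi_1(M)$ is good. Granted the existence of such skyscrapers, Freedman's theorem identifying the skyscraper with a standard open $2$-handle, combined with the reimbedding theorem for Casson handles, provides, inside a regular neighbourhood of $F\cup \mathcal{W}\cup G$, a locally flat embedded collection of discs $\ol F$ with the correct boundary, together with spheres $\ol G$ that intersect $\ol F$ geometrically as $\delta_{ij}$.

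Finally, I would check the homotopy statement on the duals. The spheres $\ol g_i$ produced by the tower construction lie in a small neighbourhood of $g_i\cup \mathcal{W}\cup F$, and the modifications used (tubing into $g_i$, finger moves, Whitney moves, and passing through the reimbedding) are all homotopies of $g_i$ in $M$: no new $\pi_2$-classes are introduced because everything happens inside the union of $g_i$ with discs and spheres that are null-homotopic or homotopic to pieces of $g_i$. Tracking this carefully, as in \cite{PRT20}*{Lemma 6.5}, gives $[\ol g_i]=[g_i]\in \pi_2(M)$, completing the construction. The only genuinely hard ingredient is the skyscraper/reimbedding step; the remainder is bookkeeping of Whitney and finger moves and careful use of the Casson lemma.
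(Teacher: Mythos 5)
The paper does not prove this theorem. It is stated in \S4.1 as an \emph{ingredient}, cited to Freedman, Freedman--Quinn, \cite{PRT20}, and the Disc Embedding Theorem book, and then used as a black box in the proof of \cref{thm:SET}. So there is no in-paper proof to compare your outline against; the comparison below is against the standard proof in the literature.

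Your outline has a concrete structural gap before the tower construction even begins. Notice that the statement of \cref{thm:DET} imposes $\lambda(g_i,g_j)=0=\mu(g_i)$ only on the \emph{dual spheres}: there is no hypothesis that $\lambda(f_i,f_j)=0$ or $\mu(f_i)=0$. (Indeed, the paper applies the theorem to the Whitney discs $\W$, whose mutual and self-intersection numbers are completely uncontrolled; and the conclusion is only that $\ol F$ has the same framed boundary as $F$, not that it is regularly homotopic to $F$ --- precisely because the intersection data of $F$ may have to be destroyed.) Your step ``Tubing the interiors of all Whitney discs for intersections of $F$ with itself into the framed dual spheres'' therefore presupposes something that need not exist: if $\mu(f_i)\neq 0$ or $\lambda(f_i,f_j)\neq 0$, no such collection of Whitney discs for $F$ is available, and the geometric Casson lemma does not help since it preserves $\lambda$ and $\mu$ of $F$. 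The missing step is to first use the framed dual spheres to kill $\lambda(f_i,f_j)$ and $\mu(f_i)$ by a (non-regular) homotopy of $F$ rel $\partial$: cusp homotopies adjust $\mu(f_i)_1$, and the Norman trick with the framed $g_j$'s removes the remaining intersections; the conditions $\lambda(g_i,g_j)=0=\mu(g_i)$ and the framing are what make this possible without spoiling the $g_i$. Only after this does one have Whitney discs to push off of $F$ using the duals, and only then does the tower/skyscraper machinery apply. Beyond that gap, your sketch is a fair \emph{pr\'ecis} of Freedman's program, but the skyscraper, reimbedding, and the $\pi_1$-null disc property are an entire book's worth of content, which is why the present paper --- correctly --- quotes the theorem rather than reproving it; you should do the same.
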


We will also freely use standard constructions such as symmetric contraction, boundary twisting, and interior twisting (i.e.\ adding local cusps). See~\citelist{\cite{FQ}*{Chapters 1-2}\cite{Freedman-book-basicgeo}} for further details.

\subsection{Proof of the surface embedding theorem}

We recall the statement for the convenience of the reader.

\begin{reptheorem}{thm:SET}[Surface embedding theorem]
Let $F=\{f_i\}_{i=1}^m \colon (\Sigma,\partial \Sigma)\looparrowright (M, \partial M)$ be as in \cref{convention}. Suppose that $\pi_1(M)$ is good and that $F$ has algebraically dual spheres $G=\{[g_i]\}_{i=1}^m \subseteq \pi_2(M)$. Then the following statements are equivalent.
\begin{enumerate}[(i)]
\item \label{item:i-SET}  The  self-intersection number $\mu (F)$ and the Kervaire--Milnor invariant $\km(F)\in\Z/2$ vanish.
\item \label{item:ii-SET} There is an embedding $\ol{F}=\{\ol{f}_i\}_{i=1}^m \colon (\Sigma, \partial\Sigma)  \hookrightarrow (M,\partial M)$, regularly homotopic to $F$ relative to $\partial \Sigma$, with geometrically dual spheres $\ol{G}=\{\ol{g}_i\colon S^2\looparrowright M\}_{i=1}^m$ such that $[\ol{g}_i]= [g_i]\in \pi_2(M)$ for all $i$.
\end{enumerate}
\end{reptheorem}

\begin{proof}[Proof of \cref{thm:SET}]
The direction \eqref{item:ii-SET} $\Rightarrow$ \eqref{item:i-SET} follows from the fact that the intersection and self-intersection numbers, as well as the Kervaire--Milnor invariant, are invariant under regular homotopy (relative to the boundary) by \cref{prop:regular-homotopy-inv-lambda,prop:regular-homotopy-inv-mu,lemma:km-reg-homotopy-invariance} respectively.

The proof of the direction \eqref{item:i-SET} $\Rightarrow$ \eqref{item:ii-SET} reduces to the disc embedding theorem (\cref{thm:DET}) as follows. The argument is similar to the proof of~\cite{FQ}*{Corollary~5.1B} (see also the proof of~\cite{PRT20}*{Theorem~8.1}).

Apply the geometric Casson \cref{lem:geometric-casson-lemma} to upgrade $G=\{g_i\}$ from algebraically to geometrically dual spheres $G'=\{g_i'\}$, changing $F$ to $F'$ by a regular homotopy in the process. The intersection and self-intersection numbers, and the Kervaire--Milnor invariant, vanish for $F$.  So they also vanish for $F'$, since all three quantities are preserved under regular homotopy relative to the boundary by \cref{prop:regular-homotopy-inv-lambda,prop:regular-homotopy-inv-mu,lemma:km-reg-homotopy-invariance}.

Then by the definition of the Kervaire--Milnor invariant (\cref{def:km=0}), after further finger moves changing $F'$ to some $F''$, we can find a convenient collection of Whitney discs $\W = \{W_\ell\}$ for $F''$ whose interiors are disjoint from $F''$. Moreover $F''$ and $G'$ are still geometrically dual, since the finger moves may be assumed to miss $G'$.

We shall apply the disc embedding theorem (\cref{thm:DET}) to the collection of generically immersed  discs $\W$ in the 4-manifold $M\setminus \nu F''$, so we verify that the hypotheses are satisfied.
The Whitney discs $\W$ have framed algebraically dual spheres as follows. The Clifford tori at the double points of $F''$ are geometrically dual to $\W$. Symmetrically contract half of these tori, one per Whitney disc, using meridional discs for $F''$ tubed into the geometrically dual spheres $G'=\{g_i'\}$. The resulting spheres are only algebraically dual to $\W$ since the components of $\W$ and $G'$ may intersect arbitrarily; however, they have vanishing intersection and self-intersection numbers since they were produced by symmetric contraction.  They are also framed, as we argue briefly now.  If a sphere $g_i$ in $G'$ is not framed,  then the symmetric contraction uses incorrectly framed caps. However in the symmetric contraction process each cap is used twice, with opposite orientations, and so any framing discrepancies cancel out. Since $F''$ has geometrically dual spheres, the fundamental group $\pi_1(M\setminus \nu F'')\cong\pi_1(M)$ and is thus good. This verifies  the hypotheses of the disc embedding theorem (\cref{thm:DET}) for $\W$, as desired.

Apply the disc embedding theorem to the Whitney discs $\W$ in $M\setminus \nu F''$ to obtain disjointly embedded, flat, framed Whitney discs $\{\ol{W}_\ell\}$ for the double points of $F''$, with interiors still disjoint from $F''$, along with a collection of geometrically dual spheres for the $\{\ol{W}_\ell\}$ in $M\setminus \nu F''$.

Tube any intersections of $G'$ with $\{\ol{W}_\ell\}$ into the geometrically dual spheres for $\{\ol{W}_\ell\}$, giving a new collection of spheres $\ol{G}=\{\ol{g_i}\}$ disjoint from $\{\ol{W}_\ell\}$. Now we have that the interiors of $\{\ol{W}_\ell\}$ lie in the complement of $F''\cup \ol{G}$, and moreover $F''$ and $\ol{G}$ are geometrically dual. Perform Whitney moves on $F''$ along $\{\ol{W}_\ell\}$ to arrive at an embedding $\ol{F}$ as claimed. By construction, $\ol{F}$ and $\ol{G}$ are geometrically dual. That $[\ol{g}_i]= [g_i]\in \pi_2(M)$ for each $i$ follows from \cite{PRT20}*{Lemma~6.5}.
\end{proof}

\subsection{The \texorpdfstring{$\pi_1$}{pi1}-negligible surface embedding theorem}

Recall that a map $F \colon X\to Y$ is called \emph{$\pi_1$-negligible} if the inclusion $Y\setminus F(X) \subseteq Y$ induces an isomorphism on $\pi_1$ for all basepoints. Here is a reformulation of the surface embedding theorem.

\begin{corollary}[The $\pi_1$-negligible surface embedding theorem]\label{thm:surface-embedding-negligible}
Let $F\colon (\Sigma,\partial \Sigma)\looparrowright (M, \partial M)$ be as in \cref{convention}. Suppose that $F$ is $\pi_1$-negligible and that $\pi_1(M)$ is good. Then $\mu (F)=0$ and the  Kervaire--Milnor invariant of $F$ vanish if and only if there exists a $\pi_1$-negligible embedding $\overline{F} \colon (\Sigma,\partial\Sigma) \hookrightarrow (M, \partial M)$ regularly homotopic to $F$, relative to the boundary.
\end{corollary}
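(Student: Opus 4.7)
The plan is to derive this corollary directly from the surface embedding theorem (\cref{thm:SET}) by showing that $\pi_1$-negligibility supplies the algebraically dual spheres required to invoke that theorem, and that conversely the embedding it produces is itself automatically $\pi_1$-negligible.

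The key observation I would establish first is that every $\pi_1$-negligible generic immersion $F$ as in \cref{convention} admits geometric, hence algebraic, dual spheres. Given a component $f_i$, choose a regular point on $f_i$ and let $\mu_i \subseteq M \sm F(\Sigma)$ be the boundary of a small embedded normal disc $\Delta_i \subseteq M$ at this point, so that $\Delta_i$ meets $F$ transversely in a single point lying on $f_i$. Since $\mu_i$ bounds $\Delta_i$ it is null-homotopic in $M$, and $\pi_1$-negligibility forces it to be null-homotopic in $M \sm F(\Sigma)$ as well. A generic null-homotopy gives an immersed disc $D_i\imra M \sm F(\Sigma)$ with $\partial D_i = \mu_i$, and the sphere $g_i := \Delta_i \cup_{\mu_i} D_i$ is transverse to $F$ and meets it in exactly one point, lying on $f_i$. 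The collection $\{g_i\}$ is then a geometrically, and in particular algebraically, dual system for $F$.

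For the implication (i) $\Rightarrow$ (ii), I would apply \cref{thm:SET} with these dual spheres to produce an embedding $\ol F$ regularly homotopic to $F$ rel.\ $\partial \Sigma$, together with geometric duals $\{\ol g_i\}$. The remaining task is to verify that $\ol F$ itself is $\pi_1$-negligible. Surjectivity of $\pi_1(M \sm \ol F(\Sigma)) \to \pi_1(M)$ is automatic by general position since $\ol F(\Sigma)$ has codimension two. The kernel of this map is normally generated by the meridians of the components of $\ol F$, and each such meridian bounds a disc in $M \sm \ol F(\Sigma)$, namely $\ol g_i$ with a small open disc around its unique intersection with $\ol F$ removed. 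Hence the kernel is trivial and $\ol F$ is $\pi_1$-negligible.

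The implication (ii) $\Rightarrow$ (i) is immediate: all three invariants vanish on the embedding $\ol F$ (the Kervaire--Milnor invariant because the empty collection of Whitney discs trivially satisfies \cref{def:km}), and they are preserved under regular homotopy rel.\ boundary by \cref{prop:regular-homotopy-inv-lambda,prop:regular-homotopy-inv-mu,lemma:km-reg-homotopy-invariance}. I do not expect a real obstacle here; the whole argument is essentially a repackaging of \cref{thm:SET}, and the only mildly subtle point is the check that $\ol F$ is $\pi_1$-negligible, where the strengthening from algebraic to geometric duals built into the conclusion of \cref{thm:SET} is what does the work.
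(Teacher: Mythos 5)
Your proposal is correct and follows essentially the same route as the paper: both establish the equivalence between $\pi_1$-negligibility and the existence of geometrically dual spheres (via capping off null-homotopic meridians in one direction, and using the duals to kill the meridional normal generators of the kernel in the other), and then reduce to \cref{thm:SET} together with the regular-homotopy invariance of $\lambda$, $\mu$, and $\km$. The only cosmetic difference is that the paper states the $\pi_1$-negligible $\Leftrightarrow$ geometric duals equivalence up front as a single observation, whereas you interleave the two directions with the two implications of the corollary.
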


This corollary follows from the surface embedding theorem and the fact  that a generic immersion $F \colon \Sigma \imra M$ is $\pi_1$-negligible if and only if $F$ admits geometrically dual spheres, which can be seen as follows. For the forwards direction, the meridional circles are null-homotopic in $M$, so by $\pi_1$-negligibility they are null-homotopic in $M \sm F(\Sigma)$. The union of null-homotopies with meridional discs gives geometrically dual spheres. For the reverse direction, first note that by general position the homomorphism $\pi_1(M\setminus F(\Sigma)) \ra \pi_1(M)$
is surjective. The kernel is normally generated by a collection consisting of one meridional circle for each connected component of $\Sigma$. Since geometrically dual spheres provide null homotopies for these meridians, the assertion follows. By the geometric Casson lemma (\cref{lem:geometric-casson-lemma}), the map $F$ in the statement of the surface embedding theorem (\cref{thm:SET}) is regularly homotopic to a $\pi_1$-negligible map, due to the existence of the algebraically dual spheres $G$. Indeed, this is the first step of the proof of the surface embedding theorem. Note that \cref{thm:SET} also controls the homotopy class of the dual spheres, and so is slightly stronger than \cref{thm:surface-embedding-negligible}.

\section{Band characteristic maps and the combinatorial formula}\label{sec:intro-km-section}

In this section we define $b$-characteristic surfaces (\cref{def:b-char}) and motivate the combinatorial formula for the Kervaire--Milnor invariant (\cref{def:t}). We postpone many of the proofs to \cref{sec:km}. We hope this will help the reader to assimilate the overall structure more easily. We work towards the definition of $b$-characteristic surfaces by first defining the related notions of $s$-characteristic and $r$-characteristic surfaces, mirroring the historical development. These latter definitions are simpler to state and serve to motivate the more complicated definition of $b$-characteristic surfaces.

A sphere $g\colon S^2\looparrowright M$ in a topological $4$-manifold $M$ is said to be \emph{twisted} if the Euler number of the normal bundle is odd. We say $g$ is \emph{framed} if the normal bundle is trivial.  To coincide with the usual meaning of framed, one can also implicitly choose a trivialisation, although we will not make use of such a choice.
Observe that if the normal bundle of $g$ has even Euler number then it is homotopic to a generically immersed sphere with trivial normal bundle, via adding local cusps.

\begin{definition}\label{def:Ft}
Let $F=\{f_1,\dots,f_m\}\colon (\Sigma,\partial \Sigma)\looparrowright (M, \partial M)$ be as in \cref{convention}. We define $\Sigma^\twist\subseteq\Sigma$ so that for $i\in\{1,\dots,m\}$, the component $\Sigma_i \subseteq \Sigma^{\twist}$ if and only if there is no framed immersed sphere $g_i$ with $\lambda(f_j,g_i) = \delta_{ij}$ for all $j=1,\dots,m$. Then we use
	\[F^\twist=\{f^\twist_i\colon \Sigma_i^\twist\ra M\}\]
	to denote the restriction of $F$ to $\Sigma^\twist$. Note that if an $f_i$ does not admit an algebraically dual sphere at all, then it belongs to $F^\twist$.
\end{definition}

Recall that $x \in H_2(M,\partial M;\Z/2)$ is said to be \emph{characteristic} if $x \cdot a = a \cdot a \in\Z/2$ for every $a \in H_2(M;\Z/2)$, where $- \cdot -$ denotes the intersection pairing $H_2(M;\Z/2)\times H_2(M,\partial M;\Z/2)\to \Z/2$.  The next definition gives a weaker notion.

\begin{definition}\label{def:s-char}
Let $F\colon (\Sigma,\partial \Sigma)\looparrowright (M, \partial M)$ be as in \cref{convention}. 	The map $F$ is called \emph{spherically characteristic} (or \emph{$s$-characteristic} for short) if
$F \cdot a = a \cdot a \in\Z/2$ for all $a\in\pi_2(M)$, considered as an element of $H_2(M;\Z/2)$.
\end{definition}

We will show in \cref{lem:r-b}  that $b$-characteristic maps are $s$-characteristic.
\begin{lemma}\label{lem:F=F-twist}
Let $F\colon (\Sigma,\partial \Sigma)\looparrowright (M, \partial M)$ be as in \cref{convention}.
\begin{enumerate}[(i)]
\item If $F$ is $s$-characteristic, then $F=F^\twist$.
\item If $F$ has algebraically dual spheres, then $F^\twist$ is $s$-characteristic or empty.
\end{enumerate}
\end{lemma}

\begin{proof}
To prove (i), suppose $F$ is not equal to $F^\twist$ then there exists a component $\Sigma_i$ of $\Sigma$ with a framed dual sphere $g_i$, i.e.\ with $\lambda(f_j, g_i)=\delta_{ij}$ for all $j\neq i$. This leads to the contradiction
\[
1 =  f_i \cdot g_i =  f_i\cdot g_i + \sum_{j\neq i} f_j \cdot g_i =  F \cdot g_i =  g_i\cdot g_i =  0 \in\Z/2,
\]
where the second to last equality follows from $F$ being $s$-characteristic.

To prove (ii), suppose that $F$ has algebraically dual spheres. Note that the dual spheres for $F^\twist\subseteq F$ are necessarily twisted. Assume that $F^\twist$ neither  $s$-characteristic nor empty. Then there exists $a\in\pi_2(M)$ such that $F^\twist\cdot a\not= a\cdot a\in\Z/2$. By tubing into a dual sphere to a component of $F^\twist$ if necessary, we can assume that $a$ is untwisted, i.e.~$a\cdot a$ is zero and that $F^\twist\cdot a= 1$. Choose some component $f_j$ of $F^\twist$ such that $a\pitchfork f_j$ is nonempty. Except for one of the intersections between $f_j$ and $a$, tube all the intersections of $a$ and $F^\twist$ into the corresponding dual spheres to $F^\twist$. Call the resulting sphere $a'_j$. Since $F^\twist \cdot a=1$, we tubed into an even number of dual spheres, so $a'_j\cdot a'_j= a\cdot a= 0\in\Z/2$. Via adding local cusps, we may assume that $a'_j$ is framed. We also have that $\lambda(f_i,a'_j) = \delta_{ij}$ for all $i,j$. This contradicts the definition of $F^\twist$.
\end{proof}

Recall that a \emph{convenient} collection of Whitney discs $\W$ for the intersections within $F$ consists of framed, generically immersed Whitney discs with interiors transverse to $F$, and with disjointly embedded boundaries. Recall the invariant $t$ from \cref{def:t} appearing in \cref{thm:main}, where $\tw(F,\W)$ is the mod $2$ count of transverse intersections between $F$ and the interiors of the Whitney discs in $\W$.

If $F$ is not $s$-characteristic, then we can change $\tw(F,\W)$ as follows. Given $a\in \pi_2(M)$ with $F \cdot a$ odd but $a \cdot a$ even, one can tube a framed Whitney disc $W$ for $F$ into a framed representative $\tilde a\colon S^2\imra M$, keeping the new Whitney disc framed but adding an odd number of interior intersections with $F$. If $F\cdot a$ is even and $a\cdot a$ is odd one can tube $W$ into a representative $\tilde a$ and also add an odd number of boundary twists to keep the new Whitney disc framed but again adding an odd number of interior intersections with $F$. Using \cref{lem:F=F-twist}, this is one reason for the appearance of $F^\twist$ in the following statements.

 The following lemma is also used in the proof of \cref{thm:main}, and shows that the vanishing of $t$ for a given collection of Whitney discs implies the vanishing of $\km$.

\begin{lemma}\label{lem:tau}
Let $F\colon (\Sigma,\partial \Sigma)\looparrowright (M, \partial M)$ be as in \cref{convention}.
Suppose that $F$ admits algebraically dual spheres, and that all double points of $F$ are paired by a convenient collection $\W$ of Whitney discs. Let $\W^\twist \subseteq \W$ denote the sub-collection of Whitney discs for the intersections within $F^\twist$, where $F^\twist$ is as in \cref{def:Ft}.
If $\tw(F^\twist,\W^\twist)=0$, then $\km(F)=0$.
\end{lemma}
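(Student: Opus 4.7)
The plan is to construct, after finitely many finger moves on $F$ taking it to $F'$, a new convenient collection of framed Whitney discs for $F'$ whose interiors are disjoint from $F'$. This directly witnesses $\km(F) = 0$ via Definition~\ref{def:km}.

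First I would eliminate the interior intersections of discs in $\W$ with components of $F \sm F^\twist$. For each component $f_j \notin F^\twist$, fix a framed algebraically dual sphere $g_j$. For each intersection $p \in \Int W \pitchfork f_j$ with $W \in \W$, tube $W$ to a parallel push-off of $g_j$ along an arc on $f_j$ joining $p$ to the geometric intersection $f_j \pitchfork g_j$. The framedness of $g_j$ keeps the tubed disc framed, and the tubing replaces $p$ by intersections of the $g_j$-pushoff with $F$. These can be iteratively eliminated by further tubings using the other framed duals, terminating in a convenient collection whose only interior intersections with $F$ are intersections with $F^\twist$. Throughout, the mod $2$ count of interior intersections with $F^\twist$ is unchanged.

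Next I would handle the Whitney discs lying in $\W \sm \W^\twist$. Each such disc $W$ pairs two double points of $F$ at least one of whose sheets belongs to $F \sm F^\twist$. Near such a double point, the Clifford torus admits a symmetric contraction by a framed meridional cap coming from the framed dual sphere of that $F \sm F^\twist$ sheet; this yields a framed algebraically dual sphere for $W$. Tubing $W$ along such a dual sphere removes any remaining interior intersection of $W$ with $F^\twist$, again without changing the total mod $2$ count of $\Int \W^\twist \pitchfork F^\twist$. After this step the only interior intersections left are between $\W^\twist$ and $F^\twist$, and by hypothesis these number an even total.

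Finally, I would eliminate the remaining intersections of $\Int \W^\twist$ with $F^\twist$ in pairs, using the hypothesis $\tw(F^\twist, \W^\twist) = 0$. For each pair $\{p, q\}$, perform a finger move on $F^\twist$ that creates two new double points paired by a small framed Whitney disc with clean interior; simultaneously, a Whitney-type move using an (a priori unframed) algebraically dual sphere of the relevant $F^\twist$ component removes the pair $\{p,q\}$ from the interiors of the original Whitney discs. The main obstacle is that the algebraically dual spheres of $F^\twist$ components are not framed, so each individual tubing can damage the framing of a Whitney disc; however, because the intersections are eliminated strictly in pairs, each unframed dual is used an even number of times, and the framing defects cancel modulo $2$. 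The resulting convenient collection of framed Whitney discs for the post-finger-move $F'$ has interiors disjoint from $F'$, proving $\km(F) = 0$.
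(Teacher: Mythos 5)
There are two genuine gaps. First, you never invoke the geometric Casson lemma (\cref{lem:geometric-casson-lemma}). With only \emph{algebraically} dual spheres, tubing a Whitney disc into a push-off of $g_j$ does not cleanly remove intersections: the push-off of $g_j$ can meet $F$ at many points (you only control signed counts, not geometric ones), so each tube trades one intersection for possibly many new ones, and your ``iterative elimination'' has no reason to terminate. The paper first upgrades $G$ from algebraically to geometrically dual via the Casson lemma, precisely so that each tube removes one intersection and adds none; you need this step.

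Second, and more seriously, the final paragraph does not actually solve the framing problem it raises. The hypothesis $t(F^\twist,\W^\twist)=0$ only guarantees that the \emph{total} number of intersections of $\Int\W^\twist$ with $F^\twist$ is even. It does not guarantee that each individual Whitney disc $W_\ell$ meets $F^\twist$ an even number of times. If $p$ and $q$ lie on two different discs $W_1$ and $W_2$, tubing each once into a twisted dual changes the framing of \emph{each} disc by an odd number; the defects do not cancel ``modulo $2$'' in any useful way, since framing is a per-disc condition. Correcting a single odd defect by a local cusp reintroduces an intersection with $F^\twist$, so you are stuck. The missing ingredient is the transfer move (\cref{lem:transfer-move}): after finger moves, it shifts intersections between Whitney discs so that each $W'_\ell$ meets $(F')^\twist$ an even number of times, after which every disc is tubed into a twisted dual an even number of times, the framing change per disc is even, and local cusps can finish the job without spoiling cleanness. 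Without this (or an equivalent redistribution construction), your ``Whitney-type move using an unframed dual sphere'' step, which is in any case not spelled out, does not yield a convenient collection of framed Whitney discs with clean interiors.
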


We wish to find practically verifiable conditions on $F$ that guarantee that $\tw(F^\twist,\W^\twist)$ is independent of the collection of Whitney discs $\W^{\twist}$. More precisely, the value of $\tw(F^\twist,\W^\twist)$ should be independent of the pairing of double points, the Whitney arcs joining the paired double points (which includes the choice of sheets at each double point) and finally the Whitney discs.
In the case that each $\Sigma_i$ is simply connected, $\tw(F^\twist,\W^\twist)$ agrees with \cite{FQ}*{Definition~10.8A}. However, Freedman-Quinn claim in their Lemma 10.8B that, for simply connected $\Sigma$, the quantity $\tw(F^{\twist},\W^{\twist})$ only depends on $F^\twist$, and not on the Whitney discs, as long as $F^\twist$ is $s$-characteristic. This is not true in general, as pointed out and corrected by Stong~\cite{Stong}.
Further, again with $\pi_1(\Sigma_i) = 1$ for all $i$, Stong established that the value of $t$ does not depend on the choice of $\W^\twist$ using the notion of \emph{$r$-characteristic} discs and spheres.  Here is our generalisation of his notion.

\begin{definition}\label{def:r-char}
Let $\Sigma$ and $M$ be as in \cref{convention}. A map $F\colon (\Sigma,\partial\Sigma)\to (M,\partial M)$ is called \emph{$\RP$-characteristic} (or simply \emph{$r$-characteristic}) if $F\cdot R= R\cdot R \in\Z/2$
	for every map $R\colon \mathbb{RP}^2\to M$ satisfying $R^*w_1(M)=0$.
\end{definition}

\begin{remark}\label{rem:r-char-implies-s-char}
A map $c \colon \RP \to S^2$ of odd degree (e.g.\ a collapse map) composed with elements of $\pi_2(M)$ can be used to show that $r$-characteristic maps are $s$-characteristic. Indeed, given $F$ and $a \in \pi_2(M)$ we obtain $a \circ c \colon \RP \to M$, and we have $0 =  F \cdot (a \circ c) + (a\circ c) \cdot (a \circ c) = F \cdot a + a \cdot a \in\Z/2$, where the second equality uses that $c$ has odd degree.
\end{remark}

Stong's key observation~\cite{Stong} was that in some instances involving elements of order two in~$\pi_1(M)$, one can change the choice of sheets at two double points of $F^\twist$, and hence the Whitney arcs and corresponding Whitney disc, with a resulting change in the value of $t(F^\twist,\W^{\twist})$ by one. The restriction to $r$-characteristic maps removes this source of indeterminacy. To summarise, Stong showed the following theorem.

\begin{theorem}[Stong~{\cite{Stong}}]\label{theorem:Stong}
Let $F \colon (\Sigma,\partial \Sigma) \imra (M,\partial M)$ be as in \cref{convention}, with $\Sigma$ a union of discs or spheres. Suppose $\mu (F)=0$ and that $F$ admits algebraically dual spheres. If $F^\twist$ is not $r$-characteristic then $\km(F)=0$, and if $F^\twist$ is $r$-characteristic then $\km(F)=\tw(F^\twist,\W^\twist)$ for any choice of Whitney discs $\W^\twist$ for the intersections within $F^\twist$.
\end{theorem}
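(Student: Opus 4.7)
The plan is to reduce the theorem to \cref{lem:tau} by carefully analysing how $\tw(F^\twist,\W^\twist)\in\Z/2$ depends on the choice of convenient collection $\W^\twist$. First I would enumerate the elementary moves relating any two such collections: (i)--(iii) changing the pairing of double points, changing the Whitney arcs with the pairing fixed, or changing a Whitney disc with its boundary arcs fixed; and (iv) the Stong sheet-change move at a double point whose double-point loop has order $2$ in $\pi_1(M)$. Each of the first three moves is encoded by an immersed sphere $X$ in $M$ obtained by cutting and re-gluing the two Whitney discs involved; the sheet-change move is instead encoded by an immersed $\RP^2$ with $X^*w_1(M)=0$, formed by closing up the two sheet-choice Whitney discs along the $2$-fold null-homotopy of the double-point loop in $M$.

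Next I would show by a local geometric calculation that each such elementary move changes $\tw(F^\twist,\W^\twist)\bmod 2$ by $F\cdot X+X\cdot X$: the $F\cdot X$ contribution counts the new intersections of the modified Whitney disc with $F$, while the $X\cdot X$ contribution accounts for the boundary twists needed to restore the framing of the convenient Whitney disc after the modification. The careful framing bookkeeping in the $\RP^2$ case is the technical heart of Stong's correction to the original \cite{FQ}*{Theorem~10.5(1)}.

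Both conclusions of the theorem then follow. If $F^\twist$ is $r$-characteristic, then $F\cdot X+X\cdot X\equiv 0\bmod 2$ for every immersed sphere $X$ (by \cref{rem:r-char-implies-s-char}) and for every immersed $\RP^2$ with trivial $w_1$-pullback, so $\tw(F^\twist,\W^\twist)$ is independent of $\W^\twist$. Combining this with \cref{lem:tau}, together with the observation that any finger move of $F^\twist$ can be paired by canonical small Whitney discs contributing nothing to $\tw$ (so that the converse implication $\km(F)=0\Rightarrow \tw=0$ is automatic), one obtains $\km(F)=\tw(F^\twist,\W^\twist)$. If $F^\twist$ is not $r$-characteristic, choose a witness $R\colon\RP^2\to M$ with $R^*w_1(M)=0$ and $F\cdot R+R\cdot R\equiv 1\bmod 2$; after introducing a double point of $F$ with a suitable order-$2$ double-point loop via a finger move if necessary, a sheet-change move realises $R$ and flips the parity of $\tw(F^\twist,\W^\twist)$. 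Hence $\tw = 0$ can always be arranged, and \cref{lem:tau} yields $\km(F)=0$.

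The main obstacle I expect is the last step: showing that every witness $R$ is realisable as the $\RP^2$ arising from some sheet-change move on some convenient $\W^\twist$. This requires using the algebraically dual spheres to identify $\pi_1(M\sm\nu F)$ with $\pi_1(M)$ and keep track of the fundamental group during modifications, together with a judicious choice of finger moves to produce a double point whose order-$2$ double-point loop matches $R|_{\RP^1}$ and whose sheet-change geometrically reconstructs $R$ up to homotopy; this is precisely where Stong's argument supersedes the erroneous \cite{FQ}*{Lemma~10.8B}.
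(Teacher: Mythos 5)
Your strategy is essentially the one the paper uses. The paper does not reprove this statement (it cites \cite{Stong}); rather, its proof of \cref{thm:main} specialises to it via \cref{lem:r-b}, since $b$-characteristic and $r$-characteristic coincide for unions of discs and spheres, and that proof follows your outline: reduce to \cref{lem:tau}, show each modification of $\W^\twist$ changes $t$ by $F\cdot X+X\cdot X$ for an associated closed surface $X$ (your accounting --- new intersections with $F$ plus the framing correction from boundary twists --- is exactly what \cref{lem:fiber-move} and the proof of \cref{thm:embedding-obstruction} establish in band language), and in the non-characteristic case use a witness to arrange $t=0$. Two refinements from the paper are worth recording. First, re-pairings of double points change $t$ by nothing and need no characteristic hypothesis at all (\cref{argument:pairing-up-independence}, via the cut-and-reglue of \cref{fig:long-whitney-arc}); only changes of disc, arc, or sheet produce a genuine sphere or $\RP$ contribution. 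Second --- and this disposes of the obstacle you flag --- one should not try to realise a given witness $R$ as the $\RP$ arising from a sheet change at a pre-existing double point. Instead, tube a small disc of $F$ into $R$ to obtain a M\"obius band $B$ with null-homotopic boundary on $\Sigma$, and perform a finger move along a fibre of $B$ (\cref{construction:finger-move}): the two new double points have double-point loop $R|_{\mathbb{RP}^1}$, their trivial Whitney disc and the band Whitney disc $W_B$ are precisely the two sheet choices, and by \cref{lem:fiber-move,lem:well-defined} adding $W_B$ changes $t$ by $\Theta(B)=F\cdot R+R\cdot R=1$. (The sub-case where the witness is a sphere, i.e.\ $F^\twist$ is not even $s$-characteristic, is handled more simply in \cref{lem:s-char-km-zero} by tubing the Whitney discs into the dual spheres and the witness, with no finger move.) With these two points supplied, your proposal matches the argument in the paper.
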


\begin{remark}\label{rem:freedman-quinn-stong-win}
Combining \cref{theorem:generic-immersions-bijection} and \cref{theorem:Stong} with \cref{thm:SET} gives the complete answer to the embedding problem for spheres and discs with algebraically dual spheres for good fundamental groups, due to Freedman, Quinn, and Stong.
\cref{theorem:generic-immersions-bijection} allows one to fix the regular homotopy class of generic immersions within the homotopy class to be that with $\mu(-)_1=0$, \cref{theorem:Stong} computes the Kervaire--Milnor invariant, and then one applies \cref{thm:SET} to conclude whether or not there is a  regular homotopy to an embedding.
\end{remark}

Our contribution in the present paper extends this solution to the case that the components of $\Sigma$ are not all simply connected. In this case there is a further source of indeterminacy coming from the choice of Whitney arcs on $\Sigma$. For this reason we need a stronger restriction on $F^\twist$.

\begin{definition}\label{def:band}
	A \emph{band} refers to either of the two $D^1$-bundles over $S^1$, i.e.\ a band is either an annulus or a M\"obius band.
Let $F\colon (\Sigma,\partial \Sigma)\looparrowright (M, \partial M)$ be as in \cref{convention}. Let $\mathcal{M}_F$ be the mapping cylinder of $F$.  Write $\bands (F)\subseteq H_2(M,\Sigma;\Z/2) := H_2(\mathcal{M}_F,\Sigma;\Z/2)$ for the subset of elements of the relative homology group that can be represented by a square
\[
\begin{tikzcd}
\partial B \ar[r, "h"]\ar[d, "\iota", hook] & \Sigma \ar[d, "F"]  \\
B \ar[r, "g"] & M
\end{tikzcd}
\]
where $B$ is a band and $\iota\colon \partial B\hookrightarrow B$ is the inclusion, such that
\begin{equation}\label{item:defn-band-w1-condn}
\langle w_1(M), g(C)\rangle + \langle w_1(\Sigma), h(\partial B)\rangle =0 \in \Z/2,
\end{equation}
 where $C$ is the core curve of $B$.
\end{definition}

\begin{figure}[htb]
	\centering
\begin{tikzpicture}
        \node[anchor=south west,inner sep=0] at (0,0){	    \includegraphics[width=3.5cm]{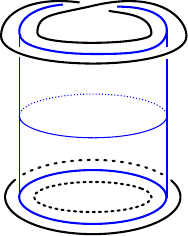}};
	  \node at (-0.23,0.15) {$F(\Sigma)$};
	\node at (0.1, 2.25) {$C$};
		\node at (3.35, 2.75) {$B$};
	\node at (-0.23,4.3) {$F(\Sigma)$};
	\end{tikzpicture}
	\caption{An annular band $B$ (blue) is shown with boundary on $F$ (black). One of the boundary components of $B$ is nonorientable on $F$ and one is orientable, so $\langle w_1(\Sigma), h(\partial B)\rangle =1$. Therefore, in order for this to be an element of $\bands(F)$, we must have $\langle w_1(M), g(C)\rangle=1$, where $M$ is the ambient $4$-manifold and $C$ is the core curve of the annulus, shown in blue.}
	\label{fig:band-example}
\end{figure}

See \cref{fig:band-example} for an example of a band.
Note that every element of $\bands(F)$ can be represented by a generic immersion of pairs $(B,\partial B) \imra (M,\Sigma)$ (\cref{defn:generic-immersion-of-pairs}).
Writing $H_2(M,\Sigma;\Z/2)$ in place of $H_2(\mathcal{M}_F,\Sigma;\Z/2)$ is a slight but standard abuse of notation. The pair $(g,h)$ induces a relative homology class since the map
\[g \sqcup (h \times \Id_{[0,1]}) \colon B \sqcup (\partial B \times [0,1]) \ra M \sqcup (\Sigma \times [0,1])\]
descends to a map $\mathcal{M}_\iota \to \mathcal{M}_F$. The mapping cylinder $\mathcal{M}_\iota$ is homeomorphic to $B$, and so we obtain a map $(B,\partial B) \to (\mathcal{M}_F,\Sigma)$.
The image of the relative fundamental class $[B,\partial B]$ in  $H_2(\mathcal{M}_F,\Sigma;\Z/2)$ is an element of $\bands (F)$. From now on, since $\mathcal{M}_F \simeq M$, to simplify the notation we will not mention the mapping cylinder and refer to $\bands (F)\subseteq H_2(M,\Sigma;\Z/2)$.

We will see in \cref{lem:representable} that given a generic immersion $F\colon (\Sigma,\partial \Sigma)\looparrowright (M, \partial M)$ as in \cref{convention}, every element of $H_2(M,\Sigma;\Z/2)$ can be represented by an immersion of some compact surface $S$ into $M$, with interior transverse to $F$, and with boundary generically immersed in $F(\Sigma)$ away from the double points. The subset $\bands(F)$ consists of those homology classes for which $S$ can be chosen to be a band, satisfying condition~\eqref{item:defn-band-w1-condn}.

We use the notation
\[
\partial \colon H_2(M,\Sigma;\Z/2)\ra H_1(\Sigma;\Z/2)
\]
for the connecting homomorphism from the long exact sequence of the pair.  A class represented by a compact surface $S$ is mapped to its boundary $\partial S$ under the map $\partial$.
Then $\partial\bands(F)\subseteq H_1(\Sigma;\Z/2)$ consists of (the homology classes of) those closed $1$-manifolds immersed in $\Sigma$ whose images under $F$ bound bands in~$M$ satisfying~\eqref{item:defn-band-w1-condn}.

\begin{figure}[htb]
	\centering
\begin{tikzpicture}
\node[anchor=south west,inner sep=0] at (0,0){	    \includegraphics{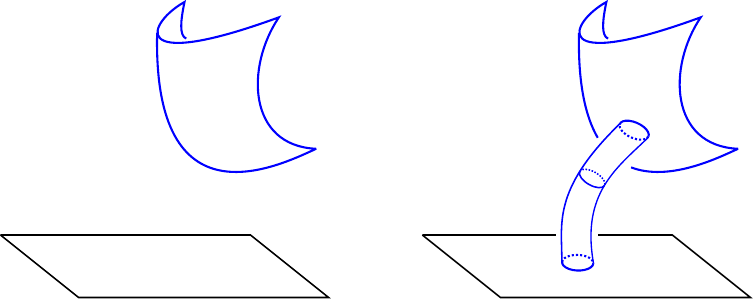}};
	  \node at (0.25,0.25) {$F(\Sigma)$};
	\node at (2.5, 3) {$S$};
		\node at (9.7, 3) {$B$};
	\node at (3,-0.35) {(a)};
		\node at (10.35,-0.35) {(b)};
	\end{tikzpicture}
	\caption{(a) An immersed surface $S$ (blue) in the ambient manifold $M$, and the image (black) of a generic immersion $F\colon (\Sigma,\partial \Sigma)\looparrowright (M, \partial M)$. (b) A thin tube is added, with one boundary component on $\Sigma$ and one on $S$. The surface $B$ (blue) is obtained by cutting out a disc on $S$ and gluing in the tube. Note that compared to $S$, the surface $B$ has a new boundary component lying on $\Sigma$.}
	\label{fig:tubing-band}
\end{figure}

\begin{construction}\label{ex:tubing-band}
Given a generic immersion $F\colon (\Sigma,\partial \Sigma)\looparrowright (M, \partial M)$ as in \cref{convention} suppose we have a generically immersed surface $S$ in $M$ with boundary on $\Sigma$, i.e.\ admitting maps satisfying
\[
\begin{tikzcd}
\partial S \ar[r, "h"]\ar[d, "\iota", hook] & \Sigma \ar[d, "F"]  \\
S \ar[r, "g"] & M,
\end{tikzcd}
\]
where possibly $\partial S$ is empty.
Then the tubing procedure shown in \cref{fig:tubing-band} can be used to create a band, as follows. If $S$ is a disc, the procedure gives an annulus $B$ with boundary lying on $\Sigma$. This annulus satisfies \eqref{item:defn-band-w1-condn}, and therefore lies in $\bands(F)$, if and only if $\langle w_1(\Sigma),h(\partial S)\rangle=0\in\Z/2$, since the core of $B$ is null-homotopic in $M$ and and the newly created boundary component of $B$ is null-homotopic in $\Sigma$.

In the case that $S$ is a sphere, we can perform the tubing procedure of \cref{fig:tubing-band} to $S$ twice. In this case both boundary components of the annulus created are null-homotopic on $\Sigma$, so we always produce an element of $\bands(F)$.

Finally if $S$ is an $\RP$, the tubing procedure creates a M\"obius band with boundary on $\Sigma$, which lies in $\bands(F)$ if and only if $\langle w_1(M),g(\mathbb{RP}^1)\rangle=0\in \Z/2$, where $\mathbb{RP}^1\subseteq \RP$.
\end{construction}

When defining $b$-characteristic surfaces, we will restrict to the case that the $\Z/2$-valued intersection form $\lambda_{\Sigma}$ is trivial on $\partial \bands(F)$. We can restrict in this way because of the following lemma.

\begin{lemma}
	\label{lem:dependence_on_A}
Let $F\colon (\Sigma,\partial \Sigma)\looparrowright (M, \partial M)$ be as in \cref{convention}, with $\mu (F)=0$. If the $\Z/2$-valued intersection form $\lambda_{\Sigma}$ on $H_1(\Sigma;\Z/2)$ is nontrivial on $\partial\bands(F)$, then we can change $F$ by a regular homotopy to $F'$ such that there are convenient collections of Whitney discs $\W$ and $\W'$ for the double points of $F$ and $F'$ respectively, such that $t(F,\W)\neq t(F',\W')$.

Moreover, if $F$ has dual spheres and the $\Z/2$-valued intersection form $\lambda_{\Sigma^\twist}$ on $H_1(\Sigma^\twist;\Z/2)$ is nontrivial on $\partial\bands(F^\twist)$ then $\km(F)=0$.
\end{lemma}

In the case that $\lambda_{\Sigma}|_{\partial \bands(F)}$ is trivial, we define an invariant $\Theta$ on the set $\bands(F)$. We will need the following notions.

\begin{enumerate}
  \item For $Z$ a closed $1$-manifold generically immersed in $\Sigma$, the self-intersection number  $\mu_{\Sigma}(Z)\in\Z/2$ of $Z$ counts the number of double points, which we assume without loss of generality to be disjoint from the double points of $F$. As usual, this is not invariant under homotopies of $Z$ in $\Sigma$, only under regular homotopies.
  \item Let $S$ be a compact surface, with a generic immersion of pairs $(S,\partial S) \imra (M,\Sigma)$.
Suppose that $w_1(\Sigma)$ is trivial on each component of~$\partial S$, e.g.\ if~$\Sigma$ is orientable. Then the normal bundle of $\partial S$ in $\Sigma$ is trivial and we can pick a nowhere vanishing section (if $S$ is closed, this is an empty choice). Extend this section to the normal bundle of $S$ in $M$ (such a normal bundle exists by \cref{rem:not-so-generic}) such that the extension is transverse to the zero section. Then we define the \emph{Euler number} $e(S)$ to be the number of zeros of this section modulo 2. Observe that this is analogous to how one measures the twisting of a Whitney disc with respect to the Whitney framing. For $S$ a closed surface this coincides with the usual definition of the Euler number.
\end{enumerate}

Here is the definition of $\Theta(S)$ in the case that $w_1(\Sigma)$ is trivial on every component of $\partial S$.

\begin{definition}\label{def:thetaA}
Let $F\colon (\Sigma,\partial \Sigma)\looparrowright (M, \partial M)$ be as in \cref{convention}, with $\mu (F)=0$. Let $A$ be a choice of Whitney arcs pairing the double points of $F$. Let $S$ be a compact surface in $M$ with a generic immersion of pairs $(S,\partial S) \imra (M,\Sigma)$, such that $\partial S$ is transverse to $A$ and $w_1(\Sigma)$ is trivial on every component of $\partial S$. Define
	\begin{equation}\label{eq:Theta-definition}
	\Theta_A(S):= \mu_{\Sigma}(\partial S)+ \lvert \partial S\pitchfork A \rvert +\lvert \Int S \pitchfork F \rvert + e(S) \mod 2.
	\end{equation}
\end{definition}

For closed $S$ we have $\Theta_A(S)\equiv \lvert \Int S\pitchfork F \rvert + e(S)\mod{2}$, and thus $\Theta_A(S)$ vanishes for all closed $S$ if and only if $F$ is characteristic in the traditional sense. In the proof of \cref{thm:main}, we will only use the definition of $\Theta_A$ for bands. But the case of general surfaces will be useful for our proof that, in the cases relevant to us, $\Theta_A$ does not depend on the choice of $A$ (see \cref{lem:well-defined} below).

\begin{remark}
\cref{def:thetaA} suffices in the case of orientable $\Sigma$. The reader only interested in this case may safely skip ahead to \cref{lem:well-defined}.
\end{remark}

If a component of $\partial S$ is orientation-reversing in $\Sigma$, then its normal bundle in $\Sigma$ is nontrivial and hence we may not use it to choose a nowhere vanishing section of the normal bundle of $S$ on its boundary as before to define the Euler number. However, bands with such boundaries may exist in the ambient $4$-manifold and must be considered.
In case $w_1(\Sigma)$ is nontrivial on precisely one component of $\partial S$, e.g.\ when $\partial S$ is connected, we know $\lambda_{\Sigma}(\partial S,\partial S)=1$.
Then, by \cref{lem:dependence_on_A}, if a band with such boundary exists, then $\km(F)=0$, and there is no need to define $\Theta$. In particular, note that this means that we need not consider the case of a M\"obius band whose boundary consists of a curve on which $w_1(\Sigma)$ is nontrivial. There is one final case of relevant bands left to consider, which we do next.

\begin{definition}\label{def:thetaA-nonorientable}
Let $F\colon (\Sigma,\partial \Sigma)\looparrowright (M, \partial M)$ be as in \cref{convention}, with $\mu (F)=0$.
    Let $A$ be a choice of Whitney arcs pairing the double points of $F$. Let $B$ be an annulus with a generic immersion of pairs $(B,\partial B) \imra (M,\Sigma)$, such that $\partial B$ transverse to $A$ and $w_1(\Sigma)$ is nontrivial on both components of~$\partial B$.
    Pick an embedded arc $D$ in $B$ connecting the two boundary components, disjoint from the self-intersections of $B$ and the intersections of $B$ with $F$.  Let $\wh B$ be the result of cutting $B$ open along $D$. There is a canonical quotient map $\gamma \colon \wh B \to B$.

    Let $\nu_B^M$ denote the normal bundle of $B$ in $M$. Pick a nowhere vanishing section of $\gamma^*\nu_B^M$ on $\partial \wh B$ as follows. On each part of $\partial \wh B$ that maps to $\partial B$, use the normal bundle of $\partial B$ in $F$ to define the section locally. For this we require that on $\partial D$ the two vectors for the two components agree up to multiplication by $\pm 1$. On the part of $\partial \wh B$ that maps to $D$ pick a section so that on every pair of points that map to the same point in $D$ the vectors agree up to multiplication by $\pm 1$. See the middle picture of \cref{fig:non-orientable-section}.
    We define
\begin{equation}\label{eq:theta-nonorientable-definition}
	\Theta_A(B,D):= \mu_{\Sigma}(\partial B)+ \lvert \partial B\pitchfork A \rvert + \lvert \Int B\pitchfork F \rvert +e(\wh B) \mod 2.
\end{equation}	

\end{definition}

\begin{figure}[htb]
	\centering
\begin{tikzpicture}
        \node[anchor=south west,inner sep=0] at (0,0){	    \includegraphics[width=11.5cm]{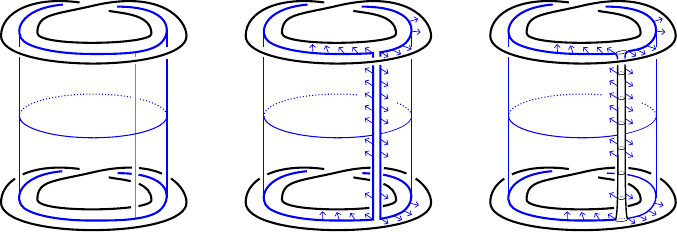}};
	   \node at (-0.23,0) {$F(\Sigma)$};
		\node at (-0.23,3.85) {$F(\Sigma)$};
		\node at (0, 1.8) {$B$};
		\node at (2, 2) {$D$};
	   \node at (3.9,0) {$F(\Sigma)$};
		\node at (3.9,3.85) {$F(\Sigma)$};
		\node at (4.1, 1.8) {$\wh{B}$};
	   \node at (8,0) {$F'(\Sigma')$};
		\node at (8,3.85) {$F'(\Sigma')$};
		\node at (8.3, 1.8) {$\Delta$};
		\node at (1.65,-0.35) {(a)};
		\node at (5.75,-0.35) {(b)};
		\node at (9.85,-0.35) {(c)};
	\end{tikzpicture}
	\caption{(a) A band $B$ (blue) is shown for $F$ (black). Both boundary components of $B$ are nonorientable on $\Sigma$. A vertical arc $D$ is shown in orange. (b) By cutting along the arc $D$ we obtain a disc $\wh{B}$. We show how to choose a nowhere vanishing section of the normal bundle of $\partial \wh{B}$ in $M$. (c) Adding a tube to $F$ guided by $D$ splits the band into a disc $\Delta$, and changes $\Sigma$ to a surface $\Sigma'$. We show how to choose a section of the normal bundle of $\partial \Delta$ in $F'$. }
	\label{fig:non-orientable-section}
\end{figure}

\begin{remark}\label{rem:absent-finger}
An alternative definition of $\Theta_A(B,D)$ would use the arc $D$ to add a tube to $F(\Sigma)$, in such a way that the tube intersects the band $B$ in two parallel copies of $D$. More precisely, we perform an ambient surgery on $F(\Sigma)$. This requires choosing a $2$-dimensional sub-bundle of the normal bundle of $D$ in $M$ -- the tube itself consists of the circle bundle for this sub-bundle, considered within a tubular neighbourhood of $D$. We build the required sub-bundle by first choosing a section lying in the normal bundle of $D$ in $B$, denoted $\nu^B_D$. The second section can be chosen freely.  Let $F'$ denote the immersion constructed by the tubing procedure. Observe that the domain of $F'$, denoted $\Sigma'$, is obtained from the abstract surface $\Sigma$ by adding a $1$-handle. Depending on the choice of the second section above, this may be an orientation-reversing or orientation-preserving $1$-handle, but this will not matter for us.

Adding the tube changes the band $B$ to a disc $\Delta$, by removing a thin strip neighbourhood of $D$. The disc $\Delta$ has boundary lying on $\Sigma'$. Observe that $\partial\Delta$ is orientation-preserving on $\Sigma'$, since it is the result of banding together two orientation-reversing curves. Since no new intersection points were added, the collection $A$ is a collection of Whitney arcs for the intersection points of $F'$. As a result, we may define
\[
\Theta_A(B,D):=\Theta_A(\Delta),\]
 where the latter is computed as in \cref{def:thetaA}. In order to see that this agrees with \cref{def:thetaA-nonorientable}, we need only check that the definition of the Euler number terms agree, assuming we choose the tube to be thin enough to miss any double points. For this compare Figures~\ref{fig:non-orientable-section}\,(b) and (c) to see that the sections at the boundary in both cases are the same and hence also the Euler numbers coincide.
 When comparing the pictures, the choice of the second section of the $2$-dimensional sub-bundle which determines the tube corresponds to the choice of the section of $\gamma^*\nu_B^M$ on the part of $\partial \wh B$ that maps to~$D$.
\end{remark}

Note that we did not prove that $e(\wh B)$ is independent of the choice of $D$. This will follow from the upcoming proof of \eqref{item-ii-lem-welldefined} in \cref{lem:well-defined} below, which states that $\Theta_A(B,D)$ depends only on the homology class of $B$. The following lemma, whose proof is again deferred to \cref{sec:km}, shows that $\Theta$ is well defined in all required cases. As a reminder, the case of orientable $\Sigma$ does not require the notion of $\Theta_A(B,D)$ from \cref{def:thetaA-nonorientable}, so parts \eqref{item-ii-lem-welldefined} and \eqref{item-iii-lem-welldefined} of the following lemma may be skipped by anyone only interested in that case.

\begin{lemma}
	\label{lem:well-defined}
Let $F\colon (\Sigma,\partial \Sigma)\looparrowright (M, \partial M)$ be as in \cref{convention}, with $\mu (F)=0$. Let $A$ be a choice of Whitney arcs pairing the double points of $F$.
	\begin{enumerate}[(i)]
	    \item\label{item-i-lem-welldefined} 	
Let $S$ be a compact surface, with a generic immersion of pairs $(S,\partial S) \imra (M,\Sigma)$, such that $\partial S$ is transverse to $A$ and $w_1(\Sigma)$ is trivial on every component of $\partial S$.
Then $\Theta_A(S) \in \Z/2$ depends only on the homology class of $S$ in $H_2(M,\Sigma;\Z/2)$.
	\item\label{item-ii-lem-welldefined} 	Let $B$ be an annulus, with a generic immersion of pairs $(B,\partial B) \imra (M,\Sigma)$, such that $\partial B$ is transverse to $A$ and $w_1(\Sigma)$ is nontrivial on both components of $\partial B$.  Pick an embedded arc $D$ in $B$ connecting the components of $\partial B$ and disjoint from all double points. Then $\Theta_A(B,D) \in \Z/2$ depends only on the homology class of $B$ in $H_2(M,\Sigma;\Z/2)$.  In particular, $\Theta_A(B,D)$ does not depend on $D$, so we write~$\Theta_A(B)$.
	\item\label{item-iii-lem-welldefined} Let $S$ be a surface as in \eqref{item-i-lem-welldefined} and let $B$ be an annulus as in \eqref{item-ii-lem-welldefined} such that $[S]=[B] \in H_2(M,\Sigma;\Z/2)$. Then $\Theta_A(S) = \Theta_A(B) \in \Z/2$.
	\item\label{item-iv-lem-welldefined} 	If  $\lambda_{\Sigma}|_{\partial \bands(F)}=0$, the restriction of $\Theta_A$
	to $\bands(F)$ is independent of the choice of $A$, giving a well defined map $\Theta\colon \bands(F)\to \Z/2$.
	\end{enumerate}
\end{lemma}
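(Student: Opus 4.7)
The plan is to prove the four parts by reducing (ii) and (iii) to part (i) via the tubing perspective of \cref{rem:absent-finger}, and to prove (i) and (iv) by analysing how $\Theta_A$ changes under elementary moves. For part (i), given two generic representatives $S_1, S_2$ of the same class in $H_2(M,\Sigma;\Z/2)$, I would choose a generic mod-2 bordism $W^3$ between them in $(M,\Sigma)$, put it in general position with respect to $F$, and Morse-theoretically decompose the trace into elementary moves: interior finger/Whitney moves between $S$ and $F$ change $|\Int S \pitchfork F|$ by $\pm 2$; boundary isotopies of $\partial S$ in $\Sigma$ across $A$ or across itself change $|\partial S \pitchfork A|$ or $\mu_\Sigma(\partial S)$ by $\pm 2$; interior cusp homotopies change $e(S)$ by $\pm 2$ and introduce a self-intersection of $S$ not counted by $\Theta_A$; and attachment of a $1$-handle to $S$ along an arc $\alpha \subset M$ (the key genus-changing move) alters $e(S)$ and $|\Int S \pitchfork F|$ by matching amounts $\bmod 2$, determined by whether $\alpha$ crosses $F$ and whether the handle is twisted. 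In each case $\Theta_A$ is preserved $\bmod 2$.

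For part (ii), I would apply \cref{rem:absent-finger} to the pair $(B,D)$, obtaining an immersion $F'\colon \Sigma' \imra M$ in which the band $B$ becomes a disc $\Delta$ with orientation-preserving boundary on $\Sigma'$. By direct comparison of the defining formulas, $\Theta_A(B,D) = \Theta_A(\Delta)$, since the section chosen in \cref{def:thetaA-nonorientable} to define $e(\wh{B})$ is precisely the section that arises from the tubing and defines $e(\Delta)$. Any two arcs $D,D'$ in $B$ are ambiently isotopic, so the corresponding tubings produce cobordant discs $\Delta, \Delta'$ in $(M,\Sigma')$; applying part (i) in $(M,\Sigma')$ yields both independence from $D$ and dependence only on $[B]$. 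Part (iii) follows analogously: after performing the tubing, with $S$ arranged to be disjoint from a neighbourhood of $D$ by general position, both $S$ and $\Delta$ live in $(M,\Sigma')$, and the hypothesis $[S] = [B]$ in $H_2(M,\Sigma;\Z/2)$ translates into $[S] = [\Delta]$ in $H_2(M,\Sigma';\Z/2)$, so part (i) gives $\Theta_A(S) = \Theta_A(\Delta) = \Theta_A(B)$.

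For part (iv), any two convenient collections of Whitney arcs $A,A'$ for the double points of $F$ differ by a finite sequence of elementary moves: isotopy of a single Whitney arc in $\Sigma$, which changes $|\partial S \pitchfork A|$ by $\pm 2$; and swap of one Whitney arc $\alpha$ for an alternative $\alpha'$ with the same endpoints (possibly on a different sheet at a double point), which produces a closed loop $\gamma = \alpha \cup \alpha'$ in $\Sigma$ satisfying $|\partial S \pitchfork A| + |\partial S \pitchfork A'| \equiv \lambda_\Sigma([\partial S],[\gamma]) \bmod 2$. The Whitney discs bounded by $\alpha$ and $\alpha'$ fit together to form an immersed band in $M$ bounded by $\gamma$, so $[\gamma] \in \partial\bands(F)$; hence the hypothesis $\lambda_\Sigma|_{\partial\bands(F)} \equiv 0$ forces each correction to vanish, and $\Theta_A$ descends to the claimed map $\Theta\colon \bands(F) \to \Z/2$.

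The principal obstacle is the local bookkeeping in part (i) for moves that simultaneously alter intersection counts and Euler numbers, especially the $1$-handle attachment: when the core arc crosses $F$, one must compare the normal framings of $S$ and of the handle in a local chart to verify that the induced changes in $|\Int S \pitchfork F|$ and $e(S)$ cancel $\bmod 2$. A secondary subtlety, relevant to (ii), is that the section used to compute $e(\wh{B})$ in \cref{def:thetaA-nonorientable} must be checked to correspond precisely, under the tubing of \cref{rem:absent-finger}, to the section used for $e(\Delta)$; this is the role of the matching-up-to-sign conditions imposed across the arc $D$ in that definition.
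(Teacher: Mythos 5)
Your proposal takes a genuinely different route in part (i) and the same route as the paper in (ii)--(iv), but there are gaps worth flagging.

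For (i), the paper does not run a Morse-theoretic bordism argument. It instead proves that $\Theta_A$ is quadratic with respect to $\lambda_\Sigma$ (the paper's \cref{lem:quadratic}), reduces to showing $\Theta_A(S)=0$ when $[S]=0\in H_2(M,\Sigma;\Z/2)$, arranges embedded boundary (\cref{lem:embedded_boundary}), cuts $\Sigma$ along $C$ and selects ``half'' of the complementary pieces (\cref{lem:cutting}), glues these to $S$ to form a \emph{closed} surface $N$ whose class in $H_2(M;\Z/2)$ comes from $\Sigma$, and finally computes $\Theta_A(S)=\lambda_M([N],[N])+\lambda_M([N],[F])$, which vanishes by the hypotheses $\lambda(f_i,f_j)=0$, $\mu(f_i)=0$. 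This is the crux: the hypotheses on intersection numbers of $F$ are what make $\Theta_A$ descend to $H_2(M,\Sigma;\Z/2)$ rather than only to $H_2(M;\Z/2)$. Your sketch never invokes these hypotheses, which is a red flag, because the statement is \emph{false} without them. Concretely, if $F=f_1\sqcup f_2$ with $\lambda_M(f_1,f_2)=1\in\Z/2$, take $S$ to be a closed push-off of $f_1(\Sigma_1)$: then $[S]=0\in H_2(M,\Sigma;\Z/2)$, yet $\Theta_A(S)=\lambda_M([S],[F])+\lambda_M([S],[S])=\lambda_M(f_1,f_2)=1\neq\Theta_A(\emptyset)$. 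The elementary moves you list (interior finger/Whitney, boundary isotopy, cusp, interior $1$-handle) all preserve $\Theta_A$ mod $2$, but they do not generate all relative mod-$2$ bordisms: the moves along $\Sigma\times[0,1]$ in which $S$ sheds or absorbs regions of $\Sigma$ -- changing the image of $[S]$ in $H_2(M;\Z/2)$ while fixing its image in $H_2(M,\Sigma;\Z/2)$ -- are missing from your list, and this is precisely where the intersection-number hypotheses must be used.

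For (ii) and (iii), your tubing strategy via \cref{rem:absent-finger} matches the paper's, but you assert that $[S]=[B]$ in $H_2(M,\Sigma;\Z/2)$ ``translates into $[S]=[\Delta]$ in $H_2(M,\Sigma';\Z/2)$.'' That is not automatic: the lift of a class under $H_2(M,\Sigma';\Z/2)\to H_2(M,\Sigma;\Z/2)$ is ambiguous up to the class $[d]$ of a small removed disc, so one only gets $[S]=[\Delta]$ \emph{or} $[S]=[\Delta]+[d]$. The paper handles this with an explicit lifting claim and then observes $\Theta_A(\Delta'\cup d)=\Theta_A(\Delta')$. You need a sentence of this sort.

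For (iv), your arc-swap move covers only arcs with the same endpoints on the same pair of double points, i.e.\ the case where $A$ and $A'$ induce the same pairing of the double points of $F$. When $A$ and $A'$ pair the double points differently, one cannot reach $A'$ from $A$ by isotopies and same-endpoint swaps; the paper first converts $A$ into Whitney arcs inducing the pairing of $A'$ via the ``long Whitney arc'' construction (concatenating Whitney discs across shared double points), noting that each step adds two parallel copies of a boundary circle and hence leaves $\Theta_A$ unchanged mod $2$. This repairing case is absent from your argument and must be added.
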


Finally we are ready to define the required generalisation of $r$-characteristic maps, called \emph{$b$-characteristic} maps.

\begin{definition}\label{def:b-char}
Let $F\colon (\Sigma,\partial \Sigma)\looparrowright (M, \partial M)$ be as in \cref{convention}, with $\mu (F)=0$. We say~$F$ is \emph{band characteristic} (or  \emph{$b$-characteristic} for short) if $\lambda_{\Sigma}|_{\partial \bands(F)}=0$ and $\Theta\colon \bands(F)\to \Z/2$ is trivial.
\end{definition}

\begin{lemma}\label{lem:r-b}
Every $b$-characteristic map is $r$-characteristic. Moreover, the two notions agree for unions of discs or spheres.
\end{lemma}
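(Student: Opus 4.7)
The plan is to convert $\RP^2$-characteristic test data into band data and back. For the first statement, suppose $F$ is $b$-characteristic and let $R\colon \RP \imra M$ be a generic immersion with $R^*w_1(M)=0$. First I would remove a small open disc around a regular point $p\in \RP$ chosen off the double points of $R$ and off $F(\Sigma)$; this yields a M\"obius band $B_0=R(\RP \sm D)\imra M$ with boundary a small circle near $R(p)$. Using a generic arc $\gamma\subseteq M$ from $R(p)$ to a regular point of $F(\Sigma)$, chosen disjoint from $F$, from the Whitney arcs $A$, and from the singularities of $F$ and $R$, tube $B_0$ to $F(\Sigma)$ to obtain a band $B$ whose boundary is a small embedded null-homotopic circle on $\Sigma$. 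The core of $B$ represents the nontrivial loop in $\RP$, so by $R^*w_1(M)=0$ together with $\partial B$ being null-homotopic in $\Sigma$, the condition of \cref{def:band} is satisfied and $[B]\in \bands(F)$.

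With these choices $\mu_\Sigma(\partial B)=0=|\partial B \pitchfork A|$, so $\Theta_A(B)\equiv |\Int B\pitchfork F|+e(B) \pmod 2$. The first summand equals $F\cdot R \pmod 2$ since the tube is disjoint from $F$ and the removed disc misses $F$. For the Euler term, the plan is to choose the tube framing so that the $\Sigma$-normal section on $\partial B$ extends through the tube to a nowhere-vanishing section of $\nu_R|_D$; then $e(B)\equiv e(\nu_R) \pmod 2$. Since the mod-$2$ self-intersection of a generically immersed surface equals the mod-$2$ Euler number of its normal bundle (the transverse double points each contribute $2$ and hence vanish mod $2$), we get $e(\nu_R)\equiv R\cdot R \pmod 2$. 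Therefore $\Theta(B)\equiv F\cdot R + R\cdot R \pmod 2$, and the $b$-characteristic hypothesis $\Theta\equiv 0$ gives $F\cdot R\equiv R\cdot R \pmod 2$, proving $F$ is $r$-characteristic.

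For the second statement, only the converse remains: when $\Sigma$ is a union of discs or spheres, $r$-characteristic implies $b$-characteristic. In that setting $H_1(\Sigma;\Z/2)=0$ makes the condition $\lambda_\Sigma|_{\partial \bands(F)}=0$ vacuous, so one need only check $\Theta\equiv 0$ on $\bands(F)$. I would assume $F$ is $r$-characteristic (hence also $s$-characteristic by \cref{rem:r-char-implies-s-char}). Given $B\in \bands(F)$, its boundary is null-homotopic in the simply connected components of $\Sigma$, so cap off $\partial B$ by generically immersed discs in $\Sigma$ to form a closed surface $T\imra M$: a sphere if $B$ is an annulus, or a copy of $\RP$ if $B$ is a M\"obius band. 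In the M\"obius case the band condition forces $T^*w_1(M)=0$. Reversing the tubing construction of the first part, the same bookkeeping yields $\Theta(B)\equiv F\cdot T + T\cdot T \pmod 2$, which vanishes by $s$-characteristic in the sphere case and by $r$-characteristic in the $\RP$ case. Hence $F$ is $b$-characteristic.

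The hard part will be the careful identification $\Theta(B)\equiv F\cdot T + T\cdot T \pmod 2$, particularly matching the relative Euler number $e(B)$ with the normal-bundle Euler number of $T$ through the tubing and capping construction, and verifying that the cap discs lying in $\Sigma$ contribute trivially mod~$2$ to the intersection pairings. Both points are handled by exploiting the homology-class invariance of $\Theta$ from \cref{lem:well-defined} to pick convenient generic representatives, and by selecting the twisting of the tube so that the $\Sigma$-framing on $\partial B$ matches the prescribed $\nu_R$-section on the removed disc (respectively, the cap disc of $T$).
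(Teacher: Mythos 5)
Your proposal is correct and uses the same underlying idea as the paper: pass between an $\RP$ (or sphere) and a band via tubing, and compare $\Theta$ to the intersection data $F\cdot R + R\cdot R$. The main difference is one of economy. You track each term of $\Theta_A(B)$ through the tubing construction by hand, which in particular forces you to carefully normalize the tube framing so that the $\Sigma$-section on $\partial B$ transports to a section of $\nu_R$ over $\partial D$ that extends nonvanishingly over the cap disc; this is the ``hard part'' you flag. The paper sidesteps all of that by invoking \cref{lem:well-defined}\,\eqref{item-i-lem-welldefined} directly: since $[B]=[R]\in H_2(M,\Sigma;\Z/2)$ (the tube does not change the relative homology class), one has $\Theta(B)=\Theta(R)$ without any term-by-term matching; for the closed surface $R$ the identity $\Theta(R)=F\cdot R+R\cdot R$ is then immediate from the remark following \cref{def:thetaA}, using $e(\nu_R)\equiv R\cdot R\bmod 2$. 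Your explicit bookkeeping would work, but you essentially end up reproving a special case of \cref{lem:well-defined}\,\eqref{item-i-lem-welldefined}, which you already acknowledge. For the converse direction in the disc/sphere case, the argument is again the same as the paper's, with the same simplification available: cap $B$ with a codimension-zero region of $\Sigma$ to get a closed $R$ with $[B]=[R]$, and apply homological invariance of $\Theta$ rather than reversing the bookkeeping. One minor point to be careful about: in both directions you should record that $\lambda_\Sigma|_{\partial\bands(F)}=0$ (automatic for simply connected $\Sigma$, and part of the $b$-characteristic hypothesis in the first direction) before asserting that $\Theta$ is well-defined on homology classes and independent of $A$, which is what licenses writing $\Theta(B)$ rather than $\Theta_A(B)$; the paper makes this explicit via \cref{lem:well-defined}\,\eqref{item-iv-lem-welldefined}.
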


\begin{proof}
Let $F\colon (\Sigma,\partial \Sigma)\looparrowright (M, \partial M)$ be as in \cref{convention}, with $\mu (F)=0$. Let $R \colon \RP \to M$ be a generic immersion which is transverse to $F$ and so that $R^*w_1(M)=0$.
We apply \cref{ex:tubing-band}.  In other words, take a small disc on $F(\Sigma)$ away from
$R \pitchfork F$, and tube into the image of $R$.  This creates a M\"obius band $B$ with boundary on $\Sigma$. Here $\partial B$ is homotopically trivial in $\Sigma$, so $\langle w_1(\Sigma),\partial B \rangle =0$. The core $C$ of $B$ corresponds to $\mathbb{RP}^1$ within the original immersed $\RP$. Therefore, since $R^*w_1(M)=0$, we have that $\langle w_1(M), C\rangle=0$. So $B\in \bands(F)$.
Note that $\Theta\colon \bands(F)\to \Z/2$ is well defined by \cref{lem:well-defined}\,\eqref{item-iv-lem-welldefined} since $F$ is $b$-characteristic. Further $\Theta(B) = \Theta(R) =  F \cdot R + R \cdot R \in \Z/2$ by \cref{lem:well-defined}\,\eqref{item-i-lem-welldefined}
since $[B]=[R] \in H_2(M,\Sigma;\Z/2)$. But this vanishes since $F$ is $b$-characteristic. Hence $F$ is $r$-characteristic.

For the second sentence, suppose that $\Sigma$ is a union of discs or spheres and is $r$-characteristic.  Let $B \in \bands(F)$ be a band. Since $\Sigma$ is simply connected, the boundary of $B$ is null homotopic in $F(\Sigma)$. Therefore~$B$ can be closed up using a codimension zero submanifold of $\Sigma$ to either a sphere or an $\RP$ immersed in~$M$.
The resulting closed surface $R$ again satisfies $\Theta(B) = F \cdot R + R \cdot R \in \Z/2$ by \cref{lem:well-defined}\,\eqref{item-i-lem-welldefined}. Here $\lambda_{\Sigma}|_{\partial \bands(F)}=0$ since $\Sigma$ is simply connected and so $\Theta\colon \bands(F)\to \Z/2$ is again well defined by \cref{lem:well-defined}\,\eqref{item-iv-lem-welldefined}.
Once again, since null homotopic circles on $\Sigma$ must be orientation-preserving, $\langle w_1(\Sigma),\partial B \rangle =0$ and so \eqref{item:defn-band-w1-condn} implies that $R^*w_1(M)=0$. So $\Theta(B) =0$ since $F$ is $r$-characteristic. Thus $F$ is $b$-characteristic. It follows that the notions of $b$-characteristic and $r$-characteristic coincide as claimed.
\end{proof}

Recall that if $F^\twist$ is $b$-characteristic, then \cref{thm:main} states that $\km(F) = t(F^\twist,\W^\twist)$, so we have a combinatorial description of $\km(F)$.  Moreover, since $\Theta$ only depends on the homology class of a band in $H_2(M,\Sigma;\Z/2)$, we can in principle determine whether or not $F$ is $b$-characteristic by computing $\Theta$ on finitely many homology classes. Having said that, as mentioned in the introduction, in practice deciding precisely which homology classes can be represented by maps of bands may be tricky.

\begin{lemma}\label{lem:bchar-reg-htpy}
Let $F\colon (\Sigma,\partial \Sigma)\looparrowright (M, \partial M)$ be as in \cref{convention}, with $\mu (F)=0$. If $G$ is regularly homotopic to $F$ and  $F$ is $b$-characteristic then $G$ is $b$-characteristic.
\end{lemma}

\begin{proof}	
	By definition, a regular homotopy can be decomposed into a sequence of ambient isotopies, finger moves, and Whitney moves. None of these affect which classes of $H_1(\Sigma;\Z/2)$ bound a band. In particular, $\lambda_\Sigma\vert_{\partial\bands(F)} =\lambda_\Sigma\vert_{\partial\bands(G)}$.
Assume that $G$ is not $b$-characteristic. Then either $\lambda_\Sigma\vert_{\partial\bands(G)}$ is nontrivial or there is a  band in $\bands(G)$ on which $\Theta$ is nonvanishing. In the former case $\lambda_\Sigma\vert_{\partial\bands(F)} =\lambda_\Sigma\vert_{\partial\bands(G)}$ implies that $F$ is not $b$-characteristic.

For the latter case, let $B$ in $\bands(G)$ be such that $\Theta(B)=1$. It suffices to show that such a band still exists after an ambient isotopy, a finger move, or a Whitney move. This is obvious for ambient isotopy. Recall that $\Theta$ only depends on the homology class of the band by \cref{lem:well-defined}. Hence we can assume that the boundary of $B$ is away from the singularity of the finger move. Then we can still consider the band $B$ as a band for the surface after the finger move and $\Theta$ is unchanged. The argument in the case of a Whitney move is similar. We first let $B$ undergo a homotopy to arrange that it is disjoint from the boundary of the Whitney disc $W$ along which the Whitney move is performed. Then we can again consider the same band $B$ for the new surface. The Whitney move leaves all terms in the definition of $\Theta$ except $|\Int B \pitchfork F|$ unchanged. Since the Whitney move uses two copies of the Whitney disc, the change in $|\Int B\pitchfork F|$ is twice $|\Int B\pitchfork W|$. As $\Theta$ takes values in $\Z/2$, $\Theta(B)$ is unchanged as claimed.  Thus we have a band $B$ in $\bands(F)$ with $\Theta(B)=1$, and so again $F$ is not $b$-characteristic. We have shown the contrapositive of the desired statement.
\end{proof}

\begin{remark}
As a counterpoint to \cref{lem:bchar-reg-htpy}, there exist maps that are homotopic to each other, but where one is $b$-characteristic and the other is not. For example, let $\Sigma$ be the Klein bottle. Then an embedding $f\colon \Sigma \hookrightarrow \R^4$ must have normal Euler number $e(\nu f)\in \{-4,0, 4\}$ by \cite{Massey-nonorientable}. It can be verified, as we do presently, that the embeddings with $e(\nu f)=0$ are precisely those which are $b$-characteristic. Hence the $b$-characteristic notion is not invariant under homotopy.

To see that $f$ is $b$ characteristic if and only if $e(\nu f)=0$, think of  $\Sigma \cong \RP \# \RP$, with a corresponding isomorphism $H_1(\Sigma;\Z/2) \cong \Z/2 \oplus \Z/2$. There is a standard embedding of $\RP$ in $\R^4$ with normal Euler number $\pm 2$, and there are essentially three ways to take connected sums of these embeddings, realising the three options $e(\nu f)\in \{-4,0, 4\}$. With $e(\nu f)$ fixed, these embeddings are unique up to regular homotopy by \cref{theorem:generic-immersions-bijection}.

We explicitly construct the standard embeddings, as follows. Take two disjoint, unlinked, unknotted  M\"{o}bius bands $M_1$ and $M_2$ in $\R^3$, with an $\varepsilon_i \in \{\pm 1\}$ signed half-twist, for $i=1,2$.  Take the boundary connected sum $M_1 \natural M_2$ ambiently to obtain a punctured Klein bottle in $\R^3$ with boundary an unknot.  Cap this unknot off with a standard slice disc in  $\R^4_{\geq 0}$ to obtain a standard embedding $f$ with normal Euler number $-2(\varepsilon_1 + \varepsilon_2)$. We do not justify the sign, which depends on conventions that are not important for us.

By \cref{lem:bchar-reg-htpy}, it suffices to check whether the three standard embeddings above are $b$-characteristic, which we do next. First one computes that $\lambda_{\Sigma}|_{\partial \bands(f)}=0$ in all three cases, as follows. We have \[\bands(f)\subseteq H_2(\R^4,\Sigma;\Z/2)\xrightarrow[\partial]{\cong} H_1(\Sigma;\Z/2)\cong \Z/2\oplus \Z/2.\] By \eqref{item:defn-band-w1-condn} in order for $B\in \bands(f)$ we need $\langle w_1(\Sigma),h(\partial B)\rangle=0$. Hence $\partial B \in H_1(\Sigma;\Z/2)\cong \Z/2\oplus \Z/2$ is either $(0,0)$ or $(1,1)$.
To see that $(x,x)$, for $x \in \{0,1\}$, can be realised as the boundary of a band, pick a simple closed curve $Z$ on $\Sigma$ representing the homology class $(x,x)$ and a generically immersed disc $D$ bounded by $Z$ in $\R^4$. Then add a tube from $D$ to $\Sigma$ to turn $D$ into an annulus $B$, using  \cref{ex:tubing-band} (see \cref{fig:tubing-band}).
Thus $\partial\bands(f) = \{(0,0),(1,1)\}$, on which $\lambda_\Sigma$ vanishes.

Hence whether or not the given standard embedding of $\Sigma$ is $b$-characteristic is decided by $\Theta(B)$, where $B$ is a band with boundary $(1,1) \in H_1(\Sigma;\Z/2)$. For the standard embeddings constructed above, such a band can be constructed explicitly, as follows. Take the core curves of $M_1$ and $M_2$, and connect sum them inside $M_1 \natural M_2$. This gives an unknot representing $(1,1)$, which bounds a standard slice disc $D$ in $\R^4_{\leq 0}$. \cref{ex:tubing-band} converts $D$ to a band $B$.

Since $\Theta$ only depends on the class of a band in $H_2(M,\Sigma;\Z/2)$, we can use the band $B$ from the previous paragraph. We shall compute that $\Theta(B)=0$ for this band if and only if $e(\nu f)=0$, that is if the embedding of $\Sigma$ arises from the connected sum of the standard embeddings of $\RP$ with opposite normal Euler numbers. The curve $\partial B$ is orientation preserving on $\Sigma$, so we use \eqref{eq:Theta-definition} to compute $\Theta(B)$. Most of the terms in this definition are trivial in this case, since we are working with an embedding of $\Sigma$ and $\partial B$ is itself embedded. Also $D$ has interior disjoint from the image of $f$, and therefore so does~$B$. Only the relative Euler number term remains, which can be computed from the twists in $M_1$ and $M_2$.  It follows that $\Theta(B) \equiv (\varepsilon_1 + \varepsilon_2)/2 \in \Z/2$, which vanishes if and only if $\varepsilon_1 = - \varepsilon_2$, which in turn holds if and only if $e(\nu f)=0$.
\end{remark}

\section{Homotopy versus regular homotopy}\label{sec:homotopy-reg-homotopy}
In this short section, we describe \cref{const:change-t} and apply it to prove \cref{theorem:changing-t-intro}, which we used in \cref{subsection:homotopy-vs-reg-homotopy} to compare homotopy and regular homotopy of maps. Note that the results in this section require that the surface $\Sigma$ from \cref{convention} is nonorientable.

If we are interested in finding an embedding in a given homotopy class, rather than a regular homotopy class, we may use the construction below to replace a given map by a homotopic map for which the invariant $t$ is trivial. In particular, the construction is applicable in the cases from \cref{theorem:generic-immersions-bijection} in which there are infinitely many regular homotopy classes with $\mu(-)_1 =0$ in a given homotopy class.

\begin{construction}\label{const:change-t}
Let $F=\{f_i\}_{i=1}^m \colon (\Sigma,\partial \Sigma)\looparrowright (M, \partial M)$ be as in \cref{convention}, with $\mu (F)=0$.  Let $\W$ denote a convenient collection of Whitney discs for the double points of $F$. Suppose that there exists $f_i$ with  $w_1(\Sigma)|_{\ker (f_i)_\bullet}$ nontrivial.

Let $N\subseteq \Sigma_i$ be a M\"{o}bius band with $f_i|_N$ $\pi_1$-trivial. In a small disc in $N$ introduce four double points with the same sign by cusp homotopies and call the resulting immersion $f'_i$. Let $F'$ denote the map given by $\{f_j\}_{j\neq i} \cup \{f'_i\}$. Then there is a convenient collection of Whitney discs $\W'$ for all the double points of $F'$ such that
\[
t(F',\W')\equiv t(F,\W)+1 \mod{2}.
\]
\end{construction}
While we have created four double points with the same sign, we will use in the proof that the M\"{o}bius band is nonorientable to change the sign of two of the double points, in order to then be able to find new Whitney discs.

\begin{proof}[Proof of \cref{const:change-t}]
We will pair up the two new pairs of double points with new Whitney discs. Pick any pair of the four new double points and pair them by arcs in the small disc, as in \cref{defn:paired-arcs-opposite-sign}, such that the resulting circle is null-homotopic in $M$. For one of the arcs perform a connected sum in the interior with the core $\alpha$ of $N$. With this new pair of arcs, the double points have opposite sign, and by our choice of $N$ the resulting Whitney circle bounds a Whitney disc $W_1$ in $M$ with embedded boundary. By  boundary twisting, arrange that $W_1$ is framed, and by pushing off  ensure there is no intersection between the boundary of $W_1$ and the boundaries of the components of $\W$. For this we push the boundary arc of $W_1$ over the end of the boundary arc for the Whitney disc in $\W$. This way $t(F,\W)$ remains unchanged. Do the same for the remaining two new double points in $f_i'$, namely pair them by a Whitney disc $W_2$, which by definition is a parallel copy of $W_1$.

Since $\lambda_N(\alpha,\alpha)=1$, the boundaries of $W_1$ and $W_2$ intersect an odd number of times. To turn $\W\cup\{W_1,W_2\}$ into a convenient collection of Whitney discs we have to remove any intersections between their boundaries. For such an intersection, push the Whitney arc of $W_1$ over the end of the Whitney arc of $W_2$. This will in turn change the number of intersections between the interior of $W_1$ and $f'_i$ by one mod $2$; that is, $|\Int W_1\pitchfork  F|\equiv |\Int W_2\pitchfork  F|+1\mod 2$. Let $\W'$ be the resulting collection of Whitney discs. We have
	\[
t(F',\W')\equiv t(F,\W)+|\Int W_1\pitchfork F|+|\Int W_2\pitchfork F|\equiv t(F,\W)+1 \mod{2}.\qedhere\]
\end{proof}

With the above construction in hand, we can now prove \cref{theorem:changing-t-intro} from the introduction.

\begin{reptheorem}{theorem:changing-t-intro}
Let $F=\{f_i\}_{i=1}^m \colon (\Sigma,\partial\Sigma)\looparrowright (M,\partial M)$ be as in \cref{convention} with $\mu (F)=0$.  Suppose that there is at least one $f_i \in F^{\twist}$ with  $w_1(\Sigma)|_{\ker (f_i)_\bullet}$ nontrivial.  Then there exists a generic immersion $F'$ homotopic to $F$ with $\mu(F')=0$, and a convenient collection of Whitney discs $\W'$ such that $t((F')^{\twist},(\W')^{\twist})=0$.
Thus if $F'$ has algebraically dual spheres then $\km(F')=0$, and if moreover $\pi_1(M)$ is good then $F'$ is regularly homotopic, relative to~$\partial \Sigma$, to an embedding.
\end{reptheorem}

\begin{proof}
By the vanishing of the intersection and self-intersection numbers, there is a convenient collection of Whitney discs $W$ for $F$ and therefore for $F^\twist$. If $t(F^\twist, W^\twist)=0$ set $F'=F$. If $t(F^\twist, W^\twist)=1$, use \cref{const:change-t} to find a generic immersion $F'$ homotopic to $F$, with $t((F')^{\twist},(W')^{\twist})=0$. If $F'$ has algebraically dual spheres, then $\km(F')=0$ by \cref{thm:main} since either $F'$ is not $b$-characteristic or $\km(F')=t((F')^{\twist},(W')^{\twist})=0$. If in addition $\pi_1(M)$ is good, apply \cref{thm:SET} to see that $F'$ is regularly homotopic, relative to $\partial\Sigma$, to an embedding.
\end{proof}

We end this section by giving another pair of applications of \cref{const:change-t}.

\begin{proposition}\label{corollary:changing-t-2}
	Let $f\colon (\Sigma,\partial \Sigma)\looparrowright (M, \partial M)$ be a generic immersion as in \cref{convention}. Assume that $\Sigma$ is connected, $\mu(f)=0$, and that $w_1(\Sigma)|_{\ker f_\bullet}$ is nontrivial while $f^*(w_1(M))$ is trivial. If $f'$ is a generic immersion homotopic to $f$, both $f$ and $f'$ are $b$-characteristic, and $e(\nu f)-e(\nu f')=\pm 8$, then
	\[t(f')\equiv t(f)+1 \mod{2}.\]
\end{proposition}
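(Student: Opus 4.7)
The plan is to build an intermediate immersion $f''$ homotopic to $f$ via Construction~\ref{const:change-t} and then show that $f''$ is regularly homotopic to $f'$, so that the $t$-invariants can be compared through the chain $t(f) \mapsto t(f'') = t(f')$.

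First, I would apply Construction~\ref{const:change-t} to $f$ (with some choice of convenient collection of Whitney discs). The hypothesis that $w_1(\Sigma)|_{\ker f_\bullet}$ is nontrivial makes the construction applicable, and it produces a generic immersion $f''$ homotopic to $f$ via four cusp homotopies of a common (but freely chosen) sign, together with a convenient collection $\mathbb{W}''$ satisfying $t(f'',\mathbb{W}'')\equiv t(f)+1 \mod 2$. As noted in the proof of Theorem~\ref{theorem:generic-immersions-bijection}, each cusp homotopy of a given sign changes $e(\nu)$ by exactly $\pm 2$, and since $f^*(w_1(M))=0$ this sign is well-defined. Thus four cusps of a common sign change $e(\nu)$ by $\pm 8$; selecting the sign appropriately, I would arrange that $e(\nu f'')=e(\nu f)\pm 8=e(\nu f')$.

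Next, I would invoke Theorem~\ref{theorem:generic-immersions-bijection}\,\eqref{thm:gib-iii}: since $f^*(w_1(M))=0$ and $f''$ and $f'$ are homotopic generic immersions with equal normal Euler numbers (hence equal images under the bijection $i^{-1}(f)\to \Z$), they are regularly homotopic. Because $f'$ is $b$-characteristic, Lemma~\ref{lem:bchar-reg-htpy} then ensures that $f''$ is $b$-characteristic as well, so Theorem~\ref{thm:embedding-obstruction}\,(ii) applies and yields $t(f'')=t(f')$. Combining the two equalities gives $t(f')=t(f'')\equiv t(f)+1 \mod 2$, as claimed.

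The main delicate point is to verify that the four cusps produced by Construction~\ref{const:change-t} can be arranged to effect precisely the Euler number change $\mp 8$ rather than some other value. This requires both the freedom to choose the common sign of the four cusps in the construction and the well-definedness of the cusp sign under the assumption $f^*(w_1(M))=0$; both are available, so the argument should close without further obstruction.
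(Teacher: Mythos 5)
Your proposal is correct and follows essentially the same route as the paper: add four cusps of a common sign (via Construction~\ref{const:change-t}) to produce $f''$ homotopic to $f$ with $t(f'')\equiv t(f)+1$, match Euler numbers so that Theorem~\ref{theorem:generic-immersions-bijection} gives a regular homotopy from $f''$ to $f'$, and conclude via Lemma~\ref{lem:bchar-reg-htpy} and the regular homotopy invariance of $t$ from Theorem~\ref{thm:embedding-obstruction}. The only cosmetic difference is the order of steps (you run the construction first and then compare regular homotopy classes, whereas the paper introduces the cusps first and invokes the construction at the end), and you correctly identify the one delicate point — the freedom to choose the common cusp sign so as to realise the change of $\mp 8$ in $e(\nu)$ — which the paper handles implicitly.
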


\begin{proof}
Since $f^*(w_1(M))$ is trivial, regular homotopy classes of generic immersions homotopic to $f$ are detected by the Euler number of the normal bundle by \cref{theorem:generic-immersions-bijection}. Further, since $e(\nu f)-e(\nu f')=\pm 8$ we may add four cusps (of the same sign) to $f$ to obtain a map $f''$ which is regularly homotopic to $f'$.

The map $f$ is $b$-characteristic by assumption, while the map $f''$ is $b$-characteristic since $f'$ is, by \cref{lem:bchar-reg-htpy}. By \cref{lem:mu1}, we know that $\mu(f)_1\in \Z/2$, so by construction $\mu(f)=\mu(f'')=\mu(f')=0$. So the quantities $t(f)$ and $t(f'')$ are defined, and further $t(f'')=t(f')$. Apply \cref{const:change-t} to see that $t(f'')\equiv t(f)+1\mod{2}$.
\end{proof}

Applying~\cref{corollary:changing-t-2} to immersions of $\RP$ into $\R^4$ we obtain the following corollary, obstructing generic immersions of $\RP$ in $\R^4$ with $e(f)\neq \pm 2 \mod 16$ from being regularly homotopic to an embedding and thus partially recovers the result due to Massey \cite{Massey-nonorientable} that every embedding of $\RP$ in $\R^4$ must have Euler number $\pm 2$.  Massey stated the result for smooth embeddings, since he used the $G$-signature theorem. But the $G$-signature theorem was later extended to the topological category by \cite{Wall-surgery-book}*{Chapter~14B}, so Massey's result also holds for locally flat embeddings of~$\RP$ in~$\R^4$.

\begin{corollary}\label{cor:RP-euler-no-restrictions}
	Let $f\colon \RP\looparrowright \R^4$ be a generic immersion with $\mu(f)=0$.  Then $t(f)=0$ if and only if $e(\nu f)=\pm 2\mod 16$.
\end{corollary}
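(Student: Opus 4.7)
The plan is to exhibit two base-case embeddings and iterate \cref{corollary:changing-t-2} to pin down $t$ as a function of $e(\nu f)$ modulo~$16$.

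First I would verify the hypotheses of \cref{corollary:changing-t-2} for any generic immersion $f\colon \RP\looparrowright \R^4$ with $\mu(f)=0$. Since $\pi_1(\R^4)=1$, the map $f_\bullet$ is trivial, so $w_1(\RP)|_{\ker f_\bullet}=w_1(\RP)\neq 0$, while $f^*w_1(\R^4)=0$. The nontrivial point is that $f$ is $b$-characteristic. Using the long exact sequence of the pair $(\R^4,\RP)$ with $\Z/2$ coefficients, $H_2(\R^4,\RP;\Z/2)\cong H_1(\RP;\Z/2)\cong \Z/2$. The nontrivial class $\alpha$ has $\partial \alpha = [\mathbb{RP}^1]\neq 0$ in $H_1(\RP;\Z/2)$. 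Any band $B$ (annulus or M\"obius band) representing $\alpha$ therefore satisfies $\langle w_1(\RP),h(\partial B)\rangle = 1$; combined with $w_1(\R^4)=0$, the defining condition~\eqref{item:defn-band-w1-condn} fails. Hence $\bands(f)=\{0\}$, so $\lambda_{\RP}|_{\partial\bands(f)}=0$ and $\Theta\colon \bands(f)\to\Z/2$ are vacuously trivial. This makes $f$ $b$-characteristic, and so $t(f)\in\Z/2$ is well defined by \cref{thm:embedding-obstruction}\,(i).

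Next I would fix two base cases: the standard embeddings $f_+,f_-\colon \RP\hookrightarrow\R^4$ with $e(\nu f_\pm)=\pm 2$ (classical, see e.g.~\cite{Massey-nonorientable}). As embeddings they carry no double points, so the empty convenient collection of Whitney discs yields $t(f_\pm)=0$.

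Finally I would iterate. Since $\R^4$ is contractible, any two maps $\RP\to\R^4$ are homotopic, and both sides are $b$-characteristic by the first step; \cref{corollary:changing-t-2} then gives $t(f)+t(f')\equiv 1 \pmod 2$ whenever $e(\nu f)-e(\nu f')=\pm 8$. Combined with the regular-homotopy invariance $t(\tilde f)=t(\tilde f')$ whenever $e(\nu \tilde f)=e(\nu \tilde f')$ (\cref{thm:embedding-obstruction}\,(ii)), this shows $t(f)$ depends only on $e(\nu f)$ modulo~$16$ within the set of immersions with $\mu(f)=0$. A cusp changes $(\mu \bmod 2,\,e)$ by $(1,\pm 2)$, so the achievable Euler numbers with $\mu=0$ form $2+4\Z$, reducing modulo~$16$ to $\{\pm 2,\pm 6\}$. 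Starting from $t(f_\pm)=0$ at $e=\pm 2$ and applying the proposition once, I get $t=1$ at $e\equiv\pm 6\pmod{16}$; hence $t(f)=0$ if and only if $e(\nu f)\equiv\pm 2\pmod{16}$. Note that both base cases are genuinely needed, since $2$ and $-2$ differ by $4$ and so are not related by any number of $\pm 8$-shifts. The main obstacle is the first step: correctly identifying $\bands(f)$ so as to conclude $b$-characteristicness for every $f$ in the given homotopy class.
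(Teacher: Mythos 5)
Your proposal is correct and follows essentially the same route as the paper's proof: verify that every generic immersion $\RP\looparrowright\R^4$ is $b$-characteristic because the nontrivial class of $H_2(\R^4,\RP;\Z/2)\cong\Z/2$ violates the $w_1$-condition in \cref{def:band}, anchor the computation at the two standard embeddings with $e(\nu f_\pm)=\pm 2$ and $t=0$, observe that $\mu(f)=0$ forces $e(\nu f)\in 2+4\Z$, and iterate \cref{corollary:changing-t-2} to propagate $t$ along $\pm 8$-shifts of the Euler number. The only cosmetic difference is that you obtain the constraint $e(\nu f)\equiv 2\pmod 4$ by counting cusps directly, whereas the paper cites \cref{theorem:generic-immersions-bijection}; these are the same underlying fact.
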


\begin{proof}
Recall that there exist embeddings $g_\pm\colon \RP\hookrightarrow\R^4$ with Euler number $\pm 2$. First we prove that $e(\nu f)\equiv 2\mod{4}$. By \cref{lem:mu1}, we know that $\mu(f)_1\in \Z/2$. Then $\mu(g_+)=0$, and so $\mu(g_+)_1=0$. Since $f$ is homotopic to $g_+$ and $\mu(f)=0$, it follows from \cref{theorem:generic-immersions-bijection} that $e(\nu f)\equiv e(\nu g_+)\equiv 2\mod{4}$. (The same argument would have applied with $g_-$.)

Note that any generic immersion of $\RP$ into $\R^4$, and in particular the map $f$, is $b$-characteristic since $H_2(\R^4,\RP;\Z/2)\cong \Z/2$ and the nontrivial element does not satisfy condition~\eqref{item:defn-band-w1-condn} in \cref{def:band}.

We have $t(g_{\pm})=0$ since $t$ vanishes for embeddings. Since $e(\nu f)\equiv 2\mod{4}$, it differs from one of $\pm 2$ by a multiple of $8$. Let $k\in \Z$ be such that $e(\nu f)=\pm 2+8k=e(\nu g_\pm)+8k$. By \cref{corollary:changing-t-2},
\[t(f)\equiv t(g_\pm)+k\equiv k \mod 2.\]
 Thus $t(f)=0$ if and only if $k$ is even, which is the case precisely when $e(\nu f)$ differs from $e(\nu g_\pm)=\pm 2$ by a multiple of $16$.
\end{proof}

\section{\texorpdfstring{Proofs of statements from \cref{sec:intro-km-section}}{Proofs of statements from Section 5}}\label{sec:km}

In this section, we provide the proofs we skipped in \cref{sec:intro-km-section}. 
The following transfer move will be useful for arranging that algebraically cancelling intersection points occur on the same Whitney disc.

\begin{construction}[Transfer move]\label{lem:transfer-move}
Let $\Sigma$ and $M$ be as in \cref{convention} and let $H\colon (\Sigma,\partial\Sigma) \to (M,\partial M)$ be a generic immersion, with components $\{h_i\colon \Sigma_i\to M\}$.
Assume the double points within $H$ are paired by a convenient collection $\W$ of Whitney discs.

Let $W_1$ and $W_2$ be components of $\W$ with $\Int {W}_1\pitchfork H\neq \emptyset \neq \Int {W}_2\pitchfork H$.  We can perform three finger moves on $H$, so that the resulting generic immersion $H'$ has six new double points, paired by three framed, embedded Whitney discs $\{V, U_1, U_2\}$, each of which has two intersections with $H'$, and such that the boundaries of $\{V, U_1, U_2\}$ are mutually disjoint and embedded. Moreover, the collection $\mathcal{W}':=\mathcal{W}\cup \{V, U_1, U_2\}$ is a convenient collection of Whitney discs for $H'$ and we have
\begin{align*}
\lvert \Int {W}_1\pitchfork H'\rvert&=  \lvert \Int {W}_1\pitchfork H\rvert-1\text{ and}\\
\lvert \Int {W}_2\pitchfork H'\rvert&=  \lvert \Int {W}_2\pitchfork H\rvert-1.
\end{align*}
\end{construction}

\begin{figure}[htbp]
    \centering
\begin{tikzpicture}	
        \node[anchor=south west,inner sep=0] at (0,0){	    \includegraphics[scale=1]{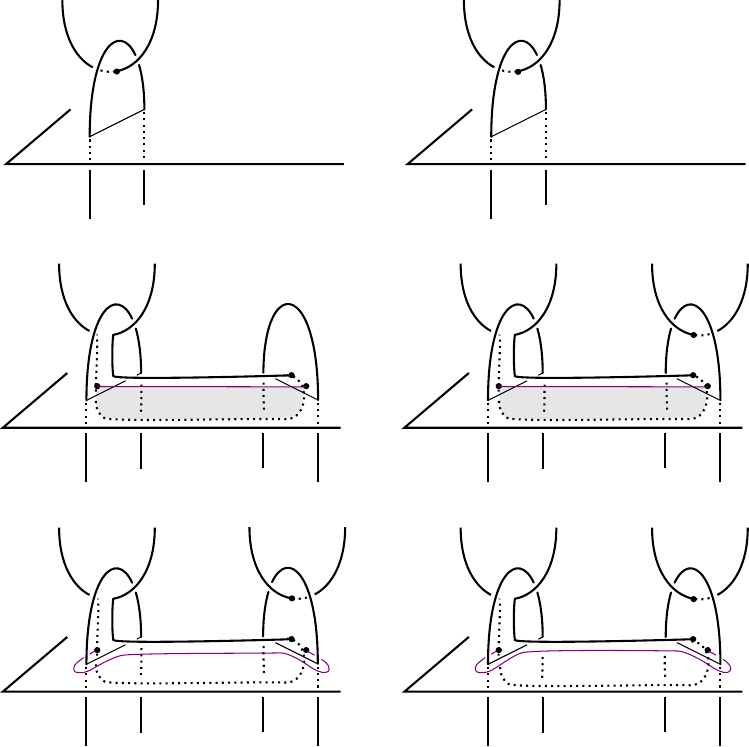}};
		\node at (6.4,9) {(i)};
		\node at (0.25,10.4) {$h_a$};
		\node at (1.3,12.5) {$h_e$};
		\node at (2.75,11.25) {$W_1$};
		\node at (1.8,9.25) {$h_b$};
		\node at (7.1,10.4) {$h_c$};
		\node at (9.55,11.25) {$W_2$};
		\node at (8.1,12.5) {$h_f$};
		\node at (8.6,9.25) {$h_d$};
				
		\node at (6.4,4.5) {(ii)};
		\node at (0.25,6) {$h_a$};
		\node at (1.25,8) {$h_e$};
		\node at (2.7,6.7) {$W_1$};
		\node at (1.75,4.7) {$h_b$};
		\node at (5.15,4.7) {$h_c$};
		\node at (5.55,6.7) {$V$};
		\node at (7.1,6) {$h_c$};
		\node at (9.5,6.7) {$W_2$};
		\node at (8.1,8) {$h_f$};
		\node at (8.55,4.7) {$h_d$};
		\node at (3.5,5.8) {$U_1$};
		\node at (10.35,5.8) {$U_2$};
		\node at (11.9,4.7) {$h_a$}; 	
		\node at (12.35,8) {$h_e$};		
		\node at (12.35,6.7) {$V$};
		
		\node at (6.4,0.1) {(iii)};
		\node at (0.25,1.5) {$h_a$};		
		\node at (1.25,3.5) {$h_e$};
		\node at (2.7,2.25) {$W_1$};
		\node at (1.75,0.25) {$h_b$};
		\node at (5.15,0.25) {$h_c$};
		\node at (5.55,2.25) {$V$};
		\node at (7.1,1.5) {$h_c$};
		\node at (9.5,2.25) {$W_2$};
		\node at (8.1,3.5) {$h_f$};
		\node at (8.55,0.25) {$h_d$};
		\node at (11.9,0.25) {$h_a$};
		\node at (12.35,3.5) {$h_e$};	
		\node at (5.6,3.5) {$h_f$};
		\node at (12.35,2) {$V$}; 						
\end{tikzpicture}
    \caption{The transfer move. (i) Whitney discs $W_1$ and $W_2$ pairing intersections between $h_a$ and $h_b$, and between $h_c$ and $h_d$ respectively. (ii) A finger move between $h_a$ and $h_c$ creates a new pair of intersections, paired by a Whitney disc $V$, shown on both panels. Depicted on the left panel, an intersection between $W_1$ and $h_e$ is pushed down into $h_a$ and then one of the resulting intersection points is pushed across to $V$. In the right panel, we see this new intersection between $V$ and $h_e$. Further, an intersection between $W_2$ and $h_f$ is pushed down to $h_c$ and one of the resulting intersection points is pushed over to $V$. These last three moves form a regular homotopy of $H$, with result $H'$. Each $W_i$ has one fewer intersection with $H'$ than with $H$, at the expense of creating two new intersections within $H'$, paired by Whitney discs $U_1$ and $U_2$, both shaded grey. Additionally, $V$ has two intersections with $H'$. The result of a boundary push off operation making the Whitney arcs for the $U_i$ (purple) disjoint from the Whitney arcs for $W_i$ and $V$ is shown in (iii). This operation creates two intersections of each $U_i$ with $H'$.}
\label{fig:transfer-move}
\end{figure}

\begin{proof}[Proof of \cref{lem:transfer-move}]
Suppose that $W_1$ pairs intersections of $h_a$ and $h_b$ while $W_2$ pairs intersections of $h_c$ and $h_d$, where repetition within $a,b,c,d$ is allowed. Perform a finger move between $h_a$ and $h_c$, creating two new double points paired by a corresponding framed, embedded Whitney disc $V$. Note that the interior of $V$ is disjoint from the image of $H$. The operation depicted in \cref{fig:transfer-move} gives a further regular homotopy, involving a finger move pushing $h_e$ through $h_a$, and a finger move pushing $h_f$ through $h_c$. We call the outcome of all three finger moves~$H'$. The procedure creates six new intersections within $H'$ compared with $H$.  The four intersections created by the $h_e$ -- $h_a$ and $h_f$ -- $h_c$ finger moves are paired by Whitney discs~$U_1$ and~$U_2$. A preliminary version of these are shown in the middle panel of \cref{fig:transfer-move}; the final versions are those arising after the boundary push off operations indicated by the bottom panel.
Overall, the move transfers an intersection of $H$ with $W_1$, as well as an intersection of $H$ with $W_2$, on to $V$, so that $|\Int{V} \pitchfork H' | =2$.  By construction, each $U_i$ intersects $H'$ twice.
\end{proof}

Now we prove \cref{lem:tau}, whose statement we recall.

\begin{replemma}{lem:tau}
Let $F\colon (\Sigma,\partial \Sigma)\looparrowright (M, \partial M)$ be as in \cref{convention}. Suppose that $F$ admits algebraically dual spheres, and that all double points of $F$ are paired by a convenient collection $\W$ of Whitney discs. Let $\W^\twist \subseteq \W$ denote the sub-collection of Whitney discs for the intersections within $F^\twist$, where $F^\twist$ is as in \cref{def:Ft}.
If $\tw(F^\twist,\W^\twist)=0$, then $\km(F)=0$.
\end{replemma}

\begin{proof}
By applying the geometric Casson lemma (\cref{lem:geometric-casson-lemma}) and \cref{prop:regular-homotopy-inv-lambda,prop:regular-homotopy-inv-mu,lemma:km-reg-homotopy-invariance}, we may arrange by a regular homotopy that $F$ and $G$ become geometrically dual.
By definition
\begin{equation}\label{eq:count-zero}
 \tw(F^\twist,\W^\twist) = \Sum_{\ell,i}\ \lvert \Int{W^\twist_\ell}\pitchfork f^\twist_i\rvert  =0\in \Z/2.
\end{equation}
We modify the collection of Whitney discs, as follows, so that each has an even number of  intersections with $F^\twist$. Since the count in \eqref{eq:count-zero} is zero, the number of Whitney discs with odd intersection with $F^\twist$ is even, so we may pair them up (arbitrarily).
For each such pair, apply \cref{lem:transfer-move}. This changes $F$ by finger moves to some $F'$, whose double points are paired by a convenient collection of Whitney discs $\W' := \{W'_{\ell}\}$, such that each element of $\W'$ has an  even number of intersections with $(F')^\twist$. Note that the new Whitney discs created by the application of \cref{lem:transfer-move} have been added to the collection.

For each intersection of some $W'_\ell$ with $(F')^\twist$, tube $W_{\ell}'$ into the corresponding geometrically dual sphere. Note that each sphere  being tubed into is necessarily twisted, but since we tube an even number of times, the total change in the framing of $W'_\ell$ is even.  Do this for each element of $\W'$. The resulting family of Whitney discs may still intersect $F'$, but not $(F')^\twist$. For each such intersection with $F'$, again tube into the appropriate geometrically dual sphere. Now the spheres are not twisted, so the framing of the Whitney discs changes by an even number. Arrange for  all the Whitney discs to be framed by adding local cusps in the interior.  We may do this because the framing coefficient of each of the Whitney discs is even. We have now produced the desired convenient collection of Whitney discs for the intersections within $F'$, whose interiors lie in the complement of $F'$. This shows that $\km(F')=0$, and therefore $\km(F)=0$, as desired.
\end{proof}

Before giving the proof of \cref{lem:dependence_on_A}, we explain the key new construction in this paper, which we already mentioned in~\cref{sec:intro-bchar}. Briefly, given a band $B$ with boundary lying on an immersed surface, a finger move along a fibre of the band produces two new double points paired by a Whitney disc arising from $B$.

\begin{construction}[Band fibre finger move]\label{construction:finger-move}
Let $F\colon (\Sigma,\partial \Sigma)\looparrowright (M, \partial M)$ be as in \cref{convention}. Suppose that $\mu (F)=0$ and that the self-intersections of $F$ are paired by a convenient collection $\W = \{W_\ell\}$ of Whitney discs with boundary arcs $A$.

Consider $B\in \bands(F)$ as a $D^1$-bundle. Then we can do a finger move along a fibre of $B$, with endpoints missing $A$, as depicted in \cref{fig:band-gives-whitney-disc}.  We call this fibre the \emph{finger arc}, and denote it by~$D$.  We assume that $D$ misses all double points of $B$ and all intersections between $\Int B$ and $F$.

A finger move depends on a choice of $2$-dimensional sub-bundle of the normal bundle to the finger arc (the proof of~\cref{lem:fiber-move} will give further details).  We use a sub-bundle that lies in the tangent bundle $TF$ at one end of the arc, contains $\nu_{D}^B$ along $D$, and intersects $TF$ in the line bundle  $\nu_D^B$ at the other end of the arc. Here $\nu_D^B$ is the normal bundle of $D$ in $B$.

Call the immersion resulting for the above finger move $F'$. We will check in the next paragraph that the remainder of $B$, i.e.\ the complement in $B$ of a tubular neighbourhood of $D$, gives a Whitney disc for the new pair of double points. Make the boundary embedded and disjoint from $A$, by boundary push off operations, and then boundary twist if necessary, to obtain a framed Whitney disc $W_B$ for the new double points.  Then $\W' := \W \cup \{W_B\}$ is a convenient collection of Whitney discs  pairing the double points of $F'$. Note that both $F'$ and $W_B$ depend on the choice of finger arc $D$ and the $2$-dimensional sub-bundle of its normal bundle mentioned above.

Now, as promised, we check that $W_B$ is a Whitney disc. The finger move creates a trivial Whitney disc and we refer to the corresponding Whitney arcs as the trivial arcs. The double points are also paired by the arcs $A_1, A_2\subseteq \partial B$ where $A_1\cup A_2=\partial W_B$.  The existence of the disc $W_B$ implies that the group elements of the double points agree with respect to the arcs $A_1$ and $A_2$. It remains only to to see that the double points have opposite signs with respect to the arcs $A_1$ and $A_2$ (see~\cref{defn:opposite-signs}). The case that both $M$ and $\Sigma$ are orientable is straightforward. The general case follows from the $w_1$ condition in the definition of a band,~\eqref{item:defn-band-w1-condn}, as we now check.

First we consider the case that $B$ is an annulus. Let $\partial_1 B$ and $\partial_2B$ denote the two components of $\partial B$. Then $A_1$ and $A_2$ differ from the trivial arcs joining the new double points by $\partial_1 B$ and $\partial_2B$ respectively. By \cref{defn:opposite-signs} the double points have opposite sign precisely when $\langle w_1(\Sigma), \partial B\rangle +\langle w_1(M),\partial_1B\rangle=0$, which matches~\eqref{item:defn-band-w1-condn} since $\partial_1B$ is homotopic in $M$ to the core of $B$.

Now suppose that $B$ is a M\"obius band. Then the union of $A_1$ and $A_2$ and the trivial pair of arcs is the circle $\partial B\subseteq \Sigma$. Moreover, the union of the image of $A_1$ and either one of the trivial arcs forms a circle in $M$ homotopic to the core $C$ of $B$. As before, the double points have opposite sign precisely when $\langle w_1(\Sigma), \partial B\rangle +\langle w_1(M),C\rangle=0$, which again matches~\eqref{item:defn-band-w1-condn}.
\end{construction}

The following lemma explains how \cref{construction:finger-move} changes the value of $t$. In the proof we will also carefully explain how to make suitable choices of $2$-dimensional sub-bundles, as required for the finger move in \cref{construction:finger-move}.

\begin{lemma}
	\label{lem:fiber-move}
Let $F\colon (\Sigma,\partial \Sigma)\looparrowright (M, \partial M)$ be as in \cref{convention}.
\begin{enumerate}[(i)]
	    \item\label{item-lem-fibre-move-i} Suppose that $F'$ and $\W'$ are obtained from $F$ and $\W$ by a single application of \cref{construction:finger-move} with respect to a band $B\in \bands(F)$, where $w_1(\Sigma)$ restricted to every component of $\partial B$ is trivial. Let $A$ denote the Whitney arcs corresponding to $\mathcal{W}$. Then
	    \[t(F',\W') = t(F,\W) + \Theta_A(B) \in \Z/2.\]
	    \item\label{item-lem-fibre-move-ii}
	   Suppose that $F'$ and $\W'$ are obtained from $F$ and $\W$ by a single application of \cref{construction:finger-move} with respect to a band $B\in \bands(F)$ and an arc $D\subseteq B$, where $B$ is an annulus with $w_1(\Sigma)$ nontrivial on both boundary components and $D \subseteq B$ connects the two boundary components. Let $A$ denote the Whitney arcs corresponding to $\mathcal{W}$. Then
	\[t(F',\W') = t(F,\W) + \Theta_A(B,D) \in \Z/2.\]
	\end{enumerate}
\end{lemma}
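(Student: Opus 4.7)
The plan is to compute $|\Int W_B \pitchfork F'| \pmod 2$ directly and match it term by term with the formula for $\Theta_A(B)$ (respectively $\Theta_A(B,D)$), following the three-stage construction of $W_B$ in \cref{construction:finger-move}.

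First I would localize the calculation. Since $\mathcal{W}' = \mathcal{W} \cup \{W_B\}$,
\[
t(F',\mathcal{W}') - t(F,\mathcal{W}) \equiv |\Int W_B \pitchfork F'| + \sum_\ell \bigl(|\Int W_\ell \pitchfork F'| - |\Int W_\ell \pitchfork F|\bigr) \pmod{2}.
\]
By general position I may arrange that the finger arc $D$, together with the small support of the finger move around it, is disjoint from the interiors of every $W_\ell \in \mathcal{W}$. Then each summand in the sum over $\ell$ vanishes, and I am reduced to computing $|\Int W_B \pitchfork F'| \pmod 2$.

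I would then split this count according to the three stages that build $W_B$. Stage (a) removes a tubular neighborhood $\nu(D)$ of the finger arc from $B$; by genericity the interior of the resulting abstract disc meets $F'$ at exactly the points of $\Int B \pitchfork F$, contributing $|\Int B \pitchfork F|$. In stage (b) the boundary of this disc is pushed off both its own self-intersections on $F'(\Sigma)$ (arising from the $\mu_\Sigma(\partial B)$ self-crossings of $\partial B$ immersed in $\Sigma$) and its $|\partial B \pitchfork A|$ intersections with $A$. Each such boundary crossing is resolved by a local boundary finger move of $W_B$ through the ambient sheet of $F'(\Sigma)$, creating precisely one new interior intersection with $F'$; stage (b) therefore contributes $\mu_\Sigma(\partial B) + |\partial B \pitchfork A|$. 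Finally, stage (c) performs boundary twists to match the Whitney framing, and each twist contributes one further intersection with $F'$.

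The main obstacle is to show that the parity of the number of boundary twists required in stage (c) equals $e(B)$ for (i), and $e(\wh B)$ for (ii). To verify this I would compare, along $\partial W_B$, the Whitney framing of $\nu_{W_B}^M$ with the band framing inherited from $\nu_B^M$. On the arcs of $\partial W_B$ lying along $\partial B \subseteq F'(\Sigma)$, the Whitney framing calls for a vector tangent to $F'(\Sigma)$ and perpendicular to $\partial W_B$, and this agrees with the nowhere vanishing section of $\nu_{\partial B}^\Sigma$ used in \cref{def:thetaA} (respectively the section on $\partial \wh B$ from \cref{def:thetaA-nonorientable}). The prescription of the $2$-dimensional sub-bundle of the normal bundle of $D$ in \cref{construction:finger-move}, requiring it to contain $\nu_D^B$ along $D$, to lie in $TF$ at one end, and to meet $TF$ in $\nu_D^B$ at the other, is tailored so that the band framing and the Whitney framing also agree in a neighborhood of the two new double points paired by $W_B$. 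In case (ii), the arc $D$ plays exactly the role of the cutting arc used to define $e(\wh B)$ in \cref{def:thetaA-nonorientable}, and the section choice along $D \subseteq \partial \wh B$ matches the datum coming from the sub-bundle prescription. Consequently the global framing defect of $\nu_{W_B}^M$ relative to the Whitney framing equals the number of zeros mod $2$ of the extended section, namely $e(B)$ or $e(\wh B)$ respectively. Summing the three stages gives
\[
|\Int W_B \pitchfork F'| \equiv |\Int B \pitchfork F| + \mu_\Sigma(\partial B) + |\partial B \pitchfork A| + e(B) = \Theta_A(B) \pmod{2},
\]
proving (i), and the identical computation with $e(\wh B)$ and $\Theta_A(B,D)$ in place of $e(B)$ and $\Theta_A(B)$ proves (ii).
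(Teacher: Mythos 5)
Your structural outline -- localize to $|\Int W_B \pitchfork F'|$, split into interior, boundary--push-off, and framing contributions, and match each against a term of $\Theta_A$ -- is indeed the shape of the paper's argument (the paper packages the first two stages by passing to the invariant $t_{\Alt}$ from \cref{def:t_alt} via \cref{lem:alt-enough}, which is equivalent to your stage (a)+(b) bookkeeping). But your treatment of the Euler-number term contains a real error, and you also miss a case.

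The framing claim is wrong as stated. You assert that on the arcs of $\partial W_B$ lying along $\partial B$, ``the Whitney framing calls for a vector tangent to $F'(\Sigma)$ and perpendicular to $\partial W_B$, and this agrees with the nowhere vanishing section of $\nu_{\partial B}^{\Sigma}$.'' That is not what the Whitney framing is: along $\partial W_B$ the Whitney framing is tangent to $F'(\Sigma)$ on one arc but \emph{normal} to $F'(\Sigma)$ on the other arc. By contrast, the section used to define $e(B)$ in \cref{def:thetaA} lies in $\nu^\Sigma_{\partial B} \subseteq TF'(\Sigma)$ on \emph{both} components of $\partial B$. These framings do not agree, so the conclusion $e(W_B) = e(B)$ does not follow by mere inspection. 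The paper bridges the gap by rotating the band section $s$ by $90$ degrees near one boundary component to produce a section $s'$ which lies in the ``time'' direction $\nu^M_\Sigma \cap \nu^M_B$ there, verifying separately that (a) the rotation preserves the number of zeros of the section and (b) the $2$-dimensional sub-bundle determined by $s'$ is the one used to guide the finger move, so that $s'|_{\partial W_B}$ really is a Whitney framing (see \cref{fig:suggestive}). Neither step is automatic from the hypotheses of \cref{construction:finger-move}, and your proof supplies neither.

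You also silently assume $B$ is an annulus in part (i). But part (i) also covers the M\"obius band case with $w_1(\Sigma)|_{\partial B}$ trivial, where $\partial W_B$ is a pair of sub-arcs of the single circle $\partial B$ and the normal bundle $\nu^M_B$ is non-orientable. The annulus argument does not carry over verbatim: the paper needs the separate ``necklace region'' construction (its Case 3) to build a section that is tangent to $\Sigma$ on the lower half of $\partial B$, normal on the upper half, consistent across the M\"obius identification, and compatible with the chosen finger-move sub-bundle. Your proof would need to at least acknowledge and handle this case separately.
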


For the proof it will be advantageous to refrain from applying boundary twists and removing intersections involving $\partial W_B$, and to instead use the following alternative definition of $t(F,\W)$, using a slightly weaker restriction on collections of Whitney discs, as in~\cite{Stong}, cf.~\cite{FQ}*{Section~10.8A}.

\begin{definition}\label{def:weak-Whitney}
Let $F\colon (\Sigma,\partial \Sigma)\looparrowright (M, \partial M)$ be as in \cref{convention}. A \emph{weak} collection of Whitney discs for $F$ is a collection $\mathcal{W}$ of Whitney discs pairing all the double points of $F$, with generically immersed interiors transverse to $F$, and with Whitney arcs whose interiors are generically immersed in $F(\Sigma)$ minus the double points of~$F$.
\end{definition}

In particular, compared to the definition of a convenient collection of Whitney discs (\cref{def:convenient-Whitney}), we allow the boundaries of Whitney discs to be generically immersed on $\Sigma$ and to intersect one another transversely.  We also allow the Whitney discs to be twisted, i.e.\ for the framing of the normal bundle restricted to the boundary to disagree with the Whitney framing.  Each of the discs in a weak collection of Whitney discs admits a normal bundle. The proof is the same as for a generic immersion of pairs, but with a preliminary step than one has to first fix the normal bundle in neighbourhoods of the two double points being paired.

\begin{definition}\label{def:t_alt}
Given a weak collection of Whitney discs $\mathcal{W}:=\{W_\ell\}$ for the double points of a generic immersion $F$ as in \cref{convention}, fix an ordering on the indexing set for $\W$ and define
    \[t_{\Alt}(F,\W) := \sum_{\ell} \Big( \mu_\Sigma(\partial W_\ell)  + \sum_{k > \ell} \lvert \partial W_\ell \pitchfork \partial W_k \rvert+  \sum_i \lvert \Int W_\ell \pitchfork f_i \rvert   +  e(W_\ell) \Big)  \in \Z/2,\]
where $e(W_\ell)$ is the relative Euler number of the normal bundle with respect to the Whitney framing on~$\partial W_\ell$.
    \end{definition}

Note that if $\mathcal{W}$ is a convenient collection of Whitney discs for $F$, then $t_{\Alt}(F,\mathcal{W})=t(F,\mathcal{W})$ (cf.~\cref{def:t}). In particular, since a convenient collection of Whitney discs comprises framed Whitney discs and has embedded and disjoint Whitney arcs, a majority of terms in the definition of $t_{\Alt}$ vanish.
The following lemma shows that $t_{\Alt}$ can be used as a proxy for $t$ in general.

\begin{lemma}\label{lem:alt-enough}
    Given a weak collection of Whitney discs $\mathcal{W}:=\{W_\ell\}_{\ell=1}^n$ for the double points of a generic immersion $F$ as in \cref{convention},
there exists a convenient collection of Whitney discs $\W'$ such that $t(F,\W') = t_{\Alt}(F,\W)$.
\end{lemma}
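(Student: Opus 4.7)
The plan is to convert $\W$ into a convenient collection $\W'$ by performing a finite sequence of local modifications, each of which preserves $t_{\Alt}$ modulo~$2$. Since a convenient collection has embedded disjoint boundaries and is framed, for such a collection the terms $\mu_{\Sigma}(\partial W_\ell)$, $|\partial W_\ell \pitchfork \partial W_k|$, and $e(W_\ell)$ all vanish, and hence $t(F,\W') = t_{\Alt}(F,\W')$. Combined with the invariance of $t_{\Alt}$ along the sequence, this will give $t(F,\W') = t_{\Alt}(F,\W)$ as required.

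I carry out the modifications in three stages. First, for each self-intersection of a Whitney arc on $F(\Sigma)$, apply a boundary push-off: push the offending crossing across the end of the arc at an endpoint double point. This removes one self-intersection, decreasing $\mu_\Sigma(\partial W_\ell)$ by one, at the cost of creating exactly one new transverse intersection between $\Int W_\ell$ and the relevant sheet of $F$, increasing $\sum_i |\Int W_\ell \pitchfork f_i|$ by one. Both changes have the same parity, so $t_{\Alt}$ is unchanged mod~$2$. Iterating, we arrange that every $\partial W_\ell$ is embedded. Second, for each transverse intersection $\partial W_\ell \pitchfork \partial W_k$ with $k\neq \ell$, again push it off the end of (say) the arc belonging to $\partial W_\ell$; this removes one pairwise boundary intersection and creates one new interior intersection of $\Int W_\ell$ with $F$, so again $t_{\Alt}$ is unchanged mod~$2$. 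Iterating, we achieve disjointly embedded boundaries.

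Third, correct framings by interior twists. For each~$\ell$ with $e(W_\ell)\neq 0$, perform $|e(W_\ell)|$ boundary twists on $W_\ell$, chosen with the correct sign. Each boundary twist changes the relative Euler number by $\pm 1$ and introduces a single new transverse intersection of $\Int W_\ell$ with $F$ (arising from the sheet of~$F$ along the Whitney arc that is twisted around). Hence each boundary twist alters two terms in $t_{\Alt}$ by one each, preserving the mod~$2$ count. After finitely many such twists, every $W_\ell$ has trivial normal framing, so $W_\ell$ is framed. The disc interiors remain transverse to $F$, and the boundary behaviour is not disturbed by interior twists.

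After these three stages the collection $\W'$ is convenient, and since each modification left $t_{\Alt}$ invariant mod $2$, we have $t_{\Alt}(F,\W') = t_{\Alt}(F,\W)$. Since $\W'$ is convenient, $t(F,\W') = t_{\Alt}(F,\W')$, which completes the argument. The only subtlety is in bookkeeping: one must perform all push-offs at endpoints that do not subsequently become reintroduced as crossings, which is easily achieved by doing the first two stages in the order above (boundary self-intersections before pairwise intersections), and by working in small disjoint neighbourhoods of the relevant crossings so that the moves do not interfere.
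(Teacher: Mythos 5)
Your proof is correct and uses the same ingredients as the paper (boundary push-offs and boundary twists, each preserving $t_{\Alt}$ mod~$2$), differing only in performing crossing removal before framing correction rather than after, which works since boundary twists do not perturb the Whitney arcs on $F(\Sigma)$. One slip: the third stage's topic sentence and final sentence say ``interior twists'' where you mean ``boundary twists'' (as the rest of the paragraph correctly describes); an interior twist (local cusp) changes $e(W_\ell)$ by $\pm 2$ and creates no new intersection with $F$, so it would not produce the parity balance your argument relies on.
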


\begin{proof}
For each Whitney disc $W_\ell$ with $e(W_\ell) \neq 0$, add boundary twists to obtain $\ol{W}_\ell$ with $e(\ol{W}_\ell) =0$.
Each boundary twist changes the Euler number by $\pm 1$ and introduces an intersection of the Whitney disc with~$F$.
We have
    \[\sum_i \lvert \Int \ol{W}_\ell \pitchfork f_i \rvert \equiv  \sum_i \lvert \Int W_\ell \pitchfork f_i \rvert + e(W_\ell) \mod{2},\]
    and also
     $\mu_\Sigma(\partial \ol{W}_\ell) = \mu_\Sigma(\partial W_\ell)$, and $\lvert \partial \ol{W}_\ell \pitchfork \partial \ol{W}_k \rvert = \lvert \partial W_\ell \pitchfork \partial W_k \rvert$, for each $k\neq \ell$.

   Next, we will remove intersections between Whitney arcs as well as self-intersections of Whitney arcs, at the expense of adding intersections between $F$ and the Whitney discs. We will use the procedure described in~\cite{Freedman-book-basicgeo}*{Section~15.2.3}. For an intersection between $\partial \ol{W}_\ell$ and $\partial \ol{W}_k$, where possibly $k=\ell$, the procedure pushes the intersection off one of the endpoints of one of the Whitney arcs of $\partial \ol{W}_\ell$, i.e.\ a double point of~$F$, moving a neighbourhood of $\partial \ol{W}_k$ and creating an intersection between~$\ol{W}_k$ and~$F$. This new intersection point is created in a small neighbourhood of the double point of $F$ chosen for the pushing off procedure. If several Whitney arcs intersect the given Whitney arc of $\partial \ol{W}_\ell$,  push off in order of proximity to the endpoint. This avoids extraneous intersections between Whitney arcs being created. Perform this pushing off procedure on both arcs of $\partial \ol{W}_\ell$.  For each of the two arcs in $\partial \ol{W}_\ell$, push towards one of the two double points of $F$ paired by $\ol{W}_\ell$; choose these double points so that we use one double point for each arc. This ensures that the new intersections between Whitney discs and $F$ arise in disjoint neighbourhoods in the ambient manifold.

Apply the move described in the previous paragraph to the Whitney arcs of $\{\ol{W}_\ell\}$ in order, beginning with $\ell=n$. In other words, in the $i$th step, we push off the intersections of $\partial \ol{W}_k$ with $\partial \ol{W}_{n-i+1}$, for $k\leq n-i+1$.
After the $n$th step, we produce a convenient collection $\W':=\{W'_\ell\}$, where each $W'_\ell$ is the result of applying the above procedure to $\ol{W}_\ell$.
    This yields, for each $\ell$,
    \begin{align*}
        \sum_i \lvert \Int W'_\ell \pitchfork f_i\rvert &\equiv
        \sum_i \lvert \Int \ol{W}_\ell \pitchfork f_i \rvert +  \mu_\Sigma(\partial \ol{W}_\ell) + \sum_{k>\ell} \lvert \partial \ol{W}_\ell \pitchfork \partial \ol{W}_k \rvert \mod{2}\\
        &\equiv  \sum_i \lvert \Int W_\ell \pitchfork f_i\rvert + e(W_\ell) +  \mu_\Sigma(\partial W_\ell) + \sum_{k > \ell} \lvert\partial W_\ell \pitchfork \partial W_k\rvert \mod{2}.
    \end{align*}
In the above expression, the term $\sum_{k>\ell} \lvert \partial \ol{W}_\ell \pitchfork \partial \ol{W}_k \rvert$ arises since the arcs in $\partial \ol{W}_\ell$ are moved, to create a new intersection point of $\ol{W}_\ell$ with $F$, precisely once for each intersection of $\partial \ol{W}_\ell$ with $\bigcup_{k >\ell} \partial \ol{W}_k$.

Sum over $\ell$ to obtain that
$t(F,\W') = t_{\Alt}(F,\W)\in \Z/2$ as claimed.
\end{proof}

\begin{proof}[Proof of~\cref{lem:fiber-move}]
By~\cref{lem:alt-enough}, it will suffice to show that in case \eqref{item-lem-fibre-move-i},
\[t_{\Alt}(F',\W') = t_{\Alt}(F,\W) + \Theta_A(B) \in \Z/2,\] and in case~\eqref{item-lem-fibre-move-ii},
	\[t_{\Alt}(F',\W') = t_{\Alt}(F,\W) + \Theta_A(B,D) \in \Z/2.\]
The proof splits into three cases.

\setcounter{case}{0}
\begin{case}\label{case:easy-case}
The band $B$ is an annulus as in \eqref{item-lem-fibre-move-i}.
\end{case}

Recall that
\[
	\Theta_A(B):= \mu_{\Sigma}(\partial B)+ \lvert \partial B\pitchfork A \rvert +\lvert \Int B\pitchfork F \rvert + e(B) \mod 2.
	\]
By the construction of $F'$ and $\mathcal{W}'$, we have
\[t_{\Alt}(F',\W') \equiv t(F,\mathcal{W}) +\mu_\Sigma(\partial W_B) + \lvert \partial W_B \pitchfork A\rvert +  \lvert \Int W_B \pitchfork F \rvert + e(W_B)  \mod{2}.
\]
Every self-intersection of $\partial B$ and each intersection $\partial B \pitchfork A$ will contribute one intersection of $\partial W_B$ and between $\partial W_B$ and $A$ respectively, while each intersection $\Int B\pitchfork F$ will contribute one intersection between $\Int W_B$ and $F$. Thus it remains to show that the framing $e(B)$ appearing in the definition of $\Theta_A(B)$ agrees with the framing $e(W_B)$. For this it will be helpful to pick the finger arc and the $2$-dimensional sub-bundle for its normal bundle needed for the finger move more carefully, which we do next.

\begin{figure}[htb]
    \centering
    \begin{tikzpicture}
        \node[anchor=south west,inner sep=0] at (0,0){	    \includegraphics{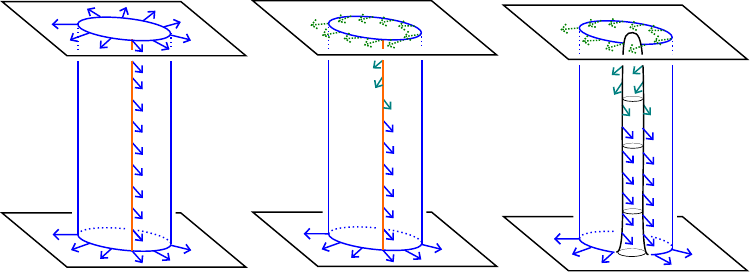}};
		\node at (4.3,0.3) {$F$};
		\node at (1,1.8) {$B$};
		\node at (4.3, 3.8) {$F$};
		\node at (10.2, 1.8) {$W_B$};
	\end{tikzpicture}
    \caption{Left: we see a model annulus $B$ (blue) connecting two sheets of $F$ (black), and a finger arc $D=\{\pt\} \times D^1 \subseteq S^1 \times D^1 \cong B$ (orange).  We also see the surface framing on $\partial B$ and the section $s$ along the finger arc of the normal bundle of $B$ in $M$. Recall the section is defined over all of $B$, but we only show it on a subset. Middle: in the top half of $B$, we rotate the section so that it lies in $(\nu_{\Sigma}^{M}|_{\partial_i B} \cap \nu_B^M|_{\partial_i B})$, i.e.\ in the time direction, on the top boundary component (dotted green). The modified section is called $s'$. Right: after performing the finger move, $s'$ gives the Whitney framing for the new Whitney disc $W_B$.}
\label{fig:suggestive}
\end{figure}

Let $\partial_i B$ denote the connected components of $\partial B\subseteq \Sigma$. Consider the following decomposition of the tangent bundle of $M$ restricted to $\partial_i B$:
\begin{equation}\label{eq:tangent-decomposition}
TM|_{\partial_i B} \cong T(\partial_i B) \oplus \nu_{\partial_i B}^{\Sigma} \oplus \nu_{\partial_i B}^B \oplus (\nu_{\Sigma}^{M}|_{\partial_i B} \cap \nu_B^M|_{\partial_i B}).
\end{equation}
As shown in \cref{fig:suggestive}, choose a section $s$ of the normal bundle of $B$ that is nonvanishing on the finger arc.  In both boundary components $\partial_i B$, we assume that this section lies in $\nu_{\partial_i B}^{\Sigma}$. Now rotate the section near the top boundary component, as shown in the middle picture of \cref{fig:suggestive}, to obtain a section $s'$, so that on the top component $s'$ lies in $(\nu_{\Sigma}^{M}|_{\partial_i B} \cap \nu_B^M|_{\partial_i B})$.  For the finger move, by definition, we use the 2-dimensional sub-bundle of $\nu^M_D$ determined by $s'$ and $T(\partial_i B)$, as shown in the right-most figure of \cref{fig:suggestive}.

Now consider the Whitney disc $W_B$ obtained from $B$ after performing the finger move along $D$ using the above $2$-dimensional sub-bundle of its normal bundle. By definition, $e(W_B)$ equals the number of zeros of $s'|_{W_B}$, since on the boundary Whitney arcs it is normal to one sheet and tangent to the other.
On the other hand, the number of zeros of $s'|_{W_B}$ equals the number of zeros of $s$, since $s'$ was obtained from $s$ by a rotation, and neither section vanishes near the finger arc.
Finally by definition $e(B)$ counts the zeros of $s$. Therefore we see that $e(B)=e(W_B)$.

\begin{case}
The band $B$ is an annulus such that $w_1(\Sigma)$ restricted to both components of $\partial B$ is nontrivial, as in \eqref{item-lem-fibre-move-ii}.
\end{case}

Assume that a finger arc $D$ has been chosen. To define $\Theta_A(B,D)$, we pick a section $s$ as in \cref{def:thetaA-nonorientable} on $\wh{B}$, which is by definition the result of cutting $B$ open along $D$.
Recall that
\[
\Theta_A(B,D):= \mu_{\Sigma}(\partial B)+ \lvert \partial B\pitchfork A \rvert + \lvert \Int B\pitchfork F \rvert +e(\wh B) \mod 2.\]
As in \cref{case:easy-case}, we need only check that the term $e(\wh{B})$ in $\Theta_A(B,D)$ agrees with the term $e(W_B)$ in $t_{\Alt}$. Again as in \cref{case:easy-case}, we rotate the section near the top boundary component to obtain a section $s'$, so that on the top component $s'$ lies in $(\nu_{\Sigma}^{M}|_{\partial_i B} \cap \nu_B^M|_{\partial_i B})$. For the finger move, we use the 2-dimensional sub-bundle determined by $s'$ and $T(\partial_i B)$. Note that, just like $s$, the section $s'$ is not defined on all of $B$, but only on $\wh{B}$; see \cref{fig:non-orientable-BFF}.
Nevertheless, on points that map to the same point in $D$, the section $s'$ agrees up to a sign and thus still determines a 1-dimensional sub-bundle. The section $s'$ restricts to a section of the normal bundle of the Whitney disc $W_B$ obtained from $B$. As in \cref{case:easy-case}, the quantities $e(B)$ and $e(W_B)$ coincide.

\begin{figure}[htb]
	\centering
  \begin{tikzpicture}	
        \node[anchor=south west,inner sep=0] at (0,0){\includegraphics{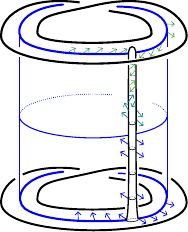}};
        \node[anchor=south west,inner sep=0] at (-5,0){\includegraphics{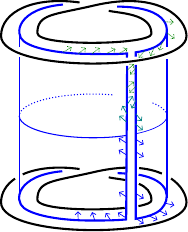}};
		\node at (-0.23,0.5) {$F'$};
		\node at (-0.23,3.35) {$F'$};
		\node at (-0.1, 1.8) {$W_B$};
		\node at (-5.23,0.5) {$F$};
		\node at (-5.23,3.35) {$F$};
		\node at (-5.1, 1.8) {$\wh{B}$};
	\end{tikzpicture}
	\caption{Left: the section $s'$ on $D$ (orange), and on a parallel copy of $D$; cf.\ \cref{fig:non-orientable-section}\,(b). Close to the top boundary the section extends into the time direction (teal and dotted green).  Right: the section $s'$ on the boundary of the new Whitney disc $W_B$.
	}
	\label{fig:non-orientable-BFF}
\end{figure}

\begin{case}
The band $B$ is a M\"obius band with $w_1(\Sigma)$ restricted to $\partial B$ trivial, as in \eqref{item-lem-fibre-move-i}.
\end{case}

As in \cref{case:easy-case}, we only need to show that the term $e(B)$ in $\Theta_A(B)$ agrees with the term $e(W_B)$ in $t_{\Alt}$.
Let $D$ denote a properly embedded arc on $B$ along which we wish to perform the finger move. Identify $B$ with the quotient of the square $S:= [-1,1]\times [-1,1]$ as usual, i.e.\ $(-1,x) \sim (1,-x)$ for all $x \in [-1,1]$, with $D$ corresponding to the arc $\{-1\}\times [-1,1]\equiv\{1\}\times [-1,1]$ (see \cref{fig:necklace}). Pull back the normal bundle $\nu_B^M$ of $B$ in $M$ to $S$ via the quotient map $\pi\colon S\to B$ and then pick a trivialisation $\pi^*(\nu_B^M)\cong S\times \R^2$ so that on the horizontal boundary $H:=[-1,1]\times \{-1,1\}$ we have that $\pi^*\nu^\Sigma_{\partial B}$ coincides with $H\times \R \times \{0\}\subseteq S\times \R\times \R$.

	\begin{figure}[htb]
    \centering
    \begin{tikzpicture}
        \node[anchor=south west,inner sep=0] at (0,0){\includegraphics{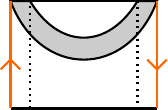}};
        \node[anchor=south west,inner sep=0] at (4,0){\includegraphics{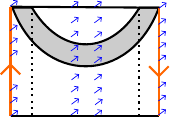}};
        \node[anchor=south west,inner sep=0] at (8,0){\includegraphics{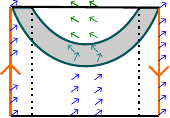}};
		\node at (1.45,-0.5) {(a)};
		\node at (5.45,-0.5) {(b)};
		\node at (9.45,-0.5) {(c)};
		\node at (-0.1,1) {$D$};
		\node at (3.1,1) {$D$};
		\node at (1.45,1.05) {$N$};
		\node at (1.5,0.25) {$S_-$};
		\node at (1.5,1.6) {$S_+$};		
	\end{tikzpicture}
    \caption{(a) The necklace region $N\subseteq S$ is shown in grey. The dotted black lines indicate the boundary of the region where the finger move occurs. The solid black lines indicate where the band is attached to the surface $F$. The arc $D$ is shown in orange. (b) The section $s$ is shown in blue. (c) The section $s'$ is indicated. Note that on $S_-$, the sections $s$ and $s'$ agree. On $S_+$, the section $s'$ (green) is obtained by rotating $s$ by 90 degrees. In the necklace region, the section rotates continuously (teal), interpolating between the values on $S_+$ and $S_-$. Note that the section on the arc $D$ has not changed, but it has been modified on part of the dotted lines.}\label{fig:necklace}
\end{figure}

	Then we pick a section $s$ of $\nu_B^M$ such that $s\vert_{\partial B}$ lies in $\nu^\Sigma_{\partial B}$. Note that $s\vert_{\partial B}$ is nowhere vanishing but the section $s$ might have zeros.
Without loss of generality we assume that any zeros of $s$ do not lie in the strip $\left([-1,-1+\varepsilon]\cup [1-\varepsilon,1]\right) \times [-1,1]$ for some $\varepsilon \in (0,1/4)$.
	
	Next we modify the section $s$.  Choose a ``necklace'' region, i.e.\ a sub-square $N$ with two opposite edges coinciding with $[-1,-1+\varepsilon]\times \{1\}$ and $[1-\varepsilon,1]\times \{1\}$, and otherwise lying in the interior of $S:=[-1,1]^2$. We consider the pullback of $s$ to $S$, where we have a trivialisation. Modify this pullback so that it remains unchanged in the lower component $S_-$ of $S\sm N$, and is rotated by 90 degrees on the upper component $S_+$. On the region $N$, the section rotates continuously, interpolating between the values on $S_+$ and $S_-$. Push this forward to get a modified section $s'$ on $B$. Since the modification is produced by a continuous rotation, the number of zeros of this modification agrees with the number of zeros of the original $s$.
	
Recall that we wish to perform a finger move guided by the arc $D=\{-1\}\times [-1,1]$. Without loss of generality, we assume that the `width' of the finger move is $2\varepsilon$. More precisely, to perform a finger move we need a $2$-dimensional sub-bundle of $\nu_D^M$. We require that this contains $\nu_D^B$ to ensure that the finger move cuts $B$ open into a Whitney disc as desired. Fix an identification of the total space of the normal bundle  $\nu_D^M$ with $D\times \R^3$. We choose an embedding $\iota \colon \nu_{D}^M \hookrightarrow M$ restricting to the inclusion of $D$ on $D \times \{0\}$,  with the following properties.
\begin{enumerate}[(i)]
  \item We assume that the first $\R^1$ factor of $D \times \R^3$ corresponds to $\nu_D^B$, and that $\iota$ identifies $D\times \{\pm 1\}\times \{0\}\times \{0\}$ with the arcs $\{-1+\varepsilon, 1-\varepsilon\}\times [-1,1]$. This is what was meant by the width of the finger move.
  \item  We also require that $\iota$ identifies $D\times \{t\}\times \{1\}\times \{0\}$  with $s'(\iota(D \times \{t\} \times \{0\} \times \{0\}))$ for $t\geq 0$, and with $s'(\iota(D \times \{t\} \times \{0\} \times \{0\}))$ rotated by $90$ degrees for $t\leq -1$. (Here we also implicitly  identify $\nu_B^M$ with its image in $M$.)
\end{enumerate}

	\begin{figure}[htb]
    \centering
    \begin{tikzpicture}
	    \node[anchor=south west,inner sep=0] at (0,4.5){\includegraphics{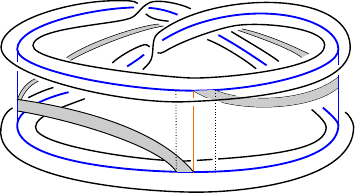}};
        \node[anchor=south west,inner sep=0] at (7,4.5){\includegraphics{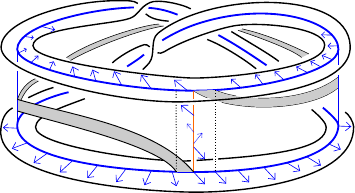}};
        \node[anchor=south west,inner sep=0] at (0,0){\includegraphics{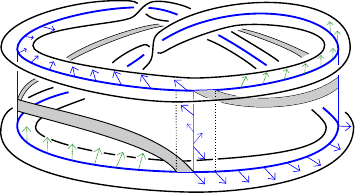}};
        \node[anchor=south west,inner sep=0] at (7,0){\includegraphics{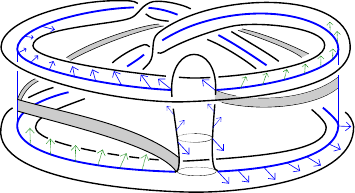}};
		\node at (3,4) {(a)};
		\node at (10,4) {(b)};
		\node at (3,-0.5) {(c)};
		\node at (10,-0.5) {(d)};						
		\node at (3.45,5.5) {$D$};
		\node at (4.5,5.5) {$S_-$};
		\node at (0.25,4.75) {$F$};
		\node at (7.25,0.25) {$F'$};
		\node at (1.5,5.4) {$S_+$};
		\node at (5.9,6.4) {$N$};
		\node at (0.1,6.25) {$B$};		
	\end{tikzpicture}
    \caption{(a) The surface $F$ is shown in black, and the M\"obius band $B$ in blue. Note this picture is entirely in $\R^3$. The necklace region $N$ is in grey, and splits $B$ into two components $S_+$ and $S_-$. The finger arc $D$ is in orange, and the width of the finger move is shown with dotted lines. (b) We show the section $s$ in blue. Note that while there is a rotation along $D$ there are no zeros of $s$ in the strip between the dotted lines. (c) The modified section $s'$. Note the section coincides with $s$ on $S_-$ and has been rotated (green) on $S_+$. (d) The section $s'$ on the Whitney disc $W_B$ formed after the band fibre finger move. By the construction of the 2-dimensional sub-bundle of the normal bundle of $D$ used to guide the finger move, the section $s'$ is tangent to $F'$ along the right edge of the finger (corresponding to the right dotted line in (c), where the finger contains part of a Whitney arc of $W_B$), and $s'$ is normal to $F'$  along the left edge of the finger.}\label{fig:necklace-on-display}
\end{figure}

Now do the finger move using $D\times S^1\times \{0\}$ according to this parametrisation, where $S^1$ is the unit circle in the $\R^2$ factor, along with a finger tip. The choices above imply that $s'|_{\partial W_B}$ is a Whitney framing, where $W_B$ denotes the new Whitney disc created according to \cref{construction:finger-move}. Specifically, let $F'$ denote the result of the finger move. By our choice of the $2$-dimensional sub-bundle for the finger move above, the section $s'$ is normal to $F'$ along half of $\partial W_B$, and tangent along the other half; see~\cref{fig:necklace-on-display}.

As previously mentioned, we need to check that the relative Euler number $e(B)$ in $\Theta_A(B)$ agrees with the twisting number $e(W_B)$ in $t_{\Alt}$.
The relative Euler number $e(B)$ is given by the number of zeros of the section $s$ on the interior of $B$. As mentioned before, this coincides with the number of zeros of the section $s'$.
 Since $s'|_{\partial W_B}$ is a Whitney framing, this further coincides with the twisting number $e(W_B)$ as desired, since we assumed there are no zeros of $s$ within the strip $\left([-1,-1+\varepsilon] \cup [1-\varepsilon,1]\right) \times [-1,1]\subseteq B$ used for the finger move.
\end{proof}

Next we prove \cref{lem:dependence_on_A}. Here is the statement for the convenience of the reader.

\begin{replemma}{lem:dependence_on_A}
Let $F\colon (\Sigma,\partial \Sigma)\looparrowright (M, \partial M)$ be as in \cref{convention}, with $\mu (F)=0$. If the $\Z/2$-valued intersection form $\lambda_{\Sigma}$ on $H_1(\Sigma;\Z/2)$ is nontrivial on $\partial\bands(F)$, then we can change $F$ by a regular homotopy to $F'$ such that there are convenient collections of Whitney discs $\W$ and $\W'$ for the double points of $F$ and $F'$ respectively, such that $t(F,\W)\neq t(F',\W')$.

Moreover, if $F$ has dual spheres and the $\Z/2$-valued intersection form $\lambda_{\Sigma^\twist}$ on $H_1(\Sigma^\twist;\Z/2)$ is nontrivial on $\partial\bands(F^\twist)$ then $\km(F)=0$.
\end{replemma}

\begin{proof}
We first prove the statement (without using dual spheres) about $t(F,\W)$ depending on the choice of $\W$ under our assumption.
By hypothesis $F$ is a generic immersion whose double points can be paired by a convenient collection $\W=\{W_\ell\}$ of Whitney discs (\cref{cor:vanishing-pairing}).
By hypothesis, $\lambda_{\Sigma }$ is nontrivial on $\partial\bands(F)$, meaning that there are bands $B_1$ and $B_2$ with boundary on $F(\Sigma )$ minus double points such that $\lambda_{\Sigma }(\partial B_1,\partial B_2)\neq 0\in \Z/2$. Here it is possible that $B_2$ is a parallel push-off of $B_1$. Using $B_i$ and \cref{construction:finger-move}, perform a finger move and obtain a new framed Whitney disc, calling the resulting convenient collection of Whitney discs $\W_i$, for $i=1,2$, and the resulting map $F_i$

If $t(F_i,\W_i)\neq t(F,\W)$ for some $i=1,2$, we can set $F'=F_i$ and $\W'=\W_i$. Otherwise, use \cref{lem:fiber-move} twice, for $B_1$ and $B_2$ simultaneously, and let $\W'$ denote the resulting convenient collection of Whitney discs for the resulting map $F'$. Then the change in $t(F,\W )$ is as before except that there is an additional contribution from the odd number of intersections between the boundary arcs for the new Whitney discs coming from $B_1$ and $B_2$. Specifically, removing these by pushing one Whitney arc off the end of the other (as part of \cref{construction:finger-move}) introduces an odd number of intersections between the Whitney discs and $F$. Therefore, $t(F',\W')\neq t(F,\W)$, as needed.

For the second statement, apply the above argument to the sub-collection $\W^\twist$ of $\W$ pairing the intersections within $F^\twist$. It follows that we may find $\W^\twist$ such that $t(F^\twist,\W^\twist)=0$, possibly after a regular homotopy of $F^\twist$.
 Then $\km(F)=0$ follows from \cref{lem:tau}.
\end{proof}

For the proof of \cref{lem:well-defined}, we will need the next four \cref{lem:representable,lem:quadratic,lem:embedded_boundary,lem:cutting}.

\begin{lemma}
	\label{lem:representable}
Let $F\colon (\Sigma,\partial \Sigma)\looparrowright (M, \partial M)$ be as in \cref{convention}. Every element of $H_2(M,\Sigma;\Z/2)$ can be represented by an immersion of some compact surface into $M$, with interior transverse to $F$, and with boundary generically immersed in $F(\Sigma)$ away from the double points.
\end{lemma}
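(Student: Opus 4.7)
The plan is bordism-theoretic: first represent $\alpha$ by a continuous map from a compact surface, and then put this map into the required general position using topological transversality.

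To obtain the initial representative, I would use the fact that the natural transformation from unoriented bordism to $\Z/2$-homology, $\mathcal{N}_2(X,A)\to H_2(X,A;\Z/2)$, is surjective for any CW pair $(X,A)$ by Thom's theorem (represent a cellular cycle by thickening simplices, then resolve singularities of the resulting pseudomanifold in dimension two). Applied to $(X,A)=(M,\Sigma)$, this produces a compact surface $N$ and a continuous map of pairs $g\colon(N,\partial N)\to(M,\Sigma)$ whose fundamental class represents~$\alpha$.

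Next I would put $g|_{\partial N}\colon\partial N\to\Sigma$ into general position by a small homotopy inside~$\Sigma$, making it a generic immersion $C\imra\Sigma$ whose image is disjoint from the discrete set of double points of $F$; this is the standard general position statement for a $1$-manifold mapping to a surface. Using the normal bundle of $F$ provided by \cref{thm:plumbed-normal-bundle}, I would extend this modification to a homotopy of $g$ on a collar $\partial N\times[0,1)\subseteq N$, arranged so that $g$ restricted to the collar lies in the total space of $\nu_F$ and meets $F(\Sigma)$ transversely exactly along $C$. With the map fixed near the boundary, I would apply \cref{prop:hom-gen-immersion}\,\eqref{item:hom-to-gen-imm-1} on the complement of a slightly smaller collar to homotope $g$ there (rel the collar) to a generic immersion into $M$. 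A further small perturbation of the interior, using topological transversality, then makes $\mathrm{Int}\,g(N)$ transverse to $F$.

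The main obstacle is the topological transversality step, but it is by now standard given the normal bundles from \cref{thm:plumbed-normal-bundle} (cf.~\cite{FQ}*{Section~9.5}): away from $\mathcal{S}(F)$ the image $F(\Sigma)$ is a locally flat codimension-two submanifold to which $g$ can be made transverse, while near each double point of $F$ one works in the plumbed local chart of \cref{thm:plumbed-normal-bundle}, where transversality can be arranged by an explicit perturbation in the two normal directions. Since every modification above is a homotopy, the homology class $\alpha$ is unchanged, and the resulting map has the properties claimed in the lemma.
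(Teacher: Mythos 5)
Your proof is correct and takes essentially the same two-step route as the paper: bordism representability of $\Z/2$-homology classes in degree two, followed by topological transversality. The only minor difference is in how the first step is justified — you invoke Thom's general representability theorem, whereas the paper gives the low-degree argument directly by observing that $\mathcal{N}_1 = 0$ forces the edge homomorphism $\mathcal{N}_2(M,\Sigma)\to H_2(M,\Sigma;\Z/2)$ of the Atiyah--Hirzebruch spectral sequence to be onto; these amount to the same thing, and your extra detail on implementing transversality near the double points of $F$ is a reasonable elaboration of what the paper treats as a black box.
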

\begin{proof}
Let $\mathcal{N}_k(M,\Sigma)$ denote the $k$-dimensional unoriented bordism group over $(M,\Sigma)$, and let $\mathcal{N}_k$ denote the $k$-dimensional unoriented bordism group over a point.
    Using topological transversality, it suffices to show that every element of $H_2(M,\Sigma;\Z/2)$ can be represented by a map $(S,\partial S)\to (M,\Sigma)$ for some surface $S$. To show this, it suffices to see that the edge homomorphism $\mathcal{N}_2(M,\Sigma)\to H_2(M,\Sigma;\Z/2)$ from the Atiyah--Hirzebruch spectral sequence is onto.

    Recall that the $\mathcal{N}_0$ is isomorphic to $\Z/2$ while the $\mathcal{N}_1$ vanishes. It follows that in the Atiyah--Hirzebruch spectral sequence with $E_2$-term $H_p(M,\Sigma ;\mathcal{N}_q)$ and  converging to $\mathcal{N}_{p+q}(M,\Sigma)$, there is no nontrivial differential going out of $H_2(M,\Sigma;\mathcal{N}_0)\cong H_2(M,\Sigma;\Z/2)$; such a differential would have codomain $H_0(M,\Sigma;\mathcal{N}_1)=0$. Thus the edge homomorphism $\mathcal{N}_2(M,\Sigma)\to H_2(M,\Sigma;\Z/2)$ is onto, as desired.
\end{proof}

\begin{lemma}
	\label{lem:quadratic}
Let $F\colon (\Sigma,\partial \Sigma)\looparrowright (M, \partial M)$ be as in \cref{convention}. Suppose that $\mu (F)=0$ and let $A$ be a choice of Whitney arcs pairing the double points of $F$.
Then the function $\Theta_A$ is quadratic with respect to the $\Z/2$-valued intersection form $\lambda_\Sigma$. That is, let $S$ and $S'$ be compact surfaces, with generic immersions of pairs $(S,\partial S) \imra (M,\Sigma)$ and $(S',\partial S') \imra (M,\Sigma)$ such that $\partial S$ and $\partial S'$ intersect $A$ and each other transversely, and are such that $w_1(\Sigma)$ is trivial on every component of $\partial S$ and $\partial S'$. Then we have
	\[
	\Theta_A(S\cup S')=\Theta_A(S)+\Theta_A(S')+\lambda_\Sigma(\partial S,\partial S').
	\]
\end{lemma}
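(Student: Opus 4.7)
The plan is to interpret $S \cup S'$ as the disjoint union $S \sqcup S'$, viewed as a single generic immersion into $M$ with boundary $C \sqcup C' \subseteq \Sigma$, and then to evaluate each of the four summands in the definition of $\Theta_A$ separately. After an initial small isotopy of $S$ and $S'$ (using topological transversality) I would assume that $C$, $C'$, $A$, $F$, and the interiors of $S$ and $S'$ are all in generic position with respect to one another.

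The key observation is in the first term $\mu_\Sigma$. As a generically immersed $1$-manifold in $\Sigma$, the boundary $C \sqcup C'$ has three kinds of self-intersections: self-intersections of $C$, self-intersections of $C'$, and points of $C \pitchfork C'$ in $\Sigma$. The last of these contributes exactly $\lambda_\Sigma(C,C') \pmod 2$, giving
\[
\mu_\Sigma(C \sqcup C') \equiv \mu_\Sigma(C) + \mu_\Sigma(C') + \lambda_\Sigma(C,C') \pmod 2.
\]
This is the only one of the four summands that is not straightforwardly additive, and it is the source of the quadratic refinement in the statement.

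The remaining three summands are plainly additive. The counts $|\partial(S \sqcup S') \pitchfork A|$ and $|\Int(S \sqcup S') \pitchfork F|$ split as the sum over the two components since we count intersections with the fixed objects $A$ and $F$ and not mutual intersections of $S$ with $S'$. For the Euler number $e(S \sqcup S')$, the normal bundle of $S \sqcup S'$ in $M$ is the disjoint union of $\nu_S^M$ and $\nu_{S'}^M$, so I may take the section used to define $e(S \sqcup S')$ to be the disjoint union of the sections used to define $e(S)$ and $e(S')$; the zero counts then add modulo $2$. Summing the four contributions and rearranging gives the desired identity. I do not expect any serious obstacle here; the main point is simply to be careful that "$S \cup S'$" is treated as a single immersion of a surface and that the only nonadditivity comes from the $\mu_\Sigma$ term.
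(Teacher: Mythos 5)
Your argument is correct and is essentially the same as the paper's: the paper also observes that $e$ is defined component-wise, $|{-}\pitchfork A|$ and $|\Int({-})\pitchfork F|$ are additive (linear), and the quadratic correction comes solely from $\mu_\Sigma(C\cup C')=\mu_\Sigma(C)+\mu_\Sigma(C')+\lambda_\Sigma(C,C')$. Your slightly more detailed justification of the $\mu_\Sigma$ identity is sound and matches the spirit of the paper's brief proof.
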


\begin{proof}
	The term $e(S)$ in \cref{def:thetaA} is defined component-wise and the terms $\lvert \partial S\pitchfork A \rvert$ and $\lvert \Int S\pitchfork F \rvert$ are linear in $\partial S$ and $S$, respectively. Hence the only term that is not linear in $S$ is $\mu_F(\partial S)$. This term is also quadratic in the sense that
	\[
	\mu_\Sigma(\partial S\cup \partial S')=\mu_\Sigma(\partial S)+\mu_\Sigma(\partial S')+\lambda_\Sigma(\partial S,\partial S'),
	\]
	which proves the lemma.
\end{proof}

\begin{lemma}
	\label{lem:embedded_boundary}
Let $F\colon (\Sigma,\partial \Sigma)\looparrowright (M, \partial M)$ be as in \cref{convention}. Suppose that $\mu (F)=0$ and let $A$ be a choice of Whitney arcs pairing the double points of $F$. Let $S$ be a compact surface, with a generic immersion of pairs $(S,\partial S) \imra (M,\Sigma)$, such that $\partial S$ is transverse to $A$ and  $w_1(\Sigma)$ is trivial on every component of $\partial S$. Then there is another such surface $S'$ with a generic immersion of pairs $(S',\partial S') \imra (M,\Sigma)$ with $\partial S'$ transverse to $A$ such that:
	\begin{enumerate}
		\item $[S]=[S']\in H_2(M,\Sigma;\Z/2)$;
		\item $\Theta_A(S)=\Theta_A(S')$; and
		\item $\partial S'$ is embedded in $\Sigma$.
	\end{enumerate}
\end{lemma}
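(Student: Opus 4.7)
The plan is to induct on the number $n$ of self-intersections of $C$ in $\Sigma$. The base case $n = 0$ is immediate with $S' := S$. For the inductive step, given a self-intersection $p$ of $C$, I will construct an intermediate surface $\wt S$ satisfying the same hypotheses as $S$, whose boundary $\wt C$ has $n - 1$ self-intersections, with $[\wt S] = [S]$ in $H_2(M,\Sigma;\Z/2)$ and $\Theta_A(\wt S) = \Theta_A(S)$; iterating then produces $(S',C')$.

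Choose a small embedded disc $D_p \subseteq \Sigma$ around $p$ disjoint from $A$, from the other self-intersections of $C$, and from the image of $\Int S$ in $\Sigma$. Within $D_p$ the curve $C$ consists of two arcs crossing transversely at $p$, and there are two possible smoothings with associated arcs $\gamma_1, \gamma_2 \subseteq D_p$. The modification of $S$ to $\wt S$ is a boundary $1$-handle attachment: take small sub-arcs $\sigma_1, \sigma_2$ of $\partial S$ mapping to small neighbourhoods of $p$ on the two strands of $C$, and attach a $1$-handle $H \cong D^1 \times D^1$ to $S$ along $\sigma_1 \cup \sigma_2$, with core a short arc in $\Sigma$ near $p$ but avoiding $p$, pushed off $\Sigma$ into $M \sm F(\Sigma)$, and with free sides realising $\gamma_1, \gamma_2$. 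Then $\wt S := S \cup H$ has $\partial \wt S = \wt C = (C \sm (\sigma_1 \cup \sigma_2)) \cup (\gamma_1 \cup \gamma_2)$, agreeing with $C$ outside $D_p$ and with the chosen smoothing inside. I pick the smoothing so that $w_1(\Sigma)$ remains trivial on every component of $\wt C$; this is always possible because the total $w_1$-evaluation is preserved, so if one smoothing produces new components with nontrivial $w_1$, the other produces components with trivial $w_1$.

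Condition (1) follows because the thin region in $M$ between the interior of $H$ and the corresponding rectangle in $\Sigma$ is a $3$-ball, making $H$ relatively homologous to a $2$-chain in $\Sigma$, which vanishes in $H_2(M,\Sigma;\Z/2)$; hence $[\wt S] = [S]$. Condition (3) follows by iteration, since $\wt C$ has one fewer self-intersection than $C$ and $D_p$ is chosen small enough that no new crossings arise. The main obstacle is condition (2). Among the four summands of $\Theta_A$, the count $\mu_\Sigma(\partial S)$ decreases by $1 \pmod 2$, while $|\partial S \pitchfork A|$ and $|\Int S \pitchfork F|$ are preserved by choosing $D_p$ disjoint from $A$ and by pushing the $1$-handle off $\Sigma$ generically away from $F(\Sigma)$. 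To preserve $\Theta_A$ the relative Euler number $e(S)$ must therefore change by $1 \pmod 2$. This is the crux: the surface framing on $\partial H \cap \wt C$, given by the normal to $\wt C$ in $\Sigma$, rotates by an odd multiple of $\pi$ as one traverses $\partial H$, because the two strands of $C$ at $p$ carry perpendicular $\Sigma$-normals, so extending this framing across the disc $H$ incurs an obstruction of $1 \pmod 2$. I would verify this framing computation in a standard local chart around $p$ in which $\Sigma$ is modelled by $\R^2 \subseteq \R^4$ and the strands of $C$ are perpendicular lines.
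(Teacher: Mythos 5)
Your proposal is essentially the paper's argument: both resolve each self-crossing of $C$ by attaching a small piece of surface (your boundary $1$-handle is topologically the same as the paper's "twisted disc" glued along two aligned boundary arcs and pushed off $F$), and both reduce the preservation of $\Theta_A$ to the claim that the local modification changes the mod-$2$ Euler number by $1$ per crossing. The paper verifies this last point by exhibiting the local piece as a twisted band with $e \equiv 1$ via a linking picture (its Figure~\ref{fig:twisted-band}), whereas you sketch a framing-rotation argument and defer the chart computation — this would need to be carried out, since the rotation number depends on the framing along the pushed-off seams as well as along $\wt C$, not just on the perpendicularity at $p$. One point in your favour: you explicitly check that the smoothing can be chosen so that $w_1(\Sigma)$ stays trivial on every component of $\wt C$ — a condition implicitly needed for $\Theta_A(S')$ to be defined, and one the paper's proof does not address — and your argument for this (one of the two smoothings always works, since the unsmoothed component is $w_1$-trivial and the "split" smoothing produces two components with equal $w_1$-evaluation) is correct.
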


\begin{proof}
To start, pick a section $\gamma_S$ of the normal bundle $\nu_S^M$ which on $\partial S$ is nowhere vanishing and lies in $\nu_{\partial S}^F$ as in the definition of $\Theta_A(S)$.

\begin{figure}[htb]
    \centering
   \begin{tikzpicture}
        \node[anchor=south west,inner sep=0] at (0,0){	               \includegraphics{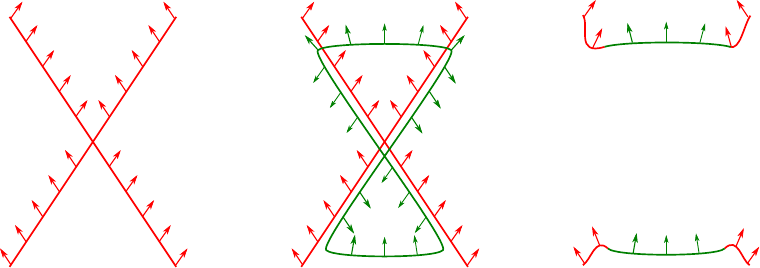}};
		\node at (0.5,0) {$\partial S$};
		\node at (5.3,0) {$\partial S$};
		\node at (6.4,-0.1) {$\partial D$};
		\node at (11.2,3.5) {$\partial S'$};
	\end{tikzpicture}
    \caption{Adding a disc $D$ to $S$ to remove a self-intersection of $\partial S$. Left: The neighbourhood of a self-intersection of $\partial S$ before adding the disc $D$. The section $\gamma_S$ is shown along $\partial S$. Middle: the boundary of the disc~$D$. The section $\gamma_D$ is shown along $\partial D$. Right: after the  modification cf.\ \cref{fig:pushoff}.}
\label{fig:remove-intersection}
\end{figure}

\begin{figure}[htb]
    \centering
  \begin{tikzpicture}
        \node[anchor=south west,inner sep=0] at (0,0){	            \includegraphics{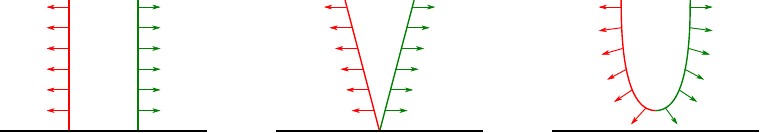}};
		\node at (0.2,0.2) {$F$};
		\node at (5,0.2) {$F$};
		\node at (10,0.2) {$F$};
		\node at (0.6,1.2) {$S$};
		\node at (2.9,1.2) {$D$};
		\node at (9.8,1.2) {$S'$};
	\end{tikzpicture}
    \caption{Glue $D$ to $S$ along the aligned parts of the boundaries and push this part of the boundary off $F$. }
\label{fig:pushoff}
\end{figure}

The idea of the proof is to remove all intersections of $\partial S$ by locally adding a twisted disc $D$ as indicated in \cref{fig:remove-intersection}. More precisely, we add these discs $D$ such that the interiors are disjoint from the interior of $F$ and the boundary is disjoint from $A$. Then pick a section $\gamma_D$ of $\nu_D^M$ such that, along the aligned (i.e.\ parallel) parts of the boundaries, $\gamma_D$ and $\gamma_S$ are opposite. Glue $D$ to $S$ along the aligned parts of the boundaries and push this part of the boundary off $F$ as indicated in \cref{fig:pushoff}. Each of these local twisted discs has mod 2 Euler number $1$, as can be seen from the nontrivial linking in \cref{fig:twisted-band}. Thus the resulting surface $S'$ has embedded boundary and the mod 2 Euler number of $S'$ differs from that of $S$ by the number of intersections of $\partial S$ modulo two, i.e.\  $\mu_\Sigma(\partial S)$. Since we have neither changed the number of intersections of the interior with $F$ nor the number of intersections of the boundary with $A$, we have $\Theta_A(S)=\Theta_A(S')\in \Z/2$. As the local discs are trivial in $H_2(M,\Sigma;\Z/2)$, we furthermore have $[S]=[S']\in H_2(M,\Sigma;\Z/2)$ as needed.
\end{proof}

\begin{figure}[htb]
    \centering
     \begin{tikzpicture}
        \node[anchor=south west,inner sep=0] at (0,0){\includegraphics[width=4cm]{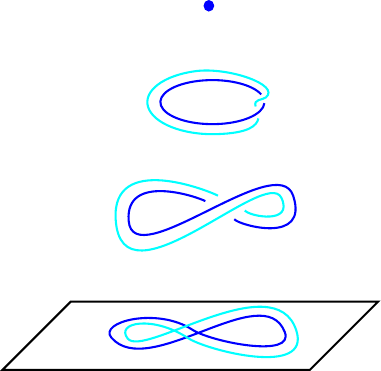}};
		\node at (3.4,0.5) {$F$};
	\end{tikzpicture}
    \caption{A twisted band with Euler number $+1$ in a movie description. Bottom: an immersed figure-eight curve (blue) is shown lying on the immersed surface $F(\Sigma)$ (black) away from the double points. A framing on the normal bundle on the boundary of the band is shown in light blue. Moving upward/forward in time, we see a simple closed curve shrinking to a point. The push-off corresponding to the framing induced by $\Sigma$ is shown in light blue. For the twisted band with Euler number $-1$, we use the other resolution.}
\label{fig:twisted-band}
\end{figure}

\begin{lemma}
	\label{lem:cutting}
Let $F\colon (\Sigma,\partial \Sigma)\looparrowright (M, \partial M)$ be as in \cref{convention}. 	Let $Z$ be a disjoint union of embedded circles in $\Sigma$.  Let $\Sigma \mid Z$ denote $\Sigma$ cut along $Z$, i.e.\ the completion of $\Sigma \sm Z$ to a compact manifold with boundary.
Let $\cF=\{\Sigma_i\}$ be the connected components of  $\Sigma \mid Z$ and suppose that $[Z]= 0 \in H_1(\Sigma;\Z/2)$.
	Then we can pick a subset $\cF'\subseteq \cF$ such that each component of $Z$ appears exactly once as a connected component of the boundary of precisely one $\Sigma_i\in\cF'$.
\end{lemma}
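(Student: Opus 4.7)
My plan is to produce $\cF'$ by an algebraic application of the long exact sequence of the pair $(\Sigma, C)$ with $\Z/2$-coefficients. First I would reduce to the case $\partial\Sigma = \emptyset$: cap off each component of $\partial\Sigma$ with a disc to form a closed $\widehat\Sigma$; by functoriality the hypothesis $[C]=0$ persists, and a valid subset of components of $\widehat\Sigma \mid C$ restricts to one of $\Sigma \mid C$ after discarding the cap discs, which contribute no boundary circles over $C$.

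For closed $\Sigma$, the relevant segment of the long exact sequence is
\[
H_2(\Sigma, C; \Z/2) \xrightarrow{\ \delta\ } H_1(C; \Z/2) \xrightarrow{\ i_\ast\ } H_1(\Sigma; \Z/2),
\]
and the hypothesis $[C] = 0$ is precisely the statement that $\sum_j [C_j] \in \ker i_\ast = \Ima \delta$. Next I would identify $H_2(\Sigma, C; \Z/2)$ with $\Z/2\langle \cF\rangle$ by fixing a CW-structure on $\Sigma$ containing $C$ as a $1$-subcomplex: any $\Z/2$-relative $2$-cycle must be locally constant on each component of $\Sigma \mid C$, so the assignment $\Sigma_i \mapsto [\Sigma_i]$ is an isomorphism. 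Under this identification, $\delta([\Sigma_i])$ is the pushforward of $[\partial \Sigma_i]$ to $H_1(C; \Z/2)$, in which each boundary circle contributes $[C_j]$ when it maps homeomorphically onto a two-sided $C_j$, and $2[C_j] = 0$ when it double-covers a one-sided $C_j$.

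To conclude, I would choose any $\cF' \subseteq \cF$ with $\delta([\cF']) = \sum_j [C_j]$, which exists by exactness. Examining the coefficient of each $[C_j]$ in $\delta([\cF'])$: if $C_j$ were one-sided, or had both of its sides in one component of $\Sigma \mid C$, this coefficient would always be zero rather than $1$; these cases are therefore ruled out by the hypothesis. Hence each $C_j$ is two-sided with its two preimage boundary circles lying in distinct components $\Sigma_a \neq \Sigma_b$, and the coefficient equals $|\{\Sigma_a, \Sigma_b\} \cap \cF'| \pmod 2$, forcing exactly one of $\Sigma_a, \Sigma_b$ to lie in $\cF'$. This is exactly the statement that $C_j$ appears once as a boundary circle of a unique $\Sigma_i \in \cF'$. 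The main technical point is the chain-level identification of the relative homology with $\Z/2\langle \cF\rangle$ and of $\delta$ with the geometric boundary; the one-sided/two-sided bookkeeping is where the homological hypothesis gets translated into the combinatorial conclusion.
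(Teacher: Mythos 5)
Your argument is correct, but it is a genuinely different route from the paper's. The paper proves the lemma by an explicit $2$-colouring: it fixes a basepoint $x$ and assigns to each component $\Sigma_i$ the parity $p(\Sigma_i)$ of the intersection of a path from $x$ into $\Sigma_i$ with $C$, checks that $[C]=0$ makes this well defined, and takes $\cF'=p^{-1}(0)$; the fact that each $C_j$ separates two components of different colour is handled by a separate preliminary observation that every component of $C$ must be two-sided with its two sides in distinct pieces. You instead run the long exact sequence of the pair $(\Sigma,C)$ with $\Z/2$-coefficients, identify $H_2(\Sigma,C;\Z/2)$ with $\Z/2\langle\cF\rangle$, and extract $\cF'$ from exactness as any preimage of $\sum_j[C_j]$ under $\delta$. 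The two proofs are Poincar\'e-dual shadows of the same fact, but yours has the advantage that the degenerate cases (a one-sided $C_j$, or a $C_j$ with both sides in one component) are excluded uniformly by inspecting the coefficient of $[C_j]$ in $\delta$, rather than by a separate argument; the paper's has the advantages of being self-contained, of producing $\cF'$ explicitly, and of working directly for surfaces with boundary. Your reduction to the closed case by capping off $\partial\Sigma$ is genuinely needed for your identification $H_2(\Sigma,C;\Z/2)\cong\Z/2\langle\cF\rangle$ (a component of $\Sigma\mid C$ meeting $\partial\Sigma$ contributes nothing to $H_2(\Sigma,C;\Z/2)$), and it is valid since the caps are absorbed into existing components of $\Sigma\mid C$ without creating or merging any and without adding boundary circles over $C$.
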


\begin{proof}
Without loss of generality, assume that $\Sigma$ is connected.
	Considering the entire collection $\cF=\{\Sigma_i\}$, every component of $Z$ would appear as the boundary of precisely two of the $\Sigma_i$, since otherwise $Z$ would be nontrivial in $H_1(\Sigma;\Z/2)$.  To see this note that $Z$ can contain homologically essential curves in $H_1(\Sigma;\Z/2)$, provided they cancel. However none of these can be orientation-reversing curves, since $Z$ is embedded.
	
	The idea of the proof is to take ``half'' of the components of $\cF$. Let $x \in \Sigma\mid  Z$ be an arbitrary basepoint away from $Z$.  For each $\Sigma_i$, define $p(\Sigma_i) \in \Z/2$ as follows. Pick a point $y\in \Int \Sigma_i$ and a path $w$ in $\Sigma$ from $x$ to $y$ which is transverse to $Z$. Define $p(\Sigma_i)$ as the mod 2 intersection number of~$w$ and~$Z$.
	
	We show that $p(\Sigma_i)$ is independent of the choices of $w$ and $y$.
	If $w'$ is another path from $x$ to $y$, then the concatenation $w^{-1}\cdot w'$ is a loop in $\Sigma$ and we have
	\[\lvert (w^{-1}\cdot w')\pitchfork Z\rvert
	=\lambda_\Sigma([w^{-1}\cdot w'],[Z])
	=\lambda_\Sigma([w^{-1}\cdot w'],0)=0.\]
	So $p(\Sigma_i)$ does not depend on the choice of $w$.
	Also, since each $\Sigma_i$ is connected, $p(\Sigma_i)$ does not depend on~$y$. To see this let $y'\in \Int\Sigma_i$, and choose a path $z$ from $y$ to $y'$ that lies in $\Int{\Sigma_i}$. Let $w'$ be a path from $x$ to $y'$, which is further transverse to $Z$. Then \[\lvert w \pitchfork Z \rvert = \lvert (w\cdot z) \pitchfork Z \rvert = \lvert w' \pitchfork Z \rvert.\]
	The first equation uses that $z \subseteq \Sigma_i$ and the second uses independence of the choice of $w$.
	Hence $p(\Sigma_i) \in \Z/2$ is well defined as desired.
	
	Now let $\cF'$ consist of all the components $\Sigma_i$ for which  $p(\Sigma_i)=0$. This subset is the one we seek, since for a fixed component $Z_j$ of $Z$, the two components of $\cF$ containing a cut-open copy of $Z_j$ have different values of $p$. This completes the proof of the lemma.
\end{proof}

We are now ready for the proof of \cref{lem:well-defined}.

\begin{replemma}{lem:well-defined}
Let $F\colon (\Sigma,\partial \Sigma)\looparrowright (M, \partial M)$ be as in \cref{convention}, with $\mu (F)=0$. Let $A$ be a choice of Whitney arcs pairing the double points of $F$.
	\begin{enumerate}[(i)]
	    \item\label{repitem-i-lem-welldefined} 	
Let $S$ be a compact surface, with a generic immersion of pairs $(S,\partial S) \imra (M,\Sigma)$, such that $\partial S$ is transverse to $A$ and $w_1(\Sigma)$ is trivial on every component of $\partial S$.
Then $\Theta_A(S) \in \Z/2$ depends only on the homology class of $S$ in $H_2(M,\Sigma;\Z/2)$.
	\item\label{repitem-ii-lem-welldefined} 	Let $B$ be an annulus, with a generic immersion of pairs $(B,\partial B) \imra (M,\Sigma)$, such that $\partial B$ is transverse to $A$ and $w_1(\Sigma)$ is nontrivial on both components of $\partial B$.  Pick an embedded arc $D$ in $B$ connecting the components of $\partial B$ and disjoint from all double points. Then $\Theta_A(B,D) \in \Z/2$ depends only on the homology class of $B$ in $H_2(M,\Sigma;\Z/2)$.  In particular, $\Theta_A(B,D)$ does not depend on $D$, so we write~$\Theta_A(B)$.
	\item\label{repitem-iii-lem-welldefined} Let $S$ be a surface as in \eqref{item-i-lem-welldefined} and let $B$ be an annulus as in \eqref{item-ii-lem-welldefined} such that $[S]=[B] \in H_2(M,\Sigma;\Z/2)$. Then $\Theta_A(S) = \Theta_A(B) \in \Z/2$.
	\item\label{repitem-iv-lem-welldefined} 	If  $\lambda_{\Sigma}|_{\partial \bands(F)}=0$, the restriction of $\Theta_A$
	to $\bands(F)$ is independent of the choice of $A$, giving a well defined map $\Theta\colon \bands(F)\to \Z/2$.
	\end{enumerate}
\end{replemma}

\begin{proof}
	To prove~\eqref{repitem-i-lem-welldefined}, assume that $S$ and $S'$ are immersed compact surfaces, with $w_1(\Sigma)$ trivial on each of the connected components of the boundaries, representing the same element in $H_2(M,\Sigma;\Z/2)$. Modulo isotopy we can assume that $S$ and $S'$ intersect transversely in their interiors in $M$, and their boundaries intersect transversely on $F$. In particular, their boundaries $\partial S$ and $\partial S'$ intersect in an even number of points. Hence $\Theta_A(S\cup S')=\Theta_A(S)+\Theta_A(S')$ by \cref{lem:quadratic}. Thus it suffices to show that $\Theta_A(S)=0$ for a compact surface $S$ such that $0=[S]\in H_2(M,\Sigma;\Z/2)$ and $w_1(\Sigma)$ is trivial on $\partial S$. In particular, we know by \cref{lem:representable} that every element of $H_2(M,\Sigma;\Z/2)$, in particular the trivial class, can be represented by an immersed surface $S$.

    By \cref{lem:embedded_boundary}, we can assume that $\partial S$ is embedded. As $0=[S]\in H_2(M,\Sigma;\Z/2)$, we also have that $0=[\partial S]\in H_1(\Sigma;\Z/2)$ since $S$ maps to $\partial S$ under the map $H_2(M,\Sigma;\Z/2)\to H_1(\Sigma;\Z/2)$.   Pick a set $\cF'$ of components of $\Sigma \mid \partial S$ as in \cref{lem:cutting}.  Gluing the $F_i\in \cF'$ to $S$ along the common boundary, we obtain a closed surface $N$. First note that $N$ represents the same class as $S$ in $H_2(M,\Sigma;\Z/2)$ since it only differs by a subset of $F(\Sigma)$. Hence $0=[N]\in H_2(M,\Sigma;\Z/2)$. As $N$ is closed it also defines an element in $H_2(M;\Z/2)$. Note that we have the pair sequence
	\[\cdots\ra H_2(\Sigma;\Z/2)\xrightarrow{\phantom{5}F\phantom{5}} H_2(M;\Z/2)\ra H_2(M,\Sigma;\Z/2)\ra \cdots\]
	Hence $N$ represents the same class in $H_2(M;\Z/2)$ as a subsurface $\Sigma'$ of $\Sigma$.
	Let $\lambda_M$ denote the $\Z/2$-valued intersection form on $H_2(M;\Z/2)$. By hypothesis, we have $\lambda_M(f_j,f_{j'})=0$ for any two connected components $f_j,f_{j'}$ of $F$. Thus
\begin{equation}\label{eqn:lambdaNF=lambdaNN}
  \lambda_M([N],[F])+\lambda_M([N],[N])=0.
\end{equation} We finish the proof of~\eqref{repitem-i-lem-welldefined} by showing that $\Theta_A(S)=\lambda_M([N],[F])+\lambda_M([N],[N])$.
	
	Recall that we were able to assume that $\partial S$ is embedded in $F(\Sigma)$ away from the double points and that $\lambda_M([N],[N])=e(\nu N)\mod{2}$. We claim that this in turn agrees with $e(S)+\sum_{F_i\in\cF'}e(F_i)$. Here we define $e(F_i)$ as follows. We used $F$ to define a nowhere vanishing section of $\nu_S^M|_{\partial S}$. Since $\nu_S^M|_{\partial S}$ is two dimensional, we can pick a linearly independent nonvanishing section. This can be equivalently used for the definition of $e(S)$. But this new section now can also be used to define $e(F_i)$. Combining these vector fields that are transverse to the zero section defines a vector field on the normal bundle of $N$, and hence computes the Euler number of the normal bundle of $N$. Thus we have shown
	\[\lambda_M([N],[N])=e(S)+\sum_{F_i\in\cF'}e(F_i).\]	
	Now consider $\lambda_M([N],[F])$. We can use the vector field used for defining $e(F_i)$ to make $N$ and $F$ transverse. Then $\lambda_M([N],[F])$ is given by the sum of $\lvert S\pitchfork F \rvert$, $\sum_{F_i\in\cF'} e(F_i)$, and the self-intersection points of $F$ contained in the $F_i\in\cF'$. As the self-intersection points of $F$ are paired by the Whitney arcs $A$, we have that modulo two the number of self-intersection points of $F$ contained inside $F_i$ agrees with $\lvert A\pitchfork \partial F_i \rvert$. Since the boundary of the $F_i$ is precisely $\partial S$, we have
	\[
	\lambda_M([N],[F])= \lvert \Int S\pitchfork F \rvert + \sum_{F_i\in\cF'}e(F_i) + \lvert A \pitchfork \partial S \rvert.
	\]	
Therefore
\begin{align*}
  \lambda_M([N],[N]) + &\lambda_M([N],[F])\\
  = &e(S) + \sum_{F_i\in\cF'} e(F_i) + \lvert \Int  S\pitchfork F \rvert  + \sum_{F_i\in\cF'}e(F_i) + \lvert A \pitchfork \partial S \rvert \\
  = &e(S)  + \lvert \Int  S\pitchfork F \rvert  +  \lvert A \pitchfork \partial S \rvert\\
   = &\Theta_A(S) \in \Z/2,\end{align*}
where the last equality holds because $\mu_S(\partial S)=0$.
Combine this with \eqref{eqn:lambdaNF=lambdaNN} to obtain
	\[\Theta_A(S)=\lambda_M([N],[N])+\lambda_M([N],[F])=0.\]
	This completes the proof of~\eqref{repitem-i-lem-welldefined}.
	
Before proving~\eqref{repitem-ii-lem-welldefined} and~\eqref{repitem-iii-lem-welldefined}, we introduce a general construction. Let $B$ be an annulus as in~\eqref{repitem-ii-lem-welldefined} with an embedded arc $D$ in $B$ connecting its two boundary components. As in \cref{rem:absent-finger}, add a tube to $F(\Sigma)$ along the arc $D$. Let $d, d'$ denote the two discs removed from $F$ when the tube is added. Adding the tube changes $F$ to some $F'$, an immersion of a surface $\Sigma'$, and changes $B$ to a disc $\Delta$. As before, observe that $\partial \Delta$ is an orientation-preserving curve in $\Sigma'$ and $A$ is now a collection of Whitney arcs pairing the double points of $F'$. By construction, we see that $\Theta_A(B,D)=\Theta_A(\Delta)$.

Moreover, suppose there is either some immersed compact surface $S$ in $M$ as in~\eqref{repitem-i-lem-welldefined} or some immersed annulus $B'$ as in~\eqref{repitem-ii-lem-welldefined}, with an embedded arc $D'$ on $B'$ connecting its two boundary components, where possibly $B=B'$. We may choose the tube in the above construction thin enough so that $\Theta_A(S)$ and $\Theta_A(B',D')$ remain unchanged. In particular, this means we assume, after a small local isotopy, that the discs $d$ and $d'$ do not intersect the boundaries of $S$ and $B'$, so both represent classes in $H_2(M,\Sigma';\Z/2)$.
We have the following claim.

\begin{claim}\label{lem:learning-to-lift}
 If $[B]=[S]\in H_2(M,\Sigma;\Z/2)$, then  either $[\Delta]=[S]\in H_2(M, \Sigma';\Z/2)$ or $[\Delta]=[S]+[d]\in H_2(M, \Sigma';\Z/2)$.
Similarly,  if $[B]=[B']\in H_2(M,\Sigma;\Z/2)$ then either $[\Delta]=[B']\in H_2(M, \Sigma';\Z/2)$ or $[\Delta]=[B']+[d]\in H_2(M, \Sigma';\Z/2)$.
\end{claim}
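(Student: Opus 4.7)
The plan is to factor the comparison of $[\Delta]$ and $[S]$ in $H_2(M,\Sigma';\Z/2)$ through the larger quotient $H_2(M,\Sigma\cup T;\Z/2)$, where $T$ denotes the tube attached to $F(\Sigma)$ along $D$. At the level of $\Z/2$-chains in $M$, one has the tautological identity $B=\Delta+R$, where $R$ is the thin rectangular neighborhood of $D$ inside $B$ whose removal produces $\Delta$. The boundary $\partial R$ consists of the two parallel copies $D_+,D_-$ of $D$ (which lie on $T$ after the surgery) together with two short arcs in the interiors of $d$ and $d'$, so $\partial R\subseteq \Sigma\cup T$ and $R$ represents a class in $H_2(M,\Sigma\cup T;\Z/2)$.

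I would then establish the vanishing $[R]=0\in H_2(M,\Sigma\cup T;\Z/2)$ by exhibiting a $2$-chain $V$ in $\Sigma\cup T$ with $\partial V=\partial R$ (taking a rectangle on $T$ together with sub-discs of $d,d'$ bounded by the short arcs), so that $R+V$ is a topologically embedded $2$-sphere lying inside a tubular neighbourhood $\nu(D)\cong D^4$ of $D$ and is therefore null-homologous in $H_2(M;\Z/2)$. Granted this, naturality combined with the hypothesis $[B]=[S]\in H_2(M,\Sigma;\Z/2)$ shows $[\Delta]=[B]=[S]\in H_2(M,\Sigma\cup T;\Z/2)$, so $[\Delta]-[S]$ lies in the kernel of the map $j_\ast\colon H_2(M,\Sigma';\Z/2)\to H_2(M,\Sigma\cup T;\Z/2)$. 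By the long exact sequence of the triple $(M,\Sigma\cup T,\Sigma')$ and excision, this kernel is the image of $H_2(\Sigma\cup T,\Sigma';\Z/2)\cong H_2(d\sqcup d',\partial d\sqcup \partial d';\Z/2)\cong \Z/2\oplus \Z/2$, generated by $[d]$ and $[d']$. Hence $[\Delta]-[S]=a[d]+b[d']$ for some $a,b\in\Z/2$, giving four \emph{a priori} possibilities.

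The main obstacle is to reduce these four possibilities to the two stated in the claim by proving that $[d]=[d']$ in $H_2(M,\Sigma';\Z/2)$. I would handle this with the same 4-ball argument: the closed surface $\mathcal{S}:=d\cup T\cup d'$ is a topological $2$-sphere lying inside $\nu(D)\cong D^4$ and therefore represents zero in $H_2(M;\Z/2)$, and its image under $H_2(M;\Z/2)\to H_2(M,\Sigma';\Z/2)$ equals $[d]+[d']$ because $T\subseteq \Sigma'$. Combining this with the kernel description, $[\Delta]-[S]=(a+b)[d]\in\{0,[d]\}$, as required. The analogous statement with $B'$ in place of $S$ follows verbatim, since the preliminary thin-tube isotopy already arranged $\partial B'\cap(d\cup d')=\emptyset$ so that $B'$ represents a class in $H_2(M,\Sigma';\Z/2)$.
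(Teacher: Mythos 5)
Your proof is correct and reaches the same conclusion, but it is dual to the paper's argument in an interesting way. The paper takes the \emph{intersection} $\Sigma\cap\Sigma'=\Sigma\sm(\mathring d\cup\mathring d')$ as its intermediate subspace: the long exact sequence of the triple gives a surjection $j\colon H_2(M,\Sigma\sm(\mathring d\cup\mathring d'))\to H_2(M,\Sigma)$ with kernel generated by $[d],[d']$, and the crux is to build an explicit lift $\widetilde B$ of $[\Delta]$ to $H_2(M,\Sigma\sm(\mathring d\cup\mathring d'))$ by gluing a strip along the tube to $\Delta$; one then pushes everything down into $H_2(M,\Sigma')$, where the paper asserts that $[d]$ and $[d']$ become identified. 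You instead take the \emph{union} $\Sigma\cup T$ as the intermediate subspace, which receives compatible maps from both $H_2(M,\Sigma)$ and $H_2(M,\Sigma')$; the chain-level identity $B=\Delta+R$ (your $R$ playing the role of the paper's glued strip) plus the null-homology of $R$ in $(M,\Sigma\cup T)$ shows $[\Delta]=[B]$ there, and the long exact sequence of the triple $(M,\Sigma\cup T,\Sigma')$ then pins the ambiguity down to $\langle[d],[d']\rangle$. A genuine advantage of your version is that you actually \emph{prove} $[d]=[d']\in H_2(M,\Sigma';\Z/2)$, via the null-homologous sphere $d\cup T\cup d'$ inside a 4-ball neighbourhood of $D$, whereas the paper simply states this identification without justification; your 4-ball argument also re-proves $[R]=0$, so the whole argument runs at a uniform level of rigour. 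One small inaccuracy: the surface $R\cup V$ you construct is a closed $\Z/2$-cycle in $\nu(D)\cong D^4$, but depending on how the strip on $T$ and the sub-discs of $d,d'$ are chosen it need not literally be an embedded $2$-sphere; this is harmless since all you use is that any $\Z/2$-$2$-cycle in a contractible set is null-homologous.
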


\begin{proof}
The exact sequence of the triple with $\Z/2$ coefficients yields
	\begin{align*}
	(\Z/2)^2 \cong H_2(\Sigma,\Sigma\setminus (\mathring{d}\cup \mathring{d'})) &\ra   H_2(M,\Sigma\setminus (\mathring{d}\cup \mathring{d'}))\xrightarrow{\phantom{5}j\phantom{5}}  H_2(M,\Sigma) \\ & \ra H_1(\Sigma,\Sigma\setminus (\mathring{d}\cup \mathring{d'})) = 0,
	\end{align*}
so  $j$ is surjective with kernel generated by the images of  $[d]$ and $[d']$ from the left hand group.

\begin{figure}[htb]
    \centering
     \begin{tikzpicture}
        \node[anchor=south west,inner sep=0] at (0,0){	        \includegraphics{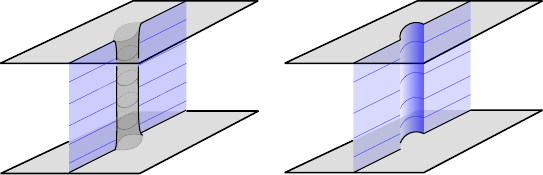}};
		\node at (4.1,0.65) {$F'$};
	   \node at (4.1,2.5) {$F'$};
	   \node at (0.9,1) {$\Delta$};
	   \node at (9,0.68) {$F$};
	   \node at (9,2.53) {$F$};
	   \node at (5.7,1) {$\wt{B}$};
	\end{tikzpicture}
    \caption{A strip, i.e.\ half of the tube, added to $\Delta$.}
\label{fig:absent-finger-paint}
\end{figure}

    Construct a lift $\wt{B}$ of $\Delta$ in $H_2(M,\Sigma\setminus (\mathring{d}\cup \mathring{d'}))$ by adding a strip along the added tube to $\Delta$, as shown in \cref{fig:absent-finger-paint}. Since $\wt{B}$, $S$, and $B'$ are mapped by $j$ to $B$, $S$, and $B'$ in $H_2(M,\Sigma)$ respectively, and the kernel is generated by $[d]$ and $[d']$, we see that the classes of $\wt{B}$, $S$, and $B'$ differ at most by the classes $[d]$ and $[d']$. The map $H_2(M,\Sigma\setminus (\mathring{d}\cup \mathring{d'}))\to H_2(M,\Sigma')$ identifies $[d]$ and $[d']$, so the claim follows.
\end{proof}

We continue now to prove~\eqref{repitem-ii-lem-welldefined}. Let $B$ and $B'$ be immersed annuli in $M$ as in the statement of ~\eqref{repitem-ii-lem-welldefined}. Choose arcs $D$ in $B$ and $D'$ in $B'$ connecting the boundary components of each, and assume that $[B]=[B']$. By the construction from the proof of \cref{lem:learning-to-lift} applied twice, once to $B$ and once to $B'$, we find discs $\Delta$ and $\Delta'$, coming from $B$ and $B'$ respectively, such that $\Theta_A(B,D)=\Theta_A(\Delta)$ and $\Theta_A(B',D')=\Theta_A(\Delta')$. From~\cref{lem:learning-to-lift}, applied twice with the r\^{o}les of $B$ and $B'$ reversed, we see that the classes $[\Delta]$ and $[\Delta']$ satisfy:
\[\big([\Delta] = [B]   \text{ or } [\Delta] = [B] +[d] \big) \text{ and } \big( [\Delta'] = [B']   \text{ or } [\Delta'] = [B'] +[d] \big). \]
Since also $[B] = [B']$, it follows that either $[\Delta]=[\Delta']$ or $[\Delta]=[\Delta']+[d]$ in $H_2(M,\Sigma';\Z/2)$ for $\Sigma'$ the surface obtained from applying the construction (twice) to $\Sigma$.

In the first case, $[\Delta]=[\Delta']$, the proof of~\eqref{repitem-ii-lem-welldefined} is completed by appealing to~\eqref{repitem-i-lem-welldefined}, which says that $\Theta_A(\Delta)=\Theta_A(\Delta')$, since both $\Delta$ and $\Delta'$ have $w_1(\Sigma')$ trivial on the boundary. In the second case, $[\Delta]=[\Delta']+[d]$, we also appeal to~\eqref{repitem-i-lem-welldefined}, but now for the pair of surfaces $\Delta$ and $\Delta' \cup d$. So we have that $\Theta_A(\Delta)=\Theta_A(\Delta' \cup d)$. It follows directly from the definition that $\Theta_A(\Delta'\cup d)=\Theta_A(\Delta')$. This completes the proof of~\eqref{repitem-ii-lem-welldefined}. In particular, we have proved that $\Theta_A(B,D)$ does not depend on the choice of arc $D$.

The proof of~\eqref{repitem-iii-lem-welldefined} is similar. Suppose we have an immersed annulus $B$ in $M$ as in the statement of~\eqref{repitem-ii-lem-welldefined}, as well as an immersed compact surface $S$ in $M$ as in the statement of~\eqref{repitem-i-lem-welldefined}. Choose an embedded arc $D\subseteq B$ connecting the boundary components. Assume that $[S]=[B]\in H_2(M,\Sigma;\Z/2)$. Apply the previous construction to $B$, yielding a disc $\Delta$ which by~\cref{lem:learning-to-lift} satisfies either $[\Delta]=[S]$ or $[\Delta]=[S]+[d]$ in the group $H_2(M,\Sigma';\Z/2)$ for the surface $\Sigma'$ obtained from applying the construction to $\Sigma$. Further, we know that $\Theta_A(B,D)=\Theta_A(\Delta)$. Now in the first case the proof is completed by appealing to~\eqref{repitem-ii-lem-welldefined}, which says that $\Theta_A(\Delta)=\Theta_A(S)$. In the second case, apply~\eqref{repitem-ii-lem-welldefined} to the pair $\Delta$ and $S \cup d$, to see that $\Theta_A(\Delta)=\Theta_A(S \cup d)$. It follows directly from the definition that $\Theta_A(S\cup d)=\Theta_A(S)$.
	
It remains to prove~\eqref{repitem-iv-lem-welldefined}. Let $B$ denote an element of $\bands(F)$. First note that only the term $\lvert \partial B\pitchfork A \rvert$ of $\Theta_A(B)$ depends on the Whitney arcs $A$. Let $A'$ denote another collection of Whitney arcs. The quantities $\Theta_A(B)$ and $\Theta_{A'}(B)$ differ by $\lvert \partial B\pitchfork A \rvert + \lvert \partial B\pitchfork A' \rvert$, regardless of whether $\Sigma$ is orientable or nonorientable.
	
	\setcounter{case}{0}
\begin{case}\label{case:same-pairing}
The collections of Whitney arcs $A$ and $A'$ correspond to the same choice of pairing up of the double points of $F$.
\end{case}
 For each pair of double points, we can pick Whitney discs $W_1$ and $W_2$ with boundary in $A$ and $A'$ respectively. By adding small strips to the union of $W_1$ and $W_2$ in the neighbourhood of the double points, we can see that the difference of $A$ and $A'$ is the boundary of some collection of bands $B'$. For more details about this construction, see the upcoming proof of~\cref{thm:main}.
	Then we have
	\[
	\lvert \partial B\pitchfork A\rvert + \lvert \partial B\pitchfork A' \rvert = \lvert \partial B\pitchfork (A\cup A')\rvert = \lvert \partial B\pitchfork \partial B'\rvert =\lambda_\Sigma(\partial B, \partial B') \mod 2,
	\]
	which vanishes by assumption.
	
	\begin{figure}[htb]
    \centering
    \begin{tikzpicture}
            \node[anchor=south west,inner sep=0] at (0,0){\includegraphics{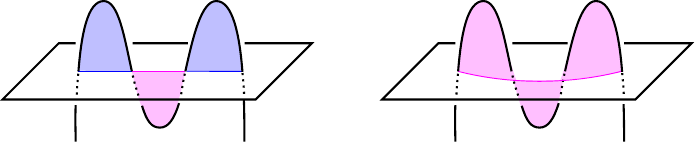}};
				\node at (1.8,1.6) {$W_1$};
				\node at (3.7,1.6) {$W_2$};
				\node at (2.7,0.95) {$V_1$};
				\node at (9.2,1.25) {$V_2$};
				\node at (1.1,1.2) {$p_1$};
				\node at (2.05,1) {$p_2$};
				\node at (3.35,1) {$q_1$};
				\node at (4.4,1.2) {$q_2$};
				\node at (7.5,1.2) {$p_1$};
				\node at (10.8,1.2) {$q_2$};
        \end{tikzpicture}
    \caption{Left: the Whitney discs $W_1$, $W_2$, and $V_1$, pairing up double points as $(p_1,p_2)$, $(q_1,p_2)$ and $(q_1,q_2)$, respectively. Right: the Whitney disc $V_2$ pairing up $(p_1,q_2)$ is obtained as a union of $W_1$, $W_2$, and $V_1$, by adding small bands at the points $p_2$ and $q_1$ to resolve the singularities, and pushing the interiors of the bands into the complement of $F$. Compare with~\cite{Stong}*{Figure~2}.}
\label{fig:long-whitney-arc}
\end{figure}

\begin{case}
The collections of Whitney arcs $A$ and $A'$ correspond to a different pairing up of the double points of $F$.
\end{case}
 From $A$ we can construct Whitney arcs $A''$ so that $A'$ and $A''$ correspond to the same pairing up of double points, as in ~\cref{fig:long-whitney-arc}. Here are the details.
	We will define the family $A''$ iteratively, starting with $A$. Let $p_1,p_2,q_1,q_2$ be double points of~$F$.
    Suppose that arcs in $A$ pair up $p_1$ and $p_2$, as well as $q_1$ and $q_2$, while arcs in $A'$ pair up $p_2$ and $q_1$. Pick Whitney discs $W_1$ and $W_2$ with boundary in $A$. Let $V_1$ be a Whitney disc for the points $p_2$ and $q_1$ with boundary away from $A$. Then, as indicated in \cref{fig:long-whitney-arc}, we may choose Whitney arcs, away from the other arcs in $A$, so that $p_2$ and $q_1$ are also paired by a Whitney disc $V_2$, obtained as a union of $W_1$, $W_2$, and $V_1$. Modify the family $A$ by removing $\partial W_1$ and $\partial W_2$, and adding in $\partial V_1$ and $\partial V_2$. Comparing this new family with $A'$, we see that we have reduced the number of mismatches in the pairing up of double points of $F$. Iterate this process and call the result $A''$.

    Looking more closely at the construction in the previous paragraph, observe that at each step, the family of arcs changes by adding in two parallel copies of the boundary of a Whitney disc $V_1$. Since intersection points are counted modulo~$2$, $\Theta_A$ and $\Theta_{A''}$ are equal. By \cref{case:same-pairing} we know that
    $\Theta_{A'}$ and $\Theta_{A''}$ are equal when restricted to $\bands(F)$. Thus, $\Theta_A$ and $\Theta_{A'}$ are equal when restricted to $\bands(F)$, as needed.
\end{proof}

\section{Proof of \texorpdfstring{\cref{thm:main,thm:embedding-obstruction}}{the main theorems}}\label{sec:proof-of-main-theorem}

First we prove \cref{thm:embedding-obstruction} from the introduction, which shows that for $b$-characteristic surfaces,  $\tw(F ,\W )\in \Z/2$ is well defined, i.e.\ independent of the Whitney discs $\W $. Note that the theorem has no assumption about the existence of algebraically dual spheres.

\begin{reptheorem}{thm:embedding-obstruction}
Let $F\colon (\Sigma,\partial \Sigma)\looparrowright (M, \partial M)$ be as in \cref{convention} with $\mu (F)=0$. Let $\W$ be a convenient collection of Whitney discs for the double points of $F$. Then
$F$ is $b$-characteristic if and only if for every $F'$ regularly homotopic to $F$ and convenient collection $\W'$ for the double points of $F'$, we have $\tw(F,\W)=\tw(F',\W')$.

For $b$-characteristic $F$, we denote the resulting regular homotopy invariant by $\tw(F) \in \Z/2$. Then if $\km(F)=0$, e.g.\ if $F$ is an embedding, then $\tw(F) =0$.
\end{reptheorem}

\begin{proof}
The final sentence, that $\km(F)=0$ implies $t(F)=0$ for $b$-characteristic $F$, is an immediate consequence of the definitions.

Now suppose that $F $ is not $b$-characteristic. Then by~\cref{lem:dependence_on_A} we can assume that $\lambda_{\Sigma }\vert_{\partial \bands(F )}$ is trivial, which implies that the function $\Theta$ is well defined on $\bands(F )$. Since we assume that $F $ is not $b$-characteristic, there exists $B\in \bands(F )$ so that $\Theta(B)=1$, so we can apply~\cref{construction:finger-move} and~\cref{lem:fiber-move} to find $F'$, regularly homotopic to $F$, and a convenient collection of Whitney discs $\W'$ for the double points of $F'$ with $\tw(F, \W)\neq \tw(F',\W')$.

If $F $ is $b$-characteristic, by definition $\lambda_{\Sigma }\vert_{\partial \bands(F )}$ is trivial and $\Theta$ is trivial on $\bands(F )$. As indicated above, the function $\Theta$, as well as which classes of $H_2(M,\Sigma ;\Z/2)$ can be represented by bands, only depends on the immersion $F$ up to regular homotopy. We need to show that  $t(F ,\W )$ does not depend on the choice of pairing of the double points, the choice of Whitney arcs, nor the choice of Whitney discs; see~\cref{fig:sheet-change}. Let $\W $ be a given initial choice of convenient collection of Whitney discs for the double points of $F $. Let $A$ be the corresponding collection of Whitney arcs for the double points of $F $.

The remainder of the proof is similar to Stong's~\cite{Stong}*{pp.~1311--3} and \cite{FQ}*{Section~10.8A}.
We will work with \emph{weak collections of framed Whitney discs} and the alternative count $t_{\Alt} \in \Z/2$, as in \cref{def:weak-Whitney,def:t_alt}. So the boundaries of our collections of Whitney discs might not be disjointly embedded, but the Whitney discs will be framed (as can always be arranged by boundary twisting). We will show that $t_{\Alt}(F ,\W )$ does not depend on the choice of weak collection of Whitney discs $\W $, and then use that $t_{\Alt}(F ,\W ) = t(F ,\W )$ for $\W $ a convenient collection (\cref{lem:alt-enough}).
	
\begin{claim}\label{argument:pairing-up-independence}	
Given a weak collection of Whitney discs $\W $ corresponding to some choice of pairing up of double points of $F $, then for any other choice of pairing, there exists a weak collection of Whitney discs $\mathcal{V} $ for that choice, so that $t_{\Alt}(\mathcal{V} )=t_{\Alt}(\W )$.
\end{claim}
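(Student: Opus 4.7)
The plan is to reduce the claim to the case of a single transposition of the pairing, since any two pairings of the finite set of double points of $F^{\twist}$ are related by a sequence of such swaps. Thus I would suppose that $\W^{\twist}$ contains Whitney discs $W_1$ pairing $\{p_1,p_2\}$ and $W_2$ pairing $\{q_1,q_2\}$, and that the target pairing replaces these two pairs by $\{p_1,q_2\}$ and $\{p_2,q_1\}$, leaving the other pairs intact.

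To construct $\mathcal{V}^{\twist}$, I would follow the move depicted in \cref{fig:long-whitney-arc} originally due to Stong. First pick any weak Whitney disc $V_1$ pairing $\{p_2,q_1\}$, with interior and boundary in generic position with respect to $\W^{\twist}$ and $F$. Then construct $V_2$ pairing $\{p_1,q_2\}$ as the weak Whitney disc obtained from $W_1\cup W_2\cup V_1$ by attaching small bands at $p_2$ and $q_1$ to resolve these double points, and pushing the bands into the complement of $F(\Sigma)$. A further small generic perturbation puts $V_2$ in transverse position with everything else; an even number of boundary twists can then be added to correct the framing if necessary. Setting $\mathcal{V}^{\twist}:=(\W^{\twist}\setminus\{W_1,W_2\})\cup\{V_1,V_2\}$ gives a weak collection of framed Whitney discs realising the new pairing.

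The remaining step is to verify $t_{\Alt}(\mathcal{V}^{\twist})\equiv t_{\Alt}(\W^{\twist})\pmod 2$. The contributions from discs in $\W^{\twist}\setminus\{W_1,W_2\}=\mathcal{V}^{\twist}\setminus\{V_1,V_2\}$, together with their mutual pairings, cancel in the difference, so the verification reduces to the following mod-$2$ identities holding for each component $f_i$ of $F$ and each $X\in\W^{\twist}\setminus\{W_1,W_2\}$, all of which follow from the band-sum description of $V_2$:
\begin{align*}
|\Int V_2\pitchfork f_i|&\equiv |\Int W_1\pitchfork f_i|+|\Int W_2\pitchfork f_i|+|\Int V_1\pitchfork f_i|,\\
e(V_2)&\equiv e(W_1)+e(W_2)+e(V_1),\\
|\partial V_2\pitchfork\partial X|&\equiv |\partial W_1\pitchfork\partial X|+|\partial W_2\pitchfork\partial X|+|\partial V_1\pitchfork\partial X|,\\
\mu_\Sigma(\partial V_2)&\equiv \mu_\Sigma(\partial W_1)+\mu_\Sigma(\partial W_2)+\mu_\Sigma(\partial V_1)+|\partial W_1\pitchfork\partial W_2|+|\partial W_1\pitchfork\partial V_1|+|\partial W_2\pitchfork\partial V_1|,\\
|\partial V_1\pitchfork\partial V_2|&\equiv |\partial V_1\pitchfork\partial W_1|+|\partial V_1\pitchfork\partial W_2|.
\end{align*}
Substituting into $t_{\Alt}(\mathcal{V}^{\twist})-t_{\Alt}(\W^{\twist})$, each of the cross-intersection terms $|\partial W_1\pitchfork\partial W_2|$, $|\partial W_1\pitchfork\partial V_1|$, $|\partial W_2\pitchfork\partial V_1|$ appears an even number of times, and the $\mu_\Sigma$, $|\Int\cdot\pitchfork F|$, $e$, and $|\partial\cdot\pitchfork\partial X|$ contributions similarly cancel in pairs, yielding $0$ modulo $2$.

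The main obstacle will be arranging the bands, the pushoff of $V_1$, and the framing corrections carefully enough that the last two identities hold exactly modulo $2$. For the $\mu_\Sigma(\partial V_2)$ identity, the bands must be chosen small enough that they contribute no spurious self-intersections of $\partial V_2$; for the $|\partial V_1\pitchfork\partial V_2|$ identity, the pushoff of $V_1$ used in forming $V_2$ should be made along a generic section of the normal bundle of $\partial V_1$ in $\Sigma$, so that each self-intersection of $\partial V_1$ contributes an even number of cancelling intersections between $\partial V_1$ and its pushoff; and for the $e(V_2)$ identity, the framings of $W_1, W_2, V_1$ must glue across the bands up to an even boundary twist. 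None of these geometric adjustments affects any other term in $t_{\Alt}$.
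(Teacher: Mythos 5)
Your proposal is correct and takes essentially the same approach as the paper: both reduce to a single swap of the pairing, construct the fourth disc as a band-sum $W_1 \cup W_2 \cup V_1$ following Stong's move (Figure~\ref{fig:long-whitney-arc}), and verify $t_{\Alt}$-invariance by a mod-2 accounting of boundary-arc and interior intersections. Your list of explicit mod-2 identities just spells out the same cancellations the paper records more compactly in prose.
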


\begin{proof}
Let $p_1,p_2,q_1,q_2$ be double points of $F $.
	Suppose that in the initial choice of data, $p_1$ and $p_2$ are paired by a Whitney disc $W_1\in \W $, and $q_1$ and $q_2$ by a Whitney disc $W_2\in \W $. Suppose we instead pair up $p_1$ and $q_2$ by some Whitney disc $V_1$. Then, as indicated in \cref{fig:long-whitney-arc},
$p_2$ and $q_1$ are also paired by a Whitney disc $V_2$, obtained as a union of $W_1$, $W_2$, and $V_1$. Then $(\W  \setminus \{W_1,W_2\})\cup \{V_1,V_2\}$ is a weak collection of framed Whitney discs.  The contribution of $V_1$ and $V_2$ to $t_{\Alt}(F ,(\W  \setminus \{W_1,W_2\})\cup \{V_1,V_2\})$ counts the intersections of $F $ with each disc $W_1$ and $W_2$ once, while it counts the intersections of $F $ with the disc $V_1$ twice.  Each intersection of $\partial V_1$ with $A\sm (\partial W_1 \cup \partial W_2)$ can be paired with an intersection of $\partial V_2$ with $A\sm (\partial W_1 \cup \partial W_2)$. Each intersection of $\partial V_1$ with $\partial W_1 \cup \partial W_2$ gives rise to two contributions to $t_{\Alt}$: an intersection of $\partial V_2$ with $\partial V_1$ and a self-intersection of $\partial V_2$.   Since intersections are counted mod 2 in the definition of $t_{\Alt}$, we see that
\[
t_{\Alt}(F ,(\W  \setminus \{W_1,W_2\})\cup \{V_1,V_2\})=t_{\Alt}(F ,\W )\in \Z/2
\]
as needed. Iterate this process to complete the proof of \cref{argument:pairing-up-independence}. \end{proof}

\begin{figure}[htb]
    \centering
   \begin{tikzpicture}
        \node[anchor=south west,inner sep=0] at (0,0){\includegraphics{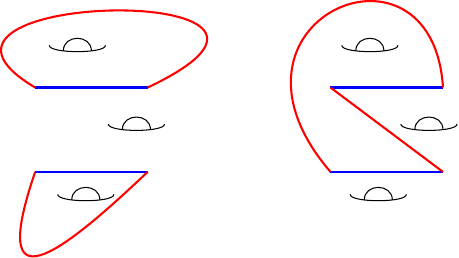}};
		\node at (0.4,2.7) {$p_2^+$};
		\node at (2.8,2.7) {$p_2^-$};
		\node at (0.4,1.5) {$p_1^-$};
	 \node at (2.8,1.5) {$p_2^+$};
			\node at (5.4,2.7) {$p_2^+$};
		\node at (7.8,2.7) {$p_2^-$};
		\node at (5.4,1.3) {$p_1^-$};
	 \node at (7.8,1.3) {$p_2^+$};
			\end{tikzpicture}
    \caption{Within $\Sigma$ we see the preimages $p_1^\pm$ and $p_2^\pm$, for the double points $p_1$ and $p_2$ of $F$ respectively. Blue denotes the Whitney arcs for $W_1$ while red denotes the Whitney arcs for the new disc $V_1$. On the left, the choice of sheets stays the same, while it changes on the right. Compare with~\cite{Stong}*{Figure~3}.}
\label{fig:sheet-change}
\end{figure}

Continuing with the proof of \cref{thm:embedding-obstruction}, next we check that $t_{\Alt}$ is independent of the choice of Whitney discs. This includes potentially changing the Whitney arcs and the choice of sheets at each double point.  Suppose we are given another weak collection of framed Whitney discs $\mathcal{V} $ for the double points of $F $. By applying~\cref{argument:pairing-up-independence}, we may assume that $\mathcal{V} $ corresponds to the same pairing of double points of $F $ as $\W $. Assume the collections are indexed so that $W_\ell\in \mathcal{W} $ and $V_\ell\in \mathcal{V} $ correspond to the same pair of double points. For each $i$, define the weak collection of Whitney discs
\[
\mathcal{U} _i:=\{V_1,V_2, \dots, V_i,W_{i+1}, W_{i+2}\dots, W_N \}
\]
where $\mathcal{U} _0=\mathcal{W} $ and $\mathcal{U} _{N}=\mathcal{V} $.
We will show that $t_{\Alt}(F ,\mathcal{U} _{i-1})=t_{\Alt}(F ,\mathcal{U} _{i})\in \Z/2$ for each $i$. Let~$A_{i}$ denote the collection of Whitney arcs for $\mathcal{U} _i$.
First we  prove a special case.
\setcounter{case}{0}
\begin{claim}\label{case:friendly-disc}
Suppose the Whitney disc $W_i$ is framed, embedded, with interior disjoint from $F $ and the Whitney discs $\mathcal{U}_{i-1} \sm \{W_i\}$, and with $\partial W_i$ disjoint from $A_{i}$, other than the endpoints. Then $t_{\Alt}(F ,\mathcal{U} _{i-1})=t_{\Alt}(F ,\mathcal{U} _{i})\in \Z/2$.
\end{claim}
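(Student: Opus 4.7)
The plan is to construct a band $B$ with $[B]\in \bands(F^\twist)$ out of $W_i$ and $V_i$, and to show that $\Theta_A(B)$ equals the difference $t_{\Alt}(F^\twist,\mathcal{U}^\twist_i)-t_{\Alt}(F^\twist,\mathcal{U}^\twist_{i-1})\in\Z/2$. Since $F^\twist$ is $b$-characteristic by hypothesis, $\Theta$ vanishes on $\bands(F^\twist)$ by \cref{def:b-char}, and this will force the two counts to coincide.

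To build $B$, I would glue $W_i$ and $V_i$ along their two corners at the pair of double points they share and smooth. A case analysis on whether the sheet-choices of $W_i$ and $V_i$ agree or differ at each corner shows that $B$ is either an annulus or a M\"obius band, with boundary on $F^\twist(\Sigma^\twist)$ avoiding the double points of $F$. Since $W_i$ and $V_i$ are Whitney discs for the same ordered pair of double points, tracking group elements along the relevant loops shows that the core of $B$ is null-homotopic in $M$, verifying the $w_1$-condition of \cref{def:band}; hence $[B]\in \bands(F^\twist)$. The $b$-characteristic hypothesis then gives $\lambda_{\Sigma^\twist}|_{\partial B}=0$, which via the standard identity $\lambda_\Sigma(C,C)=\langle w_1(\Sigma),C\rangle$ for a curve $C$ on a surface rules out the M\"obius band sub-case with $w_1(\Sigma^\twist)$ nontrivial on the boundary. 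Hence $\Theta_A(B)$ is well defined via either \cref{def:thetaA} or \cref{def:thetaA-nonorientable}.

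Next I would compute $\Theta_A(B)$ using any convenient collection $A$ of Whitney arcs transverse to $\partial B$, invoking \cref{lem:well-defined}\,(iv) for independence of choice. Each of the four ingredients of $\Theta_A(B)$ decomposes additively over the $W_i$ and $V_i$ pieces, up to cross-terms from intersections between $\partial W_i$ and $\partial V_i$. The friendly-disc hypothesis yields $\mu_{\Sigma^\twist}(\partial W_i)=0$ (embedded boundary), $e(W_i)=0$ (framed), $|\Int W_i\pitchfork F^\twist|=0$, and $\partial W_i$ disjoint from $A_i$ (hence from $\partial V_i$ and from $A$ after a small perturbation), so all $W_i$-contributions and cross-terms vanish and
\[\Theta_A(B)\equiv \mu_{\Sigma^\twist}(\partial V_i)+|\partial V_i\pitchfork (A\setminus \partial V_i)|+|\Int V_i\pitchfork F^\twist|+e(V_i)\pmod 2.\]
By the same friendly-disc conditions, the $W_i$-contribution to $t_{\Alt}(F^\twist,\mathcal{U}^\twist_{i-1})$ also vanishes, so the right-hand side equals $t_{\Alt}(F^\twist,\mathcal{U}^\twist_i)-t_{\Alt}(F^\twist,\mathcal{U}^\twist_{i-1})$ in $\Z/2$; then $\Theta_A(B)=0$ by $b$-characteristicness completes the argument.

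The main technical obstacle will be matching the Euler number conventions, since $e(W_\ell)$ in $t_{\Alt}$ is measured relative to the Whitney framing on $\partial W_\ell$ while $e(B)$ in $\Theta_A(B)$ is measured relative to a section of $\nu_{\partial B}^{F^\twist}$. I would resolve this exactly as in the proof of \cref{lem:fiber-move}, by rotating the chosen section in a small neighborhood of the corners where $W_i$ and $V_i$ meet so that the two framing conventions agree on each of $W_i$ and $V_i$, and hence on $B$ after gluing. In the M\"obius band sub-case I would additionally route the cutting arc from \cref{def:thetaA-nonorientable} cleanly through the gluing region, so that $e(\wh B)$ decomposes as the sum of the framing contributions from $W_i$ and $V_i$ as expected.
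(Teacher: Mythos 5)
Your plan genuinely differs from the paper's: you construct a band directly from $W_i\cup V_i$ and compute $\Theta$ on it, whereas the paper performs the Whitney move along $W_i$, observes that $V_i$ together with the strip traced by the moved sheet is a band in $\bands(F^\twist)$, and reduces the comparison of $t_{\Alt}$-values to \cref{lem:fiber-move}. Your route is structurally reasonable, but there is a genuine gap where you place $[B]$ into $\bands(F^\twist)$ by asserting that the core of $B$ is null-homotopic in $M$. That assertion is false in general. When $W_i$ and $V_i$ make the same sheet-choices at $p$ and $q$ (the annulus case), the core is the image under $F^\twist$ of a loop of the form $a_1\cup\bar b_1$ in $\Sigma^\twist$, and $f_\bullet$ of such a loop is trivial only when $\Sigma^\twist$ is simply connected. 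When the sheet-choices differ at one of $p,q$ (the M\"obius band case --- exactly Stong's sheet change), the core carries the group element $\eta(p)\in\pi_1(M)$, which must have order two but is typically nontrivial. So the null-homotopy claim fails in both regimes; it is neither true nor, even where true, sufficient (one still needs $\langle w_1(\Sigma^\twist),\partial B\rangle=0$).

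What is actually needed is only the $w_1$-condition~\eqref{item:defn-band-w1-condn}, namely $\langle w_1(M),\text{core}\rangle+\langle w_1(\Sigma^\twist),\partial B\rangle=0$. This does hold, but for a different reason: it follows from the fact that $p$ and $q$ have opposite signs in the sense of \cref{defn:opposite-signs} with respect to the arcs of $W_i$ \emph{and} with respect to the arcs of $V_i$; combining the two opposite-sign relations yields \eqref{item:defn-band-w1-condn}. The verification is essentially the one done at the end of \cref{construction:finger-move}, which establishes that for a Whitney disc produced from a band, the opposite-sign condition is equivalent to \eqref{item:defn-band-w1-condn}. Once this is patched, the remaining bookkeeping you sketch --- pushing $\partial B$ off $\partial W_i\cup\partial V_i$ to make it transverse to the Whitney arcs, accounting for the corner strips, and matching $e(B)$ against the Whitney-framing twistings $e(W_i)$ and $e(V_i)$ --- is nontrivial but plausible; it essentially reproduces the content of \cref{lem:fiber-move}, so the paper's shortcut of invoking that lemma directly via a Whitney move is the more economical path.
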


\begin{proof}
A neighbourhood of $W_i$ is depicted in \cref{fig:band-strip}. Note that the two arcs of $\partial V_i$ lie in $A_i$ and thus my hypothesis only intersect the arcs in $\partial W_i$ at the endpoints. As described in the figure, we wish to perform the Whitney move using $W_i$ pushing towards the Whitney arc $a_i$ for $W_i$. Observe that the union of $V_i$ with a strip, corresponding to the unit outward pointing normal vector field of $a_i\subseteq \partial W_i$, is either an annulus or a M\"obius band; this requires a small isotopy of $V_i$ to ensure that the chosen vector field of $a_i$ is compatible with the Whitney arcs of $V_i$, as shown in \cref{fig:band-strip}. Denote the union of $V_i$ and the strip by $B$.

We show that $B\in\bands(F)$. For this we need to check that condition \eqref{item:defn-band-w1-condn} holds. From the right hand side of \cref{fig:band-strip}, one sees that $\partial B$ is homotopic in $\Sigma$ to the union of $\partial V_i$ and $\partial W_i$. The core $C$ of $B$ is given by the union of $a_i$ and either of the Whitney arcs of $V_i$. The Whitney arcs must induce opposite signs at the two double points, as explained in \cref{defn:opposite-signs}. The orientation conditions in the latter definition imply that the condition in \eqref{item:defn-band-w1-condn} holds, as we explain next. Let $p_1$ and $p_2$ denote the double points paired by $W_i$ (and $V_i$). Let $a_i$ and $b_i$ denote the Whitney arcs of $W_i$, and let $c_i$ and $d_i$ denote those of $V_i$. Begin by fixing local orientations of $M$ and both sheets of $\Sigma$ at $p_1$, so that the first agrees with the one determined by the latter two. Transport the local orientations of $\Sigma$ to $p_2$ via the Whitney arcs of $W_i$ and form the induced local orientation of $M$ at $p_2$. By \cref{defn:opposite-signs}, this does not agree with the local orientation of $M$ at $p_2$ determined by the one at $p_1$ by transporting along $a_i$. Continuing with the local orientations at $p_2$ determined in the previous step, transport the local orientations of $\Sigma$ back to $p_1$, this time along the Whitney arcs of $V_i$. Again by \cref{defn:opposite-signs}, the resulting induced local orientation of $M$ at $p_1$ agrees with the local orientation of $M$ transported to $p_1$ along $c_i$. In this circuit, we have constructed a new set of local orientations of $M$ and the two sheets of $\Sigma$ at $p_1$. Compared to the initial choice, the local orientation induced by the sheets of $\Sigma$ has changed by $\langle w_1(\Sigma), a_i\cup b_i\cup c_i \cup d_i\rangle=\langle w_1(\Sigma), \partial V_i\cup \partial W_i\rangle$. On the other hand, the local orientation of $M$ transported along $a_i\cup c_i$ has changed by $\langle w_1(M),a_i\cup c_i\rangle=\langle w_1(M),C\rangle$, where $C$ is the core of $B$ from above. Since the two orientations must agree, we have   $\langle w_1(\Sigma), \partial V_i\cup \partial W_i\rangle = \langle w_1(M),C\rangle$, as needed.

For the band $B$ as above, performing a finger move as in~\cref{construction:finger-move} creates $W_i$ as the standard Whitney disc, and $V_i$ as the new Whitney disc arising from the band. Here we used the fact that $\partial W_i$ and $\partial V_i$ only intersect at the endpoints.  Since $F$ is $b$-characteristic, the disc $V_i$, has trivial contribution to $t_{\Alt}(F ,\mathcal{U} _i)$ by~\cref{lem:fiber-move}. So does $W_i$ to $t_{\Alt}(F ,\mathcal{U}_{i-1} )$, since by hypothesis $\partial W_i$ is framed, embedded, and disjoint from $A_{i-1}\sm \{a_i,b_i\}\subseteq A_i$, and the interior of $W_i$ is disjoint from $F $.  Therefore $t_{\Alt}(F ,\mathcal{U} _{i-1})=t_{\Alt}(F ,\mathcal{U} _{i})\in \Z/2$ as asserted.
\end{proof}

\begin{figure}[tb]
    \centering
    \begin{tikzpicture}
        \node[anchor=south west,inner sep=0] at (0,0){ \includegraphics{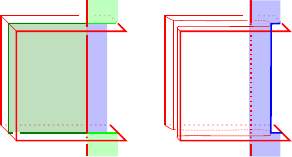}};
		\node at (1.5,-0.2) {$F $};
		\node at (0.75,1.25) {$W_i$};
		\node at (1.25,1.75) {$a_i$};
	 \node at (-0.3,2.5) {$F $};
			\node at (1.75,2.5) {$\mathsmaller{V_i}$};
			\node at (1.75,0.1) {$\mathsmaller{V_i}$};
			\node[rotate=90] at (1.65,1.25) {{\footnotesize strip}};
			\node[rotate=90] at (4.4,1.25) {{\footnotesize band}};
		\end{tikzpicture}
      \caption{Left: two sheets of the surface $\Sigma $ and two Whitney discs $W_i$ and $V_i$ between the same pair of double points. The disc $W_i$ is assumed to be framed, embedded, have interior disjoint from $F $ and the Whitney discs $\mathcal{U}_{i-1} \sm \{W_i\}$, and $\partial W_i$ disjoint from
      $A_i$. One of its Whitney arcs $a_i$ is also labelled. The blue strip to the right of $a_i$ is an extension of $W_i$ beyond its boundary, that is part of the data for the Whitney move.  Right: the result of the Whitney move. The strip and the disc $V_i$ from the previous panel have formed a band $B$ (blue). }\label{fig:band-strip}
\end{figure}
Now we prove the general case.
Denote the double points paired by $W_i$ by $p_1$ and $p_2$.
By a small isotopy, assume that, other than $p_1$ and $p_2$, the arcs of $\partial W_i$ intersect the arcs in $A_i$ in isolated double points in the interiors.
By performing a suitable finger move near $p_2$, split $W_i$ into new Whitney discs $W_i'$ and $U_1$, creating two new double points $q_1$ and $q_2$ in the process, paired by a standard trivial Whitney disc $U_2$, where $U_1$ satisfies the conditions of \cref{case:friendly-disc}. We choose both the base and tip of the finger arc to be closer to $p_2$ than any intersections of $\partial W_i$ with arcs in $A_i$, as well as any self-intersections of $\partial W_i$. See~\cref{fig:nonstandard-finger}. By construction, the points $p_1$ and $q_1$ are paired by $W_i'$, and the points $q_2$ and $p_2$ are paired by $U_1$. Here $U_1$ is framed, embedded, has interior disjoint from $F $ and the Whitney discs $\mathcal{U}_{i-1} \sm \{W_i\}$. In addition $\partial U_1$ is disjoint from $A_{i}$, other than at $p_2$, and is disjoint from $\partial W_i'$. These conditions will shortly allow us to apply \cref{case:friendly-disc} to $U_1$.

Let $F'$ denote the result of performing the finger move above to $F $. Note that
\begin{equation}\label{eqn:t-1}
  t_{\Alt}(F ,\mathcal{U}_{i-1} )=t_{\Alt}(F' ,(\mathcal{U}_{i-1} \sm \{W_i\}) \cup \{W_i',U_1\})
\end{equation}
by construction. Let $V_i'$ denote the Whitney disc obtained as the union of $V_i$, $W_i'$, and $U_2$, as in \cref{fig:long-whitney-arc}. Observe that the Whitney discs $U_1$ and $V_i'$ pair the same double points, namely $q_2$ and $p_2$. Consider the two collections of Whitney discs $(\mathcal{U}_{i-1}  \sm \{W_i\})\cup \{W_i', U_1\}$ and $(\mathcal{U}_{i-1}  \sm \{W_i\})\cup \{W_i', V_i'\}$, for the double points of $F' $. The two collections differ only in that one contains the disc $U_1$ and the other the disc $V_i'$. We will apply \cref{case:friendly-disc} to change between the two collections.  This is permitted since $U_1$ is framed, embedded, has interior disjoint from $F' $ and the Whitney discs $(\mathcal{U}_{i-1}  \sm \{W_i\})\cup \{W_i'\}$, and $\partial U_1$ is disjoint, other than at the endpoints, from the Whitney arcs of $(\mathcal{U}_{i-1}  \sm \{W_i\})\cup \{W_i', V_i'\}$, given by $A_i\cup \partial W_i'\cup \partial U_2$.

So by \cref{case:friendly-disc},
\begin{equation}\label{eqn:t-2}
t_{\Alt}(F' ,(\mathcal{U}_{i-1}  \sm \{W_i\})\cup \{W_i', U_1\})=t_{\Alt}(F',(\mathcal{U}_{i-1}  \sm \{W_i\})\cup \{W_i', V_i'\}).
\end{equation}
By the proof of \cref{argument:pairing-up-independence} (see \cref{fig:long-whitney-arc}),
\begin{equation}\label{eqn:t-3}
t_{\Alt}(F' ,(\mathcal{U}_{i-1}  \sm \{W_i\})\cup \{W_i', V_i'\})= t_{\Alt}(F' ,(\mathcal{U}_{i-1} \sm \{W_i\})\cup \{U_2,V_i\}).
\end{equation}
Since $U_2$ is trivial, we can use it to undo the Whitney move, and obtain
\begin{equation}\label{eqn:t-4}
t_{\Alt}(F' ,(\mathcal{U}_{i-1} \sm \{W_i\})\cup \{U_2,V_i\})=t_{\Alt}(F ,(\mathcal{U}_{i-1}  \sm \{W_i\})\cup \{V_i\})=t_{\Alt}(F ,\mathcal{U}_{i} ).
\end{equation}
The combination of \eqref{eqn:t-1}, \eqref{eqn:t-2}, \eqref{eqn:t-3}, and \eqref{eqn:t-4} imply $t_{\Alt}(F ,\mathcal{U}_{i-1} ) = t_{\Alt}(F ,\mathcal{U}_i )$.
This completes the proof that $t_{\Alt}$ is independent of the choices of Whitney discs, and therefore completes the proof that $t_{\Alt}$ is well defined.

Finally, by \cref{lem:alt-enough} we know that $t_{\Alt}(F ,\W ) = t(F ,\W )$ for $\W $ a convenient collection, so $t$ is well defined for convenient collections $\W $, as desired.
\end{proof}

\begin{figure}[htb]
    \centering
    \begin{tikzpicture}	
        \node[anchor=south west,inner sep=0] at (0,0){	    \includegraphics{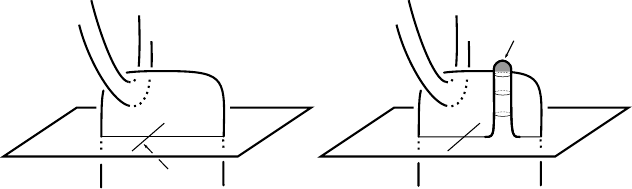}};
	   \node at (1.5,1) {$p_1$};
   	   \node at (2.25,1.1) {$W_i$};
   	   \node at (4.1,1) {$p_2$};
	   \node at (6.8,1) {$p_1$};
   	   \node at (7.5,1.1) {$W'_i$};
   	   \node at (9.5,1) {$p_2$};
   	   \node at (8.23,2.2) {$q_1$};
   	   \node at (8.85,2.2) {$q_2$};   	
\node at (8.9,2.65) {$U_2$};   	
\node at (8.95,1.1) {$U_1$};
\node at (3.2,0.2) {$\partial V_i$};   	
	   	   \end{tikzpicture}
    \caption{Splitting a Whitney disc $W_i$ into two Whitney discs. One of the new Whitney discs, $U_1$, pairing $p_2$ and $q_2$, satisfies the hypotheses of \cref{case:friendly-disc}. The other Whitney disc $W_i'$ intersects whatever $W_i$ intersected. The trivial Whitney disc $U_2$ pairing the new double points $q_1$ and $q_2$ is shown in grey. Note that $\partial W_i$ may intersect $\partial V_i$, or more generally other arcs in $A_i$, or itself.}\label{fig:nonstandard-finger}
\end{figure}

Next we recall the statement of \cref{thm:main} for the convenience of the reader.

\begin{reptheorem}{thm:main}
Let $F\colon (\Sigma,\partial \Sigma)\looparrowright (M, \partial M)$ be as in \cref{convention}. Suppose that $\mu (F)=0$ and that $F$ has algebraically dual spheres. If $F^\twist$ is not $b$-characteristic then $\km(F)=0$. If $F^\twist$ is $b$-characteristic then the secondary embedding obstruction satisfies \[\km(F) = \tw(F^{\twist},\W^{\twist})\in \Z/2\]
for every convenient collection of Whitney discs $\W^\twist$ pairing all the double points of $F^{\twist}$.
\end{reptheorem}

\begin{proof}
First we show that if $F^\twist$ is not $b$-characteristic then $\km(F)=0$.
By~\cref{lem:dependence_on_A} we reduce to the case that $\lambda_{\Sigma^\twist}\vert_{\partial \bands(F^\twist)}$ is trivial, which implies that the function $\Theta$ is well defined on $\bands(F^\twist)$. Since we assume that $F^\twist$ is not $b$-characteristic, there exists $B\in \bands(F^\twist)$ so that $\Theta(B)=1$, so we can apply~\cref{construction:finger-move} and~\cref{lem:fiber-move} to find a collection of Whitney discs $\mathcal{W}^\twist$ for the double points of $F^\twist$ with $t(F^\twist, \mathcal{W}^\twist)=0$. Then by~\cref{lem:tau}, we know that $\km(F)=0$.

By \cref{thm:embedding-obstruction}, if $F^\twist$ is $b$-characteristic, then $t(F^{\twist},\W^{\twist})$ is well defined, i.e.\ is independent of $\W^{\twist}$.  As in \cref{thm:embedding-obstruction} we denote the resulting invariant $t(F^{\twist})$.
We need to show that $\km(F)=t(F^{\twist})$.

Recall that $b$-characteristic implies $r$-characteristic by \cref{lem:r-b}, and also $r$-characteristic implies $s$-characteristic by \cref{rem:r-char-implies-s-char}.  Therefore \cref{lem:tau} applies, which says that if $t(F^\twist)=t(F^\twist, \mathcal{W}^\twist)=0$ then  $\km(F)=0$.
On the other hand, if $\km(F)=0$, then after a regular homotopy the double points of $F$ can be paired up by a convenient collection of Whitney discs with interiors disjoint from $F$. Using these Whitney discs to calculate $t(F^{\twist})$, and regular homotopy invariance of $t$ from \cref{thm:embedding-obstruction} it follows that $t(F^{\twist})=0$.

Thus we have shown that for $F^\twist$ $b$-characteristic and $F$ with algebraically dual spheres, $\km(F)=0$ if and only if $t(F^{\twist})=0$, or equivalently $\km(F)=t(F^{\twist}) \in \Z/2$, as desired.
\end{proof}

\section{Examples and applications}\label{sec:applications}

\begin{proposition}\label{prop:application}
Let $F\colon (\Sigma,\partial \Sigma)\looparrowright (M, \partial M)$ be as in \cref{convention} and assume that $\mu (F)=0$. If there are two orientation-preserving immersed loops in $\Sigma$ that intersect transversely in an odd number of points and are null-homotopic in $M$, then $F$ is not $b$-characteristic.
\end{proposition}

\begin{proof}
The two immersed loops in $\Sigma$ from the assumption bound immersed discs in $M$. These discs give classes in $\bands(F)$ by \cref{ex:tubing-band} and by assumption $\lambda_{\Sigma}|_{\bands(F)}$ is nontrivial. It follows by definition that $F$ is not $b$-characteristic.
\end{proof}

This applies to every simply connected target $M$ whenever $\Sigma$ has a component of positive genus.
\cref{prop:application} also implies \cref{cor:simply-connected-pos-genus} and \cref{cor:stabilisation}, whose statements we recall, as follows.

\begin{repcorollary}{cor:simply-connected-pos-genus}
	If $M$ is a simply connected $4$-manifold and $\Sigma$ is a connected, oriented surface with positive genus, then any generic immersion $F\colon (\Sigma,\partial\Sigma)\imra (M,\partial M)$ with vanishing self-intersection number is not $b$-characteristic.
Thus if~$F$ has an algebraically dual sphere then $\km(F)=0$, and since $\pi_1(M)$ is good the map~$F$ is regularly homotopic, relative to~$\partial \Sigma$, to an embedding.
\end{repcorollary}

\begin{proof}
	As $\pi_1(M)$ is trivial and $\Sigma$ has positive genus, $F$ is not $b$-characteristic by \cref{prop:application}. By \cref{thm:main}, it follows that $\km(F)=0$ if $F$ has an algebraically dual sphere. In this case $F$ is regularly homotopic, relative to $\partial \Sigma$, to an embedding, by \cref{thm:SET}. The theorem applies because $\pi_1(M)$ is good.	
\end{proof}

\begin{repcorollary}{cor:stabilisation}
Let $F\colon (\Sigma,\partial \Sigma)\looparrowright (M, \partial M)$ be as in \cref{convention},  with $\mu (F)=0$ and $\Sigma$ connected.
If $F'$ is obtained from~$F$ by an ambient connected sum with an embedding $S^1 \times S^1 \hookrightarrow S^4$, then $F'$ is not $b$-characteristic.
Thus if $F$ has an algebraically dual sphere then $\km(F')=0$, and if $\pi_1(M)$ is good then~$F'$ is regularly homotopic, relative to~$\partial \Sigma$, to an embedding.
\end{repcorollary}

\begin{proof}
	Since $F'$ is obtained from $F$ by an ambient connected sum with an embedding $S^1\times S^1\hookrightarrow S^4$, we can apply \cref{prop:application} to see that $F'$ is not $b$-characteristic.
	 By \cref{thm:main}, it follows that $\km(F')=0$ if $F$ has an algebraically dual sphere, as this sphere remains algebraically dual to $F'$. If in addition $\pi_1(M)$ is good, then by \cref{thm:SET} $F$ is regularly homotopic, relative to $\partial \Sigma$, to an embedding.
\end{proof}

\begin{example}
	To illustrate the difference between $r$-characteristic and $b$-characteristic surfaces we give an example of a surface that is $r$-characteristic but not $b$-characteristic. Consider any $r$-characteristic immersed sphere with trivial self-intersection number. Add a single trivial tube to obtain an immersed torus. As this will not change the intersection number with any closed surface, the new torus is still $r$-characteristic. But it fails to be $b$-characteristic by \cref{cor:stabilisation}.
\end{example}

\begin{example}\label{ex:km-handle}
We explain next why our methods allow us to obtain embeddings where \cite{FQ}*{Theorem~10.5A\,(1)} would not produce them (cf.\ the discussion directly following \cref{thm:SET}).

Let $f\colon S^2\looparrowright M$ be a generic immersion in a $4$-manifold with $\pi_1(M)$ good, equipped with an algebraically dual sphere and with $\km(f)=1$, for example a sphere representing a generator of $H_2(*\CP^2)$. Other such spheres may be constructed as in \cite{KLCLL19}*{Theorem~2}.
Let $T$ be a generic immersion of a torus produced by adding a trivial tube to $f$, i.e.\ by taking the ambient connected sum of $f$ with the standard embedding $S^1\times S^1\hookrightarrow S^4$. Then by \cref{cor:stabilisation} we see that $\km(T)=0$. Thus $T$ is regularly homotopic to an embedding since $\pi_1(M)$ is good. Fix a $1$-skeleton $\Sigma_0$ for
$S^1\times S^1$.
Then $T$ is not regularly homotopic to an embedding relative to $\Sigma_0$, since the Kervaire--Milnor invariant for $f$ restricted to the $2$-cell(s) $(S^1\times S^1)\sm \nu \Sigma_0$ considered as a map to $M \sm T(\nu \Sigma_0)$ equals $\km(f)=1$.

We emphasise that this holds for every choice of 1-skeleton $\Sigma_0 \subseteq S^1\times S^1$.  In order to apply the strategy of \cite{FQ}*{Theorem~10.5A(1)} to find an embedding, one needs to first make a judicious choice of finger moves.  But without our theory, there is no clear strategy for finding these finger moves.  To obtain an embedding obstruction in this way, matters are worse, since one would need to compute the Kervaire--Milnor invariant of the 2-skeleton for every choice of finger moves and for every choice of 1-skeleton.
\end{example}

\begin{example}\label{ex:nontrivialtorus}
We construct an immersed torus with nontrivial Kervaire--Milnor invariant. In contrast to \cref{prop:application}, the torus in this example is not $\pi_1$-trivial.  Consider an immersion $f_1$ of a $2$-sphere in $*\CP^2$ representing a generator of $H_2(*\CP^2;\Z)$ with trivial self-intersection number.  Let $K \colon S^1 \hookrightarrow S^3$ be an arbitrary knot and consider the embedding of a torus given by the product $f_2:= K \times \Id \colon S^1 \times S^1\hookrightarrow S^3 \times S^1$. Let $F$ denote the interior connected sum $f_1\# f_2\colon S^1\times S^1\looparrowright W:= *\CP^2\# (S^3\times S^1)$.

First we  claim that $F$ is $b$-characteristic.
To see this, we start by computing $H_2(W,S^1\times S^1;\Z/2)$ using the long exact sequence of the pair with $\Z/2$ coefficients:
\[
\begin{tikzcd}[column sep={1.75em}, row sep={1em}]
H_2(S^1\times S^1)\ar[two heads]{r}	&H_2(W) \ar["0"]{r}	&H_2(W,S^1\times S^1) \ar{r}	&H_1(S^1\times S^1)  \ar[two heads]{r}\ar["\cong" labl,draw=none]{d} &H_1(W)\ar["\cong" labl,draw=none]{d}.\\
&&&\Z/2 \oplus \Z/2	&\Z/2
\end{tikzcd}
\]

Therefore $H_2(W,S^1\times S^1;\Z/2) \cong \Z/2$ is generated by $S \times \{p\}$ where $S \subseteq S^3$ is a Seifert surface for the knot $K(S^1)$ and $p \in S^1$.
The intersection form of $S^1\times S^1$ restricted to $\partial S$ is trivial. Since $\Theta$ is well defined on homology classes we can compute it using $S$.  But $S$ has interior disjoint from the image of $F$, embedded boundary, and trivial relative Euler number, so $\Theta (S)=0$. If follows that $\Theta$ vanishes on all of $H_2(W,S^1\times S^1;\Z/2)$, in particular it vanishes on the subset $\bands(F)$. Thus $F$ is $b$-characteristic as claimed.

Observe that $\km(f_1)=1$ inside $*\CP^2$ (see e.g.\ ~\cite{FQ}*{Section~10.8}). We can pick a convenient collection of Whitney discs for $f_1$ in $*\CP^2$. Since $f_2$ is an embedding, these constitute a convenient collection of Whitney discs for $F$. It follows that $\km(F)=\km(f_1)=1$. Note that the choice of knot~$K$ was irrelevant, since for any two choices the resulting immersions $F$ are regularly homotopic and hence have equal Kervaire--Milnor invariant.
\end{example}

\begin{example}\label{ex:pi1Z}
	In the previous example we constructed a generically immersed torus in $*\CP^2\#(S^1\times S^3)$ with nontrivial Kervaire--Milnor invariant. In particular, this torus is not homotopic to an embedding (cf.\ \cref{subsection:homotopy-vs-reg-homotopy}). Now we show that in contrast to this every map $f$ from a closed surface $\Sigma$ to $S^1\times S^3$ is homotopic to an embedding.
	Note that these classes do not have algebraically dual spheres since $\pi_2(S^1\times S^3)=0$.  The surfaces in the regular homotopy class with $\mu(f)_1 =0$ are either not $b$-characteristic or $t(f^{\twist})$ vanishes.
	
	Since the projection $S^1\times S^3\to S^1$ is  3-connected, the induced map $[\Sigma,S^1\times S^3]\to[\Sigma,S^1]$ is bijective. In particular, the homotopy class of a map $f\colon \Sigma\to S^1\times S^3$ is determined by the induced map on fundamental groups.
	
	We first consider the case that $\Sigma$ is connected. Since $\pi_1(S^1\times S^3)\cong \Z$, we can find a generating set for $\pi_1(\Sigma)$ such that at most one generator is non-trivial in $\pi_1(S^1\times S^3)$. Thus there exists a decomposition $\Sigma=H\#\Sigma'$, where $H$ is either a sphere, a torus, or a Klein bottle, with respect to which $f$ can be written as an internal connected sum $T\#f'$, where $T$ is a map on $H$ and $f'$ is $\pi_1$-trivial. In particular, $f'$ is homotopic to an embedding inside a ball $D^4 \subseteq S^1 \times S^3$. It remains to show that $T$ is homotopic to an embedding, which will show that the connected sum is homotopic to an embedding.
	
	If $H$ is a sphere, we are done. If $H$ is a torus, let $i\colon S^1\times S^1\hookrightarrow S^3$ be an embedding. For each $k\in \Z$, define the embedding $h'_k\colon S^1\times S^1\to S^1\times(S^1\times S^1)$ by $(s,t)\mapsto (s^k,(s,t))$. Let $h_k:=(\Id\times i)\circ h'_k$. There exists some $k$ and some identification of $H$ with $S^1\times S^1$ such that $T$ and $h_k$ induce the same map on fundamental groups and thus are homotopic. If $H$ is a Klein bottle, let $p\colon H\to S^1$ be a fibre bundle with fibre $S^1$. For each $k\in\Z$ there exists an immersion $i\colon H\looparrowright S^3$ such that $h_k(x):=(p(x)^k,i(x))$ is an embedding $H \hookrightarrow S^1\times S^3$. As before, there exists some $k$ such that $T$ and $h_k$ are homotopic.
	
	The above embeddings can be realised as embeddings into $S^1\times D^3\subseteq S^1\times S^3$. The argument generalises to disconnected surfaces by picking disjoint copies of $S^1\times D^3$ in $S^1\times S^3$ for each connected component.
\end{example}

Next we prove \cref{prop:connectedsum-intro} and \cref{cor:arbitrary-genus-intro}, which we restate for the convenience of the reader.

\begin{repproposition}{prop:connectedsum-intro}
Let $M_1$ and $M_2$ be oriented $4$-manifolds. Let $F_1\colon (\Sigma_1, \partial \Sigma_1)\looparrowright (M_1, \partial M_1)$ and $F_2\colon (\Sigma_2, \partial \Sigma_2)\looparrowright (M_2, \partial M_2)$ be generic immersions of connected, compact, oriented surfaces, each with vanishing self-intersection number. If $F_i$ is $b$-characteristic for each $i$ then both the disjoint union
\[F_1\sqcup F_2\colon \Sigma_1 \sqcup \Sigma_2\looparrowright M_1\# M_2\]
and any interior connected sum
\[F_1\# F_2\colon \Sigma_1 \# \Sigma_2 \looparrowright M_1\# M_2\]
 are $b$-characteristic, and satisfy
 \[\tw(F_1 \sqcup F_2)=\tw(F_1\#F_2)=\tw(F_1)+\tw(F_2).\]
\end{repproposition}

\begin{proof}
The vanishing of the self-intersection number of $F_i$ is witnessed by a convenient collection of Whitney discs $\mathcal{W}_i$ in $M_i$, for each $i$. The union $\mathcal{W}_1\sqcup \mathcal{W}_2$, now considered in $M_1\# M_2$, shows that the intersection and self-intersection numbers of $F_1\sqcup F_2$, as well as for $F_1\# F_2$, vanish in $M_1\# M_2$. Since the union $\mathcal{W}_1\sqcup \mathcal{W}_2$ pairs all the double points of $F_1\sqcup F_2$ (resp.\ $F_1\# F_2$), and since components of $\mathcal{W}_i$ cannot intersect $F_j(\Sigma_j)$ for all $i\neq j$, the claimed relationship $\tw(F_1 \sqcup F_2)=\tw(F_1\#F_2)=\tw(F_1)+\tw(F_2)$ holds as long as $F_1\sqcup F_2$ and $F\# F_2$ are $b$-characteristic.

As a preliminary step, note that neither $F_i$ has a framed dual sphere in $M_i$, since otherwise it would not be $s$-characteristic, and therefore, not $b$-characteristic by \cref{lem:r-b}. As a result, $\Sigma_i^\twist=\Sigma_i$ for~$i=1,2$.

Next we consider the immersion $F_1\sqcup F_2\colon (\Sigma_1\sqcup \Sigma_2, \partial \Sigma_1\sqcup \partial \Sigma_2)\looparrowright M_1\#M_2$. Let $S\subseteq M_1\# M_2$ denote a connected sum $3$-sphere. Consider a band $[B] \in H_2(M_1 \# M_2,\Sigma_1\sqcup \Sigma_2)$. By (topological) transversality we can assume that $B$ is immersed, the double points of $B$ are disjoint from $S$, and the intersection $B\cap S$ corresponds to an embedded $1$-manifold in the interior of the domain of $B$, since $\partial B \subseteq \Sigma_1 \sqcup \Sigma_2 \subseteq (M_1 \# M_2) \sm S$. The image of this $1$-manifold in $S$ is embedded and bounds a collection of immersed (perhaps intersecting) discs in $S$. Surger $B$ using two copies each of these discs to produce $B_1\subseteq M_1$ and $B_2\subseteq M_2$, where each $B_i$ is an immersed collection of surfaces with $\partial B_i\subseteq \Sigma_i$.

Each component of $B_i$ can be replaced by a band as follows. Recall that since each $M_i$ and $\Sigma_i$ is oriented, there is no condition on Stiefel--Whitney classes for bands, and we need only arrange that each component is either a M\"obius band or an annulus. By considering the Euler characteristic, we see that each component is homeomorphic to either a sphere, an $\RP$, a disc, a M\"obius band, or an annulus.  Then use the tubing procedure from \cref{ex:tubing-band}
to replace each sphere, $\RP$, or disc component by a band. More precisely, choose a small disc on $\Sigma_1$ or $\Sigma_2$, as appropriate, away from all Whitney arcs and double points, and tube into the disc, sphere, or $\RP$.

Since each $F_i$ is $b$-characteristic, $\lambda_{\Sigma_1}\vert_{\partial\bands(F_i)}$ is trivial for each $i$. Therefore, $\lambda_{\Sigma_1\sqcup\Sigma_2}$ is trivial on $\partial B=\partial B_1\cup \partial B_2$.  It follows by \cref{lem:well-defined}\,\eqref{item-iv-lem-welldefined} that $\Theta\colon \bands(F_1\sqcup F_2)\to \Z/2$ is well defined.
By \cref{lem:quadratic}, $\Theta$ extends to a linear map $\langle\bands(F_1\sqcup F_2)\rangle \to \Z/2$ on the subspace $\langle\bands(F_1\sqcup F_2)\rangle  \subseteq H_2(M_1\#M_2,\Sigma_1 \sqcup \Sigma_2;\Z/2)$ generated by the bands.
Then since $[B_1 \cup B_2] = [B] \in H_2(M_1\#M_2,\Sigma_1 \sqcup \Sigma_2;\Z/2)$, we see that $\Theta(B)=\Theta(B_1)+\Theta(B_2)$.

For each $i$, the value of $\Theta(B_i)$ does not depend on whether the ambient manifold is $M_i$ or $M_1\# M_2$, since $B_i$ does not intersect $F_j(\Sigma_j)$ for all $i\neq j$ (see~\cref{def:thetaA}). Since each $F_i$ is $b$-characteristic, $\Theta(B)=\Theta(B_1)+\Theta(B_2)=0+0=0\in \Z/2$. This completes the proof that $F_1\sqcup F_2$ is $b$-characteristic.

Now we consider the connected sum $F_1\# F_2$. Let $B\in H_2(M_1\#M_2,\Sigma_1\# \Sigma_2)$ be a band. As above, we assume that the intersection $B\cap S$ corresponds to an embedded $1$-manifold in the domain of $B$. Unlike above, this may include embedded arcs with endpoints on the boundary. These endpoints are mapped to the intersection $(F_1\#F_2)(\Sigma_1\# \Sigma_2) \cap S$. By connecting the endpoints with arcs on $(F_1\#F_2)(\Sigma_1\# \Sigma_2) \cap S$, we again get a collection of closed circles in $S$, which bound an immersed collection of discs in $S$. Surger using these discs as before to produce collections $B_i\subseteq M_i$. Once again, each component of $B_i$ is homeomorphic to either a sphere, an $\RP$, a disc, a M\"obius band, or an annulus. By \cref{ex:tubing-band} applied to the sphere, $\RP$, and disc components, we may arrange that each component is a band. The argument of the previous paragraph now applies to show that $F_1\# F_2$ is $b$-characteristic.
\end{proof}

\begin{repcorollary}{cor:arbitrary-genus-intro}
For any $g$, there exists a smooth, closed $4$-manifold $M_g$, a closed, connected, oriented surface $\Sigma_g$ of genus $g$, and a smooth, $b$-characteristic, generic immersion $F \colon \Sigma_g \imra M_g$ with $t(F)\neq 0$ and therefore $\km(F)\neq 0$.
\end{repcorollary}

\begin{proof}
	By the same proof as in \cref{ex:nontrivialtorus}, for any knot $K$ the product $T:=K \times \Id\colon S^1 \times S^1\to S^3 \times S^1$ is an embedded $b$-characteristic torus. Since $T$ is an embedding, $t(T)=0$. A computation using the intersection form shows that a generic immersion $S\colon S^2\to \CP^2$ representing three times a generator of $H_2(\CP^2;\Z)$ is $s$-characteristic. Since $\pi_1(\CP^2)$ has no 2-torsion the map $S$ is also $r$-characteristic and thus $b$-characteristic by \cref{lem:r-b}. We will show that $\km(S)=1$.  This was the original example of Kervaire and Milnor~\cite{Kervaire-Milnor:1961-1}.
To see that $\km(S)=1$, represent $S$ in the following way. Take a cuspidal cubic, which is a smooth embedding of a $2$-sphere away from a single singular point. In a neighbourhood of the singular point we see a cone on the trefoil. Replace a neighbourhood of the singular point with an immersed disc $\Delta$ in $D^4$ with boundary the trefoil, and two double points that are paired by a framed Whitney disc that intersects $\Delta$ once.  Alternatively, we can compute $t(S)$ as $(\sigma(\CP^2) - S \cdot S)/8 = (1- 9)/8 \equiv 1 \mod{2}$, see \cref{section:KM-invariant}.
This gives us the case $g=0$.
Next, by \cref{prop:connectedsum-intro}, for every $g\in\N$
\[S\#^g T \colon \Sigma_g \ra \CP^2 \#^g (S^3\times S^1)\]
is a $b$-characteristic generic immersion of a closed surface of genus $g$ with nontrivial $t$, and therefore $\km(S\#^g T)\neq 0$. In particular $S\#^g T$ is not regularly homotopic to an embedding. Note these examples are smooth, but have no algebraically dual sphere. We could replace $(\CP^2,S)$ with $(\CP^2 \#^8 \ol{\CP^2},S')$ where $S'$ is a generic immersion representing the class $(3,1,\dots,1) \in \Z^9 \cong H_2(\CP^2 \#^8 \ol{\CP^2})$, to obtain an example with an algebraically dual sphere and $\km(S') = (-7-1)/8 \equiv 1 \mod{2}$.
\end{proof}

\begin{remark}
Let $M$ denote the infinite connected sum $\CP^2 \#^\infty (S^3\times S^1)$. The proof of the \cref{cor:arbitrary-genus-intro}, along with the formula from  \cref{prop:connectedsum-intro}, shows that for every $g$ there exists a smooth generic immersion $F\colon \Sigma_g\looparrowright M$ with $t(F)\neq 0$ and therefore $\km(F)\neq 0$. The following proposition shows that if there is such a compact $4$-manifold $M$ and such an $F$ then the 4-manifolds must have nonabelian fundamental group.  In other words, if there is an immersed surface in a 4-manifold with abelian fundamental group with nontrivial $\km$, then we give a bound on the complexity of that surface.
\end{remark}

\begin{proposition}\label{prop:so-big}
Let $M$ be a compact $4$-manifold such that $\pi_1(M)$ is abelian with $n$ generators. Let $F\colon \Sigma\looparrowright M$ be a $b$-characteristic generic immersion where $\Sigma$ is a closed, connected surface. Then the Euler characteristic satisfies $\chi(\Sigma)\geq -2n$.
\end{proposition}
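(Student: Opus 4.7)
The plan is to use the $b$-characteristic hypothesis to exhibit a large isotropic subspace of the nondegenerate form $\lambda_\Sigma$ inside $\partial\bands(F)$, and then bound its dimension. Set $m := \dim_{\Z/2} H_1(\Sigma;\Z/2) = 2 - \chi(\Sigma)$, and consider the $\Z/2$-map $F_* \colon H_1(\Sigma;\Z/2) \to H_1(M;\Z/2)$. Since $\pi_1(M) = H_1(M;\Z)$ is abelian and generated by $n$ elements, $\dim_{\Z/2} H_1(M;\Z/2) \leq n$, so $K' := \ker F_*$ has dimension at least $m-n$. The goal is to find a subspace $V \subseteq K' \cap \partial\bands(F)$ with $\dim V \geq m-n-1$. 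Given such a $V$, the $b$-characteristic hypothesis yields $\lambda_\Sigma|_V = 0$, so $V$ is isotropic for the nondegenerate form $\lambda_\Sigma$ on $(\Z/2)^m$, forcing $\dim V \leq m/2$. Combining gives $m - n - 1 \leq m/2$, i.e.\ $m \leq 2n+2$, i.e.\ $\chi(\Sigma) \geq -2n$.

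To construct $V$, I would start with $[c] \in K'$ and an integral lift $[c]_\Z \in H_1(\Sigma;\Z)$; since $\pi_1(M)$ is abelian and $F_*[c] = 0 \bmod 2$, we have $F_*([c]_\Z) = 2x$ for some $x \in \pi_1(M)$. If $x = 0$ then $F(c)$ is null-homotopic and the tubing trick used in the proof of \cref{lem:r-b} produces an immersed annulus $B \to M$ with $h_*[\partial B] = [c]$. If $x \neq 0$, a loop $\gamma$ of class $x$ satisfies $\gamma^2 \sim F(c)$ in the abelian $\pi_1(M)$, and one realises $c$ as the embedded boundary of an immersed M\"obius band in $M$ with core $\gamma$. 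In either case $B$ lies in $\bands(F)$ exactly when the condition~\eqref{item:defn-band-w1-condn} holds, which for this construction reduces to
\[
\langle w_1(M), x\rangle + \langle w_1(\Sigma), c\rangle = 0 \bmod 2.
\]

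The final step analyses the flexibility in $x$: changing the lift $[c]_\Z$ by $2w$ shifts $x$ by $F_*(w)$, while adding any $2$-torsion $t \in \pi_1(M)$ to $x$ leaves $2x$ unchanged. The resulting variations of $\langle w_1(M), x\rangle$ span a subgroup
\[
E := \langle F^* w_1(M), H_1(\Sigma;\Z/2)\rangle + \langle w_1(M), \mathrm{Tors}_2 \pi_1(M)\rangle \subseteq \Z/2.
\]
If $E = \Z/2$ the $w_1$ condition can always be arranged, so $V := K'$ works and $\dim V \geq m-n$. If $E = \{0\}$, the formula $\phi([c]) := \langle w_1(M), x\rangle + \langle w_1(\Sigma), c\rangle$ descends to a well-defined linear map $\phi \colon K' \to \Z/2$, and $V := \ker\phi$ has codimension at most one in $K'$. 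Either way $\dim V \geq m-n-1$, completing the argument.

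The main obstacle will be a careful justification of the M\"obius band construction: realising $c$ as the embedded boundary of an immersed M\"obius band whose core has the prescribed homotopy class and whose normal bundle is compatible in the sense of~\eqref{item:defn-band-w1-condn}. Producing a M\"obius line sub-bundle of $\nu_\gamma^M$ of the right twisting type is where the $w_1$ pairing with the core genuinely enters; the annulus case is standard via the tubing trick from the proof of \cref{lem:r-b}. The remaining linear-algebra verifications (well-definedness and linearity of $\phi$, and the bound on isotropic subspaces) are routine.
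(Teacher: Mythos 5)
Your argument is correct, and it reaches the paper's bound by a genuinely different (and more laborious) route. Both proofs ultimately extract an isotropic subspace for $\lambda_\Sigma$ from curves bounding bands, but the paper argues by contraposition and works only inside the orientable summand: writing $\Sigma\cong \Sigma'\#k\RP$ with $k\leq 2$ and $\Sigma'$ of genus $g$, the assumption $\chi(\Sigma)<-2n$ forces $g>n$; lifting $H_1(\Sigma';\Z)\to H_1(M)$ through a surjection $\Z^n\twoheadrightarrow \pi_1(M)$ produces a direct summand of rank $2g-n>g$ consisting of curves that are null-homotopic in $M$ (because $\pi_1(M)$ is abelian) and orientation-preserving in $\Sigma$. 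Hence only the tubed-disc annulus construction is needed, condition~\eqref{item:defn-band-w1-condn} holds automatically, and two such curves with odd intersection contradict $\lambda_\Sigma|_{\partial\bands(F)}=0$. By insisting on the full mod-$2$ kernel you must additionally handle classes whose image in $\pi_1(M)$ is $2$-divisible but nonzero (hence the M\"obius bands and the $2$-torsion bookkeeping), track~\eqref{item:defn-band-w1-condn} via the functional $\phi$, and sacrifice one dimension; all of this is sound --- the variation analysis for $\phi$ is correct and $m-n-1\leq \dim V\leq m/2$ does yield $\chi(\Sigma)\geq -2n$ --- but it buys nothing beyond the paper's bound. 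One reassurance: the step you flag as the main obstacle is easier than you fear, since membership in $\bands(F)$ (\cref{def:band}) only requires a continuous commutative square; map the core of the M\"obius band to $\gamma$, extend over the band via the deformation retraction onto the core, and glue in the free homotopy from $\gamma^2$ to $F(c)$ along the boundary. No normal sub-bundle needs to be constructed, and~\eqref{item:defn-band-w1-condn} is a condition to be verified afterwards, not an obstruction to building the map.
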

\begin{proof}
Suppose that $\chi(\Sigma)<-2n$. Note that $\Sigma$ can be written as a connected sum of a genus $g$ orientable surface $\Sigma'$ for some $g>n$ with zero, one, or two copies of $\RP$. There exists a surjection $\Z^n\twoheadrightarrow \pi_1(M)$. Then the induced map $H_1(\Sigma')\to H_1(M)\cong \pi_1(M)$ admits a lift $H_1(\Sigma')\to \Z^n$, which has kernel of rank at least $2g-n>g$. So there exist closed curves $\gamma_1,\gamma_2$ in $\Sigma'\sm \mathring{D^2}\subseteq \Sigma$ that are null-homotopic in $M$ and $\lambda_{\Sigma}(\gamma_1,\gamma_2)\equiv 1\mod{2}$. It follows that $F$ is not $b$-characteristic.
\end{proof}

Next we prove our corollaries on knot theory from \cref{subsection:knots}.

\begin{repcorollary}{cor:M-slicing}
For every knot $K\subseteq S^3$,
\begin{enumerate}
	\item $g_M(K)=0$ for every simply connected $4$-manifold $M$ not homeomorphic to one of $S^4$, $\CP^2$, or $*\CP^2$;
	\item $g_{\CP^2}(K)\leq 1$ and $g_{\CP^2}( \#^3 T(2,3))=1$; and
    \item $g_{*\CP^2}(K)\leq 1$ and $g_{*\CP^2}( \#^2 T(2,3))=1$.
\end{enumerate}
\end{repcorollary}

\begin{proof}
Let $K\subseteq S^3$ be an arbitrary knot and let $M$ be an arbitrary closed, simply connected $4$-manifold. Let $\Delta'$ be a generically immersed disc bounded by $K$ in a collar $S^3 \times [0,1]$ of $\partial M^\circ$. Since $M$ is simply connected, every class in $H_2(M;\Z)\cong \pi_2(M)$ is represented by a generically immersed sphere. By assumption, $M$ is not homeomorphic to $S^4$ and thus $H_2(M;\Z)$ is nontrivial. Since~$M$ is closed, every primitive class $\alpha \in H_2(M;\Z)$ has an algebraic dual $\beta\in H_2(M;\Z)$, i.e.\ $\lambda(\alpha,\beta)=1$. Represent $\alpha$ and $\beta$ by generically immersed spheres, and tube the interior of $\Delta'$ into $\beta$ to obtain $\Delta$. Add local cusps to arrange $\mu(\Delta)=0$.

First we prove (1). In this case we claim that in the construction of $\Delta$ we can choose the primitive class $\alpha$ to satisfy $\lambda(\alpha,\alpha)\in 2\Z$, as we explain presently. Then the disc $\Delta$ constructed above is not $r$-characteristic, since $\Delta\cdot \alpha \not\equiv \alpha\cdot \alpha \mod{2}$ (see \cref{rem:r-char-implies-s-char}). By \cref{theorem:Stong}, this implies that $\km(\Delta)=0$. Since the disc $\Delta$ has the algebraically dual sphere $\alpha$ and $\pi_1(M)=1$ is good, by \cref{thm:SET}, $\Delta$ is homotopic rel.\ boundary to an embedding. To see the claim regarding $\alpha$, note that when $M\not\cong S^4,\CP^2,*\CP^2$, the group $H_2(M;\Z)$ has rank at least 2 by the classification of closed, simply connected $4$-manifolds up to homeomorphism. Then $H_2(M;\Z)$ has a summand isomorphic to $\Z\oplus \Z$, so the classes $x$, $y$, and $x+y$, for the generators $x,y$ of the $\Z$-factors, are primitive, and at least one of $\lambda(x,x)$, $\lambda(y,y)$, or $\lambda(x+y,x+y)$ is even.

In (2) and (3), we have $M = \CP^2$ or $*\CP^2$. The only primitive classes are $\pm [\CP^1]$, so we choose $\alpha = \beta = [\CP^1]$ in the construction of the first paragraph. We construct the disc $\Delta$ as before, but are no longer able to conclude that it is $r$-characteristic. However by \cref{cor:stabilisation} we know that the connected sum of $\Delta$ with an unknotted torus is homotopic to an embedding. This completes the proof of the first parts of (2) and (3).

Now we prove that $g_{\CP^2}( \#^3 T(2,3))=1$.
Let $K:= \#^3 T(2,3)$. Let $g^d_{\CP^2}(K)$ denote the minimal genus of a surface bounded by $K$ in $(\CP^2)^\circ$ in the homology class $d\in \Z\cong H_2(\CP^2;\Z)$. First we consider $d=\pm 1$, where the class is $b$-characteristic (or equivalently, $s$-characteristic; see \cref{lem:r-b}). As before, construct the disc $\Delta' \subseteq S^3 \times [0,1]$, and tube into $\CP^1$ to obtain the disc $\Delta$. We assume that $\Delta'$ has trivial self-intersection number, so $1=\Arf(K)=t(\Delta')$ by~\citelist{\cite{matsumoto78}\cite{freedman-kirby}\cite{CST-twisted-whitney}*{Lemma~10}}. Since $\CP^1$ is embedded disjointly from $\Delta'$, $t(\Delta) =1$. Thus by \cref{thm:embedding-obstruction}, $\Delta$ is not homotopic to an embedding and so $g^{\pm 1}_{\CP^2}(K)\neq 0$.

Next, let $\sigma_d(K):= \sigma_K(e^{\pi i \frac{d-1}{d}})$, where $\sigma_K$ denotes the Levine--Tristram signature function of~$K$.
By~\cites{gilmer81,viro70}, for even $d$
\[2g^d_{\CP^2}(K)+1 \geq \Big| \sfrac{d^2}{2} - 1- \sigma(K)\Big|,\]
while if $d$ is divisible by an odd prime $p$, then
\[2g^d_{\CP^2}(K)+1 \geq \Big| \sfrac{p^2-1}{2p^2} d^2 - 1 - \sigma_{d}(K) \Big|.\]
In our case, $\sigma(K)=\sigma_d(K)=-6$ for all $d$, and so $g^d_{\CP^2}(K)\geq 1$ for all $d\neq \pm 1$. This completes the argument that $g_{\CP^2}(K)=1$.

Finally we show that $g_{*\CP^2}( \#^2 T(2,3))=1$.
Write $K:=\#^2 T(2,3)$ and  let $g^d_{*\CP^2}(K)$ denote the minimal genus of a surface bounded by $K$ in $(*\CP^2)^\circ$ in the homology class $d\in  H_2(*\CP^2;\Z)$. For $d=\pm 1$, modify the argument above for the case of $\CP^2$, using that tubing into a sphere representing a generator of $H_2(*\CP^2;\Z)$ to obtain a disc $\Delta'$ adds 1 to the $t$ count, and so $1 = 1+\Arf(K) = t(\Delta')$. Therefore, again by \cref{thm:embedding-obstruction}, $g^{\pm 1}_{*\CP^2}(K) \neq 0$.
Next, for $*\CP^2$ the same inequalities from~\cites{gilmer81,viro70} hold, and $\sigma(K) = \sigma_d(K) = -4$ for all $d$.  Therefore applying the inequalities we see that  $g^d_{*\CP^2}(K) \geq 1$ for all $d$. It follows that $g_{*\CP^2}(K) =1$ as asserted.
\end{proof}

\begin{repcorollary}{cor:shake-genus}
For any knot $K\subseteq S^3$, $g^\mathrm{sh}_{\pm 1}(K)=\Arf(K)\in\{0,1\}$.
\end{repcorollary}

\begin{proof}
A generator of $H_2(X_{\pm 1}(K);\Z)$ can be represented by a generically immersed sphere $F$ which is $b$-characteristic (or equivalently, $s$-characteristic; see \cref{lem:r-b}), has trivial $\mu(F)$, and has an algebraically dual sphere. We also recall from \citelist{\cite{matsumoto78}\cite{freedman-kirby}\cite{CST-twisted-whitney}*{Lemma~10}} that~$\Arf(K)$ coincides with the count $t(F)$. Then by \cref{thm:main,thm:embedding-obstruction}, the sphere $F$ is homotopic to an embedding if and only if $\Arf(K)=0$. We have an embedded torus representative for both generators by~\cref{cor:stabilisation}.
\end{proof}

\def\MR#1{}
\bibliographystyle{gtart}
\bibliography{bib}
\end{document}